\let\OLDthebibliography\thebibliography
\renewcommand\thebibliography[1]{
	\OLDthebibliography{#1}
	\setlength{\parskip}{0pt}
	\setlength{\itemsep}{2pt} 
}
\theoremstyle{definition}
\newtheorem{df}{Definition}[section]
\newtheorem{rem}[df]{Remark}
\theoremstyle{plain}
\newtheorem{thm}[df]{Theorem}
\newtheorem{pp}[df]{Proposition}
\newtheorem{co}[df]{Corollary}
\newtheorem{lm}[df]{Lemma}
\newtheorem{cond}{Condition}
\newcommand{\tr}{\mathrm{t}} 
\newcommand{\End}{\mathrm{End}} 
\newcommand{\id}{\mathrm{id}}
\newcommand{\Hom}{\mathrm{Hom}}
\newcommand{\Conf}{\mathrm{Conf}}
\newcommand{\ev}{\mathrm{ev}}
\newcommand{\coev}{\mathrm{coev}}
\newcommand{\Rep}{\mathrm{Rep}}
\newcommand{\diag}{\mathrm{diag}}
\newcommand{\uni}{\mathrm{u}}
\newcommand{\di}{\slashed d}
\newcommand{\Diff}{\mathrm{Diff}}
\newcommand{\PSU}{\mathrm{PSU}}
\newcommand{\Vir}{\mathrm{Vir}}
\newcommand{\Span}{\mathrm{Span}}
\newcommand{\pri}{\mathrm{p}}
\newcommand{\E}{E^1(V)}
\newcommand{\mc}{\mathcal}
\newcommand{\ER}{E^1(V)_{\mathbb R}}
\numberwithin{equation}{section}
\title{Unitarity of the modular tensor categories associated to unitary vertex operator algebras, II}
\author{{\sc Bin Gui}\\
	{\small Department of Mathematics, Vanderbilt University}\\
	{\small binguimath@gmail.com}
}
\date{}
\begin{document}\sloppy 
	\pagenumbering{arabic}
	\setcounter{section}{0}
	\setcounter{equation}{6}

	\maketitle

\begin{abstract}
This is the second part in a two-part series of papers constructing a unitary structure for the modular tensor category (MTC) associated to a unitary rational vertex operator algebra (VOA). We define, for a unitary rational vertex operator algebra $V$, a non-degenerate sesquilinear form $\Lambda$ on each vector space of intertwining operators. We give two sets of criteria for the positivity of $\Lambda$, both concerning the energy bounds condition of vertex operators and intertwining operators. These criteria can be applied to many familiar examples, including unitary Virasoro VOAs, unitary affine VOAs of type $A$, $D$, and more. Having shown that $\Lambda$ is an inner product, we prove that $\Lambda$ induces a unitary structure on the modular tensor category of $V$.
\end{abstract}

	\tableofcontents
\section*{Introduction}

In this article, we continue the study initiated in  \cite{Gui19a} of the unitarity structure on the modular tensor category (MTC) associated to a unitary ``rational" vertex operator algebra (VOA) $V$. The precise meaning of the word ``rational" will be given in the notation list.  We will follow the notations and conventions in \cite{Gui19a}. For simplicity, we assume that all $V$-modules are unitarizable. To prove the unitarity of the modular tensor category  of $V$, we  need to define, for any unitary $V$-modules $W_i,W_j$,  a canonical unitary structure  on the unitarizable $V$-module $W_i\boxtimes W_j$.

We first recall the definition of $W_i\boxtimes W_j$. For each equivalence class $[W_k]$ of irreducible unitary $V$-module, we choose a representing element $W_k$, and let $\mathcal E$ be the set of these $W_k$'s. Then the cardinality of $\mathcal E$ is finite by the rationality of $V$. The tensor product of $W_i,W_j$ is defined to be
\begin{align*}
W_{ij}\equiv W_i\boxtimes W_j=\bigoplus_{k\in\mathcal E}\mathcal V{k \choose i~j}^*\otimes W_k.
\end{align*}
where $\mathcal V{k\choose i~j}^*$ is the dual vector space of $\mathcal V{k\choose i~j}$, the finiteness of the dimension of which follows also from the rationality of $V$. The action of $V$ on $W_i\boxtimes W_j$ is $\bigoplus_k \id\otimes Y_k$, where $Y_k$ is the vertex operator describing the action of $V$ on $W_k$. Since the unitary structure on each $W_k$ is already chosen, to determine a suitable unitary on $W_i\boxtimes W_j$, it suffices to find such an inner product $\Lambda$ on $\mathcal V{k\choose i~j}^*$ for any $k\in\mathcal E$.

\subsubsection*{Definition of $\Lambda$} 

We first define $\Lambda=\Lambda(\cdot|\cdot)$ as a sesquilinear form on $\mathcal V{k\choose i~j}^*$, antilinear on the second variable. Recall from part I that for any $\mathcal Y_\alpha\in\mathcal V{k\choose i~j}$, its adjoint intertwining operator $\mathcal Y_{\alpha^*}\in \mathcal V{j\choose \overline i~k}$ is defined to satisfy that for any $w^{(i)}\in W_i$, 
\begin{align*}
\mathcal Y_{\alpha^*}(\overline{w^{(i)}},x)=\mathcal Y_\alpha(e^{xL_1}(e^{-i\pi}x^{-2})^{L_0}w^{(i)},x^{-1})^\dagger.
\end{align*}
Here $\dagger$ is the formal adjoint operation, which means that for any $w^{(j)}\in W_j,w^{(k)}\in W_k$, 
\begin{align*}
\langle \mathcal Y_{\alpha^*}(\overline{w^{(i)}},x)w^{(k)}|w^{(j)}  \rangle=\langle w^{(k)}|\mathcal Y_\alpha(e^{xL_1}(e^{-i\pi}x^{-2})^{L_0}w^{(i)},x^{-1})w^{(j)} \rangle.
\end{align*}
The creation operator $\mathcal Y^i_{i0}\in\mathcal V{i\choose i~0}$ of $W_i$ is defined so that for any $v\in V$ and $w^{(i)}\in W_i$, 
\begin{align*}
\mathcal Y^i_{i0}(w^{(i)},x)v=e^{xL_{-1}}Y_i(v,-x)w^{(i)}.
\end{align*}
The annihilation operator $\mathcal Y^0_{\overline ii}$ is defined to be the adjoint intertwining operator of $\mathcal Y^i_{i0}$, which is of type $0\choose \overline i~i$.

We now choose a basis $\{\mathcal Y_\alpha:\alpha\in\Theta^k_{ij} \}$ of the vector space $\mathcal V{k\choose i~j}$, and let $\{\widecheck{\mathcal Y}^\alpha:\alpha\in\Theta^k_{ij} \}$ be its dual basis in $\mathcal V{k\choose i~j}^*$. By fusion relations, there exits, for each $k\in\mathcal E$, a complex matrix $\Lambda=\{\Lambda^{\alpha\beta} \}_{\alpha,\beta\in\Theta^k_{ij}}$, such that for any $z_1,z_2\in\mathbb C^\times$ satisfying $0<|z_2-z_1|<|z_1|<|z_2|$ and $\arg z_1=\arg z_2=\arg (z_2-z_1)$, we have

\begin{align}
Y_j\big(\mathcal Y^0_{\overline ii}(\overline {w^{(i)}_2},z_2-z_1)w^{(i)}_1,z_1\big)=\sum_{k\in\mathcal E}\sum_{\alpha,\beta\in\Theta^k_{ij}}\Lambda^{\alpha\beta}\mathcal Y_{\beta^*}(\overline{w^{(i)}_2},z_2)\mathcal Y_{\alpha}(w^{(i)}_1,z_1).\label{eq365}
\end{align}
The fusion relation \eqref{eq365} is called \emph{transport formula}, and the matrix $\Lambda$ is called \emph{transport matrix}.\footnote{The primitive form of $\Lambda$ appeared in \cite{Wassermann}. See the discussion in remark \ref{lb30}.} We then define a sesquilinear form $\Lambda(\cdot|\cdot)$ on $\mathcal V{k\choose i~j}^*$ to satisfy that
\begin{align}
\Lambda( \widecheck{\mathcal Y}^{\alpha}|\widecheck{\mathcal Y}^{\beta}  )=\Lambda^{\alpha\beta}
\end{align}
for any $\alpha,\beta\in\Theta^k_{ij}$. It is easy to see that this definition does not depend on the basis $\Theta^k_{ij}$ chosen.

The sesquilinear forms $\Lambda$ on the dual vector spaces of intertwining operators induce a sesquilinear form on $W_i\boxtimes W_j$, also denoted by $\Lambda$.  If one can prove that $\Lambda$ is positive, then by the rigidity of the MTC of $V$, one can easily show that $\Lambda$ is also non-degenerate. Therefore $\Lambda$ becomes an inner product. It will then be  not hard to show, as we shall see in chapter 7, that the MTC of $V$ is unitary. However, the difficulty lies exactly in the proof of  the positivity of $\Lambda$.

\subsubsection*{Positivity of $\Lambda$}

To prove the positivity of $\Lambda$,  we will use several analytic conditions on VOAs and their intertwining operators, so that certain results and techniques in conformal nets can be applied here. In this paper, we assume that the unitary rational VOA $V$ satisfies  condition \ref{CondA} or \ref{CondB}. The precise statements of these two conditions are in section \ref{Condition ABC}. Here we make a brief and somewhat simplified description. Both conditions require that $V$ is strongly local \cite{CKLW}, so that we can construct a conformal net $\mathcal M_V$  using smeared vertex operators from $V$. Both require that there exists a ``generating set"  of intertwining operators, which means, more precisely, that there exists a set $\mathcal F=\{W_i:i\in\mathcal F \}$ of irreducible $V$-modules such that any irreducible $V$-module is a submodule of a tensor product of these modules (i.e., the set $\mathcal F$ generate the monoidal category of $V$), and that for any $i\in\mathcal F$ and any irreducible $V$-module $W_j,W_k$, the type $k\choose i~j$ intertwining operators are energy-bounded.\footnote{By proposition 3.4, if an intertwining operator is energy bounded, then so is its adjoint intertwining operator. Therefore, it suffices to require that $\mathcal F\cup\overline{\mathcal F}$, instead of $\mathcal F$, generates the monoidal category of $V$, where $\overline{\mathcal F}=\{W_{\overline i}:i\in\mathcal F \}$. Moreover, we are also interested in the general case where $\mathcal F\cup\overline{\mathcal F}$ generates, not the whole tensor category of $V$, but a smaller tensor subcategory. In this case, this tensor subcategory might not be modular, but only a ribbon fusion category.  In section \ref{Condition ABC}, the statement of conditions \ref{CondA} and \ref{CondB} will take care of this general case. } The difference between these two conditions is that condition A requires that there exists a generating set of quasi-primary vectors in $V$ whose vertex operators satisfy $1$-st order energy bounds, whereas in condition B, the  $1$-st order energy bounds condition is required for intertwining operators. The reason we need $1$-st order energy bounds condition on either vertex operators or intertwining operators is to guarantee the \emph{strong intertwining property}: causally disjoint smeared vertex operators and smeared intertwining operators commute, not only when acting on  a common invariant core of them, but also strongly in the sense that the von Neumann algebras they generate commute. (See proposition \ref{lb106}.)

We now sketch the proof of the positivity of $\Lambda$ on $W_i\boxtimes W_j$. We first assume that $i,j\in\mathcal F$. So any intertwining operator whose charge space is $W_i$ or $W_j$ is energy-bounded. In this special case, our proof is modeled on Wassermann's argument in \cite{Wassermann}. We first recall some  algebraic and analytic properties of smeared intertwining operators proved in part I. Choose disjoint open intervals $I,J\in S^1\setminus\{-1\}$.\\
(a) \emph{Braiding}: If $\mathcal Y_\alpha$ and $\mathcal Y_{\alpha'}$ are (energy bounded) intertwining operators with charge space $W_i$,  $\mathcal Y_\beta$ and $\mathcal Y_{\beta'}$ are (energy bounded) intertwining operators with charge space $W_j$, and for any $w^{(i)}\in W_i,w^{(j)}\in W_j,z_1\in I,z_2\in J$ we have the braid relation
\begin{align}
\mathcal Y_\alpha(w^{(i)},z_1)\mathcal Y_\beta(w^{(j)},z_2)=\mathcal Y_{\beta'}(w^{(j)},z_2)\mathcal Y_{\alpha'}(w^{(i)},z_1),
\end{align}
Then the smeared intertwining operators also have braiding
\begin{align}
\mathcal Y_\alpha(w^{(i)},f)\mathcal Y_\beta(w^{(j)},g)=\mathcal Y_{\beta'}(w^{(j)},g)\mathcal Y_{\alpha'}(w^{(i)},f)
\end{align}
for any $f\in C^\infty_c(I),g\in C^\infty_c(J)$.\\
(b) \emph{Adjoint relation}: If $w^{(i)}$ is quasi-primary with conformal dimension $\Delta_{w^{(i)}}$, then
\begin{align}
\mathcal Y_\alpha(w^{(i)},f)^\dagger= {e^{-i\pi\Delta_{w^{(i)}}}} \mathcal Y_{\alpha^*}(\overline{w^{(i)}},\overline {e_{(2-2\Delta_{w^{(i)}})}f}),
\end{align}
where for each $r\in\mathbb R$, $e_r$ is a function on $S^1\setminus\{-1\}$ satisfying  $e_r(e^{i\theta})=e^{ir\theta}$ ($-\pi<\theta<\pi$).\\
(c) \emph{Strong intertwining property}: If $\mathcal Y_\alpha\in\mathcal V{k\choose i~l}$, then for any $f\in C^\infty_c(I)$ and $y\in\mathcal M_V(J)$, 
\begin{align}
\pi_k(y)\mathcal Y_\alpha(w^{(i)},f)\subset \mathcal Y_\alpha(w^{(i)},f)\pi_l(y).
\end{align}
Here $\pi_k$ and $\pi_l$ are the representations of $\mathcal M_V$ on $\mathcal H_k,\mathcal H_l$ integrated from the $V$-modules $W_k,W_l$ respectively, the existence of which will be discussed in section \ref{lb300} (see also \cite{CWX}).\\
(d) \emph{Rotation covariance}: If  $w^{(i)}\in W_i$ is a homogeneous vector  with conformal weight $\Delta_{w^{(i)}}$, and for $t\in\mathbb R$, we define $\mathfrak r(t)f\in C^\infty(S^1)$  to satisfy $(\mathfrak r(t)f)(e^{i\theta})=f(e^{i(\theta-t)})$, then
\begin{gather}
e^{it{{\overline {L_0}}}}\mathcal Y_\alpha(w^{(i)},f)e^{-it{\overline{ L_0}}}=\mathcal Y_\alpha\big(w^{(i)},e^{i(\Delta_{w^{(i)}}-1)t}\mathfrak r(t)f\big).
\end{gather}

Now the positivity of $\Lambda$ on $W_i\boxtimes W_j$ can be argued as follows.  Write $W_{ij}=W_i\boxtimes W_j$, and define a type ${ij\choose i~j}={W_i\boxtimes W_j\choose W_i~W_j}$ intertwining operator $\mathcal Y_{i\boxtimes j}$, such that for any $w^{(i)}\in W_i,w^{(j)}\in W_j$,
\begin{align}\label{eq400}
\mathcal Y_{i\boxtimes j}(w^{(i)},x)w^{(j)}=\sum_{k\in\mathcal E}\sum_{\alpha\in\Theta^k_{ij}}\widecheck{\mathcal Y}^{\alpha}\otimes\mathcal Y_{\alpha}(w^{(i)},x)w^{(j)}.
\end{align}
This definition is independent of the basis $\Theta^k_{ij}$ chosen. Using rotation covariance and lemma \ref{lb15}, we can show that the vectors of the form
\begin{align}
\xi=\sum_{s=1}^N\mathcal Y_{i\boxtimes j}(w^{(i)}_s,f_s)\cdot\mathcal Y^j_{j0}(w^{(j)}_s,g_s)\Omega\label{eq368}
\end{align}
form a dense subspace of the norm closure $\mathcal H_{ij}$ of $W_{ij}=W_i\boxtimes W_j$, where $N=1,2,\dots,f_1,\dots,f_N\in C^\infty_c(I),g_1,\dots,g_N\in C^\infty_c(J)$, $w^{(i)}_1,\dots,w^{(i)}_N\in W_i,w^{(j)}_1,\dots,w^{(j)}_N\in W_j$ are quasi-primary, and $\Omega$ is the vacuum vector in the vacuum module $W_0=V$. Therefore, to prove the positivity of the sesquilinear form $\Lambda$ on $W_i\boxtimes W_j$, it suffices to prove that $\Lambda(\xi|\xi)\geq0$ for all such $\xi$. Note that the transport formula \eqref{eq365} can be written in the form of braiding:
	\begin{align}
&\mathcal Y^j_{j0}(w^{(j)},z_0)\mathcal Y^0_{\overline ii}(\overline{w^{(i)}_2},z_2)\mathcal Y^i_{i0}(w^{(i)}_1,z_1)\nonumber\\
=&\bigg(\sum_{k\in\mathcal E}\sum_{\alpha,\beta\in\Theta^*_{ij}}\Lambda^{\alpha\beta}\mathcal Y_{\beta^*}(\overline{w^{(i)}_2},z_2)\mathcal Y_{\alpha}(w^{(i)}_1,z_1)\bigg)\mathcal Y^j_{j0}(w^{(j)},z_0).
\end{align}
for any $z_1,z_2\in I,z_0\in J,w^{(i)}_1,w^{(i)}_2\in W_i,w^{(j)}\in W_j$. Then, using the smeared version of this braid relation, together with the adjoint relation, one is  able to compute that
\begin{align}
\Lambda(\xi|\xi)=\sum_{1\leq s,t\leq N}\langle \mathcal Y^j_{j0}(w^{(i)}_t,f_t)^\dagger\mathcal Y^j_{j0}(w^{(i)}_s,f_s)\mathcal Y^i_{i0}(w^{(j)}_t,g_t)^\dagger\mathcal Y^i_{i0}(w^{(j)}_s,g_s)\Omega|\Omega \rangle.\label{eq366}
\end{align}
By the strong intertwining property, the right hand side of equation \eqref{eq366} can be approximated by
\begin{align}
\sum_{1\leq s,t\leq N}\langle \mathfrak A^*_t\mathfrak A_s\mathfrak B^*_t\mathfrak B_s\Omega|\Omega \rangle,\label{eq367}
\end{align}
where for each $s,t$,  $\mathfrak A_s\in\Hom_{\mathcal M_V(I^c)}(\mathcal H_0,\mathcal H_i)$ and $\mathfrak B_t\in\Hom_{\mathcal M_V(J^c)}(\mathcal H_0,\mathcal H_j)$ are bounded operators, where $I^c$ and $J^c$ are the interiors of the complements of $I$ and $J$ in $S^1$ respectively, and $\mathcal H_0,\mathcal H_i,\mathcal H_j$ are the $\mathcal M_V$-modules integrated from $W_0,W_i,W_j$ respectively. But \eqref{eq367} equals
\begin{align}
\bigg\lVert \sum_{1\leq s\leq N}\mathfrak A_s\Omega\boxtimes \mathfrak B_s\Omega \bigg\lVert^2,
\end{align}
where $\boxtimes$ is the \emph{Connes fusion product} \cite{Con80}. So $\Lambda(\xi|\xi)$ must be non-negative, and hence $\Lambda$ is positive on $W_i\boxtimes W_j$.

\subsubsection*{Generalized (smeared) intertwining operators}

Now consider the general case when $W_i$ and $W_j$ are not necessarily in $\mathcal F$. Then the energy bounds condition of the intertwining operators with charge spaces $W_i$ or $W_j$ is unknown. But since $\mathcal F$ generates the monoidal category of $V$, there exist $W_{i_1},\dots,W_{i_m}\in\mathcal F$ such that $W_i$ is equivalent to a submodule of $W_{i_m}\boxtimes\cdots\boxtimes W_{i_1}$. Equivalently, there exist  irreducible $V$-modules $W_{r_2},\dots,W_{r_{m-1}}$ and non-zero (energy-bounded) intertwining operators $\mathcal Y_{\sigma_2}\in\mathcal V{r_2\choose i_2~i_1 },\mathcal Y_{\sigma_3}\in\mathcal V{r_3\choose i_3~r_2},\dots\mathcal Y_{\sigma_m}\in\mathcal V{i\choose i_m~r_{m-1}}$. Choose mutually disjoint open intervals $I_1,I_2,\dots,I_m\subset I$. Then we define a \emph{generalized intertwining operator} $\mathcal Y_{\sigma_n\cdots\sigma_2,\alpha}$ (acting on the source space $W_j$ of $\mc Y_\alpha$), such that for any $z_1\in I_1,z_2\in I_2,\dots,z_m\in I_m,w^{(i_1)}\in W_{i_1},w^{(i_2)}\in W_{i_2},\dots,w^{(i_m)}\in W_{i_m}$,
\begin{align}
&\mathcal Y_{\sigma_n\cdots\sigma_2,\alpha}(w^{(i_n)},z_n;\dots;w^{(i_2)},z_2;w^{(i_1)},z_1)\nonumber\\
=& \mathcal Y_\alpha\big(\mathcal Y_{\sigma_m}(w^{(i_n)},z_n-z_1)\cdots\mathcal Y_{\sigma_2}(w^{(i_2)},z_2-z_1)w^{(i_1)},z_1\big).
\end{align}
Now, for each $f_1\in C^\infty_c(I_1),\dots,f_m\in C^\infty_c(I_m)$, we define a \emph{generalized smeared interwining operator}
\begin{align*}
&\mathcal Y_{\sigma_m\cdots\sigma_2,\alpha}(w^{(i_m)},f_m;\dots;w^{(i_1)},f_1)\nonumber\\
=&\int_{-\pi}^{\pi}\cdots \int_{-\pi}^{\pi} \mathcal Y_{\sigma_m\cdots\sigma_2,\alpha}(w^{(i_m)},e^{i\theta_m};\dots;w^{(i_1)},e^{i\theta_1})\cdot f_1(e^{i\theta_1})\cdots f_m(e^{i\theta_m})\di\theta_1\cdots\di\theta_m,
\end{align*}
where $\di\theta=e^{i\theta}d\theta/2\pi$. We can also define, for any $\mathcal Y_{\beta}$ with charge space $W_j$, mutually disjoint $J_1,\dots,J_n\subset J$, and $g_1\in C^\infty_c(J_1),\dots,g_n\in C^\infty_c(J_n)$, a generalized smeared intertwining operator $\mathcal Y_{\rho_n\cdots\rho_2,\beta}(w^{(j_n)},g_n;\dots;w^{(j_1)},g_1)$ in a similar way. Then we can prove the positivity of $\Lambda$ on $W_i\boxtimes W_j$ in a similar way as above, once we've established the braid relations, the adjoint relation, the strong intertwining property, and the rotation covariance of generalized smeared intertwining operators. The last two properties follow easily from those of the smeared intertwining operators. The first two, especially the adjoint relation, is much harder to prove. All these properties will be treated in this paper.

\subsubsection*{Outline of this paper}

In chapter 4 we discuss some of the relations between a strongly-local unitary rational VOA $V$ and its conformal net $\mathcal M_V$. A unitary $V$-module $W_i$ is called strongly integrable, if it can be integrated to an $\mathcal M_V$-module $\mathcal H_i$. In section 4.1, we study the relation between the abelian category of strongly integrable unitary $V$-modules and the one of $\mathcal M_V$-modules. In section 4.2  we give a useful criterion for the strong integrability of unitary $V$-modules  based on the energy bounds condition of intertwining operators. 

Chapter 5 is devoted to the study of generalized intertwining operators and their smeared versions. In particular, we prove (in section 5.3) the braid relations, the adjoint relation, the strong intertwining property, and the rotation covariance of  generalized smeared intertwining operators. Since the proof of the first two properties are harder, and since the difficulty is mostly on the unsmeared side, in sections 5.1 and 5.2 we prove the braid relations and the adjoint relation of  generalized (unsmeared) intertwining operators.

Chapter 6 is the climax of our  series of papers. Recall that for any $w^{(i)}\in W_i$ and $z\in\mathbb C\setminus\{0\}$, $\mathcal Y_{i\boxtimes j}(w^{(i)},z)W_j$ is dense in the algebraic completion $\widehat W_{ij}$ of $W_{ij}=W_i\boxtimes W_j$ by proposition A.3 in part I, where  $\mathcal Y_{i\boxtimes j}$ is defined by equation \eqref{eq400}. In section 6.1 we first prove a similar density result for generalized intertwining operators. Its smeared version is also proved with the help of rotation covariance. In section 6.2 we define the sesquilinear form $\Lambda$. The positive-definiteness of $\Lambda$ is proved in section 6.3 using all the  results we have achieved so far. 

In chapter 7 we show that $\Lambda$ defines a unitary structure on the MTC of $V$. Our result is applied to unitary Virasoro VOAs and many unitary affine VOAs in sections 8.1 and 8.2. The sesquilinear form $\Lambda$ is closely related to the non-degenerate bilinear form considered in \cite{HK07}. We will explain this relation in section 8.3.

\subsubsection*{Acknowledgment}
This research is supported by NSF grant DMS-1362138. I am grateful to my advisor, professor Vaughan Jones, for his consistent support throughout this project.

\subsubsection*{Conventions and Notations}
We follow the notations and the conventions  in part I \cite{Gui19a} (see conventions 1.12, 2.1, 2.19, and definition 1.13).  In particular, if $s\in\mathbb R$, we always assume $\arg e^{is}=s$. If $z\in\mathbb C^\times$ and $\arg z$ is chosen, then we let $\arg{\overline z}=-\arg z$ and $\arg z^s=s\arg z$. If $z_1,z_2\in\mathbb C^\times$ and $\arg z_1,\arg z_2$ are chosen, then we set $\arg(z_1z_2)=\arg z_1+\arg z_2$. We also understand $z_1/z_2$ as $z_1z_2^{-1}$. Therefore $\arg(z_1/z_2)=\arg(z_1z_2^{-1})=\arg z_1+\arg z_2^{-1}=\arg z_1-\arg z_2$. 

Arguments of explicit  positive \emph{real} numbers (e.g. $1,\sqrt 2,\pi$) are assumed to be $0$ unless otherwise stated. (This is also assumed but not explicitly mentioned in \cite{Gui19a}.)  In this article as well as \cite{Gui19a},  symbols like $r,s,t$ (sometimes with subscripts) are used as \textbf{real variables}. When they take positive values we also assume their arguments  to be  $0$. $e^{2i\pi}$ is regarded  not as a positive real number but as a positive \emph{complex} number. As a consequence, its argument is not $0$ but  $2\pi$. Symbols like $z,\zeta$ are regarded as \textbf{complex variables}. Therefore, even if they can take positive values, their arguments are still not necessarily $0$. 

Now assume that $U$ is an open subset of $\mathbb C$, $f:U\rightarrow \mathbb C^\times$ is continuous, $z_1,z_2\in U$, and the interval $E$ connecting $z_1,z_2$ is inside $U$. Choose arguments $\arg f(z_1),\arg f(z_2)$ of $f(z_1),f(z_2)$. Following definition 1.13, we say that $\arg f(z_2)$ is \textbf{close to  $\arg f(z_1)$ as $z_2\rightarrow z_1$}, if there exists a (unique) continuous function $A:[0,1]\rightarrow \mathbb R$ satisfying that: (a) $A(0)=\arg z_1,A(1)=\arg z_2$. (b) $A(t)$ is an argument of $f(tz_1+(1-t)z_2)$ for any $t\in[0,1]$.

We always assume $V$ to be  a VOA of CFT type satisfying the following  condition:
\begin{flalign*}
&(\alpha)\text{ $V$ is isomorphic to $V'$.}&\\
&(\beta)\text{ Every $\mathbb N$-gradable weak $V$-module is completely reducible.}&\\
&(\gamma)\text{ $V$ is $C_2$-cofinite.}&
\end{flalign*}
The precise meanings of these conditions, which are not quite important to our theory, can be found in \cite{H MI}. The importance of these conditions is to guarantee the existence of a MTC of $V$ due to \cite{H Rigidity}.  

The following notation list, which is an expansion of the one in part I, is used throughout this paper.\\

\noindent
$A^\tr$: the transpose of the linear operator $A$.\\
$A^\dagger$: the formal adjoint of the linear operator $A$.\\
$A^*$: the ajoint of the possibly unbounded linear operator $A$.\\
$\overline A$: the closure of the pre-closed linear operator $A$.\\
$C_i$: the antiunitary map $W_i\rightarrow W_{\overline i}$.\\
$\mathbb C^\times=\{z\in\mathbb C:z\neq0 \}$.\\
$\Conf_n(\mathbb C^\times)$: the $n$-th configuration space of $\mathbb C^\times$.\\
$\mathscr D(A)$: the domain of the possibly unbounded operator $A$.\\
$\di\theta=  \frac {e^{i\theta}}{2\pi}d\theta$.\\
$e_r(e^{i\theta})=e^{ir\theta}\quad(-\pi<\theta<\pi)$.\\
$E^1(W_i),E^1(V)$: see section \ref{Condition ABC}.\\
$\ER$: the real subspace of the vectors in $\E$ that are fixed by the CPT operator $\theta$.\\
$\mathcal E$: a complete list of mutually inequivalent irreducible $V$-modules.\\
$\mathcal E^\uni$: the set of unitary $V$-modules in $\mathcal E$.\\
$\mathcal F$: a non-empty set of non-zero irreducible unitary $V$-modules.\\
$\mathcal F^\boxtimes$: see section \ref{Condition ABC}.\\
$\Hom_V(W_i,W_j)$: the vector space of $V$-module homomorphisms from $W_i$ to $W_j$.\\
$\Hom_{\mathcal M_V}(\mathcal H_i,\mathcal H_j)$: the vector space of bounded $\mathcal M_V$-module homomorphism from $\mathcal H_i$ to $\mathcal H_j$.\\
$\Hom_{\mathcal M_V(I)}(\mathcal H_i,\mathcal H_j)$: the vector space of bounded operators $\mathcal H_i\rightarrow H_j$ intertwining the action of elements in $\mathcal M_V(I)$.\\
$\mathcal H_i$: the norm completion of the vector space $W_i$. If the $V$-module $W_i$ is a unitary, energy-bounded, and strongly integrable, then $\mathcal H_i$ is the  $\mathcal M_V$-module associated with $W_i$.\\
$\mathcal H^r_i$: the vectors of $\mathcal H_i$ that are inside $\mathscr D((1+\overline{L_0})^r)$.\\
$\mathcal H^\infty_i=\bigcap_{r\geq0}\mathcal H^r_i$.\\
$i$: either the index of a $V$-module, or $\sqrt{-1}$.\\
$I^c=S^1\setminus\overline I$.\\
$I_1\subset\joinrel\subset I_2$: $I_1,I_2\in\mathcal J$ and $\overline {I_1}\subset I_2$.\\
$\id_i=\id_{W_i}$: the identity operator of $W_i$.\\
$\mathcal J$: the set of (non-empty, non-dense) open intervals of $S^1$.\\
$\mathcal J(U)$: the set of open intervals of $S^1$ contained in the open set $U$.\\
$\mathcal M_V$: the conformal net constructed from $V$.\\
$\mathcal M_V(I)_\infty$: the set of smooth operators in $\mathcal M_V(I)$.\\
$\mathscr O_n(I)$: see the beginning of chapter \ref{lb112}.\\
$\mathfrak O_n(I)$: see section \ref{Condition ABC}.\\
$P_s$: the projection operator of $W_i$ onto $W_i(s)$.\\
$\mathfrak r(t):S^1\rightarrow S^1$: $\mathfrak r(t)(e^{i\theta})=e^{i(\theta+t)}$.\\
$\mathfrak r(t):C^\infty(S^1)\rightarrow C^\infty(S^1)$: $\mathfrak r(t)h=h\circ\mathfrak r(-t)$.\\
$\Rep(V)$: the modular tensor category of the representations of $V$.\\
$\Rep^\uni(V)$: the category of the unitary representations of $V$.\\
$\Rep^\uni_{\mathcal G}(V)$: When $\mathcal G$ is additively closed, it is the subcategory of $\Rep^\uni(V)$ whose objects are unitary $V$-modules in $\mathcal G$. When $\mathcal G$ is multiplicatively closed, then it is furthermore equipped with the structure of a ribbon tensor category.\\
$S^1=\{z\in\mathbb C:|z|=1 \}$.\\
$\mathcal S$: the collection of strongly integrable energy-bounded unitary $V$-modules.\\
$\mathcal V{k\choose i~j}$: the vector space of type $k\choose i~j$ intertwining operators.\\
$W_0=V$, the vacuum module of $V$.\\
$W_i$: a $V$-module.\\
$\widehat W_i$: the algebraic completion of $W_i$.\\
$W_{\overline i}\equiv W'_i$: the contragredient module of $W_i$.\\
$W_{ij}\equiv W_i\boxtimes W_j$: the tensor product of $W_i,W_j$.\\
$w^{(i)}$: a vector in $W_i$.\\
$\overline {w^{(i)}}=C_iw^{(i)}.$\\
$x$:  a formal variable, or an element inside $\mathcal M_V(I)$.\\
$Y_i$: the vertex operator of $W_i$.\\
$\mathcal Y_\alpha$: an intertwining operator of $V$.\\
$\mathcal Y_{\overline{\alpha}}\equiv\overline{\mathcal Y_\alpha}$: the conjugate intertwining operator of $\mathcal Y_\alpha$.\\
$\mathcal Y_{\alpha^*}\equiv\mathcal Y_{\alpha}^\dagger$: the adjoint intertwining operator of $\mathcal Y_\alpha$.\\
$\mathcal Y_{B_\pm\alpha}\equiv B_{\pm}\mathcal Y_\alpha$: the braided intertwining operators of $\mathcal Y_\alpha$.\\
$\mathcal Y_{C\alpha}\equiv C\mathcal Y_\alpha$: the contragredient intertwining operator of $\mathcal Y_\alpha$.\\
$\mathcal Y^i_{i0}=B_\pm Y_i$, the creation operator of $W_i$.\\
$\mathcal Y^0_{\overline ii}=C^{-1}\mathcal Y^{\overline i}_{\overline i0}=(\mathcal Y^i_{i0})^\dagger$, the annihilation operator of $W_i$.\\
$\mathcal Y_{\sigma_n\cdots\sigma_2,\alpha}$: a generalized intertwining operator of $V$.\\
$\Delta_i$: the conformal weight of  $W_i$.\\
$\Delta_w$: the conformal weight (the energy) of the homogeneous vector $w$.\\
$\Theta^k_{ij}$: a set of linear basis of $\mathcal V{k\choose i~j}$.\\
$\Theta^k_{i*}=\coprod_{j\in\mathcal E}\Theta^k_{ij},\Theta^k_{*j}=\coprod_{i\in\mathcal E}\Theta^k_{ij},\Theta^*_{ij}=\coprod_{k\in\mathcal E}\Theta^k_{ij}.$\\
$\Theta^*_{i*}=\coprod_{s,t\in\mathcal E\cap\mathcal F^\boxtimes}\Theta^t_{is},\Theta^*_{*j}=\coprod_{s,t\in\mathcal E\cap\mathcal F^\boxtimes}\Theta^t_{sj}.$\\
$\theta$: the PCT operator of $V$, or a real variable.\\
$\vartheta_i$: the twist of $W_i$.\\
$\Lambda$: the sesquilinear form defined by transport matrices on $\mathcal V{k\choose i~j}^*$ or on $W_i\boxtimes W_j$.\\
$\nu$: the conformal vector of $V$.\\
$\sigma_{i,j}$: the braid operator  $\sigma_{i,j}:W_i\boxtimes W_j\rightarrow W_j\boxtimes W_i$.\\
$\Upsilon^0_{i\overline i}=C\mathcal Y^i_{i0}$.\\
$\Omega$: the vacuum vector of $V$.

	\setcounter{section}{3}

\section{From unitary VOAs to conformal nets}

In this chapter, we assume that $V$ is unitary and energy-bounded. A net $\mathcal M_V$ of von Neumann algebras on the circle can be defined using smeared vertex operators of $V$. If $\mathcal M_V$ is a conformal net, then $V$ is called strongly local. A theorem in \cite{CKLW} shows that when $V$ is generated by a set of quasi-primary vectors whose field operators satisfy linear energy bounds, then $V$ is strongly local. This is discussed in section 4.1. 

Let $W_i$ be an energy-bounded unitary $V$-module. If this representation of $V$ can be integrated to a representation of the conformal net $\mathcal M_V$, we say that $W_i$ is strongly integrable. In section 4.1, we show that the abelian category of energy-bounded strongly-integrable unitary $V$-modules is equivalent to the  category of the corresponding integrated $\mathcal M_V$-modules. A similar topic is treated in \cite{CWX}.

There are two major ways to prove the strong integrability of a unitary $V$-modules $W_i$.  First, if the action of $V$ on $W_i$ is restricted from the inclusion of $V$ in a larger energy-bounded strongly-local unitary VOA, then $W_i$ is  strongly local. This result is proved in \cite{CWX}, and will not be used in our paper. In section 4.2, we give a different criterion using the energy bounds condition of intertwining operators.

\subsection{Unitary VOAs, conformal nets, and their representations}

We first review the definition of conformal nets. Standard references  are \cite{CKLW,Car04,GF93,GL96,KL04}. Conformal nets are based on the theory of  von Neumann algebras. For an outline of this theory, we recommend \cite{Con80} chapter 5. More details can be found in \cite{Jon03,Takesaki I,Takesaki II,KR83,KR15}.

Let $\Diff(S^1)$ be the group of orientation-preserving diffeomorphisms of $S^1$. Convergence in $\Diff(S^1)$ means uniform convergence of all derivatives. Let $\mathcal H$ be a Hilbert space, and let $\mathcal U(\mathcal H)$ let the group of unitary operators on $\mathcal H$, equipped with the strong (operator) topology. $P\mathcal U(\mathcal H)$ is the quotient topology group of $\mathcal U(\mathcal H)$, defined by identifying $x$ with $\lambda x$ when $x\in \mathcal U(\mathcal H), \lambda\in S^1$. A \textbf{strongly continuous projective representation} of $\Diff(S^1)$ on $\mathcal H$ is, by definition, a continuous homomorphism from $\Diff(S^1)$ into $P\mathcal U(\mathcal H)$.

$\Diff(S^1)$ contains the subgroup $\PSU(1,1)$ of M\"obius transformations of $S^1$. Elements in $\PSU(1,1)$ are of the form
\begin{align}
z\mapsto \frac{\lambda z+\mu}{\overline\mu z+\overline \lambda}  \quad(z\in S^1),
\end{align}
where $\lambda,\mu\in\mathbb C,|\lambda|^2-|\mu|^2=1$. $\PSU(1,1)$ contains the subgroup $S^1=\{\mathfrak r(t):t\in\mathbb R \}$ of rotations of $S^1$.

A \textbf{conformal net} $\mathcal M$ associates to each $I\in\mathcal J$ a von Neumann algebra $\mathcal M(I)$ acting on a fixed Hilbert space $\mathcal H_0$, such that the following conditions hold:\\
(a) (Isotony) If $I_1\subset I_2\in\mathcal J$, then $\mathcal M(I_1)$ is a von Neumann subalgebra of $\mathcal M(I_2)$.\\
(b) (Locality) If $I_1,I_2\in\mathcal J$ are disjoint, then $\mathcal M(I_1)$ and $\mathcal M(I_2)$ commute.\\
(c) (Conformal covariance) We have a strongly continuous projective unitary representation $U$ of $\Diff(S^1)$ on $\mathcal H_0$, such that for any $g\in \Diff(S^1),I\in\mathcal J$,
\begin{align*}
U(g)\mathcal M(I)U(g)^*=\mathcal M(gI).
\end{align*}
 Moreover, if $g$ fixes the points in $I$, then for any $x\in\mathcal M(I)$,
\begin{align*}
U(g)xU(g)^*=x.
\end{align*}
(d) (Positivity of energy) The generator of the restriction of $U$ to $S^1$ is positive.\\
(e) There exists a  unique (up to scalar) unit vector $\Omega\in\mathcal H_0$ (the vacuum vector), such that $U(g)\Omega\in\mathbb C\Omega$ for any $g\in\PSU(1,1)$. Moreover, $\Omega$ is  cyclic under the action of $\bigvee_{I\in\mathcal J}\mathcal M(I)$ (the von Neumann algebra generated by all $\mathcal M(I)$).

The following properties are satisfied by a conformal net, and will be used in our theory:\\
(1) (Additivity) If $\{I_a:a\in\mathcal A \}$ is a collection of open intervals in $\mathcal J$, $I\in\mathcal J$, and $I=\bigcup_{a\in\mathcal A}I_a$, then $\mathcal M(I)=\bigvee_{a\in\mathcal A}\mathcal M(I_a)$.\\
(2) (Haag duality) $\mathcal M(I)'=\mathcal M(I^c)$, where $\mathcal M'=\End_{\mathcal M}(\mathcal H_0)$ is the commutant of $\mathcal M$.\\
(3) $\mathcal M(I)$ is a type III factor. (Indeed, it is of type III$_1$.)\\
Properties (2) and (3) are natural consequences of Bisognano-Wichmann theorem, cf. \cite{BGL93,GF93}.

Following \cite{CKLW}, we now show how to construct a conformal net $\mathcal M_V$ from $V$.  Let the Hilbert space $\mathcal H_0$ be the norm completion of $V$. For any $I\in\mathcal J$ we define $\mathcal M_V(I)$ to be the von Neumann algebra on $\mathcal H_0$  generated by  closed operators of the form $\overline {Y(v,f)}$, where $v\in V$  and $f\in C^\infty_c(I)$. Thus we've obtained a net of von Neumann algebras $I\in\mathcal J\mapsto \mathcal M_V(I)$ and denote it by $\mathcal M_V$. The vacuum vector $\Omega$ in $\mathcal H_0$ is the same as that of $V$. The projective representation $U$ of $\Diff(S^1)$ is obtained by integrating the action of the real part of the Virasoro algebra  on $V$. The representation of $\PSU(1,1)$ is determined by the action of $L_{\pm1},L_0$ on $V$. \emph{All the axioms of conformal nets, except possibly locality, are satisfied for $\mathcal M_V$.}

Locality of $\mathcal M_V$, however, is much harder to prove. To be sure, for any disjoint $I,J\in\mathcal J$, and any  $u,v\in V$, we can use proposition 2.13, corollary 3.13, and proposition 3.9 to show that
\begin{align}
Y(u,f)Y(v,g)=Y(v,g)Y(u,f),\label{eq305}\\
Y(u,f)^\dagger Y(v,g)=Y(v,g)Y(u,f)^\dagger,
\end{align}
where both sides act on $\mathcal H^\infty_0$. The commutativity of closed operators on a common invariant core, however, does not imply the strong commutativity of these two operators, as indicated by the example of Nelson (cf. \cite{Nel}). So far, the best result we have for the locality of $\mathcal M_V$ is the following:
\begin{thm}\label{lb61}
Suppose that $V$ is generated  by a set $E$ of quasi-primary vectors, and that for any $v\in E$, $Y(v,x)$ satisfies linear energy bounds. Then the net $\mathcal M_V$ satisfies the  locality condition, and  is therefore a conformal net. Moreover, if we let $E_{\mathbb R}=\{v+\theta v,i(v-\theta v):v\in E \}$, then for any $I\in\mathcal J$, $\mathcal M_V(I)$ is generated by the closed operators $\overline{Y(u,f)}$, where $u\in E_{\mathbb R}$, and $f\in C^\infty_c(I)$ satisfies $e^{i\pi\Delta_u/2}e_{1-\Delta_u}f=\overline {e^{i\pi\Delta_u/2}e_{1-\Delta_u}f}$.
\end{thm}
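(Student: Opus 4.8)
The plan is to supply the one conformal-net axiom still missing for $\mathcal M_V$, namely locality, by upgrading the commutativity on the common invariant core $\mathcal H^\infty_0$ --- already furnished by proposition 2.13, corollary 3.13 and proposition 3.9, i.e.\ by \eqref{eq305} together with its adjoint version --- to \emph{strong} commutativity of von Neumann algebras. This is exactly where the hypothesis of linear energy bounds is indispensable: by Nelson's example, commutativity of closed operators on a common core does not by itself imply strong commutativity.

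First I would reduce everything to the generating set $E$. Call $v\in V$ a \emph{strongly local vector} if $\overline{Y(v,f)}$ strongly commutes with $\overline{Y(v',g)}$ for every $v'\in V$ and all $f\in C^\infty_c(I)$, $g\in C^\infty_c(J)$ with $I,J\in\mathcal J$ disjoint. Following \cite{CKLW}, the strongly local vectors form a $\PSU(1,1)$-invariant subspace of $V$ containing $\Omega$ that is closed under the vertex operation: the field $Y(v_{(n)}w,\cdot)$ is recovered from a contour integral of the product $Y(v,\cdot)Y(w,\cdot)$, so the smeared field of a product of strongly local vectors is affiliated to the von Neumann algebra generated by the smeared fields of the factors. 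Hence the strongly local vectors form a vertex subalgebra, and since $E$ generates $V$ it suffices to show that each $v\in E$ is strongly local. The same reasoning shows that for each $I\in\mathcal J$ the algebra $\mathcal M_V(I)$ is already generated by the $\overline{Y(u,f)}$ with $u\in E_{\mathbb R}$ and $f\in C^\infty_c(I)$, and one may further impose $e^{i\pi\Delta_u/2}e_{1-\Delta_u}f=\overline{e^{i\pi\Delta_u/2}e_{1-\Delta_u}f}$, since by the adjoint relation for quasi-primary vertex operators (cf.\ corollary 3.13) this is precisely the condition making $Y(u,f)$ symmetric on $\mathcal H^\infty_0$, and an arbitrary $f$ splits into two such pieces.

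Now fix $u,v\in E_{\mathbb R}$, disjoint $I,J\in\mathcal J$, and test functions $f\in C^\infty_c(I)$, $g\in C^\infty_c(J)$ satisfying the reality condition above; put $N=1+\overline{L_0}$. Since $Y(u,x)$ satisfies linear energy bounds, $Y(u,f)$ is $N$-bounded; moreover $[L_0,Y(u,f)]$ is again a finite linear combination of smeared fields of $u$ --- a multiple of $Y(u,f)$ together with $Y(u,\mathfrak d f)$ for a first-order differential operator $\mathfrak d$ on $S^1$, as one sees by differentiating the rotation covariance of $Y(u,\cdot)$ at $t=0$ --- so $[N,Y(u,f)]$ is $N$-bounded as well, and likewise for $v$. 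By a commutator theorem of the kind used in \cite{CKLW} (Glimm--Jaffe for self-adjointness, Driessler--Fröhlich for strong commutativity), a symmetric $N$-bounded operator with $N$-bounded commutator with $N$ is essentially self-adjoint on any core of $N$; hence $A:=\overline{Y(u,f)}$ and $B:=\overline{Y(v,g)}$ are self-adjoint. From \eqref{eq305} and its self-adjoint refinement in proposition 3.9 we have $AB\xi=BA\xi$ for all $\xi\in\mathcal H^\infty_0$. Applying the commutator theorem once more --- with $A$, $B$, $N$, the common core $\mathcal H^\infty_0\subset\bigcap_k\mathscr D(N^k)$, the $N$-boundedness of $A,B,[N,A],[N,B]$, and commutativity on the core --- gives that the spectral projections of $A$ and $B$ commute, i.e.\ $A$ and $B$ strongly commute.

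Consequently the von Neumann algebras $\{A\}''$ and $\{B\}''$ commute; joining over all admissible $u,f$ supported in $I$ and $v,g$ supported in $J$ and using the generation statement of the second paragraph gives $[\mathcal M_V(I),\mathcal M_V(J)]=0$ for disjoint $I,J\in\mathcal J$. Since all the remaining axioms of a conformal net have already been checked for $\mathcal M_V$, this shows that $\mathcal M_V$ is a conformal net, and the final assertion of the theorem is exactly the generation statement obtained along the way. The main obstacle is the weak-to-strong commutativity passage of the third paragraph: it genuinely fails without an energy bound, and verifying the hypotheses of the commutator theorem --- in particular the $N$-boundedness of $[N,A]$, which is where being quasi-primary and satisfying \emph{first}-order energy bounds is used --- is the crux; a secondary point requiring care is the closure-under-vertex-operation argument that reduces the problem from all of $V$ to the generating set $E$.
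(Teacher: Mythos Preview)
Your proposal is correct and follows essentially the same route as the paper: reduce to the generating set $E_{\mathbb R}$ via the \cite{CKLW} argument that strongly local vectors form a vertex subalgebra (the paper cites \cite{CKLW} theorem 8.1 directly), observe that the reality condition on $f$ makes $Y(u,f)$ symmetric, and then invoke a Glimm--Jaffe/Driessler--Fr\"ohlich style commutator theorem (packaged in the paper as Lemma B.8 and theorem B.9) to upgrade commutativity on $\mathcal H^\infty_0$ to self-adjointness and strong commutativity. Your explicit verification that $[N,Y(u,f)]$ is $N$-bounded via rotation covariance is exactly what the paper encodes by citing equation (3.38) and proposition 3.9-(a).
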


\begin{proof}
Clearly $E_{\mathbb R}$ generates $V$. From the proof of \cite{CKLW}	theorem 8.1, it suffices to prove, for any disjoint $I,J\in\mathcal J$,  $u,v\in E_{\mathbb R}$, and  $f\in C^\infty_c(I),g\in C^\infty_c(J)$ satisfying $e^{i\pi\Delta_u/2}e_{1-\Delta_u}f=\overline {e^{i\pi\Delta_u/2}e_{1-\Delta_u}f},e^{i\pi\Delta_v/2}e_{1-\Delta_v}g=\overline {e^{i\pi\Delta_v/2}e_{1-\Delta_v}g}$, that $\overline {Y(u,f)}$ and $\overline{Y(v,g)}$ commute strongly. By proposition 3.9-(b), $Y(u,f)$ and $Y(v,g)$ are symmetric operators. Hence by equation (3.38), proposition 3.9-(a), equation \eqref{eq305}, Lemma B.8, and theorem B.9, $\overline {Y(u,f)}$ and $\overline{Y(v,g)}$ are self-adjoint operators, and they commute strongly with each other.
\end{proof}
We say that a unitary energy-bounded strongly local VOA $V$ is \textbf{strongly local}, if $\mathcal M_V$ satisfies the locality condition.\\

Suppose that $V$ is strongly local. We now discuss representations of the conformal net $\mathcal M_V$. Let $\mathcal H_i$ be a Hilbert space (currently not yet related to $W_i$). Suppose that for any $I\in\mathcal J$, we have a (normal unital *-) representation $\pi_{i,I}:\mathcal M_V(I)\rightarrow B(\mathcal H_i)$, such that for any $I_1,I_2\in\mathcal J$ satisfying $I_1\subset I_2$, and any $x\in\mathcal M_V(I_1)$, we have $\pi_{i,I_1}(x)=\pi_{i,I_2}(x)$. Then $(\mathcal H_i,\pi_i)$ (or simply $\mathcal H_i$) is called a (locally normal) \textbf{represention} of the  $\mathcal M_V$ (or an \textbf{$\mathcal M_V$-module}). We shall write $\pi_{i,I}(x)$ as $\pi_i(x)$ when we have specified that $x\in\mc M_V(I)$. If moreover $\xi^{(i)}\in\mathcal H_i$, we also write $x\xi^{(i)}$ for $\pi_i(x)\xi^{(i)}=\pi_{i,I}(x)\xi^{(i)}$.

The $\mathcal M_V$-modules  we are interested in are those arising from unitary $V$-modules. Let $W_i$ be an energy-bounded unitary  $V$-module, and let $\mathcal H_i$ be the norm completion of the inner product space $W_i$. Assume that we have a  representation $\pi_i$ of $\mathcal M_V$ on $\mathcal H_i$. Then we say that $(\mathcal H_i,\pi_i)$ is \textbf{associated with the $V$-module $(W_i,Y_i)$}, if for any $I\in\mathcal J$, $v\in V$, and $f\in C^\infty_c(I)$, we have
\begin{align}
\pi_{i,I}\big(\overline{Y(v,f)}\big)=\overline{Y_i(v,f)}.\label{eq228}
\end{align}
(See section B.1 for the definition of $\pi_{i,I}$ acting on unbounded closed operators affiliated with $\mathcal M_V(I)$.) A $\mathcal M_V$-module associated with $W_i$, if exists, must be unique. We say that an energy-bounded  unitary $V$-module $W_i$ is \textbf{strongly integrable} if there exists a $\mathcal M_V$-module $(\mathcal H_i,\pi_i)$ associated with $W_i$. Let $\mathcal S$ be the collection of strongly integrable energy-bounded unitary $V$-modules. Obviously $V\in\mathcal S$. It is easy to show that $S$ is additively complete.

We now introduce a very useful density property. For any $I\in\mathcal J$, we define $\mathcal M_V(I)_\infty$ to be the set of \textbf{smooth operators} in $\mathcal M_V(I)$, i.e., the set of all  $x\in\mathcal M_V(I)$ satisfying that for any unitary $V$-module $W_i$ inside $\mathcal S$, 
\begin{align}
x\mathcal H_i^\infty\subset\mathcal H_i^\infty,\quad x^*\mathcal H_i^\infty\subset\mathcal H_i^\infty.\label{eq301}
\end{align}
\begin{pp}\label{lb104}
If $V$ is unitary, energy-bounded, and strongly local, then $\mathcal M_V(I)_\infty$ is a strongly dense self-adjoint subalgebra of $\mathcal M_V(I)$.
\end{pp}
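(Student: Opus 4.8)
The plan is to produce smooth operators by averaging arbitrary elements of $\mathcal M_V(I)$ over the rotation group (and possibly a slightly larger group of diffeomorphisms supported near $I$), using the positive-energy condition to control the regularity of the averaged operator, and then to show that such averages are both strongly dense and closed under the algebra and $*$ operations.

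\textbf{Step 1: Self-adjointness and algebra structure.} That $\mathcal M_V(I)_\infty$ is a $*$-subalgebra is immediate from the definition: the conditions $x\mathcal H_i^\infty\subset\mathcal H_i^\infty$ and $x^*\mathcal H_i^\infty\subset\mathcal H_i^\infty$ are manifestly symmetric in $x$ and $x^*$, and if $x,y$ both preserve $\mathcal H_i^\infty$ (as does $xy$, and $(xy)^*=y^*x^*$), and linear combinations obviously do too. So the content is entirely the strong density.

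\textbf{Step 2: Smoothing by rotation.} Fix $x\in\mathcal M_V(I)$. For a test function $\varphi\in C^\infty_c(\mathbb R)$ supported in a small neighbourhood of $0$, set
\begin{align*}
x_\varphi=\int_{\mathbb R}\varphi(t)\,U(\mathfrak r(t))\,x\,U(\mathfrak r(t))^*\,dt,
\end{align*}
where $U$ is the conformal-covariance representation of $\mathcal M_V$. By conformal covariance, $U(\mathfrak r(t))xU(\mathfrak r(t))^*\in\mathcal M_V(\mathfrak r(t)I)$, so for $\operatorname{supp}\varphi$ small enough that $\mathfrak r(t)I$ stays inside a fixed slightly larger interval $\widetilde I\in\mathcal J$ for all $t\in\operatorname{supp}\varphi$, additivity gives $x_\varphi\in\mathcal M_V(\widetilde I)$. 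One must argue this holds for every representation $\pi_i$ with $W_i\in\mathcal S$ simultaneously and compatibly, which follows because each $\pi_i$ intertwines the conformal representations of $\mathcal M_V$ on $\mathcal H_0$ and on $\mathcal H_i$ (the latter being the one integrated from the Virasoro action on $W_i$, i.e. $e^{it\overline{L_0}}$). The key regularity point: because $L_0\geq 0$ on each $\mathcal H_i$ (positivity of energy) and $e^{it\overline{L_0}}$ acts on $\mathcal H_i^\infty$ with derivatives controlled by powers of $(1+\overline{L_0})$, a $C^\infty_c$-average in $t$ against the strongly continuous, uniformly bounded family $t\mapsto U(\mathfrak r(t))xU(\mathfrak r(t))^*$ produces an operator that maps $\mathcal H_i^r$ into $\mathcal H_i^{r}$ and in fact into $\mathcal H_i^\infty$; one checks that commutators $[\overline{L_0},x_\varphi]$ are realized as $x_{-\varphi'}$-type averages and iterates. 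Hence $x_\varphi\in\mathcal M_V(\widetilde I)_\infty$.

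\textbf{Step 3: Density.} Choosing a sequence $\varphi_n$ that is an approximate identity ($\varphi_n\geq 0$, $\int\varphi_n=1$, supports shrinking to $0$), strong continuity of $t\mapsto U(\mathfrak r(t))\xi$ gives $x_{\varphi_n}\xi\to x\xi$ for every $\xi\in\mathcal H_0$, so $x_{\varphi_n}\to x$ strongly. Running Steps 2–3 with $\widetilde I$ replaced by an arbitrarily small enlargement of $I$ and then invoking that $\mathcal M_V(I)=\bigcap_{I\subset\subset\widetilde I}\mathcal M_V(\widetilde I)$ is not quite automatic, so instead I would smooth using diffeomorphisms supported strictly inside $I$: replace $\mathfrak r(t)$ by a one-parameter family $g_t\in\Diff(S^1)$ with $g_t$ equal to the identity outside a fixed interval $\subset\joinrel\subset I$ and $g_0=\id$; then $U(g_t)xU(g_t)^*\in\mathcal M_V(I)$ for all $t$, the average stays in $\mathcal M_V(I)$, and the positive-energy/smooth-vector argument still applies because the projective representation $U$ of $\Diff(S^1)$ is generated by the (bounded-below) Virasoro operators on each $\mathcal H_i$ and maps $\mathcal H_i^\infty$ to itself smoothly in the diffeomorphism parameter. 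This yields $x_\varphi\in\mathcal M_V(I)_\infty$ with $x_{\varphi_n}\to x$ strongly, proving density.

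\textbf{Main obstacle.} The delicate point is Step 2: verifying that a single averaged operator $x_\varphi$ lies in $\mathcal M_V(I)_\infty$, i.e. preserves $\mathcal H_i^\infty$ \emph{for every} $W_i\in\mathcal S$ at once, with the smoothing in the diffeomorphism parameter genuinely converting to smoothing in the $\overline{L_0}$-grading. This requires carefully combining (i) the compatibility of the conformal representations $U_i$ on the various $\mathcal H_i$ with the net representations $\pi_i$, (ii) the positivity of energy to get that $C^\infty_c$-averages of the strongly continuous action land in the smooth domain, and (iii) a commutator estimate showing $\operatorname{ad}(\overline{L_0})^k x_\varphi$ is again a bounded averaged operator, so that $x_\varphi$ maps each $\mathcal H_i^r$ into every $\mathcal H_i^s$. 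Everything else — the $*$-algebra property and the approximate-identity density argument — is routine.
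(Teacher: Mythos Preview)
Your core idea---rotation smoothing $x_\varphi=\int \varphi(t)\,e^{it\overline{L_0}}xe^{-it\overline{L_0}}\,dt$ followed by an approximate-identity argument---is exactly what the paper does, and your Step~2 analysis (the Leibniz-type identity $[\overline{L_0},x_\varphi]=i\,x_{-\varphi'}$, iterated) is the heart of the proof. The self-adjointness and algebra observations in Step~1 are fine.

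The gap is in your localization fix in Step~3. Replacing rotations by a one-parameter group $g_t\in\Diff(S^1)$ supported inside $I$ does keep $x_\varphi$ in $\mathcal M_V(I)$, but the generator of $U(g_t)$ is then a smeared Virasoro operator $T(f)$, not $\overline{L_0}$. Your commutator identity $[\overline{L_0},x_\varphi]=x_{-\varphi'}$ relies on the averaging group and the operator defining $\mathcal H_i^\infty$ being the \emph{same}; with $g_t$ in place of $\mathfrak r(t)$, $e^{it\overline{L_0}}$ and $e^{isT(f)}$ do not commute, so conjugating $x_\varphi$ by $e^{it\overline{L_0}}$ no longer differentiates to a shift in $\varphi$, and the argument that $x_\varphi$ preserves $\mathcal H_i^\infty$ breaks down. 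Your closing remark that ``$U$ maps $\mathcal H_i^\infty$ to itself smoothly in the diffeomorphism parameter'' does not bridge this: smoothness in the $g_t$-direction gives control over powers of $T(f)$, not of $\overline{L_0}$, and equating the two smooth domains is itself a nontrivial claim you have not established.

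The paper's resolution is much simpler and stays entirely with rotations: instead of starting from $x\in\mathcal M_V(I)$, start from $x\in\mathcal M_V(J)$ for some $J\subset\joinrel\subset I$, pick $\epsilon>0$ with $\mathfrak r(t)J\subset I$ for $|t|<\epsilon$, and smooth with $h\in C^\infty_c(-\epsilon,\epsilon)$. Then $x_h\in\mathcal M_V(I)$ automatically, and your Leibniz computation goes through verbatim to give $x_h\in\mathcal M_V(I)_\infty$. Strong density then follows from additivity $\mathcal M_V(I)=\bigvee_{J\subset\joinrel\subset I}\mathcal M_V(J)$, since every $x\in\mathcal M_V(J)$ is approximated by such $x_h$. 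You were looking in the wrong direction when you considered $\mathcal M_V(I)=\bigcap_{\widetilde I}\mathcal M_V(\widetilde I)$; going \emph{inward} via additivity is what makes the rotation argument close.
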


\begin{proof}
By additivity or by the construction of $\mathcal M_V$, we have $\mathcal M_V(I)=\bigvee_{J\subset\joinrel\subset I}\mathcal M_V(J)$. ($J\subset\joinrel\subset I$ means that $J\in\mathcal J$ and $\overline J\subset I$.) For each $J\subset\joinrel\subset I$ and $x\in\mathcal M_V(J)$, we choose $\epsilon>0$ such that $\mathfrak r(t)J\subset I$ whenever $t\in(-\epsilon,\epsilon)$. For each $h\in C_c^\infty(-\epsilon,\epsilon)$ such that $\int_{-\epsilon}^{\epsilon}h(t)dt=1$, define $$x_h=\int_{-\epsilon}^{\epsilon}e^{it\overline{L_0}}xe^{-it\overline{L_0}}h(t)dt.$$ Then by (3.39), $x_h\in\mathcal M_V(I)$. For each  $W_i$ inside $\mathcal S$, equations (3.39) and \eqref{eq228} imply that 
\begin{align}
\pi_i(e^{it\overline{L_0}}xe^{-it\overline{L_0}})=e^{it\overline{L_0}}\pi_i(x)e^{-it\overline{L_0}}.\label{eq319}
\end{align}
So we have
$$\pi_i(x_h)=\int_{-\epsilon}^{\epsilon}e^{it\overline{L_0}}\pi_i(x)e^{-it\overline{L_0}}h(t)dt,$$
which implies that
\begin{align}
e^{it\overline{L_0}}\pi_i(x_h)\xi^{(i)}=\pi_i(x_{h_t})e^{it\overline{L_0}}\xi^{(i)},
\end{align} 
where $h_t(s)=h(s-t)$.
From this equation, we see that the derivative of $e^{it\overline{L_0}}\xi^{(i)}\in\mathcal H^\infty_i$ at $t=0$ exists and equals
	\begin{gather}
	-\pi_i(x_{h'})\xi^{(i)}+i\pi_i(x_h) \overline{L_0}\xi^{(i)}.\label{eq229}
	\end{gather}
	This implies that $\pi_i(x_h)\xi^{(i)}\in \mathcal H^1_i$ and $i\overline{L_0}\pi_i(x_h)\xi^{(i)}$ equals \eqref{eq229}. Using the same argument, we see that for each $n\in\mathbb Z_{\geq0}$, the following Leibniz rule holds:
	\begin{gather*}
	\pi_i(x_h)\xi^{(i)}\in \mathscr D(\overline {L_0}^n)=\mathcal H^n_i,\\
	\overline {L_0}^n\pi_i(x_h)\xi^{(i)}=\sum_{m=0}^n {n\choose m} i^m\pi_i(x_{h^{(m)}})\cdot\overline{L_0}^{n-m}\xi^{(i)},
	\end{gather*}
	where $h^{(m)}$ is the $m$-th derivative of $h$. This proves that $\pi_i(x_h)\mathcal H^\infty_i\subset \mathcal H^\infty_i$.
	
	Since $(x_h)^*=(x^*)_{\overline h}$, we also have $x_h^*\mathcal H^\infty_i\subset\mathcal H^\infty_i$. So $x_h\in\mathcal M_V(I)_\infty$. Clearly $x_h\rightarrow x$ strongly as $h$ converges to the $\delta$-function at $0$. We thus conclude that any $x\in\mathcal M_V(J)$ can be strongly approximated by elements in $\mathcal M_V(I)_\infty$. Hence the proof is finished.
\end{proof}

We study the relation between the representation categories of $\mathcal M_V$ and $V$. Assume, as before, that $V$ is unitary, energy-bounded, and strongly local. We define an additive category $\Rep_{\mathcal S}(\mathcal M_V)$ as follows: The objects are $\mathcal M_V$-modules of the form $\mathcal H_i$, where $W_i$ is an element inside $\mathcal S$. If $W_i,W_j$ are inside $\mathcal S$, then the vector space of morphisms $\Hom_{\mathcal M_V}(\mathcal H_i,\mathcal H_j)$ consists of bounded linear operators $R:\mathcal H_i\rightarrow \mathcal H_j$, such that for any $I\in\mathcal J,x\in\mathcal M_V(I)$, the relation $R\pi_i(x)=\pi_j(x)R$ holds.

Define a functor $\mathfrak F:\Rep^\uni_{\mathcal S}(V)\rightarrow \Rep_{\mathcal S}(\mathcal M_V)$ in the following way: If $W_i$ is a unitary $V$-module in $\mathcal S$, then we let $\mathfrak F(W_i)$ be the $\mathcal M_V$-module $\mathcal H_i$. If $W_i,W_j$ are in $\mathcal S$ and  $R\in\Hom_V(W_i,W_j)$, then by lemma 2.20, $R$ is bounded, and hence can be extended to a bounded linear map $R:\mathcal H_i\rightarrow \mathcal H_j$. It is clear that $R$ is an element in $\Hom_{\mathcal M_V}(\mathcal H_i,\mathcal H_j)$. We let $\mathfrak F(R)$ be this $\mathcal M_V$-module homomorphism. Clearly $\mathfrak F:\Hom_V(W_i,W_j)\rightarrow \Hom_{\mathcal M_V}(\mathcal H_i,\mathcal H_j)$ is linear. We show that $\mathfrak F$ is an isomorphism.

\begin{thm}\label{lb63}\footnote{This theorem is also proved in \cite{CWX}. We would like to thank Sebastiano Carpi for letting us know this fact.}
Let $V$ be unitary, energy-bounded, and strongly local. For any $W_i,W_j$ in $\mathcal S$, the linear map $\mathfrak F:\Hom_V(W_i,W_j)\rightarrow \Hom_{\mathcal M_V}(\mathcal H_i,\mathcal H_j)$ is an isomorphism. Therefore, $\mathfrak F:\Rep^\uni_{\mathcal S}(V)\rightarrow \Rep_{\mathcal S}(\mathcal M_V)$ is an equivalence of additive categories.
\end{thm}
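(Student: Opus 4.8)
The plan is to show that $\mathfrak F$ is injective and surjective on each $\Hom$-space, since additivity is already clear. Injectivity is immediate: if $R\in\Hom_V(W_i,W_j)$ is sent to the zero operator $\mathcal H_i\to\mathcal H_j$, then $R$ vanishes on the dense subspace $W_i\subset\mathcal H_i$, hence $R=0$ already as a $V$-module map. So the whole content is surjectivity: given a bounded $T\in\Hom_{\mathcal M_V}(\mathcal H_i,\mathcal H_j)$, i.e.\ $T\pi_i(x)=\pi_j(x)T$ for all $x\in\mathcal M_V(I)$, $I\in\mathcal J$, I must produce a $V$-module homomorphism $W_i\to W_j$ whose continuous extension is $T$.

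The key steps, in order. First, reduce to smooth vectors: I would show $T\mathcal H_i^\infty\subset\mathcal H_j^\infty$. This follows by intertwining $T$ with the smooth operators $x_h\in\mathcal M_V(I)_\infty$ built in the proof of Proposition \ref{lb104} (averaging a fixed $x$ against rotations): since $T\pi_i(x_h)=\pi_j(x_h)T$ and $\pi_j(x_h)$ preserves $\mathcal H_j^\infty$ with the explicit Leibniz rule for $\overline{L_0}^n$ established there, one gets control of $\overline{L_0}^nT\xi^{(i)}$ for $\xi^{(i)}\in\mathcal H_i^\infty$; letting $x_h$ range over a net converging strongly to $1$ (so $x=1$, or more carefully using a partition of $S^1\setminus\{-1\}$ into intervals and summing the corresponding $x_h$'s to strongly approximate the identity) shows $T\xi^{(i)}\in\mathcal H_j^\infty$. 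In particular $T$ commutes with $\overline{L_0}$ on $\mathcal H_i^\infty$, hence preserves the energy grading, so $T$ restricts to a linear map $W_i\to W_j$. Second, show this restriction intertwines the vertex operators: for $v\in V$ and $f\in C^\infty_c(I)$ with $I\in\mathcal J$, equation \eqref{eq228} gives $\pi_i(\overline{Y(v,f)})=\overline{Y_i(v,f)}$ and likewise on $\mathcal H_j$; since $\overline{Y(v,f)}$ is affiliated with $\mathcal M_V(I)$, the intertwining relation $T\pi_i(\overline{Y(v,f)})\subset\pi_j(\overline{Y(v,f)})T$ holds in the affiliated sense (see section B.1), giving $TY_i(v,f)w^{(i)}=Y_j(v,f)Tw^{(i)}$ on the smooth (indeed finite-energy) vectors. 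Third, pass from the smeared relation back to modes: since this holds for all $f\in C^\infty_c(I)$ and all $I\in\mathcal J$, and the modes $Y_i(v)_n$ are recovered from the smeared operators by the standard Fourier-coefficient argument (integrating against $e_{n}$ supported near a point, as in the conventions and in part I), one concludes $TY_i(v)_nw^{(i)}=Y_j(v)_nTw^{(i)}$ for all $n$, i.e.\ $T|_{W_i}\in\Hom_V(W_i,W_j)$. Its continuous extension agrees with $T$ on the dense subspace $W_i$, hence equals $T$; thus $T=\mathfrak F(T|_{W_i})$.

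The main obstacle I expect is the first step — upgrading the bare intertwining property of $T$ with the \emph{bounded} algebra $\mathcal M_V(I)$ to a statement about \emph{unbounded} smeared vertex operators and, in particular, the claim that $T$ preserves smooth vectors. Commutation with a von Neumann algebra does not automatically give commutation with affiliated unbounded operators on their natural domains; one genuinely needs the smooth-operator technology of Proposition \ref{lb104} (the regularizing averages $x_h$ and the Leibniz estimates) together with the functional-calculus bookkeeping of section B.1 to make this rigorous. Once smooth vectors are under control and $T$ is shown to commute with $\overline{L_0}$, the remaining steps are routine: they are the by-now standard dictionary between smeared field operators and modes, already used implicitly throughout the construction of $\mathcal M_V$ and in part I.
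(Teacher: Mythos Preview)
Your overall architecture is right: injectivity is trivial, and for surjectivity you must show a bounded intertwiner $T$ preserves the finite-energy grading and intertwines the vertex-operator modes. Steps 2 and 3 of your plan are essentially sound (and match the paper's final paragraph, which also uses a partition of unity $f_1+f_2=1$ to recover the modes $Y_k(v,n)=Y_k(v,e_nf_1)+Y_k(v,e_nf_2)$ from localized smeared operators). The paper also passes to the orthogonal direct sum $W_k=W_i\oplus^\perp W_j$ so that $T$ becomes an element of the commutant of a single representation; you would need this maneuver too in order to make the affiliated-operator intertwining in your step 2 rigorous.

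The genuine gap is your step 1. Your argument via the smoothed operators $x_h$ is circular: the Leibniz rule of Proposition \ref{lb104} tells you $\overline{L_0}^n\pi_j(x_h)\eta$ is controlled \emph{only for $\eta$ already in $\mathcal H_j^\infty$}, so you cannot use it to show $T\xi^{(i)}\in\mathcal H_j^\infty$ without already knowing this. Letting $x_h\to 1$ strongly does not help, since for $x=1$ the averaging gives back the identity and contains no information about $\overline{L_0}$. More fundamentally, $e^{it\overline{L_0}}$ lies in no local algebra $\mathcal M_V(I)$, so commutation of $T$ with the local net does not automatically yield commutation with rotations.

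The paper's device for this step is different and direct. Working on $\mathcal H_k=\mathcal H_i\oplus^\perp\mathcal H_j$, write $L_0=Y_k(\nu,e_1)=Y_k(\nu,e_1f_1)+Y_k(\nu,e_1f_2)$ using a partition of unity subordinate to two arcs. Each summand $\overline{Y_k(\nu,e_1f_a)}$ is symmetric (since $\nu$ is quasi-primary with $\Delta_\nu=2$) and is affiliated with $\mathcal M_V(I_a)$, so $R$ commutes strongly with each. The closure $A$ of their sum is symmetric and extends the self-adjoint $\overline{L_0}$, hence $A=\overline{L_0}$; therefore $R$ commutes strongly with $\overline{L_0}$ itself, and in particular preserves each eigenspace $W_k(s)$. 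This immediately gives $RW_i\subset W_j$, after which your steps 2--3 (equivalently the paper's final paragraph) go through.
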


\begin{proof}
The linear map $\mathfrak F:\Hom_V(W_i,W_j)\rightarrow \Hom_{\mathcal M_V}(\mathcal H_i,\mathcal H_j)$ is clearly injective. We only need to prove that $\mathfrak F$ is  surjective.
Choose $R\in\Hom_{\mathcal M_V}(\mathcal H_i,\mathcal H_j)$. Define an orthogonal direct sum module $W_k=W_i\oplus^\perp W_j$. Then  $\mathcal H_k$ is  the orthogonal direct sum $\mathcal M_V$-module of $\mathcal H_i,\mathcal H_j$. Regard $R$ as an element in $\End_{\mathcal M_V}(\mathcal H_k)$, which is the original operator when acting on $\mathcal H_i$, and is $0$ when acting on $\mathcal H_j$.  Then for any $I\in\mathcal J,x\in\mathcal M_V(I)$, $R$ commutes with $\pi_k(x),\pi_k(x^*)$. Therefore, for any homogeneous $v\in V$ and $f\in C^\infty_c(I)$, $R$ commutes strongly with $\pi_k\big(\overline{Y(v,f)}\big)=\overline{Y_k(v,f)}$.

We first show that $RW_i\subset W_j$. Choose $I_1,I_2\in\mathcal J$ and $f_1\in C^\infty_c(I_1,\mathbb R),f_2\in C^\infty_c(I_2,\mathbb R)$ such that $f_1+f_2=1$. Regard $L_0$ as an unbounded operator on $\mathcal H_k$ with domain $W_k$. Then  $L_0$ is the restriction of the smeared vertex operator $Y_k(\nu,e_1)$ to $W_k$. (Recall that by our notation of $e_r$, $e_1(e^{i\theta})=e^{i\theta}$.) Therefore,
\begin{align*}
L_0\subset Y_k(\nu,e_1f_1)+Y_k(\nu,e_1f_2),
\end{align*}
and hence
\begin{align*}
\overline{L_0}\subset\overline{Y_k(\nu,e_1f_1)+Y_k(\nu,e_1f_2)}\subset\overline {\overline{Y_k(\nu,e_1f_1)}+\overline{Y_k(\nu,e_1f_2)}}.
\end{align*}
Recall that $\nu$ is quasi-primary and $\Delta_\nu=2$. Therefore, by equation (3.25),  $\overline{Y_k(\nu,e_1f_1)}$ and $\overline{Y_k(\nu,e_1f_2)}$ are symmetric operators. It follows that $A=\overline {\overline{Y_k(\nu,e_1f_1)}+\overline{Y_k(\nu,e_1f_2)}}$ is symmetric. Note that $\overline{L_0}$ is self adjoint. Thus we have $$\overline{L_0}\subset A\subset A^*\subset \overline {L_0}^*=\overline{L_0},$$ which implies that $$\overline{L_0}=\overline {\overline{Y_k(\nu,e_1f_1)}+\overline{Y_k(\nu,e_1f_2)}}.$$ 
Therefore, since $R$ commutes strongly with $\overline{Y_k(\nu,e_1f_1)}$ and $\overline{Y_k(\nu,e_1f_2)}$, $R$ also commutes strongly with $\overline {L_0}$. In particular, $R$ preserves  every eigenspace of $\overline{L_0}$ in $\mathcal H_k$. This implies that $RW_i(s)\subset W_j(s)$ for any $s\in\mathbb R$, and hence that $RW_i\subset W_j$.

Now, for any  $n\in\mathbb Z,w^{(i)}\in W_i$, and  $v\in V$, we have $$Y_k(v,n)w^{(i)}=Y_k(v,e_n)w^{(i)}=\overline{Y_k(v,e_nf_1)}w^{(i)}+\overline{Y_k(v,e_nf_2)}w^{(i)}.$$
Since $R$ commutes strongly with $\overline{Y_i(v,e_nf_1)},\overline{Y_i(v,e_nf_2)}$, we have $RY_k(v,e_n)w^{(i)}=Y_k(v,e_n)Rw^{(i)}$, which implies that $RY_i(v,n)w^{(i)}=Y_j(v,n)Rw^{(i)}$. Therefore, $R\in\Hom_V(W_i,W_j)$.
\end{proof}

\begin{co}\label{lb103}
If $W_i$ is a unitary $V$-module in $\mathcal S$, and $\mathcal H_1$ is a (norm-)closed $\mathcal M_V$-invariant subspace of $\mathcal H_i$, then there exists a $V$-invariant subspace $W_1$ of $W_i$, such that $\mathcal H_1$ is the norm closure of $W_1$. 
\end{co}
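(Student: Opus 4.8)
The plan is to realize the closed invariant subspace $\mathcal H_1$ as the range of a $V$-module idempotent and then appeal to Theorem \ref{lb63}. First I would let $P\in B(\mathcal H_i)$ be the orthogonal projection onto $\mathcal H_1$. Since $\mathcal H_1$ is $\mathcal M_V$-invariant and each $\mathcal M_V(I)$ is a von Neumann algebra, hence closed under $*$, the orthogonal complement $\mathcal H_1^\perp$ is also $\mathcal M_V$-invariant: indeed, for $x\in\mathcal M_V(I)$ we have $\pi_i(x)\mathcal H_1\subset\mathcal H_1$ and $\pi_i(x^*)\mathcal H_1\subset\mathcal H_1$, and taking adjoints of the latter gives $\pi_i(x)\mathcal H_1^\perp\subset\mathcal H_1^\perp$. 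Therefore $P$ commutes with $\pi_i(x)$ for every $I\in\mathcal J$ and $x\in\mathcal M_V(I)$, i.e. $P\in\End_{\mathcal M_V}(\mathcal H_i)=\Hom_{\mathcal M_V}(\mathcal H_i,\mathcal H_i)$.

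Next I would invoke Theorem \ref{lb63}: the linear map $\mathfrak F:\Hom_V(W_i,W_i)\to\Hom_{\mathcal M_V}(\mathcal H_i,\mathcal H_i)$ is an isomorphism, so there is a bounded $V$-module endomorphism $R$ of $W_i$ with $\mathfrak F(R)=P$. By the construction of $\mathfrak F$, $R$ is precisely the restriction of $P$ to $W_i$; in particular $RW_i\subset W_i$. Moreover, since $\mathfrak F$ is injective and $\mathfrak F(R^2)=\mathfrak F(R)^2=P^2=P=\mathfrak F(R)$, we get $R^2=R$, so $R$ is an idempotent in $\Hom_V(W_i,W_i)$. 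I would then set $W_1:=RW_i$, which is a $V$-invariant subspace of $W_i$ (the image of a $V$-module homomorphism).

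Finally I would check that $\mathcal H_1$ is the norm closure of $W_1$. Because $R=P|_{W_i}$ we have $W_1=RW_i=PW_i$; since $W_i$ is norm-dense in $\mathcal H_i$ and $P$ is continuous, $PW_i$ is dense in $P\mathcal H_i=\mathcal H_1$, so $\overline{W_1}=\mathcal H_1$, as required. I do not expect a serious obstacle here: the substantive content has already been extracted in Theorem \ref{lb63}, and the only points demanding (routine) care are the self-adjointness argument that places $P$ in $\Hom_{\mathcal M_V}(\mathcal H_i,\mathcal H_i)$ and the observation that $\mathfrak F(R)$ is by definition the bounded extension of $R$, so that $R$ genuinely preserves the algebraic module $W_i$.
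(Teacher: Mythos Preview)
Your proof is correct and follows essentially the same approach as the paper: take the orthogonal projection $P$ onto $\mathcal H_1$, observe it lies in $\End_{\mathcal M_V}(\mathcal H_i)$, and invoke Theorem~\ref{lb63} to conclude that $P$ preserves $W_i$ and restricts to a $V$-module endomorphism. The only difference is the final density step: the paper shows $e_1$ commutes with the spectral projections $P_s$ of $\overline{L_0}$ and uses $\xi=\sum_s P_s\xi$ to approximate each $\xi\in\mathcal H_1$ by vectors in $W_1$, whereas you use the more direct observation that a bounded operator carries a dense subspace to a dense subset of its range; your argument is shorter and avoids the detour through $L_0$.
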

\begin{proof}
Let $e_1$ be the orthogonal projection of $\mathcal H_i$ onto $\mathcal H_1$. Then $e_1\in\End_{\mathcal M_V}(\mathcal H_i)$. By theorem \ref{lb63}, $e_1$ restricts to an element in $\End_V(W_i)$. So $W_1=e_1W_i$ is a $V$-invariant subspace of $W_i$, and $e_1L_0=L_0e_1$ when both sides act on $W_i$. Therefore $e_1$ commutes strongly with $\overline{L_0}$. Let $P_s$ be  the projection of $\mathcal H_i$ onto $W_i(s)$. Then $P_s$ is a spectral projection of $\overline {L_0}$. Hence $eP_s=P_se$ for any $s\geq0$. 

Choose any $\xi\in \mathcal H_1$. Then $\xi=\sum_{s\geq0}P_s\xi$. Since for any $s\geq0$ we have $P_s\xi=P_se_1\xi=e_1P_s\xi\in e_1W_i= W_1$, we see that $\xi$ can be approximated by vectors in $W_1$. This proves that $\mathcal H_1$ is the norm closure of $W_1$.
\end{proof}

\subsection{A criterion for strong integrability}\label{lb300}

Assume that $V$ is unitary, energy bounded, and strongly local. In this section, we give a criterion for the strong integrability of  energy-bounded unitary $V$-modules.

\begin{pp}\label{lb118}
	Let $W_i$ be a non-trivial energy-bounded unitary $V$-module. Then $W_i$ is strongly integrable if and only if for any $I\in\mathcal J$, there exists a unitary operator $U_I:\mathcal H_0\rightarrow \mathcal H_i$, such that  any $v\in V$ and $f\in C_c^\infty(I)$ satisfy
	\begin{align}
	\overline{Y_i(v,f)}=U_I\overline{Y(v,f)}U_I^*.\label{eq355}
	\end{align}
\end{pp}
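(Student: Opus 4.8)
The plan is to prove both directions. The forward direction is nearly immediate: if $W_i$ is strongly integrable, it carries a representation $\pi_i$ of $\mathcal{M}_V$ on $\mathcal{H}_i$ satisfying \eqref{eq228}. For each $I$, both $\mathcal{M}_V(I)$ and its image $\pi_{i,I}(\mathcal{M}_V(I))$ are factors of type III (property (3) of conformal nets), hence have no minimal projections and, in particular, are "infinite" in every relevant sense; more to the point, a normal unital $*$-representation of a type III factor is automatically faithful, so $\pi_{i,I}$ is a $*$-isomorphism onto its image. One then wants a unitary $U_I:\mathcal{H}_0\to\mathcal{H}_i$ implementing it. For this I would invoke the uniqueness of the standard form of a von Neumann algebra: $\mathcal{H}_0$ is the vacuum Hilbert space and $\Omega$ is cyclic and separating for $\mathcal{M}_V(I)$ (cyclicity is axiom (e) together with additivity/Reeh--Schlieder, separating-ness follows from Haag duality since $\Omega$ is cyclic for $\mathcal{M}_V(I^c)=\mathcal{M}_V(I)'$), so $(\mathcal{H}_0,\mathcal{M}_V(I))$ is in standard form. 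If $W_i$ is irreducible then $\mathcal{H}_i$ is likewise a standard form Hilbert space for $\pi_{i,I}(\mathcal{M}_V(I))$ (here one uses that $\pi_i$ is irreducible and factoriality, plus a Reeh--Schlieder argument for a fixed vector of $\mathcal{H}_i$), and uniqueness of the standard form produces a unitary $U_I$ with $\pi_{i,I}(x)=U_I x U_I^*$, which is exactly \eqref{eq355} after applying \eqref{eq228}. The general (reducible but finite multiplicity) case reduces to the irreducible one by taking direct sums, or by replacing ``standard form'' with ``a representation quasi-equivalent to the standard one,'' using that any two faithful normal representations of a type III factor are unitarily equivalent once one passes to a suitable multiple — and all $\mathcal{H}_i$ with $W_i$ in $\mathcal{S}$ are separable.

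For the converse, suppose for each $I\in\mathcal{J}$ we are given such a unitary $U_I$. Define $\pi_{i,I}:\mathcal{M}_V(I)\to B(\mathcal{H}_i)$ by $\pi_{i,I}(x)=U_I x U_I^*$; this is manifestly a normal unital $*$-representation. The content is to check that these local representations are compatible, i.e. that $\pi_{i,I_1}(x)=\pi_{i,I_2}(x)$ whenever $I_1\subset I_2$ and $x\in\mathcal{M}_V(I_1)$, and that the resulting $\mathcal{M}_V$-module is associated with $W_i$ in the sense of \eqref{eq228}. The second point is built in: $\pi_{i,I}\big(\overline{Y(v,f)}\big)=U_I\overline{Y(v,f)}U_I^*=\overline{Y_i(v,f)}$ is precisely the hypothesis \eqref{eq355} (extended from bounded elements to the affiliated closed operators generating $\mathcal{M}_V(I)$ via the functional calculus, cf. section B.1). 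So once compatibility is established we are done. For compatibility I would argue on a generating set: $\mathcal{M}_V(I_1)$ is generated by $\overline{Y(v,f)}$ with $v\in V$, $f\in C^\infty_c(I_1)$, and for such generators both $\pi_{i,I_1}$ and $\pi_{i,I_2}$ send $\overline{Y(v,f)}$ to $\overline{Y_i(v,f)}$ by \eqref{eq355} (note $C^\infty_c(I_1)\subset C^\infty_c(I_2)$); since a normal $*$-homomorphism is determined on the von Neumann algebra generated by a self-adjoint set on which it is prescribed, $\pi_{i,I_1}=\pi_{i,I_2}$ on $\mathcal{M}_V(I_1)$.

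The main obstacle is the forward direction's construction of $U_I$ — more precisely, justifying that $\mathcal{H}_i$ sits in standard form (or is unitarily equivalent to the standard form) over $\pi_{i,I}(\mathcal{M}_V(I))$, which is where one must be careful: it requires a cyclic-and-separating vector for the local algebra acting on $\mathcal{H}_i$, and the honest tool is a Reeh--Schlieder type theorem for the module $\mathcal{H}_i$ together with the type III factoriality of the local algebras, guaranteeing that all faithful normal representations on separable Hilbert spaces are unitarily equivalent. The converse direction, by contrast, is essentially bookkeeping: the only subtlety is extending \eqref{eq355} from bounded operators to the closed affiliated operators and invoking the determination of a normal $*$-homomorphism by its values on generators, both of which are standard (the latter is how $\mathcal{M}_V(I)$ was built in the first place, cf. Theorem \ref{lb61}).
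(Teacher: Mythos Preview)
Your proof is correct, and the converse (``if'') direction is essentially the paper's argument: define $\pi_{i,I}(x)=U_IxU_I^*$ and check compatibility on generators. The paper phrases this as ``$U_J^*U_I$ commutes strongly with each $\overline{Y(v,f)}$, hence with $\mathcal M_V(I)$,'' which is exactly your observation that both representations agree on the affiliated generators.

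Where you diverge from the paper is in the forward (``only if'') direction, and here you make it harder than it is. You route through standard-form uniqueness and worry about producing a cyclic--separating vector for $\pi_{i,I}(\mathcal M_V(I))$ on $\mathcal H_i$ via a module Reeh--Schlieder argument, flagging this as the main obstacle. The paper bypasses all of this: since $\mathcal M_V(I)$ is a type~III factor and $\mathcal H_0,\mathcal H_i$ are separable with $\pi_{i,I}$ nonzero (as $W_i$ is non-trivial and $\pi_{i,I}$ is unital), the general fact that any two nonzero normal representations of a type~III factor on separable Hilbert spaces are unitarily equivalent immediately yields $U_I$. No standard form, no Reeh--Schlieder for the module, no case split on irreducibility, and no ``passing to a suitable multiple'' is needed---for type~III the comparison theory of projections in the commutant is trivial, so unitary equivalence holds on the nose. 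Your approach would work, but the paper's is a one-line invocation of a standard structural fact, and what you identified as the hard step is in fact the easy one.
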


\begin{proof}
	``If part": For any $I\in\mathcal J(I)$, we define a representation $\pi_{i,I}$ of $\mathcal M_V(I)$ on $\mathcal H_i$ to be 
	\begin{align}
	\pi_{i,I}(x)=U_IxU_I^*\qquad(x\in\mathcal M_V(I)).
	\end{align}
	If $J\in\mathcal J(I)$ and $I\subset J$, then by equation \eqref{eq355}, $U_J^*U_I$ commutes strongly with every $\overline{Y(v,f)}$ where $v\in V$ and $f\in C^\infty_c(I)$. So $U_J^*U_I$ commutes with $\mathcal M_V(I)$, which implies that $\pi_{i,I}$ is the restriction of $\pi_{i,J}$ on $\mathcal M_V(I)$. So $\pi_i$ is a representation of the conformal net $\mathcal M_V$. It is obvious that $\pi_i$ is associated with $W_i$. So $W_i$ is strongly integrable.
	
	``Only if part": Suppose that $W_i$ is strongly integrable. We let $(\mathcal H_i,\pi_i)$ be the $\mathcal M_V$-module associated with $W_i$. For each $I\in\mathcal M_I$, $\pi_{i,I}$ is a non-trivial representation of $\mathcal M_V(I)$ on $\mathcal H_i$. Since the Hilbert spaces $\mathcal H_0,\mathcal H_i$ are separable, and $\mathcal M_V(I)$ is a type III factor, $\pi_{i,I}$ is (unitary) equivalent to the representation $\pi_{0,I}$ of $\mathcal M_V(I)$ on $\mathcal H_0$. So there exits a unitary $U_I:\mathcal H_i\rightarrow \mathcal H_i$ such that equation \eqref{eq355} always holds.
\end{proof}

\begin{rem}
	Equation \eqref{eq355} is equivalent to one of the following equivalent relations:
	\begin{gather}
	U_I\overline{Y(v,f)}\subset \overline{Y_i(v,f)}U_I,\\
	U_I^*\overline{Y_i(v,f)}\subset \overline{Y(v,f)}U_I^*.
	\end{gather}
\end{rem}

\begin{pp}\label{lb119}
	Let $W_j,W_k$ be non-trivial energy-bounded unitary $V$-modules. Assume that $W_j$ is strongly integrable. If for any $I\in\mathcal J$ there exits a collection $\{T_a:a\in\mathcal A \}$ of bounded linear operators from $\mathcal H_j$ to $\mathcal H_k$, such that $\bigvee_{a\in\mathcal A}T_a\mathcal H_j$ is dense in $\mathcal H_k$, and that for any $a\in\mathcal A,v\in V,f\in C^\infty_c(I)$, we have
	\begin{gather}
	T_a\overline{Y_j(v,f)}\subset \overline{Y_k(v,f)}T_a,\label{eq356}\\
	T_a^*\overline{Y_k(v,f)}\subset \overline{Y_j(v,f)}T_a^*,\label{eq357}
	\end{gather}
	then $W_k$ is strongly integrable.
\end{pp}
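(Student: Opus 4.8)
The plan is to build, for every $I\in\mathcal J$, a normal representation $\pi_{k,I}$ of the local algebra $\mathcal M_V(I)$ on $\mathcal H_k$ directly out of the intertwiners $T_a$, to check that $\pi_{k,I}\big(\overline{Y(v,f)}\big)=\overline{Y_k(v,f)}$ for all $v\in V,f\in C^\infty_c(I)$, and that these $\pi_{k,I}$ are compatible as $I$ shrinks; then $(\mathcal H_k,\pi_k)$ is by definition an $\mathcal M_V$-module associated with $W_k$, so $W_k\in\mathcal S$. Fix $I$ and a family $\{T_a:a\in\mathcal A\}$ as in the hypothesis. Since $W_j$ is strongly integrable there is a representation $\pi_j$ with $\pi_{j,I}\big(\overline{Y(v,f)}\big)=\overline{Y_j(v,f)}$. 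The first observation is that each $T_b^*T_a$ lies in $\pi_{j,I}(\mathcal M_V(I))'$: combining \eqref{eq356} and \eqref{eq357} gives $T_b^*T_a\,\overline{Y_j(v,f)}\subset\overline{Y_j(v,f)}\,T_b^*T_a$, and adjointing \eqref{eq356}, \eqref{eq357} yields the same for $\overline{Y_j(v,f)}^*$, so $T_b^*T_a$ commutes with the spectral data of every $\overline{Y_j(v,f)}$; since $\mathcal M_V(I)$ --- hence, $\pi_{j,I}$ being normal, also $\pi_{j,I}(\mathcal M_V(I))$ --- is generated (in the affiliated sense) by these operators, the claim follows.

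Next I would set, for $x\in\mathcal M_V(I)$,
\[
\pi_{k,I}(x)\Big(\sum_{a\in F}T_a\xi_a\Big)=\sum_{a\in F}T_a\,\pi_{j,I}(x)\xi_a
\]
on the dense subspace of such finite sums. Well-definedness and $\|\pi_{k,I}(x)\|\le\|x\|$ follow from a positivity estimate: with $P=\|x\|^2-\pi_{j,I}(x^*x)\ge0$ and $Q=P^{1/2}\in\pi_{j,I}(\mathcal M_V(I))$, and using that $Q$ commutes with each $T_b^*T_a$,
\[
\|x\|^2\Big\|\sum_a T_a\xi_a\Big\|^2-\Big\|\sum_a T_a\pi_{j,I}(x)\xi_a\Big\|^2=\sum_{a,b}\big\langle P\,T_b^*T_a\xi_a,\xi_b\big\rangle=\Big\|\sum_a T_aQ\xi_a\Big\|^2\ge0 .
\]
Thus $\pi_{k,I}(x)$ extends to $B(\mathcal H_k)$ with $\pi_{k,I}(x)T_a=T_a\pi_{j,I}(x)$, and one checks routinely that $\pi_{k,I}$ is a unital $*$-homomorphism; its normality comes from evaluating $\langle\pi_{k,I}(x_\lambda)T_a\xi_a,T_b\xi_b\rangle=\langle\pi_{j,I}(x_\lambda)\xi_a,T_a^*T_b\xi_b\rangle$ on an increasing bounded net and invoking normality of $\pi_{j,I}$.

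The key --- and most delicate --- step is to identify $\pi_{k,I}\big(\overline{Y(v,f)}\big)$ with $\overline{Y_k(v,f)}$; the obstacle is that these smeared operators need not be self-adjoint, so one cannot argue on spectral projections directly. Write $A=\overline{Y(v,f)}$, $A_j=\overline{Y_j(v,f)}=\pi_{j,I}(A)$, $A_k=\overline{Y_k(v,f)}$, and $R=(1+A^*A)^{-1}\in\mathcal M_V(I)$. From \eqref{eq356}, \eqref{eq357} and their adjoints one gets $T_a A_j^*A_j\subset A_k^*A_k\,T_a$, hence $(1+A_k^*A_k)^{-1}T_a=T_a(1+A_j^*A_j)^{-1}=T_a\pi_{j,I}(R)=\pi_{k,I}(R)T_a$; since $\bigvee_a T_a\mathcal H_j$ is dense in $\mathcal H_k$, this forces $\pi_{k,I}(R)=(1+A_k^*A_k)^{-1}$, whence $\pi_{k,I}(A^*A)=A_k^*A_k$ and, taking modulus, $|\pi_{k,I}(A)|=|A_k|$. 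Because $\pi_{k,I}$ is intertwined with $\pi_{j,I}$ via the $T_a$ on all of $\mathcal M_V(I)$, it is so on the affiliated operators as well: $T_a A_j\subset\pi_{k,I}(A)T_a$ and $T_a|A_j|\subset|A_k|T_a$. The latter shows $(1+|A_k|)\sum_a T_a\mathscr D(A_j)=\sum_a T_a\mathcal H_j$ is dense, so $\sum_a T_a\mathscr D(A_j)$ is a core for $|A_k|$, hence for both $A_k$ and $\pi_{k,I}(A)$, and on it the two operators agree (both send $T_a\xi$ to $T_a A_j\xi$); therefore $\pi_{k,I}\big(\overline{Y(v,f)}\big)=\overline{Y_k(v,f)}$. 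Finally, for $I_1\subset I_2$ the normal representations $\pi_{k,I_1}$ and $\pi_{k,I_2}|_{\mathcal M_V(I_1)}$ agree on the generators $\overline{Y(v,f)}$ with $f\in C^\infty_c(I_1)$, hence everywhere on $\mathcal M_V(I_1)$, so the family $\{\pi_{k,I}\}$ assembles into a representation $\pi_k$ of $\mathcal M_V$ associated with $W_k$, and $W_k\in\mathcal S$. The main work is in this last paragraph --- the core/modulus bookkeeping for possibly non-normal smeared operators --- together with pinning down the (routine but technical) fact that a normal representation of $\mathcal M_V(I)$ is determined by, and intertwined via the $T_a$ with, the appropriate functions of the affiliated generators $\overline{Y(v,f)}$.
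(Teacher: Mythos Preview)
Your argument is correct, and it is genuinely different from the paper's. The paper works inside the direct-sum module $W_l=W_j\oplus^\perp W_k$, notes that \eqref{eq356}--\eqref{eq357} say each $T_a$ commutes strongly with every $\overline{Y_l(v,f)}$, then uses a Zorn-type maximal family of partial isometries together with the fact that $\pi_{j,I}(\mathcal M_V(I))'$ is a type III factor to assemble a single \emph{unitary} $U_I:\mathcal H_j\to\mathcal H_k$ satisfying $\overline{Y_k(v,f)}=U_I\overline{Y_j(v,f)}U_I^*$; strong integrability of $W_k$ then follows from Proposition~\ref{lb118}. You bypass both the type III property and Proposition~\ref{lb118} entirely: from $T_b^*T_a\in\pi_{j,I}(\mathcal M_V(I))'$ you manufacture the representation $\pi_{k,I}$ by hand on $\bigvee_a T_a\mathcal H_j$, and then do the unbounded-operator bookkeeping (resolvent, modulus, common core) to pin down $\pi_{k,I}\big(\overline{Y(v,f)}\big)=\overline{Y_k(v,f)}$. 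The paper's route is shorter once one is willing to invoke the type III structure of the local algebras (which in turn rests on Bisognano--Wichmann); your route is more self-contained and would survive in settings where that structural input is unavailable, at the price of the careful core argument you flag in the last paragraph.
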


We remark that when $T_\alpha$ is not unitary, equations \eqref{eq356} and \eqref{eq357} do not imply each other.
\begin{proof}
	Let $W_l=W_j\oplus^\perp W_k$ be the direct sum module of $W_j$ and $W_k$, and extend each $T_a$ to a bounded linear operator on $\mathcal H_l$, such that $T_a$ equals zero on the subspace $\mathcal H_k$. Choose any $I\in\mathcal J$. Since $\overline{Y_l(v,f)}=\diag(\overline{Y_j(v,f)},\overline{Y_k(v,f)})$,  equations \eqref{eq356} and \eqref{eq357} are equivalent to that $T_a$ commutes strongly with $\overline{Y_l(v,f)}$ for any $v\in V,f\in C^\infty_c(I)$. We construct a unitary operator $U_I:\mathcal H_j\rightarrow\mathcal H_k$ such that
	\begin{align}
	\overline{Y_k(v,f)}=U_I\overline{Y_j(v,f)}U_I^*\label{eq358}
	\end{align}
	for any $v\in V,f\in C^\infty_c(I)$. Then the strong integrability of $W_k$ will follow immediately from proposition \ref{lb118} and the strong integrability of $W_j$.
	
	Let $\{U_b:b\in\mathcal B \}$ be a maximal collection of non-zero partial isometries from $\mathcal H_j$ to $\mathcal H_k$ satisfying the following conditions:\\
	(a) For any $b\in\mathcal B,v\in V,f\in C^\infty_c(I)$, $U_b$ commutes strongly with $\overline{Y_l(v,f)}$.\\
	(b) The projections $\{e_b=U_bU_b^*:b\in\mathcal B \}$ are pairwise orthogonal.\\
	Note that similar to $T_a$, each $U_b$ is  extended to a partial isometry on $\mathcal H_l$, being zero when acting on $\mathcal H_k$. 
	
	Let $e=\sum_{b\in\mathcal B}e_b$. We prove that $e=\id_{\mathcal H_k}$. Let $e'=\id_{\mathcal H_k}-e$. If $e'\neq0$, then by the density of $\bigvee_{a\in\mathcal A}T_a\mathcal H_j$ in $\mathcal H_k$, there exists $a\in\mathcal A$ such that $e'T_a\neq0$. Take the left polar decomposition $e'T_a=U_aH_a$ of $e'T_a$, where $U_a$ is the partial isometry part. Then $U_aU_a^*$ is the projection of $\mathcal H_l$ onto the range of $e'T_a$, which is nonzero and orthogonal to each $e_b$. For each $v\in V,f\in C^\infty_c(I)$, since  $e'$ and $T_a$ commute strongly with $\overline{Y_l(v,f)}$, $U_a$ also commutes strongly with $\overline{Y_l(v,f)}$. Therefore, $\{U_b:b\in\mathcal B \}\cup\{U_a \}$ is a collection of partial isometries from $\mathcal H_j$ to $\mathcal H_k$ satisfying conditions (a) and (b), and $\{U_b:b\in\mathcal B \}$ is its proper sub-collection. This contradicts the fact that $\{U_b:b\in\mathcal B \}$ is maximal. So $e'=0$, and hence $e=\id_{\mathcal H_k}$.
	
	For each $b\in\mathcal B$ we let $p_b=U_b^*U_b$, which is a non-zero projection on $\mathcal H_j$. We now restrict ourselves to operators on $\mathcal H_j$. Then $p_b$ commutes strongly with each $\overline{Y_j(v,f)}$, which, by the strong integrability of $W_j$, is equivalent to that $p_b\in \pi_{j,I}(\mathcal M_V(I))'$. Note that $\mathcal B$ must be countable. We choose a countable collection $\{q_b:b\in\mathcal B \}$ of non-zero orthogonal projections on $\mathcal H_j$ satisfying that $\sum_{b\in\mathcal B}q_b=\id_{\mathcal H_j}$, and that each $q_b\in\pi_{j,I}(\mathcal M_V(I))'$. Since $\pi_{j,I}(\mathcal M_V(I))'$ is a type III factor, for each $b$ there exists a partial isometry $\widetilde U_b\in\pi_{j,I}(\mathcal M_V(I))'$ satisfying $\widetilde U_b\widetilde U_b^*=p_b,\widetilde U_b^*\widetilde U_b=q_b$. 
	
	We turn our attention back to operators on $\mathcal H_l$. Since $\widetilde U_b\in\pi_{j,I}(\mathcal M_V(I))'$, $\widetilde U_b$ commutes strongly with each  $\overline{Y_l(v,f)}$. Let $U_I=\sum_{b\in\mathcal B}U_b\widetilde U_b$. Then $U_I$ is a unitary operator from $\mathcal H_j$ to $\mathcal H_k$ satisfying relation \eqref{eq358} for any $v\in V,f\in C^\infty_c(I)$. Thus our proof is finished.
\end{proof}

We now prove the strong integrability of an energy-bounded unitary $V$-module using the linear energy-boundedness of intertwining operators.

\begin{thm}\label{lb120}
	Let $W_i,W_j,W_k$ be non-zero unitary irreducible $V$-modules. Assume that $W_j$ and $W_k$ are energy-bounded, that $W_j$ is strongly integrable, and that there exist a non-zero quasi-primary vector $w^{(i)}_0\in W_i$ and a non-zero intertwining operator $\mathcal Y_{\alpha}\in\mathcal V{k\choose i~j}$, such that $\mathcal Y_{\alpha}(w^{(i)}_0,x)$ satisfies linear energy bounds. Then $W_k$ is strongly integrable.
\end{thm}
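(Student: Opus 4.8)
The plan is to verify the hypotheses of Proposition \ref{lb119}, with $W_j$ in the role of the strongly integrable module. Fix $I\in\mathcal J$; it suffices to produce bounded operators $T_a\colon\mathcal H_j\to\mathcal H_k$ with $\bigvee_a T_a\mathcal H_j$ dense in $\mathcal H_k$ and satisfying \eqref{eq356}--\eqref{eq357} for all $v\in V$ and $f\in C^\infty_c(I)$. I would take the $T_a$ to be the partial isometries $U_g$ produced by the polar decompositions of the closed operators $A_g$, where $A_g$ is the closure of $\mathcal Y_\alpha(w^{(i)}_0,g)$ and $g$ ranges over $C^\infty_c(I^c\setminus\{-1\})$; by the linear energy bounds of $\mathcal Y_\alpha(w^{(i)}_0,x)$ this is a closed operator $\mathcal H_j\to\mathcal H_k$ with $\mathcal H^1_j\subset\mathscr D(A_g)$, so $\mathrm{ran}\,U_g=\overline{\mathrm{ran}}\,A_g$.

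First I would establish that $A_g$ commutes strongly with the action of $\mathcal M_V(I)$, that is, with the self-adjoint operators $\overline{Y_j(u,\tilde f)}$ on $\mathcal H_j$ and $\overline{Y_k(u,\tilde f)}$ on $\mathcal H_k$ for $u\in E_{\mathbb R}$ and $\tilde f\in C^\infty_c(I)$ of the type appearing in Theorem \ref{lb61} (which, by the same decomposition argument used there, generate the action of $\mathcal M_V(I)$ on $\mathcal H_j$ and the candidate action on $\mathcal H_k$). To this end I would pass to the energy-bounded unitary module $W_l=W_j\oplus^\perp W_k$ and form on $\mathcal H_l=\mathcal H_j\oplus^\perp\mathcal H_k$ the symmetric operator $\hat A_g$ whose off-diagonal blocks are $\mathcal Y_\alpha(w^{(i)}_0,g)$ and its formal adjoint; since $w^{(i)}_0$ is quasi-primary, the adjoint relation identifies the latter, up to a phase, with $\mathcal Y_{\alpha^*}(\overline{w^{(i)}_0},\overline{e_{2-2\Delta_{w^{(i)}_0}}g})$, which is again supported in $I^c\setminus\{-1\}$ and, by proposition 3.4, again linearly energy bounded, so $\hat A_g$ is linearly energy bounded. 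Because the braiding of an intertwining operator with the vacuum vertex operator $Y$ over disjoint intervals is trivial, $\hat A_g$ commutes with $\overline{Y_l(v,f)}$ on the smooth core $\mathcal H^\infty_l$ for every $v\in V$ and $f\in C^\infty_c(I)$. The commutator theorem of Appendix B (Lemma B.8 and Theorem B.9), fed by the linear energy bounds of $\hat A_g$ and the energy bounds of the vertex operators of $W_l$, then upgrades this commutation on the core to strong commutativity of the self-adjoint operator $\overline{\hat A_g}$ with each $\overline{Y_l(u,\tilde f)}$. Hence the partial isometry in the polar decomposition of $\overline{\hat A_g}$ lies in the commutant of the action of $\mathcal M_V(I)$ on $\mathcal H_l$; since $\overline{\hat A_g}$ is off-diagonal, so is this partial isometry, and its $\mathcal H_j\to\mathcal H_k$ block is the desired $U_g$, which then obeys \eqref{eq356}--\eqref{eq357}.

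Second, I would check that $\bigvee_g\overline{\mathrm{ran}}\,A_g=\mathcal H_k$. If $\eta\in\mathcal H_k$ is orthogonal to $\mathcal Y_\alpha(w^{(i)}_0,g)\xi$ for every $g\in C^\infty_c(I^c\setminus\{-1\})$ and $\xi\in\mathcal H^\infty_j$, then for fixed $\xi$ the boundary value on the arc $I^c\setminus\{-1\}$ of the holomorphic function $z\mapsto\langle\eta,\mathcal Y_\alpha(w^{(i)}_0,z)\xi\rangle$ vanishes, hence this function vanishes identically; comparing Laurent coefficients gives $\langle\eta,\mathcal Y_\alpha(w^{(i)}_0,n)w^{(j)}\rangle=0$ for all $n\in\mathbb Z$ and $w^{(j)}\in W_j$. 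But since $\mathcal Y_\alpha\neq0$, $w^{(i)}_0\neq0$, and $W_i,W_k$ are irreducible, $\mathcal Y_\alpha(w^{(i)}_0,z)W_j$ is dense in $\widehat W_k$ (by the density result of part I), equivalently the span of the vectors $\mathcal Y_\alpha(w^{(i)}_0,n)w^{(j)}$ is all of $W_k$; therefore $\eta\perp W_k$ and $\eta=0$. Proposition \ref{lb119} now yields the strong integrability of $W_k$.

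The main obstacle is the strong-commutativity step: the algebraic identity (trivial braiding over disjoint intervals) is elementary, but promoting it to commutation of the generated von Neumann algebras is precisely where the commutator-theorem machinery enters, and where the linear energy bounds hypothesis on $\mathcal Y_\alpha(w^{(i)}_0,x)$ (and, through proposition 3.4, on its adjoint) is indispensable --- exactly as linear energy bounds are needed for the strong locality of $\mathcal M_V$ in Theorem \ref{lb61}. A secondary, purely bookkeeping, point is to keep every smearing function involved --- in particular $\overline{e_{2-2\Delta_{w^{(i)}_0}}g}$ --- supported away from $-1$, which forces the restriction to $g\in C^\infty_c(I^c\setminus\{-1\})$ throughout; one must also observe, via analytic continuation as in the density step, that this restricted class of $g$ is still rich enough.
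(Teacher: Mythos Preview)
Your plan is the paper's: a Reeh--Schlieder density argument for the ranges of the smeared $\mathcal Y_\alpha(w^{(i)}_0,g)$, the commutator-theorem machinery (Lemma~B.8, Theorem~B.9) on the direct sum $W_l=W_j\oplus^\perp W_k$ to promote core commutativity to strong commutativity, polar decomposition to get bounded intertwiners, and Proposition~\ref{lb119}. The paper packages the strong-commutativity step slightly differently --- it splits the off-diagonal extension $A$ of $\mathcal Y_\alpha(w^{(i)}_0,g)$ into $H=(A+A^\dagger)/2$ and $K=(A-A^\dagger)/(2i)$, shows each is essentially self-adjoint and strongly commutes with every $\overline{Y_l(v,f)}$, and then passes through the smooth subalgebra $\mathcal N(I)_\infty$ of the von Neumann algebra generated by all $\overline{Y_l(v,f)}$ to conclude that $\mathcal N(I)$ commutes strongly with $\overline A$ itself before polar-decomposing --- but your single-operator route via $\hat A_g$ lands in the same place.

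Two points need tightening. First, you restrict the strong-commutativity conclusion to the self-adjoint generators $\overline{Y_l(u,\tilde f)}$ with $u\in E_{\mathbb R}$ as in Theorem~\ref{lb61}; but Theorem~\ref{lb120} does not assume $V$ is generated by such a set, and Proposition~\ref{lb119} requires \eqref{eq356}--\eqref{eq357} for \emph{all} $v\in V$, $f\in C^\infty_c(I)$. You cannot bootstrap from generators here because the representation of $\mathcal M_V(I)$ on $\mathcal H_k$ is precisely what is not yet available. The fix is immediate: Theorem~B.9 does not require the second operator to be self-adjoint, so run it for every $v,f$ (as the paper does) and the partial isometry satisfies \eqref{eq356}--\eqref{eq357} in full. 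Second, in the density step your ``holomorphic function $z\mapsto\langle\eta,\mathcal Y_\alpha(w^{(i)}_0,z)\xi\rangle$'' is not well-defined for general $\eta\in\mathcal H_k$; the paper instead considers $z\mapsto\langle z^{\overline{L_0}}\mathcal Y_\alpha(w^{(i)}_0,g)w^{(j)}\,|\,\xi^{(k)}\rangle$ on the closed punctured unit disk, which is manifestly holomorphic on the interior, and uses rotation covariance plus Schwarz reflection to propagate vanishing from an arc to the whole circle before reading off modes.
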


\begin{proof}
	Step 1. Fix any $J\in\mathcal J(S^1\setminus\{-1\})$, and let $\mathcal W_J$ be the subspace of $\mathcal H_k$ spanned by the vectors $\mathcal Y_\alpha(w^{(i)}_0,g)w^{(j)}$ where $g\in C^\infty_c(J)$ and $w^{(j)}\in W_j$. We show that $\mathcal W_J$ is a dense subspace of $\mathcal H_k$.
	
Our proof is similar to that of Reeh-Schlieder theorem (cf. \cite{RS61}).	Choose $\xi^{(k)}\in \mathcal W_J^\perp$.  Note that for each $\eta^{(k)}\in\mathcal H^{k}$, the multivalued function
	\begin{align}
	z\mapsto z^{\overline{L_0}}\eta^{(k)}=\sum_{s\geq0}z^sP_s\eta^{(k)}
	\end{align}
	is continuous on $\overline D^\times(1)=\{\zeta\in\mathbb C:0<|\zeta|\leq1 \}$ and holomorphic on its interior $D^\times(1)$. So we have a multivalued holomorphic function of $z$:
	\begin{align}
	\langle z^{\overline{L_0}}\mathcal Y_\alpha(w^{(i)}_0,g)w^{(j)}|\xi^{(k)} \rangle,\label{eq362}
	\end{align}
	which is continuous on $\overline D^\times(1)$ and holomorphic on $D^\times(1)$. Choose $\varepsilon>0$ such that   the support of $g^t=\exp(it(\Delta_{w^{(i)}_0}-1))\mathfrak r(t)g$ is inside $J$ for any $t\in(-\varepsilon,\varepsilon)$. Then, by proposition 3.15, we have
	\begin{align}
	\langle e^{it\overline{L_0}}\mathcal Y_\alpha(w^{(i)}_0,g)w^{(j)}|\xi^{(k)} \rangle=\langle \mathcal Y_\alpha(w^{(i)}_0,g^t)e^{it\overline{L_0}}w^{(j)}|\xi^{(k)} \rangle,\label{eq363}
	\end{align}
	which must be zero when $t\in(-\delta,\delta)$. 
	
	By Schwarz reflection principle, the value of function \eqref{eq362} is zero for any $z\in\overline D^\times(r)$. In particular, it is zero for any $z\in S^1$. This shows that \eqref{eq363} is zero for any $t\in\mathbb R$. Here, when we define the smeared intertwining operator, we allow the arguments to exceed the region $(-\pi,\pi)$ under the action of $\mathfrak r(t)$. So the right hand side of equation \eqref{eq363} becomes
	\begin{align}
	\sum_{s\in\mathbb R}\int_{t-\pi}^{t+\pi} \langle \mathcal Y_\alpha(w^{(i)}_0,e^{i\theta})e^{it\overline{L_0}}w^{(j)}|P_s\xi^{(k)} \rangle\cdot\exp(it(\Delta_{w^{(i)}_0}-1))g(e^{i(\theta-t)})\di\theta,\label{eq360}
	\end{align}
which is $0$ for any $t\in\mathbb R$. (Recall our notation that $\di\theta=e^{i\theta}d\theta/2\pi$.)	
	Since $W_i,W_j,W_k$ are irreducible, we let $\Delta_i,\Delta_j,\Delta_k$ be their conformal dimensions, and set $\Delta_\alpha=\Delta_i+\Delta_j-\Delta_k$. Then by equation (1.25),
	\begin{align}
	\mathcal Y_\alpha(w^{(i)}_0,z)z^{\Delta_\alpha}=\sum_{n\in\mathbb Z}\mathcal Y_\alpha(w^{(i)}_0,\Delta_\alpha-1-n)z^{n}
	\end{align}
	is a single valued holomorphic function for $z\in\mathbb C^\times $. So the fact that \eqref{eq360} always equals $0$ implies that
	\begin{align}
	\sum_{s\in\mathbb R}\int_{-\pi}^\pi \langle \mathcal Y_\alpha(w^{(i)}_0,e^{i\theta})w^{(j)}|P_s\xi^{(k)} \rangle e^{i\Delta_\alpha\theta}\cdot h(e^{i\theta})d\theta=0\label{eq361}
	\end{align}
	for any $w^{(j)}\in W_j,I\in \mathcal J$ and $h\in C^\infty_c(I)$.  By partition of unity on $S^1$, we see that equation \eqref{eq361} holds for any $h\in C^\infty(S^1)$.

	For any $m\in\mathbb Z$, we choose $h(e^{i\theta})=e^{-im\theta}$. Then the left hand side of equation \eqref{eq361} becomes
	\begin{align}
	&\sum_{s\in\mathbb R}\int_{-\pi}^\pi \langle \mathcal Y_\alpha(w^{(i)}_0,e^{i\theta})w^{(j)}|P_s\xi^{(k)} \rangle e^{i\Delta_\alpha\theta}\cdot e^{-im\theta}d\theta\nonumber\\
	=&\sum_{s\in\mathbb R}\int_{-\pi}^\pi\sum_{n\in\mathbb Z} \langle \mathcal Y_\alpha(w^{(i)}_0,\Delta_\alpha-1-n)w^{(j)}|P_s\xi^{(k)} \rangle\cdot e^{i(n-m)\theta}d\theta\nonumber\\
	=&\sum_{s\in\mathbb R}\sum_{n\in\mathbb Z} \int_{-\pi}^\pi\langle \mathcal Y_\alpha(w^{(i)}_0,\Delta_\alpha-1-n)w^{(j)}|P_s\xi^{(k)} \rangle\cdot e^{i(n-m)\theta}d\theta\nonumber\\
	=&2\pi\sum_{s\in\mathbb R}\langle \mathcal Y_\alpha(w^{(i)}_0,\Delta_\alpha-1-m)w^{(j)}|P_s\xi^{(k)} \rangle\nonumber\\
	=&2\pi\langle \mathcal Y_\alpha(w^{(i)}_0,\Delta_\alpha-1-m)w^{(j)}|\xi^{(k)} \rangle,
	\end{align}
	which by equation \eqref{eq361} must be zero. By corollary 2.15 and the proof of corollary A.4, vectors of the form $\mathcal Y_\alpha(w^{(i)}_0,s)w^{(j)}$ (where $s\in\mathbb R,w^{(j)}\in W_j$) span $W_k$, which is a dense subspace of $\mathcal H_j$. So $\xi^{(k)}=0$.\\
	
	Step 2.  Choose any $I\in\mathcal J$, and let $J\in\mathcal J(I^c\setminus\{-1\})$. Take $W_l=W_j\oplus^\perp W_k$. Then for each $v\in V,f\in C^\infty_c(I)$ we have $\overline{Y_l(v,f)}=\diag(\overline{Y_j(v,f)},\overline{Y_k(v,f)})$. For each $g\in C^\infty_c(J)$, we extend $\mathcal Y_\alpha(w^{(i)}_0,g)$ to an operator on $\mathcal H_l^\infty$ whose restriction to $\mathcal H_k^\infty$ is zero. We also regard $A=\mathcal Y_\alpha(w^{(i)}_0,g)$ as an unbounded operator on $\mathcal H_l$ with domain $\mathcal H^\infty_l$. Let $\mathcal N(I)$ be the von Neumann algebra on $\mathcal H_j$ generated by the operators $\overline{Y_l(v,f)}$ where $v\in V,f\in C^\infty_c(I)$, and let $\mathcal N(I)_\infty$ be the set of all $x\in\mathcal N(I)$ satisfying  $x\mathcal H^\infty_l\subset \mathcal H^\infty_l,x^*\mathcal H^\infty_l\subset \mathcal H^\infty_l$. Then as in the proof of proposition \ref{lb104}, $\mathcal N(I)_\infty$ is a strongly dense self-adjoint subalgebra of $\mathcal N(I)$. Let $H=(A+A^\dagger)/2$ and $K=(A-A^\dagger)/(2i)$ be symmetric unbounded operators on $\mathcal H_l$ with domain $\mathcal H^\infty_l$. Then by  proposition 2.13, corollary 3.13,  remark 3.14, and equation (3.26), for any $v\in V$ and $f\in C^\infty_c(I)$, $Y_l(v,f)$ commutes with $H$ and $K$ when acting on $\mathcal H^\infty_l$. By lemma B.8 and relations (3.38), (3.26), $\overline H$ and $\overline K$ are self adjoint, and by theorem B.9, $\overline{Y_l(v,f)}$ commutes strongly with $\overline H$ and $\overline K$. Hence any $x\in\mathcal N(I)$ commutes strongly with $\overline H$ and $\overline K$. In particular, if $x\in\mathcal N(I)_\infty$, we have $xH=Hx,xK=Kx$ when both sides of the equations act on $\mathcal H^\infty_l$. So $x(H+iK)=(H+iK)x$ when acting on $\mathcal H^\infty_l$. Therefore, $x\overline{\mathcal Y_\alpha(w^{(i)}_0,g)}\subset \overline{\mathcal Y_\alpha(w^{(i)}_0,g)}x$ for any $x\in\mathcal N(I)_\infty$, which implies that $\mathcal N(I)$ commutes strongly with $\overline{\mathcal Y_\alpha(w^{(i)}_0,g)}$. Thus $\overline{Y_l(v,f)}$ commutes strongly with $\overline{\mathcal Y_\alpha(w^{(i)}_0,g)}$.
	
	Let $\overline{\mathcal Y_\alpha(w^{(i)}_0,g)}=T_gH_g$ be the left polar decomposition of $\overline{\mathcal Y_\alpha(w^{(i)}_0,g)}$, where $T_g$ is the partial isometry. Then $T_g$ commutes strongly with each $\overline{Y_l(v,f)}$. By step 1, $\{T_g:g\in C^\infty_c(J) \}$ form a collection of bounded operators from $\mathcal H_j$ to $\mathcal H_k$ satisfying the conditions in proposition \ref{lb119}. Therefore, by that proposition, $W_k$ is strongly integrable.
\end{proof}

\section{Generalized intertwining operators}\label{lb112}

Generalized intertwining operators are nothing but genus $0$ correlation functions written in a particular way. Suppose that $\mathcal Y_{\sigma_2},\dots,\mathcal Y_{\sigma_n}$ is a chain of intertwining operators with charge spaces $W_{i_2},\dots,W_{i_n}$ respectively, such that the source space of $\mathcal Y_{\sigma_2}$ is $W_{i_1}$, and the target space of $\mathcal Y_{\sigma_n}$ is $W_i$. Choose $\mathcal Y_{\alpha}\in\mathcal Y{k\choose i~j}$. Choose $(z_1,\dots,z_n)\in\Conf_n(\mathbb C^\times)$, and choose arguments
 $\arg z_1,\arg (z_2-z_1),\dots,\arg(z_n-z_1)$. A \textbf{generalized intertwining operator} $\mathcal Y_{\sigma_n\cdots\sigma_n,\alpha}$ is defined near $(z_1,\dots,z_n)$ in the following two situations.

The first case is when $(z_1,\dots,z_n)$ satisfies $0<|z_2-z_1|<\cdots<|z_n-z_1|<|z_1|$. We define a $(W_j\otimes W_{i_1}\otimes\cdots\otimes W_{i_n}\otimes W_{\overline k} )^*$-valued holomorphic function $\mathcal Y_{\sigma_n\cdots\sigma_n,\alpha}$ near $(z_1,\dots,z_n)$ to satisfy that for any  $w^{(j)}\in W_j,w^{(i_1)}\in W_{i_1},\dots,w^{(i_n)}\in W_{i_n},w^{(\overline k)}\in W_{\overline k}$, 
\begin{align}
& \langle \mathcal Y_{\sigma_n\cdots\sigma_2,\alpha}(w^{(i_n)},z_n;\dots;w^{(i_2)},z_2;w^{(i_1)},z_1)w^{(j)},w^{(\overline k)}\rangle\nonumber\\
=&\big\langle \mathcal Y_\alpha\big(\mathcal Y_{\sigma_n}(w^{(i_n)},z_n-z_1)\cdots\mathcal Y_{\sigma_2}(w^{(i_2)},z_2-z_1)w^{(i_1)},z_1\big)w^{(j)},w^{(\overline k)}\big\rangle.\label{eq370}
\end{align}
The $V$-modules $W_{i_1},\dots,W_{i_n}$ are called the \textbf{charge spaces} of $ \mathcal Y_{\sigma_n\cdots\sigma_2,\alpha}$. $W_j$ is called the \textbf{source space} of $ \mathcal Y_{\sigma_n\cdots\sigma_2,\alpha}$, and $W_k$ is called the \textbf{target space} of $ \mathcal Y_{\sigma_n\cdots\sigma_2,\alpha}$.  The vector space of generalized intertwining operators with charge spaces $W_{i_1},\dots,W_{i_n}$, source space $W_j$, and target space $W_k$ is also denoted by $\mathcal V{k\choose i_n~\dots~i_1~j}$. 

In the second case, we choose $I\in\mathcal J$, and choose an arbitrary continuous argument function $\arg_I$ on $I$. We define $\mathscr O_n(I)$ to be the set of all $(z_1,\dots,z_n)\in\Conf_n(\mathbb C^\times)\cap I^n$ satisfying that for any $2\leq l<m\leq n$, either $\arg_I(z_lz_1^{-1})\arg_I(z_mz_1^{-1})<0$, or $|\arg_I(z_lz_1^{-1})|<|\arg_I(z_mz_1^{-1})|$. Our definition is clearly independent of the choice of $\arg_I$, and $\mathscr O_n(I)$ is a finite disconnected union of simply-connected sets. 

We want to define our generalized intertwining operators near any $(z_1,\dots,z_n)\in\mathscr O_n(I)$. To do this, we rotate $z_1,\dots,z_n$ along $I$ without meeting each other, until  these points satisfy $0<|z_2-z_1|<\cdots<|z_n-z_1|<|z_1|=1$. The arguments of $z_1,z_2-z_1,\dots,z_n-z_1$ are changed continuously. We first define  $\mathcal Y_{\sigma_n\cdots\sigma_n,\alpha}$ near the new point $(z_1,\dots,z_n)$ using equation \eqref{eq370}. Then we reverse this process of rotating $z_1,\dots,z_n$, and change $\mathcal Y_{\sigma_n\cdots\sigma_n,\alpha}$ continuously so as to be defined near the original point.

We now define the product of two generalized intertwining operators defined near $S^1$. Products of more than two generalized intertwining operators are defined in a similar way. Choose disjoint $I,J\in\mathcal J$,  choose  $(z_1,\dots,z_m)\in \mathscr O_m(I),(\zeta_1,\dots,\zeta_n)\in \mathscr O_n(J)$, and choose arguments $\arg z_1,\arg(z_2-z_1),\dots,\arg(z_m-z_1),\arg\zeta_1,\arg(\zeta_2-\zeta_1),\dots,\arg(\zeta_n-\zeta_1)$. Choose generalized intertwining operators $\mathcal Y_{\sigma_m\cdots\sigma_1,\alpha}\in\mathcal V{k\choose i_m\cdots i_1 i_0},\mathcal Y_{\rho_n\cdots\rho_1,\beta}\in\mathcal V{i_0\choose j_n\cdots j_1 j_0}$. If we choose $\arg z_2,\dots,\arg z_m,\arg\zeta_2,\dots,\arg\zeta_n$, then we can find uniquely chains of intertwining operators $\mathcal  Y_{\alpha_1},\dots,\mathcal Y_{\alpha_m}$ with charge spaces $W_{i_1},\dots,W_{i_m}$ respectively, and $\mathcal Y_{\beta_1},\dots,\mathcal Y_{\beta_n}$ with charge spaces $W_{j_1},\dots,W_{j_n}$ respectively, such that the source space of $\mathcal Y_{\beta_1}$ is $W_{j_0}$, that the source space of $\mathcal Y_{\alpha_1}$ and the target space of $\mathcal Y_{\beta_n}$ are $W_{i_0}$,  that the target space of $\mathcal Y_{\alpha_m}$ is $W_k$, and that for any $w^{(j_1)}\in W_{j_1},\dots,w^{(j_n)}\in W_{j_n},w^{(i_1)}\in W_{i_1},\dots,w^{(i_m)}\in W_{i_m}$, we have the fusion relations
\begin{gather}
\mathcal Y_{\sigma_m\cdots\sigma_2,\alpha}(w^{(i_m)},z_m;\dots;w^{(i_1)},z_1)
=\mathcal Y_{\alpha_m}(w^{(i_m)},z_m)\cdots\mathcal Y_{\alpha_1}(w^{(i_1)},z_1),\label{eq293}\\
\mathcal Y_{\rho_n\cdots\rho_2,\beta}(w^{(j_n)},\zeta_n;\dots;w^{(j_1)},\zeta_1)
=\mathcal Y_{\beta_n}(w^{(j_n)},\zeta_n)\cdots\mathcal Y_{\beta_1}(w^{(j_1)},\zeta_1).\label{eq294}
\end{gather}
We then define a $(W_{j_0}\otimes W_{j_1}\otimes\cdots\otimes W_{j_n}\otimes W_{i_1}\otimes\cdots\otimes W_{i_m}\otimes W_{\overline k})^*$-valued holomorphic function $\mathcal Y_{\sigma_m\cdots\sigma_1,\alpha}\mathcal Y_{\rho_n\cdots\rho_1,\beta}$ near $(\zeta_1,\dots,\zeta_n,z_1,\dots,z_m)$ to satisfy that for any $w^{(j_0)}\in W_{j_0},w^{(j_1)}\in W_{j_1},\dots,w^{(j_n)}\in W_{j_n},w^{(i_1)}\in W_{i_1},\dots,w^{(i_m)}\in W_{i_m},w^{(\overline k)}\in W_k$,
\begin{align}
&\langle \mathcal Y_{\sigma_m\cdots\sigma_2,\alpha}(w^{(i_m)},z_m;\dots;w^{(i_1)},z_1)\mathcal Y_{\rho_n\cdots\rho_2,\beta}(w^{(j_n)},\zeta_n;\dots;w^{(j_1)},\zeta_1)w^{(j_0)},w^{(\overline k)} \rangle\nonumber\\
=&\langle\mathcal Y_{\alpha_m}(w^{(i_m)},z_m)\cdots\mathcal Y_{\alpha_1}(w^{(i_1)},z_1)\mathcal Y_{\beta_n}(w^{(j_n)},\zeta_n)\cdots\mathcal Y_{\beta_1}(w^{(j_1)},\zeta_1)w^{(j_0)},w^{(\overline k)} \rangle.\label{eq292}
\end{align}

\begin{rem}\label{lbb1}
It is clear that our definition does not depend on the choice of $\arg z_2,\dots,\arg z_m,\arg\zeta_2,\dots,\arg\zeta_n$. Moreover, if we choose $\varsigma\in S_m,\varpi\in S_n$, and \emph{real variables} $\lambda_1,\dots,\lambda_n,r_1,\dots,r_m$ defined near $1$ and satisfying $0<\lambda_{\varpi(1)}<\cdots,\lambda_{\varpi(n)}<r_{\varsigma(1)}<\cdots<r_{\varsigma(m)}$, then the following series
\begin{align}
\sum_{s\in\mathbb R}\langle \mathcal Y_{\sigma_m\cdots\sigma_2,\alpha}(w^{(i_m)},r_mz_m;\dots;w^{(i_1)},r_1z_1)P_s\mathcal Y_{\rho_n\cdots\rho_2,\beta}(w^{(j_n)},\lambda_n\zeta_n;\dots;w^{(j_1)},\lambda_1\zeta_1)w^{(j_0)},w^{(\overline k)} \rangle\label{eq371}
\end{align}
of $s$ converges absolutely, and by proposition 2.11, as $r_1,\dots,r_m,\lambda_1,\dots,\lambda_n\rightarrow 1$, the limit of \eqref{eq371} exists and equals the left hand side of equation \eqref{eq292}.
\end{rem}

\subsection{Braiding of generalized intertwining operators}

\begin{thm}\label{lb16}
	
Choose disjoint $I,J\in\mathcal J$. Choose $(z_1,\dots,z_m)\in \mathscr O_m(I),(\zeta_1,\dots,\zeta_n)\in \mathscr O_n(J)$. Choose arguments  $\arg z_1,\arg \zeta_1,\arg(z_2-z_1),\dots,\arg(z_m-z_1),\arg(\zeta_n-\zeta_1),\dots,\arg(\zeta_n-\zeta_1)$.	Let $W_i,W_j,W_{i_1},W_{i_2},\dots,W_{i_m},W_{j_1},W_{j_2},\dots,W_{j_n}$ be $V$-modules. Assume that for any $w^{(i)}\in W_i,w^{(j)}\in W_j$, the braid relation
	\begin{align}
	\mathcal Y_\alpha(w^{(i)},z_1)\mathcal Y_\beta(w^{(j)},\zeta_1)=\mathcal Y_{\beta'}(w^{(j)},\zeta_1)\mathcal Y_{\alpha'}(w^{(i)},z_1)
	\end{align}
	holds. Then for any intertwining operators $\mathcal Y_{\sigma_2},\dots,\mathcal Y_{\sigma_m},\mathcal Y_{\rho_2},\dots,\mathcal Y_{\rho_n}$, any $w^{(i_1)}\in W_{i_1},\dots,w^{(i_m)}\in W_{i_m},w^{(j_1)}\in W_{j_1},\dots,w^{(j_n)}\in W_{j_n}$, we have the generalized braid relation
	\begin{align}
&\mathcal Y_{\sigma_m\cdots\sigma_2,\alpha}(w^{(i_m)},z_m;\dots;w^{(i_1)},z_1)	\mathcal Y_{\rho_n\cdots\rho_2,\beta}(w^{(j_n)},\zeta_n;\dots;w^{(j_1)},\zeta_1)\nonumber\\
=&\mathcal Y_{\rho_n\cdots\rho_2,\beta'}(w^{(j_n)},\zeta_n;\dots;w^{(j_1)},\zeta_1)\mathcal Y_{\sigma_m\cdots\sigma_2,\alpha'}(w^{(i_m)},z_n;\dots;w^{(i_1)},z_1).\label{eq41}
	\end{align}
(Note that here, as before, we follow convention 2.19 to simplify our statement.)	
\end{thm}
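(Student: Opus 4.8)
The plan is to strip \eqref{eq41} down to a braid relation between ordinary intertwining operators, verify that relation on one favorable configuration by ``collapsing'' each cluster of points to a single intertwining operator, and then spread the identity to all of $\mathscr O_m(I)\times\mathscr O_n(J)$ by analytic continuation. First I would pair both sides of \eqref{eq41} with $w^{(\overline k)}$, apply them to $w^{(j_0)}$, and use \eqref{eq292} together with the fusion relations \eqref{eq293} and \eqref{eq294} to rewrite the left hand side as a matrix coefficient of
\begin{align*}
\mathcal Y_{\alpha_m}(w^{(i_m)},z_m)\cdots\mathcal Y_{\alpha_1}(w^{(i_1)},z_1)\,\mathcal Y_{\beta_n}(w^{(j_n)},\zeta_n)\cdots\mathcal Y_{\beta_1}(w^{(j_1)},\zeta_1)w^{(j_0)}
\end{align*}
and the right hand side as the same expression with $\alpha_l,\beta_p$ replaced by the operators $\alpha'_l,\beta'_p$ arising from the fusion decompositions of $\mathcal Y_{\sigma_m\cdots\sigma_2,\alpha'}$ and $\mathcal Y_{\rho_n\cdots\rho_2,\beta'}$. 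So it suffices to braid these two products of ordinary intertwining operators.

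Both products extend to multivalued holomorphic functions on the relevant connected component of $\Conf_{m+n}(\mathbb C^\times)$ (with the usual $0$ and $\infty$ insertions), so by uniqueness of analytic continuation it is enough to check the identity on a single convenient configuration in that component and then transport it — the argument conventions of the notation list and of \cite{Gui19a} (cf. convention 2.19) keeping the branches under control. I would move the $z$'s into a tight cluster around a point of $I$ with $0<|z_2-z_1|<\cdots<|z_m-z_1|<|z_1|$ and the $\zeta$'s into a tight cluster around a point of $J$, so that the iterated series underlying \eqref{eq293} and \eqref{eq294} converge in both directions. On such a configuration, associativity of intertwining operators lets me re-collapse each cluster: the $z$-product becomes $\mathcal Y_\alpha\big(W_I(z),z_1\big)$, where $W_I(z)=\mathcal Y_{\sigma_m}(w^{(i_m)},z_m-z_1)\cdots\mathcal Y_{\sigma_2}(w^{(i_2)},z_2-z_1)w^{(i_1)}$ is a convergent vector of the algebraic completion $\widehat W_i$, and similarly the $\zeta$-product becomes $\mathcal Y_\beta\big(W_J(\zeta),\zeta_1\big)$ with $W_J(\zeta)\in\widehat W_j$.

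Next I would invoke the hypothesized braid relation, extended from honest vectors in $W_i,W_j$ to the vectors $W_I(z)\in\widehat W_i$, $W_J(\zeta)\in\widehat W_j$ by the convergence results of \cite{Gui19a}, to replace $\mathcal Y_\alpha\big(W_I(z),z_1\big)\mathcal Y_\beta\big(W_J(\zeta),\zeta_1\big)$ by $\mathcal Y_{\beta'}\big(W_J(\zeta),\zeta_1\big)\mathcal Y_{\alpha'}\big(W_I(z),z_1\big)$: braiding the outer legs at $z_1$ and $\zeta_1$ does not see the charge-space operators appended near them, since those stay clustered with $z_1$ (resp. $\zeta_1$) throughout. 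Re-expanding each cluster by associativity in the opposite direction turns $\mathcal Y_{\beta'}\big(W_J(\zeta),\zeta_1\big)$ back into $\mathcal Y_{\rho_n\cdots\rho_2,\beta'}$ and $\mathcal Y_{\alpha'}\big(W_I(z),z_1\big)$ back into $\mathcal Y_{\sigma_m\cdots\sigma_2,\alpha'}$, which is exactly the right hand side of \eqref{eq41}; continuing back to the original configuration finishes the proof.

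The step I expect to be the main obstacle is the passage to the algebraic completions: justifying that the hypothesized braid relation persists when its charge-space arguments are the convergent vectors $W_I(z)$ and $W_J(\zeta)$, i.e.\ that one may interchange the braiding with the sums building up these vectors — exactly where the energy estimates and convergence lemmas of Part I do the work. A secondary but genuinely delicate point is to check that the argument assignments line up at each collapse and re-expansion step, so that the two sides of \eqref{eq41} agree on the nose rather than up to a monodromy factor; the geometric picture — moving a cluster of insertions around another cluster induces only the base braiding on the cluster centers — makes the result plausible, but it is precisely this bookkeeping that needs care.
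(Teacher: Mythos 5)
Your overall strategy---reduce by analytic continuation to a favorable configuration where both clusters are tight, collapse each cluster to a single intertwining operator with a convergent argument in the algebraic completion, braid, and re-expand---is essentially the same as the paper's. But the step you flag as the main obstacle is indeed where your outline as written does not close, and the paper resolves it by a specific device you don't name. You propose to ``invoke the hypothesized braid relation, extended from honest vectors in $W_i,W_j$ to the vectors $W_I(z)\in\widehat W_i$, $W_J(\zeta)\in\widehat W_j$ by the convergence results of \cite{Gui19a}.'' Part I supplies no such lemma; the convergence results there concern products and iterates of intertwining operators, not an extension of a given equality of products (the braid relation) to charge-space arguments drawn from $\widehat W_i$, $\widehat W_j$. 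Attempting that extension directly runs into the usual problem of interchanging the triple sums hidden in $\mathcal Y_\alpha(\cdot,z_1)\mathcal Y_\beta(\cdot,\zeta_1)$ with the sums defining $W_I(z)$, $W_J(\zeta)$, and would require estimates roughly as strong as the theorem itself.

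The paper sidesteps this. It observes that the braid relation, together with fusion of $\mathcal Y_\alpha,\mathcal Y_\beta$ at the separation $z_1-\zeta_1$, identifies \emph{both} sides with a single iterate
\[
\mathcal Y_\delta\big(\mathcal Y_\gamma(w^{(i)},z_1-\zeta_1)w^{(j)},\zeta_1\big),
\]
and then substitutes the cluster series $W_I(z)$ for $w^{(i)}$ and $W_J(\zeta)$ for $w^{(j)}$ \emph{into this iterate}. In the iterated form, the needed absolute and locally uniform convergence is exactly what Corollary 2.7 and Theorem 2.6 of Part~I supply, after introducing real scaling parameters $r_1,\dots,r_m,\lambda_1,\dots,\lambda_n$ and taking the limit as they go to $1$. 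Both sides of \eqref{eq41} converge to the same nested expression \eqref{eq42}, which closes the proof. So: right structure and right identification of the delicate point, but the mechanism that actually makes the passage to $\widehat W_i,\widehat W_j$ legal---pivoting through the common fused iterate rather than braiding completed charge vectors directly---is missing from your outline and is the substantive content of the proof.
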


\begin{proof}
	By analytic continuation, it suffices to assume that $|z_1-\zeta_1|$ is small enough with respect to $1$, and  $|z_2-z_1|,\dots,|z_m-z_1|,|\zeta_2-\zeta_1|,\dots,|\zeta_n-\zeta_1|$ are small enough with respect to $|z_1-\zeta_1|$, such that for any \emph{real variables} $r,\lambda>0$ satisfying $\frac 23<\frac r \lambda <\frac 32$, the following inequalities are satisfied:
\begin{gather}
|\zeta_n-\zeta_1|+|z_m-z_1|<1/4,\\
0<|\lambda\zeta_2-\lambda \zeta_1|<|\lambda\zeta_3-\lambda\zeta_1|<\cdots<|\lambda\zeta_n-\lambda\zeta_1|<|rz_1-\lambda\zeta_1|-|rz_m-rz_1|,\\
0<|rz_2-rz_1|<|rz_3-rz_1|<\cdots<|rz_m-rz_1|<|rz_1-\lambda\zeta_1|<\lambda-|rz_m-rz_1|.
\end{gather}

Choose $\arg(z_1-\zeta_1)$. Since $|z_1-\zeta_1|<1$, there exist intertwining operators $\mathcal Y_{\gamma}$ and $\mathcal Y_{\delta}$ such that for any $w^{(i)}\in W_i,w^{(j)}\in W_j$, we have
	\begin{flalign}
\mathcal Y_\alpha(w^{(i)},z_1)\mathcal Y_\beta(w^{(j)},\zeta_1)=\mathcal Y_{\delta}(\mathcal Y_{\gamma}(w^{(i)},z_1-\zeta_1)w^{(j)},\zeta_1)=\mathcal Y_{\beta'}(w^{(j)},\zeta_1)\mathcal Y_{\alpha'}(w^{(i)},z_1).
	\end{flalign}
Choose \emph{real variables} $r_m>\cdots>r_1>\lambda_n>\dots>\lambda_1>0$ satisfying $2/3<r_1/\lambda_1<3/2$. (Recall that the arguments of real variables are chosen to be $0$.) When $r_2/r_1,\dots,r_m/r_1,\lambda_2/\lambda_1,\dots,\lambda_n/\lambda_1$ are close to  $1$, by corollary 2.7, the right hand side of the equation
	\begin{flalign}
&\mathcal Y_\alpha\big(\mathcal Y_{\sigma_m}(w^{(i_m)},r_mz_m-r_1z_1)\cdots\mathcal Y_{\sigma_2}(w^{(i_2)},r_2z_2-r_1z_1)w^{(i_1)},r_1z_1\big)\nonumber\\
&\cdot\mathcal Y_{\beta}\big(\mathcal Y_{\rho_n}(w^{(j_n)},\lambda_n\zeta_n-\lambda_1\zeta_1)\cdots\mathcal Y_{\rho_2}(w^{(j_2)},\lambda_2\zeta_2-\lambda_1\zeta_1)w^{(j_1)},\lambda_1\zeta_1\big)\nonumber\\
=&\mathcal Y_{\delta}\bigg(\mathcal Y_{\gamma}\Big(\mathcal Y_{\sigma_m}(w^{(i_m)},r_mz_m-r_1z_1)\cdots\mathcal Y_{\sigma_2}(w^{(i_2)},r_2z_2-r_1z_1)w^{(i_1)},r_1z_1-\lambda_1\zeta_1\Big)\nonumber\\
&\quad~\cdot\mathcal Y_{\rho_n}(w^{(j_n)},\lambda_n\zeta_n-\lambda_1\zeta_1)\cdots\mathcal Y_{\rho_2}(w^{(j_2)},\lambda_2\zeta_2-\lambda_1\zeta_1)w^{(j_1)},\lambda_1\zeta_1\bigg)\label{eq281}
\end{flalign}
converges absolutely and locally uniformly. If moreover $r_1/\lambda_1=4/3$, then by theorem 2.6, the left hand side of equation \eqref{eq281} also converges absolutely and locally uniformly, and hence equation \eqref{eq281} holds.

Now we let $r_1,\dots,r_m,\lambda_1,\dots,\lambda_n\rightarrow 1$, then  the left hand side of equation \eqref{eq281} converges to the left hand side of equation \eqref{eq41}, and the right hand side of 
	\eqref{eq281} converges to
	\begin{align}
	&\mathcal Y_{\delta}\bigg(\mathcal Y_{\gamma}\Big(\mathcal Y_{\sigma_m}(w^{(i_m)},z_m-z_1)\cdots\mathcal Y_{\sigma_2}(w^{(i_2)},z_2-z_1)w^{(i_1)},z_1-\zeta_1\Big)\nonumber\\
	&\quad~\cdot\mathcal Y_{\rho_n}(w^{(j_n)},\zeta_n-\zeta_1)\cdots\mathcal Y_{\rho_2}(w^{(j_2)},\zeta_2-\zeta_1)w^{(j_1)},\zeta_1\bigg).\label{eq42}
	\end{align}
Therefore, the left hand side of equation	\eqref{eq41} equals \eqref{eq42}. The same  argument shows that the right hand side of \eqref{eq41} also equals \eqref{eq42}. This finishes our proof.
\end{proof}

Note that it is easy to generalize proposition 2.11 to generalized intertwining operators.

\subsection{The adjoint relation for generalized intertwining operators}

This section is devoted to the proof of  the adjoint relation for generalized intertwining operators \eqref{eq314}. We first recall that if $\mathcal Y_\alpha$ is a unitary intertwining operator of a unitary $V$, $z\in S^1$ with chosen argument, and $w^{(i)}\in W_i$ is quasi-primary, then	by relation (1.34), 
\begin{align}
\mathcal Y_\alpha(w^{(i)},z)^\dagger=e^{-i\pi\Delta_{w^{(i)}}}z^{2\Delta_{w^{(i)}}}\mathcal Y_{\alpha^*}(\overline{w^{(i)}},z).\label{eq291}
\end{align}
We want to obtain a similar relation for generalized intertwining operators. To achieve this goal, we first  need an auxiliary fusion relation. Recall that for any $V$-module $W_i$, we have the creation operator $\mathcal Y^i_{i0}=B_\pm Y_i$ of $W_i$, and the annihilation operator $\mathcal Y^0_{i\overline i}=C^{-1}\mathcal Y^i_{i0}$ of $W_{\overline i}$. We set $\Upsilon^0_{i\overline i}=C\mathcal Y^i_{i0}$. Then similar to equation (1.40), for any $w^{(i)}_1\in W_i,w^{(\overline i)}_2\in W_{\overline i}$ we have
\begin{align}
\langle \Upsilon^0_{ i \bar i}(w^{(i)}_1,x)w^{(\overline i)}_2,\Omega\rangle=\langle e^{x^{-1}L_{1}}w^{(\overline i)}_2,(e^{-i\pi}x^{-2})^{L_0}e^{-x^{-1}L_1}w^{(i)}_1\rangle.\label{eq284}
\end{align}
\begin{pp}[Fusion with annihilation operators]\label{lb2}
	Let $z_1,z_2\in\mathbb C^\times$ satisfy $0<|z_1|,|z_1-z_2|<|z_2|$. Choose $\arg z_2$, let $\arg z_1$ be close to $\arg z_2$ as $z_1\rightarrow z_2$, and let $\arg(z_2-z_1)$ be close to $\arg z_2$ as $z_1\rightarrow 0$. Then  for any  $\mathcal Y_\alpha\in\mathcal V{k\choose i~j}, w^{(i)}\in W_i$ and $w^{(j)}\in W_j$, we have the  fusion relation
	\begin{align}
	\Upsilon^0_{k\overline k}\big(\mathcal Y_\alpha(w^{(i)},e^{i\pi}(z_2-z_1))w^{(j)},z_2\big)=\Upsilon^0_{j\overline j}(w^{(j)},z_2)\mathcal Y_{C\alpha}(w^{(i)},z_1).\label{eq14}
	\end{align}
\end{pp}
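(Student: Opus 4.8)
The plan is to reduce the fusion relation \eqref{eq14} to the well-understood fusion relation (1.40) (or its analogue) for the creation/annihilation operators, by carefully applying the contragredient operation $C$ and the definition of $\Upsilon^0_{i\bar i}=C\mathcal Y^i_{i0}$ on both sides. First I would pair both sides of \eqref{eq14} against the vacuum $\Omega$ in the target module $W_0=V$, using test vectors $w^{(\overline k)}\in W_{\overline k}$ on the right. Because $\Upsilon^0_{k\overline k}$ applied and then paired with $\Omega$ is governed by the explicit formula \eqref{eq284}, the left-hand side of \eqref{eq14}, paired with $\Omega$ and $w^{(\overline k)}$, becomes a matrix coefficient of the form $\langle e^{z_2^{-1}L_1}w^{(\overline k)},(e^{-i\pi}z_2^{-2})^{L_0}e^{-z_2^{-1}L_1}\mathcal Y_\alpha(w^{(i)},e^{i\pi}(z_2-z_1))w^{(j)}\rangle$, i.e. a three-point function involving $\mathcal Y_\alpha$ composed with the operators implementing the conformal-inversion (the $L_1$, $L_0$ dressing). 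The right-hand side, paired the same way, is $\langle \Upsilon^0_{j\overline j}(w^{(j)},z_2)\mathcal Y_{C\alpha}(w^{(i)},z_1)w^{(\overline k)},\Omega\rangle$ — again by \eqref{eq284} this is a matrix coefficient where $\mathcal Y_{C\alpha}$ is applied first.

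Next, I would use the defining property of the contragredient (adjoint) intertwining operator $\mathcal Y_{C\alpha}$ from part I: the matrix coefficient of $\mathcal Y_{C\alpha}(w^{(i)},z_1)$ against $W_{\overline k}$ and $W_{\overline j}$ equals, up to the standard $z_1^{2\Delta}$ / $e^{\pm i\pi L_0}$ factors and the substitution $z_1\mapsto$ (inversion), a matrix coefficient of $\mathcal Y_\alpha$ itself. Plugging this in, both sides of the paired equation \eqref{eq14} become genuinely the \emph{same} analytic function once one tracks the arguments: $\arg(e^{i\pi}(z_2-z_1))$, $\arg z_1$, $\arg z_2$ are exactly arranged by the hypothesis ($\arg z_1$ close to $\arg z_2$ as $z_1\to z_2$, $\arg(z_2-z_1)$ close to $\arg z_2$ as $z_1\to0$) so that the geometric series defining the two products converge in overlapping regions. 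Since both sides, as $(W_j\otimes W_{\overline k})^*$-valued functions, are analytic on a connected domain $\{0<|z_1|,|z_1-z_2|<|z_2|\}$ with the chosen branch of arguments, and they agree in a sufficiently small region (e.g. $|z_1|$ small, where both reduce to the associativity/fusion identity (1.40) composed with the inversion operators), they agree everywhere by analytic continuation. A final observation: both $\Upsilon^0_{k\overline k}$ and the product on the right, as operators into $\widehat{V}$, are determined by their $\Omega$-matrix coefficients (vectors in $\widehat V=\widehat W_0$ are determined by pairing with all of $W_0$, and $V\cong V'$ lets one pass between $W_0$ and $W_{\overline 0}$), so equality of the $\langle\,\cdot\,,\Omega\rangle$ matrix coefficients for all test vectors gives equality of the operators.

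The main obstacle I expect is the bookkeeping of arguments and the $e^{i\pi}$ factors: \eqref{eq14} has the shifted argument $e^{i\pi}(z_2-z_1)$ in the charge slot, which comes precisely from the $(e^{-i\pi}x^{-2})^{L_0}$ appearing in \eqref{eq284} and in the definition of the adjoint/contragredient operators, and one must verify that the "close to" conventions in the hypothesis make all the branch choices on the two sides line up consistently rather than differing by a nontrivial power of $e^{2\pi i}$. A secondary technical point is justifying the interchange of the (absolutely convergent) double sum defining the composite $\Upsilon^0_{j\overline j}(w^{(j)},z_2)\mathcal Y_{C\alpha}(w^{(i)},z_1)$ with the fusion expansion — but this is exactly the kind of convergence statement supplied by proposition 2.11 / corollary 2.7 and the remark following \eqref{eq371}, so it is routine once set up. I would structure the write-up as: (1) reduce to $\Omega$-matrix coefficients via \eqref{eq284}; (2) rewrite the right side using the defining property of $\mathcal Y_{C\alpha}$; (3) observe both sides are the analytic continuation of a common three-point function; (4) invoke \eqref{eq291}-style argument tracking and convergence from part I to conclude.
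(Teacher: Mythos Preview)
Your overall strategy matches the paper's: reduce to the $\langle\,\cdot\,,\Omega\rangle$ pairing via \eqref{eq284}, unwind $\mathcal Y_{C\alpha}$ by its definition, and show both sides equal a common expression. The paper carries this out by first specializing to $z_1,z_2\in\mathbb R_{>0}$ with $0<z_2-z_1<z_1<z_2$ (so all arguments are $0$ and there is no branch ambiguity), then uses Lemma~2.16-(1) and (1.26), (1.30) to reduce the right side, and Lemma~2.16-(2) to reduce the left side, both to the same explicit expression. You should name Lemma~2.16 (the $e^{zL_{\pm1}}$-conjugation formulas for intertwining operators) as the tool that makes your step~(3) go through; ``observe both sides are the analytic continuation of a common three-point function'' is the right intuition but is not a proof without that lemma.

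There is one genuine gap in your final step. You assert that operators into $\widehat V$ are determined by their $\langle\,\cdot\,,\Omega\rangle$ matrix coefficients, citing that vectors in $\widehat V$ are determined by pairing with all of $W_0$. But you only pair with $\Omega$, not with all of $W_0$; knowing $\langle Aw^{(\overline k)},\Omega\rangle=\langle Bw^{(\overline k)},\Omega\rangle$ for every $w^{(\overline k)}$ only pins down the vacuum component of $Aw^{(\overline k)}$. The paper closes this by invoking (the proof of) Corollary~2.15: once \eqref{eq14} is known when paired against $\Omega$, the intertwining/Jacobi structure propagates it to pairing against every $v\in V$. You need that extra step (or an equivalent cyclic-vector argument) rather than the non-degeneracy observation you give.
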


\begin{proof}
	Let us assume that $z_1,z_2\in\mathbb R_{>0}$ and $0<z_2-z_1<z_1<z_2$. If the proposition is proved for this special case, then by analytic continuation, it also holds in general.

	Therefore, we assume that $\arg z_1=\arg z_2=\arg(z_2-z_1)=0$. Let $\arg(z_1^{-1}-z_2^{-1})$ be close to $\arg(z_1^{-1})=-\arg z_1$ as $z_2^{-1}\rightarrow 0$. (The reason for choosing this argument is to use lemma 2.16-(1). Recall also convention 1.12 in \cite{Gui19a}.) Then it is obvious that $\arg(z_1^{-1}-z_2^{-1})=0=\arg(\frac{z_2-z_1}{z_1z_2})$. 
	
	We now use equation \eqref{eq284} and the definition of $C\alpha$ to compute that
	\begin{align}
	&\langle \Upsilon^0_{j\overline j}(w^{(j)},z_2)\mathcal Y_{C\alpha}(w^{(i)},z_1)w^{(\overline k)},\Omega\rangle\nonumber\\
	=&\sum_{s\in\mathbb R}\langle\Upsilon^0_{j\overline j}(w^{(j)},z_2)P_s\mathcal Y_{C\alpha}(w^{(i)},z_1)w^{(\overline k)},\Omega\rangle\nonumber\\
	=&\sum_{s\in\mathbb R}\langle e^{z_2^{-1}L_1}P_s\mathcal Y_{C\alpha}(w^{(i)},z_1)w^{(\overline k)},(e^{-i\pi}z_2^{-2})^{L_0}e^{-z_2^{-1}L_1}w^{(j)}\rangle\nonumber\\
	=&\sum_{s\in\mathbb R}\big\langle w^{(\overline k)},\mathcal Y_\alpha\big(e^{z_1L_1}(e^{-i\pi}z_1^{-2})^{L_0}w^{(i)},z_1^{-1}\big) P_se^{z_2^{-1}L_{-1}}(e^{-i\pi}z_2^{-2})^{L_0}e^{-z_2^{-1}L_1}w^{(j)}\big\rangle,\label{eq8}
	\end{align}
	which, according to lemma 2.16-(1), converges absolutely and equals
	\begin{align}
	\Big\langle w^{(\overline k)},e^{z_2^{-1}L_{-1}}\mathcal Y_\alpha\Big(e^{z_1L_1}(e^{-i\pi}z_1^{-2})^{L_0}w^{(i)},\frac{z_2-z_1}{z_1z_2}\Big) (e^{-i\pi}z_2^{-2})^{L_0}e^{-z_2^{-1}L_1}w^{(j)}\Big\rangle.
	\end{align}
	By (1.26) and (1.30), the above formula equals
	\begin{align}
	&\Big\langle w^{(\overline k)},e^{z_2^{-1}L_{-1}}(e^{-i\pi}z_2^{-2})^{L_0}\nonumber\\
	&~~\cdot\mathcal Y_\alpha\Big((e^{i\pi}z_2^2)^{L_0}e^{z_1L_1}(e^{-i\pi}z_1^{-2})^{L_0}w^{(i)},e^{i\pi}(z_2-z_1)\frac {z_2}{z_1}\Big) e^{-z_2^{-1}L_1}w^{(j)}\Big\rangle\nonumber\\
	=&\Big\langle w^{(\overline k)},e^{z_2^{-1}L_{-1}}(e^{-i\pi}z_2^{-2})^{L_0}\nonumber\\
	&~~\cdot\mathcal Y_\alpha\Big(e^{-z_1z_2^{-2}L_1}\Big(\frac{z_2}{z_1}\Big)^{2L_0}w^{(i)},e^{i\pi}(z_2-z_1)\frac {z_2}{z_1}\Big) e^{-z_2^{-1}L_1}w^{(j)}\Big\rangle.\label{eq285}
	\end{align}
	
	On the other hand, we have
\begin{align}
&\big\langle\Upsilon^0_{k\overline k}\big(\mathcal Y_\alpha(w^{(i)},e^{i\pi}(z_2-z_1))w^{(j)},z_2\big)w^{(\overline k)},\Omega\big\rangle\nonumber\\
=&\sum_{s\in\mathbb R}\big\langle\Upsilon^0_{k\overline k}\big(P_s\mathcal Y_\alpha(w^{(i)},e^{i\pi}(z_2-z_1))w^{(j)},z_2\big)w^{(\overline k)},\Omega\big\rangle\nonumber\\
=&\sum_{s\in\mathbb R}\langle w^{(\overline k)},e^{z_2^{-1}L_{-1}}(e^{-i\pi}z_2^{-2})^{L_0}e^{-z_2^{-1}L_1}P_s\mathcal Y_\alpha(w^{(i)},e^{i\pi}(z_2-z_1))w^{(j)}\rangle.\label{eq5}
\end{align}
Note that $|-z_2^{-1}|<|e^{i\pi}(z_2-z_1)|^{-1}$. Let $\arg(1-e^{i\pi}(z_2-z_1)\cdot (-z_2^{-1}))$ be close to $\arg(1-e^{i\pi}(z_2-z_1)\cdot 0)=0$ as 
	$-z_2^{-1}\rightarrow 0$. Then clearly $\arg(1-e^{i\pi}(z_2-z_1)\cdot (-z_2^{-1}))=0=\arg(\frac {z_1}{z_2})$. We can use lemma 2.16-(2) to compute that \eqref{eq5} equals \eqref{eq285}. This proves equation \eqref{eq14} when both sides act on $\Omega$. By the proof of corollary 2.15, equation \eqref{eq14} holds when acting on any vector inside $V$. 
\end{proof}

\begin{rem}
By proposition 2.9 and the above property, we have the fusion relation
\begin{align}
\Upsilon^0_{k\overline k}\big(\mathcal Y_{B_+\alpha}(w^{(j)},z_2-z_1)w^{(i)},z_1\big)=\Upsilon^0_{j\overline j}(w^{(j)},z_2)\mathcal Y_{C\alpha}(w^{(i)},z_1)
\end{align}
 when  $0<|z_2-z_1|<|z_1|<|z_2|$, $\arg z_1$ is close to $\arg z_2$ as $z_1\rightarrow z_2$, and $\arg (z_2-z_1)$ is close to $\arg z_2$ as $z_1\rightarrow 0$. Similarly, we can also show that
\begin{align}
\mathcal Y^0_{k\overline k}\big(\mathcal Y_{B_-\alpha}(w^{(j)},z_2-z_1)w^{(i)},z_1\big)=\mathcal Y^0_{j\overline j}(w^{(j)},z_2)\mathcal Y_{C^{-1}\alpha}(w^{(i)},z_1).
\end{align}
\end{rem}

\begin{thm}[Fusion of contragredient intertwining operators]\label{lb3}
	Let $z_1,\dots,z_n,z_1',\dots,z_n'\in\mathbb C^\times$  satisfy the following conditions:\\
	(1) $0<|z_1|<|z_2|<\dots<|z_n|$ and $0<|z_2-z_1|<\dots<|z_n-z_1|<|z_1|$;\\
	(1') $|z_1'|>|z_2'|>\dots>|z_n'|>0$ and $0<|z_2'-z_1'|<\dots<|z_n'-z_1'|<|z_1'|$.\\
	Choose arguments $\arg z_1,\arg z'_1$. For each $2\leq m\leq n$, we choose arguments $\arg(z_m-z_1),\arg(z'_m-z'_1)$.  Let $\arg z_m$ be close to $\arg z_1$ as $z_m\rightarrow z_1$, and let $\arg z'_m$ be close to $\arg z'_1$ as $z'_m\rightarrow z'_1$.

Let $W_{i_1},\dots,W_{i_n}$, and $W_i$ be $V$-modules, and let $\mathcal Y_{\sigma_2},\dots,\mathcal Y_{\sigma_n}$ be a chain of intertwining operators of $V$ satisfying the following conditions:\\
	(a) for each $2\leq m\leq n$,  the charge space of $\mathcal Y_{\sigma_m}$ is $W_{i_m}$;\\
	(b) the source space of $\mathcal Y_{\sigma_2}$ is $W_{i_1}$;\\
	(c) the target space of $\mathcal Y_{\sigma_n}$ is $W_{i}$.\\
	Then  there exists a chain of  intertwining operators $\mathcal Y_{\sigma_2'},\dots,\mathcal Y_{\sigma_n'}$, whose types are the same as those of $\mathcal Y_{\sigma_2},\dots,\mathcal Y_{\sigma_n}$ respectively, such that for any  $\mathcal Y_\alpha\in\mathcal V{k\choose i~j}$ , if $\mathcal Y_{\alpha_1},\mathcal Y_{\alpha_2},\dots,\mathcal Y_{\alpha_n}$ is a chain of intertwining operators of $V$ satisfying the following conditions:\\
	(i)	 for each $1\leq m\leq n$, the charge space of $\mathcal Y_{\alpha_m}$ is $W_{i_m}$;\\
	(ii) the source space of $\mathcal Y_{\alpha_1}$ is $W_j$;\\
	(iii)  the target space of $\mathcal Y_{\alpha_n}$ is $W_k$;\\
	(iv) for any $w^{(i_1)}\in W_{i_1},\dots,w^{(i_n)}\in W_{i_n}$, we have the  fusion relation
	\begin{align}
	&\mathcal Y_\alpha\big(\mathcal Y_{\sigma_n}(w^{(i_n)},z_n-z_1)\cdots\mathcal Y_{\sigma_2}(w^{(i_2)},z_2-z_1)w^{(i_1)},z_1\big)\nonumber\\
	=&\mathcal Y_{\alpha_n}(w^{(i_n)},z_n)\cdots\mathcal Y_{\alpha_1}(w^{(i_1)},z_1),\label{eq15}
	\end{align}
	then the following fusion relation also holds:
	\begin{align}
	&\mathcal Y_{C\alpha}\big( {\mathcal Y}_{\sigma'_n}(w^{(i_n)},z_n'-z_1')\cdots {\mathcal Y}_{\sigma'_2}(w^{(i_2)},z_2'-z_1')w^{(i_1)},z_1'\big)\nonumber\\
	=&\mathcal Y_{C\alpha_1}(w^{(i_1)},z_1')\cdots\mathcal Y_{C\alpha_n}(w^{(i_n)},z_n').\label{eq18}
	\end{align}
\end{thm}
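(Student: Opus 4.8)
The plan is to derive \eqref{eq18} from \eqref{eq15} by iterating the fusion relation with annihilation operators of proposition~\ref{lb2}, which is precisely the mechanism that converts an intertwining operator $\mathcal Y_\beta$ into its contragredient $\mathcal Y_{C\beta}$ at the cost of an extra factor $\Upsilon^0$ on the contragredient modules. Both sides of \eqref{eq18}, regarded as functions of $(z_1',\dots,z_n')$ on the simply-connected components of the domain described by~(1'), extend holomorphically, so it suffices to verify \eqref{eq18} at one convenient configuration, while \eqref{eq15}, being a hypothesis, is available on its whole domain. Accordingly I would first reduce, by analytic continuation along the lines of the proof of theorem~\ref{lb16}, to a configuration in which $z_1,\dots,z_n$ lie near prescribed positive reals, the associated $z_1',\dots,z_n'$ lie near the corresponding ``inverted'' points, and all moduli and arguments are under explicit control.

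Next I would pass from operator identities to scalar correlation functions, by pairing against $w^{(\overline k)}\in W_{\overline k}$, applying $\Upsilon^0_{k\overline k}(\cdot,z_0)$ for a suitable $z_0$ with chosen argument, and pairing with the vacuum via \eqref{eq284}; thus I would compute
\[
\langle\Upsilon^0_{k\overline k}\big(\mathcal Y_\alpha(\mathcal Y_{\sigma_n}(w^{(i_n)},z_n-z_1)\cdots w^{(i_1)},z_1)\,w^{(j)},z_0\big)w^{(\overline k)},\Omega\rangle
\]
in two ways. Using \eqref{eq15} to rewrite the inner expression as $\mathcal Y_{\alpha_n}(w^{(i_n)},z_n)\cdots\mathcal Y_{\alpha_1}(w^{(i_1)},z_1)w^{(j)}$ and then applying proposition~\ref{lb2} $n$ times, peeling off the outermost factor $\mathcal Y_{\alpha_n}$ first, pulls $\Upsilon^0_{k\overline k}$ inward through $\mathcal Y_{\alpha_n},\dots,\mathcal Y_{\alpha_1}$ in turn, each step emitting a factor $\mathcal Y_{C\alpha_m}(w^{(i_m)},z_m')$ to the left of the factors already produced; after the last step one reaches $\langle\Upsilon^0_{j\overline j}(w^{(j)},z_0)\,\mathcal Y_{C\alpha_1}(w^{(i_1)},z_1')\cdots\mathcal Y_{C\alpha_n}(w^{(i_n)},z_n')w^{(\overline k)},\Omega\rangle$, the reversal of order being automatic and the $z_m'$ being exactly the points forced by the $e^{i\pi}$-prescription of \eqref{eq14}. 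On the other hand, applying proposition~\ref{lb2} a single time with the fused vector $\xi=\mathcal Y_{\sigma_n}(w^{(i_n)},z_n-z_1)\cdots w^{(i_1)}\in\widehat W_i$ as the charge — legitimate once one checks that \eqref{eq14} extends to charges in the algebraic completion, which follows by continuity from the $W_i$-charge case together with the density results of part~I — gives $\langle\Upsilon^0_{j\overline j}(w^{(j)},z_0)\,\mathcal Y_{C\alpha}(\xi,z_1')w^{(\overline k)},\Omega\rangle$; re-expressing $\mathcal Y_{C\alpha}(\xi,z_1')$ by the associativity and braiding relations of part~I (proposition~2.9 and the surrounding results) in the form $\mathcal Y_{C\alpha}(\mathcal Y_{\sigma_n'}(w^{(i_n)},z_n'-z_1')\cdots w^{(i_1)},z_1')$ then \emph{defines} the desired chain $\mathcal Y_{\sigma_2'},\dots,\mathcal Y_{\sigma_n'}$: the construction uses only $\sigma_2,\dots,\sigma_n$ and the geometry, not $\alpha$, and the resulting operators have the same types as $\mathcal Y_{\sigma_2},\dots,\mathcal Y_{\sigma_n}$ (though in general they differ from them by braiding). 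Equating the two computations and cancelling the common left factor $\Upsilon^0_{j\overline j}(w^{(j)},z_0)$ — legitimate since, as $w^{(j)},w^{(\overline k)},z_0$ vary, that pairing separates points, which is the non-degeneracy used in the proof of corollary~2.15 — yields \eqref{eq18}.

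The analytic-continuation reduction and the passage to correlation functions are routine. The hard part will be the bookkeeping of arguments through the $n$-fold iteration: each application of proposition~\ref{lb2} inserts a factor $e^{i\pi}$ together with a definite branch prescription, and one must verify that after $n$ steps the accumulated choices reproduce precisely the configuration~(1') and all the hypotheses (a)--(c), (i)--(iv), and that every intermediate configuration — in particular the fused charge $\xi$ and its re-expansion — remains inside the region $0<|z_2-z_1|<\cdots<|z_n-z_1|<|z_1|$ on which proposition~\ref{lb2} applies. A secondary technical point, already noted, is the extension of \eqref{eq14} to charges lying in $\widehat W_i$.
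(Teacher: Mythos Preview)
Your plan is the paper's plan: reduce by analytic continuation to a special configuration, apply proposition~\ref{lb2} $n$ times to the product $\mathcal Y_{\alpha_n}\cdots\mathcal Y_{\alpha_1}$ and once to the fused form $\mathcal Y_\alpha(\xi,\cdot)$, then strip off the common factor $\Upsilon^0_{j\overline j}(w^{(j)},\cdot)$ via proposition~2.3. So the architecture is right.

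The step that needs repair is your construction of $\sigma_2',\dots,\sigma_n'$. You propose to ``re-express $\mathcal Y_{C\alpha}(\xi,z_1')$ by the associativity and braiding relations'' and assert the outcome is independent of $\alpha$. But $\xi=\mathcal Y_{\sigma_n}(w^{(i_n)},z_n-z_1)\cdots w^{(i_1)}$ carries the \emph{unprimed} differences $z_m-z_1$, while the primed points $z_m'$ in the theorem are prescribed independently; there is no associativity or braiding move that converts $\mathcal Y_{\sigma_m}(\cdot,z_m-z_1)$ into some $\mathcal Y_{\sigma_m'}(\cdot,z_m'-z_1')$ without passing through the outer $\mathcal Y_{C\alpha}$, so the claimed $\alpha$-independence is exactly what is to be proved.

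The paper sidesteps any re-expression by its choice of configuration. It introduces an auxiliary base point $\zeta_0'$ and works with real $\zeta_0'<\zeta_1'<\cdots<\zeta_n'<0$, setting $\zeta_m:=\zeta_m'-\zeta_0'>0$; then automatically $\zeta_m-\zeta_1=\zeta_m'-\zeta_1'$, so the inner chain $\mathcal Y_{\widetilde\sigma_m}(\cdot,\zeta_m-\zeta_1)$ appears \emph{identically} on both the $\alpha$-side (via \eqref{eq15} analytically continued, the paper's \eqref{eq16}) and the $C\alpha$-side. The single application of \eqref{eq14} to the fused form then yields \eqref{eq18} at $\zeta'$ with $\widetilde\sigma_m$ manifestly independent of $\alpha$, and the desired $\sigma_m'$ come from $\widetilde\sigma_m$ by analytically continuing the $\zeta'$-configuration back to the given $z'$-configuration along a fixed path (the paper's process $(\mathbf P)$). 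If you build the equality $\zeta_m-\zeta_1=\zeta_m'-\zeta_1'$ into your ``convenient configuration'' from the start, your re-expression step collapses to the identity $\sigma_m'=\widetilde\sigma_m$ and your argument goes through. As for the extension of \eqref{eq14} to vectors in $\widehat W_i$ that you flag: the paper does not prove a separate lemma but simply cites the absolute convergence results (theorem~2.6, corollary~2.7) at each step of the iteration, which legitimizes the infinite sums involved.
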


\begin{proof}
	Let $W_{j_1},\dots,W_{j_{n-1}}$  be the target spaces of $\mathcal Y_{\alpha_1},\dots,\mathcal Y_{\alpha_{n-1}}$ respectively. Choose $\zeta_0',\zeta'_1,\cdots,\zeta'_n\in\mathbb R_{<0}$ satisfying  $\zeta'_0<\zeta'_1<\cdots<\zeta'_n<0$ and $|\zeta'_0-\zeta'_1|>|\zeta'_1-\zeta'_n|$. Let $\zeta_1=\zeta_1'-\zeta_0',\dots,\zeta_n=\zeta_n'-\zeta_0'$.
	Let $\arg\zeta'_0=\arg\zeta'_1=\cdots=\arg\zeta'_n=-\pi,\arg\zeta_1=\arg(\zeta'_1-\zeta'_0)=0,\dots,\arg\zeta_n=\arg(\zeta'_n-\zeta'_0)=0$. Note that for any $2\leq m\leq n$, $\zeta_m-\zeta_1=\zeta'_m-\zeta'_1$. We let $\arg(\zeta_m-\zeta_1)=\arg(\zeta'_m-\zeta'_1)=0$. 
	
	We now rotate and stretch these points, so that for each $1\leq m\leq n$, $\zeta_m$ is moved to $\widetilde z_m=z_m$, $\zeta'_m$ is moved to $\widetilde z'_m=z'_m$, $\arg \zeta_m$ becomes $\arg \widetilde z_m=\arg z_m$, and $\arg \zeta'_m$ becomes $\arg \widetilde z'_m=\arg z'_m$. We assume that during this process, conditions  (1) and (1') are always satisfied. (Note that such process might not exist if the choice of $\arg z_2,\arg z_3,\dots$ and $\arg z'_2,\arg z'_3,\dots$ are arbitrary with respect to $\arg z_1$ and $\arg z'_1$. ) Denote this process by $(\mathbf{P})$. Then under this process, for each $2\leq m\leq n$, $\arg(\zeta_m-\zeta_1)$ is changed to an argument $\arg(\widetilde z_m-\widetilde z_1)$ of $\widetilde z_m-\widetilde z_1$, and $\arg(\zeta'_m-\zeta'_1)$ is changed to an argument $\arg(\widetilde z'_m-\widetilde z'_1)$ of $\widetilde z'_m-\widetilde z'_1$ accordingly. Since $\arg(\widetilde z_m-\widetilde z_1)\in \arg(z_m- z_1)+2i\pi\mathbb Z$ and $\arg(\widetilde z'_m-\widetilde z'_1)\in \arg(z'_m- z'_1)+2i\pi\mathbb Z$, there exist intertwining operators $\mathcal Y_{\widetilde \sigma_m},\mathcal Y_{\sigma'_m}$ of the same type as that of $\mathcal Y_{\sigma_m}$, such that for any $w^{(i_m)}\in W_{i_m}$, 
	\begin{gather*}
	\mathcal Y_{\widetilde \sigma_m}(w^{(i_m)},\widetilde z_m-\widetilde z_1)=\mathcal Y_{\sigma_m}(w^{(i_m)},z_m-z_1),\\
	\mathcal Y_{\sigma'_m}(w^{(i_m)},z'_m-z'_1)=\mathcal Y_{\widetilde \sigma_m}(w^{(i_m)},\widetilde z'_m-\widetilde z'_1).
	\end{gather*}
	Then equation \eqref{eq15} implies that
	\begin{align}
	&\mathcal Y_\alpha\big(\mathcal Y_{\widetilde\sigma_n}(w^{(i_n)},\widetilde z_n-\widetilde z_1)\cdots\mathcal Y_{\widetilde\sigma_2}(w^{(i_2)},\widetilde z_2-\widetilde z_1)w^{(i_1)},\widetilde z_1\big)\nonumber\\
	=&\mathcal Y_{\alpha_n}(w^{(i_n)},\widetilde z_n)\cdots\mathcal Y_{\alpha_1}(w^{(i_1)},\widetilde z_1).
	\end{align}
	By reversing  process $(\mathbf P)$, the above equation is analytically continued to the equation
	\begin{align}
	&\mathcal Y_\alpha\big(\mathcal Y_{\widetilde\sigma_n}(w^{(i_n)},\zeta_n'-\zeta_1')\cdots\mathcal Y_{\widetilde\sigma_2}(w^{(i_2)},\zeta_2'-\zeta_1')w^{(i_1)},\zeta_1'-\zeta_0'\big)\nonumber\\
	=&\mathcal Y_{\alpha_n}(w^{(i_n)},\zeta_n'-\zeta_0')\cdots\mathcal Y_{\alpha_1}(w^{(i_1)},\zeta_1'-\zeta_0')\label{eq16}.
	\end{align}
	
	For any $1\leq m\leq n$, we let $\arg(\zeta'_0-\zeta'_m)$ be close to $\arg \zeta'_0=-\pi$ as $\zeta'_m\rightarrow 0$. Then $\arg(\zeta'_0-\zeta'_m)=-\pi$, and hence $\zeta'_m-\zeta'_0=e^{i\pi}(\zeta_0-\zeta_m)$. Choose arbitrary $w^{(j)}\in W_j$.  Then by lemma \ref{lb2}, we have
	\begin{align}
	&\Upsilon^0_{j\overline j}(w^{(j)},\zeta_0')\mathcal Y_{C\alpha_1}(w^{(i_1)},\zeta_1')\cdots\mathcal Y_{C\alpha_n}(w^{(i_n)},\zeta_n')\nonumber\\
	=&\Upsilon^0_{j_1\overline{j_1}}\big(\mathcal Y_{\alpha_1}(w^{(i_1)},\zeta_1'-\zeta_0')w^{(j)},\zeta_0'\big)\mathcal Y_{C\alpha_2}(w^{(i_2)},\zeta_2')\nonumber\\
	&\cdot\mathcal Y_{C\alpha_3}(w^{(i_3)},\zeta_3')\cdots \mathcal Y_{C\alpha_n}(w^{(i_n)},\zeta_n')\nonumber\\
	=&\Upsilon^0_{j_2\overline{j_2}}\big(\mathcal Y_{\alpha_2}(w^{(i_2)},\zeta_2'-\zeta_0')\mathcal Y_{\alpha_1}(w^{(i_1)},\zeta_1'-\zeta_0')w^{(j)},\zeta_0'\big)\nonumber\\
	&\cdot\mathcal Y_{C\alpha_3}(w^{(i_3)},\zeta_3')\cdots \mathcal Y_{C\alpha_n}(w^{(i_n)},\zeta_n')\nonumber\\
	&\qquad\qquad\qquad\qquad\qquad\vdots\nonumber\\
	=&\Upsilon^0_{k\overline k}\big(\mathcal Y_{\alpha_n}(w^{(i_n)},\zeta_n'-\zeta_0')\cdots \mathcal Y_{\alpha_1}(w^{(i_1)},\zeta_1'-\zeta_0')w^{(j)},\zeta_0'\big),\label{eq287}
	\end{align}
	where, by theorem 2.6, the expression in each step converges absolutely. By \eqref{eq16}, expression \eqref{eq287} equals
	\begin{align}
	\Upsilon^0_{k\overline k}\Big(\mathcal Y_\alpha\big(\mathcal Y_{\widetilde\sigma_n}(w^{(i_n)},\zeta_n'-\zeta_1')\cdots \mathcal Y_{\widetilde\sigma_2}(w^{(i_2)},\zeta_2'-\zeta_1')w^{(i_1)},\zeta_1'-\zeta_0'\big)w^{(j)},\zeta_0'\Big),\label{eq17}
	\end{align}
	the absolute convergence of which is guaranteed by corollary 2.7. Again by proposition \ref{lb2}, equation \eqref{eq17} equals
	\begin{align}
	\Upsilon^0_{j\overline j}(w^{(j)},\zeta_0')\mathcal Y_{C\alpha}\big(\mathcal Y_{\widetilde\sigma_n}(w^{(i_n)},\zeta_n'-\zeta_1')\cdots \mathcal Y_{\widetilde\sigma_2}(w^{(i_2)},\zeta_2'-\zeta_1')w^{(i_1)},\zeta_1'\big),\label{eq288}
	\end{align}
	the absolute convergence of which follows from theorem 2.6. Therefore, the left hand side of equation \eqref{eq287} equals \eqref{eq288}. By proposition 2.3, we obtain
	\begin{align}
	&\mathcal Y_{C\alpha}\big(\mathcal Y_{\widetilde\sigma_n}(w^{(i_n)},\zeta_n'-\zeta_1')\cdots \mathcal Y_{\widetilde\sigma_2}(w^{(i_2)},\zeta_2'-\zeta_1')w^{(i_1)},\zeta_1'\big)\nonumber\\
	=&\mathcal Y_{C\alpha_1}(w^{(i_1)},\zeta_1')\cdots\mathcal Y_{C\alpha_n}(w^{(i_n)},\zeta_n'). \label{eq289}
	\end{align}
	
	Now we do process $(\mathbf P)$. Then \eqref{eq289} is analytically continued to the equation
	\begin{flalign}
	&\mathcal Y_{C\alpha}\big( {\mathcal Y}_{\widetilde\sigma_n}(w^{(i_n)},\widetilde z_n'-\widetilde z_1')\cdots {\mathcal Y}_{\widetilde\sigma_2}(w^{(i_2)},\widetilde z_2'-\widetilde z_1')w^{(i_1)},\widetilde z_1'\big)\nonumber\\
	=&\mathcal Y_{C\alpha_1}(w^{(i_1)},\widetilde z_1')\cdots\mathcal Y_{C\alpha_n}(w^{(i_n)},\widetilde z_n'),
	\end{flalign}
	which implies \eqref{eq18}. Thus the proof is completed.
\end{proof}

\begin{rem}\label{lb87}
Choose (not necessarily disjoint) $I,J\in\mathcal J$, and choose  $(z_1,\dots,z_n)\in\mathscr O_n(I),(z_1',\dots,z'_n)\in \mathscr O_n(J)$. Choose continuous argument functions $\arg_I,\arg_J$ on $I,J$ respectively, and let $\arg z_1=\arg_I(z_1),\dots,\arg z_n=\arg_I(z_n),\arg z'_1=\arg_J(z'_1),\dots,\arg z'_n=\arg_J(z'_n)$.  For each $2\leq m\leq n$ we choose arguments $\arg(z_m-z_1)$ and $\arg(z'_m-z'_1)$. Then by theorem \ref{lb3}, for any chain of intertwining operators $\mathcal Y_{\sigma_2},\dots, \mathcal Y_{\sigma_n}$ satisfying conditions (a), (b), and (c) of theorem \ref{lb3}, there exists a chain of intertwining operators $\mathcal Y_{\sigma_2'},\dots,\mathcal Y_{\sigma_n'}$ whose types are the same as those of $\mathcal Y_{\sigma_2},\dots, \mathcal Y_{\sigma_n}$ respectively, such that
\begin{align}
\mathcal Y_{\sigma_n\cdots\mathcal\sigma_2,\alpha}(w^{(i_n)},z_n;\dots,w^{(i_1)},z_1)=\mathcal Y_{\alpha_n}(w^{(i_n)},z_n)\cdots\mathcal Y_{\alpha_1}(w^{(i_1)},z_1)\label{eqb1}
\end{align}
always implies
\begin{align}
\mathcal Y_{\sigma'_n\cdots\sigma'_2,C\alpha}(w^{(i_n)},z'_n;\dots,w^{(i_1)},z'_1)=\mathcal Y_{C\alpha_1}(w^{(i_1)},z_1')\cdots\mathcal Y_{C\alpha_n}(w^{(i_n)},z_n').\label{eqb2}
\end{align}
\end{rem}

\begin{co}[Adjoint of generalized intertwining operators]
Let $V$ be unitary. Let $I\in \mathcal J$, choose $(z_1,\dots,z_n)\in\mathscr O_n(I)$, and choose arguments $\arg z_1,\arg(z_2-z_1),\dots,\arg(z_n-z_1)$. Let $W_{i_1},\dots,W_{i_n}$, and $W_i$ be unitary $V$-modules, and let $\mathcal Y_{\sigma_2},\dots,\mathcal Y_{\sigma_n}$ be a chain of unitary intertwining operators of $V$ satisfying the following conditions:\\
	(a) for each $2\leq m\leq n$,  the charge space of $\mathcal Y_{\sigma_m}$ is $W_{i_m}$;\\
	(b) the source space of $\mathcal Y_{\sigma_2}$ is $W_{i_1}$;\\
	(c) the target space of $\mathcal Y_{\sigma_n}$ is $W_{i}$.\\
Then for each $2\leq m\leq n$, there exists a unitary intertwining operator $\mathcal Y_{\widetilde\sigma_m}$ whose type is the same as that of $\mathcal Y_{\overline{\sigma_m}}$, such that for   any unitary $\mathcal Y_\alpha\in\mathcal V{k\choose i~j}$, and any nonzero quasi-primary vectors $w^{(i_1)}\in W_{i_1},\dots,w^{(i_n)}\in W_{i_n}$, we have
	\begin{align}
	&\mathcal Y_{\sigma_n\cdots\sigma_2,\alpha}\big(w^{(i_n)},z_n;\dots;w^{(i_1)},z_1\big)^\dagger\nonumber\\
	=&e^{-i\pi\big(\Delta_{w^{(i_1)}}+\cdots+\Delta_{w^{(i_n)}}\big)}z_1^{2\Delta_{w^{(i_1)}}}\cdots z_n^{2\Delta_{w^{(i_n)}}}\cdot\mathcal Y_{\widetilde\sigma_n\cdots\widetilde \sigma_2,\alpha^*}\big(\overline{w^{(i_n)}},z_n;\dots;\overline{w^{(i_1)}},z_1\big),\label{eq314}
	\end{align}
where the formal adjoint is defined for evaluations of the  operators between the vectors inside $W_j$ and $W_k$.
\end{co}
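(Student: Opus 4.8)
The plan is to transport the single–variable adjoint relation \eqref{eq291} through the expansion of a generalized intertwining operator as a product of ordinary intertwining operators evaluated on $S^1$. By \eqref{eq293} (equivalently, by the definition of generalized intertwining operators together with \eqref{eq15}), for the configuration at hand there is a uniquely determined chain $\mathcal Y_{\alpha_1},\dots,\mathcal Y_{\alpha_n}$ of ordinary intertwining operators with
\begin{align*}
\mathcal Y_{\sigma_n\cdots\sigma_2,\alpha}(w^{(i_n)},z_n;\dots;w^{(i_1)},z_1)=\mathcal Y_{\alpha_n}(w^{(i_n)},z_n)\cdots\mathcal Y_{\alpha_1}(w^{(i_1)},z_1).
\end{align*}
Taking formal adjoints reverses the order of the product, and, since every $w^{(i_m)}$ is quasi-primary and every $z_m\in S^1$, relation \eqref{eq291} applies to each factor, giving
\begin{align*}
\mathcal Y_{\sigma_n\cdots\sigma_2,\alpha}(\dots)^\dagger=e^{-i\pi(\Delta_{w^{(i_1)}}+\cdots+\Delta_{w^{(i_n)}})}z_1^{2\Delta_{w^{(i_1)}}}\cdots z_n^{2\Delta_{w^{(i_n)}}}\cdot\mathcal Y_{\alpha_1^*}(\overline{w^{(i_1)}},z_1)\cdots\mathcal Y_{\alpha_n^*}(\overline{w^{(i_n)}},z_n).
\end{align*}
This already produces the scalar prefactor of \eqref{eq314}, so the entire problem reduces to recognizing the product $\mathcal Y_{\alpha_1^*}(\overline{w^{(i_1)}},z_1)\cdots\mathcal Y_{\alpha_n^*}(\overline{w^{(i_n)}},z_n)$ as a generalized intertwining operator $\mathcal Y_{\widetilde\sigma_n\cdots\widetilde\sigma_2,\alpha^*}(\overline{w^{(i_n)}},z_n;\dots;\overline{w^{(i_1)}},z_1)$ whose chain $\mathcal Y_{\widetilde\sigma_m}$ has the type of $\mathcal Y_{\overline{\sigma_m}}$.

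One cannot carry out this last identification by adjointing the fused form directly, since the charge entering $\mathcal Y_\alpha$ is not quasi-primary and \eqref{eq291} is unavailable there. Instead I would invoke Theorem \ref{lb3} and Remark \ref{lb87}: applied to the fusion relation above, they produce a chain $\mathcal Y_{\sigma'_2},\dots,\mathcal Y_{\sigma'_n}$, with $\mathcal Y_{\sigma'_m}$ of the same type as $\mathcal Y_{\sigma_m}$, such that $\mathcal Y_{\sigma'_n\cdots\sigma'_2,C\alpha}$ equals the \emph{reversed} product $\mathcal Y_{C\alpha_1}(w^{(i_1)},\cdot)\cdots\mathcal Y_{C\alpha_n}(w^{(i_n)},\cdot)$ — that is, the contragredient performs the same order–reversal as the formal adjoint. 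It then remains to pass from the contragredient picture to the adjoint picture, which is precisely what the part-I relations among the contragredient $C\alpha$, the conjugate $\overline\alpha$, and the adjoint $\alpha^*$ of a unitary intertwining operator accomplish: conjugating a chain of intertwining operators is a transport through the antiunitaries $C_i$ together with $z\mapsto\overline z$ and weight–dependent scalars, it is compatible with fusion, it sends each $C\alpha_m$ to $\alpha_m^*$ and each $w^{(i_m)}$ to $\overline{w^{(i_m)}}$, and it turns $\mathcal Y_{\sigma'_m}$ into $\mathcal Y_{\widetilde\sigma_m}:=\overline{\mathcal Y_{\sigma'_m}}$, which has the type of $\mathcal Y_{\overline{\sigma_m}}$. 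Applying $\overline{\cdot}$ to the Theorem~\ref{lb3} identity thus converts it into the required equality, and combining with the displayed adjoint of the product yields \eqref{eq314}. As usual, by holomorphicity of both sides in $(z_1,\dots,z_n)$ over the components of $\mathscr O_n(I)$ it is enough to do this on a sub-configuration with $0<|z_2-z_1|<\cdots<|z_n-z_1|<|z_1|$ and the $z_m$ near $S^1$, where \eqref{eq293} and Theorem~\ref{lb3} apply verbatim, and one may take the auxiliary points of Remark~\ref{lb87} to be the reflections $\overline{z_m}$ so that the conjugation returns the original configuration.

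The main obstacle is the bookkeeping needed to line up the three ``duality'' operations on \emph{multi-point} intertwining operators — the formal adjoint $\dagger$, the contragredient $C$, and the conjugate $\overline{\cdot}$: keeping track of which of them reverses the order of a product, which conjugates the module labels while fixing the charge labels, how the chosen branches $\arg z_1,\arg(z_m-z_1)$ transform, and how the scalars $e^{-i\pi\Delta}$ and $z^{2\Delta}$ compose and cancel. All the analytic input — the fusion relation \eqref{eq293}, Theorem~\ref{lb3}, and \eqref{eq291} — is already available, so once this matching is done carefully the statement follows.
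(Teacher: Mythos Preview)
Your proposal is correct and follows essentially the same route as the paper: expand the generalized intertwining operator as a product via \eqref{eq293}, invoke Theorem~\ref{lb3}/Remark~\ref{lb87} at the reflected points $\overline{z_m}$ to obtain the chain $\mathcal Y_{\sigma'_m}$ governing the reversed contragredient product, then pass from $C$ to $*$ by conjugation and set $\mathcal Y_{\widetilde\sigma_m}=\overline{\mathcal Y_{\sigma'_m}}$. The only cosmetic difference is that the paper works with the transpose relation (1.27) for $\mathcal Y_{C\alpha_m}(\cdot,\overline{z_m})$ and then applies $C_j^{-1}(\cdot)C_k$ at the end, whereas you apply the adjoint relation \eqref{eq291} to each factor first and then match with the conjugated Theorem~\ref{lb3} identity; since $\alpha^*=\overline{C\alpha}$ these two orderings amount to the same computation.
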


\begin{proof}
Let $\arg_I$ be the continuous argument function on $I$ satisfying $\arg_I(z_1)=\arg z_1$. We let $\arg z_2=\arg_I(z_2),\dots,\arg z_n=\arg_I(z_n)$. Recall that by convention 1.12, we have $\arg\overline{z_1}=-\arg z_1,\arg\overline{z_2}=-\arg z_2,\dots,\arg\overline{z_n}=-\arg z_n$.  Let $\arg (\overline{z_2}-\overline {z_1})=-\arg(z_2-z_1),\dots,\arg (\overline{z_n}-\overline {z_1})=-\arg(z_n-z_1)$. By remark \ref{lb87}, we can find a chain of unitary intertwining operators $\mathcal Y_{\sigma_2'},\dots,\mathcal Y_{\sigma_n'}$ whose types are the same as those of $\mathcal Y_{\sigma_2},\dots,\mathcal Y_{\sigma_n}$ respectively, such that for any chain of intertwining operators $\mathcal Y_{\alpha_1},\dots,\mathcal Y_{\alpha_n}$ and any  unitary $\mathcal Y_\alpha$, if equation \eqref{eqb1} holds for any $w^{(i_1)}\in W_1,\dots,w^{(i_n)}\in W_{i_n}$, then
	\begin{align}
\mathcal Y_{\sigma'_n\cdots\sigma'_2,C\alpha}(w^{(i_n)},\overline{z_n};\dots,w^{(i_1)},\overline{z_1})=\mathcal Y_{C\alpha_1}(w^{(i_1)},\overline{z_1})\cdots\mathcal Y_{C\alpha_n}(w^{(i_n)},\overline{z_n}).\label{eq312}
	\end{align}
Now assume that $w^{(i_1)},\dots,w^{(i_n)}$ are quasi-primary. By equation (1.27), for any $1\leq m\leq n$, we have
\begin{align}
\mathcal Y_{C\alpha_m}(w^{(i_m)},\overline{z_m})=e^{-i\pi\Delta_{w^{(i_m)}}}z_m^{2\Delta_{w^{(i_m)}}}\mathcal Y_{\alpha_m}(w^{(i_m)},z_m)^{\tr}.
\end{align}
Therefore, by equation \eqref{eqb1}, we see that \eqref{eq312} equals
\begin{align}
&e^{-i\pi\big(\Delta_{w^{(i_1)}}+\cdots+\Delta_{w^{(i_n)}}\big)}z_1^{2\Delta_{w^{(i_1)}}}\cdots z_n^{2\Delta_{w^{(i_n)}}}\big(\mathcal Y_{\alpha_n}(w^{(i_n)},z_n)\cdots\mathcal Y_{\alpha_1}(w^{(i_1)},z_1)\big)^\tr\nonumber\\
=&e^{-i\pi\big(\Delta_{w^{(i_1)}}+\cdots+\Delta_{w^{(i_n)}}\big)}z_1^{2\Delta_{w^{(i_1)}}}\cdots z_n^{2\Delta_{w^{(i_n)}}}\mathcal Y_{\sigma_n\cdots\sigma_2,\alpha}\big(w^{(i_n)},z_n;\dots;w^{(i_1)},z_1\big)^\tr.\label{eq313}
\end{align}

Recall that $\alpha^*=\overline{C\alpha}$. It is obvious that equation
\begin{align}
C_j^{-1}\mathcal Y_{\sigma'_n\cdots\sigma'_2,C\alpha}(w^{(i_n)},\overline{z_n};\dots,w^{(i_1)},\overline{z_1}) C_k=\mathcal Y_{\overline{\sigma'_n}\cdots\overline{\sigma'_2},\alpha^*}(\overline{w^{(i_n)}},z_n;\dots;\overline{w^{(i_1)}},z_1)
\end{align}
holds when $z_1,\dots,z_n$ also satisfy $0<|z_2-z_1|<\cdots<|z_n-z_1|<|z_1|$. By analytic continuation, it holds  for general $(z_1,\dots,z_n)\in\mathscr O_n(I)$. Therefore, if we apply $C_j^{-1}(\cdot)C_k$ to the left hand side of equation \eqref{eq312} and the right hand side of equation \eqref{eq313}, we obtain
\begin{align}
&\mathcal Y_{\overline{\sigma'_n}\cdots\overline{\sigma'_n},\alpha^*}(\overline{w^{(i_n)}},z_n;\dots;\overline{w^{(i_1)}},z_1)\nonumber\\
=&e^{i\pi\big(\Delta_{w^{(i_1)}}+\cdots+\Delta_{w^{(i_n)}}\big)}z_1^{-2\Delta_{w^{(i_1)}}}\cdots z_n^{-2\Delta_{w^{(i_n)}}}\mathcal Y_{\sigma_n\cdots\sigma_2,\alpha}\big(w^{(i_n)},z_n;\dots;w^{(i_1)},z_1\big)^\dagger.
\end{align}
So if we let $\mathcal Y_{\widetilde\sigma_2}=\mathcal Y_{\overline{\sigma_2'}},\dots,\mathcal Y_{\widetilde\sigma_2}=\mathcal Y_{\overline{\sigma_2'}}$, then equation \eqref{eq314} is proved.
\end{proof}

\subsection{Generalized smeared intertwining operators}\label{Condition ABC}
In this section, we assume that $V$ is unitary, energy-bounded, and strongly local. Let $\mathcal F$ be a non-empty set of non-zero irreducible unitary $V$-modules, and let  $\overline{\mathcal F}=\{W_{\overline i}:i\in\mathcal F \}$. Let $\mathcal F^\boxtimes$ be the collection of unitary $V$-modules $W_i$, where $W_i$ is equivalent to a finite  direct sum of  submodules of  tensor products of some $V$-modules in $\mathcal F\cup\overline{\mathcal F}$. So $\mathcal F^\boxtimes$ is additively closed, and any irreducible element in $\mathcal F^\boxtimes$ is equivalent to a submodule of $W_{i_1}\boxtimes\cdots\boxtimes W_{i_n}$, where $i_1,\dots,i_n\in\mathcal F\cup\overline{\mathcal F}$. If $i\in\mathcal F$, we let $E^1(W_i)$ be the vector space of all quasi-primary vectors  $w^{(i)}\in W_i$ satisfying the condition that for any $j,k\in\mathcal F^\boxtimes$ and any $\mathcal Y_\alpha\in\mathcal V{k\choose i~j}$, $\mathcal Y_\alpha(w^{(i)},x)$ satisfies linear energy bounds. $E^1(V)$ is defined in a similar way to be the set of all quasi-primary vectors $v\in V$, such that for any $k\in\mathcal F^\boxtimes$, $Y_k(v,x)$ satisfies linear energy bounds.

In this section, we always assume, unless otherwise stated, that $\mathcal F$ satisfies one of the following two conditions.
\begin{cond}\label{CondA}
{~}\\
(a) Every irreducible submodule of a tensor product of $V$-modules in  $\mathcal F\cup\overline{\mathcal F}$ is unitarizable.\\
(b) $V$ is generated by $E^1(V)$.\\
(c) If $i\in\mathcal F,j,k\in\mathcal F^\boxtimes$, and $\mathcal Y_\alpha\in\mathcal V{k\choose i~j}$, then $\mathcal Y_\alpha$ is energy-bounded.
\end{cond}
\begin{cond}\label{CondB}
{~}\\
(a) Every irreducible submodule of a tensor product of $V$-modules in  $\mathcal F\cup\overline{\mathcal F}$ is unitarizable and energy-bounded.\\
(b) For any $i\in\mathcal F$, $E^1(W_i)$ contains at least one non-zero vector.
\end{cond}

Note that if $V$ is unitary and $\mathcal F$ satisfies condition \ref{CondA}-(b), then by corollary 3.7 and theorem \ref{lb61}, $V$ is energy bounded and strongly local. By corollary 3.7,  Conditions \ref{CondA}-(a),(b) $\Rightarrow$ condition \ref{CondB}-(a), and condition \ref{CondB}-(b) $\Rightarrow$ \ref{CondA}-(c).

\begin{rem}\label{lb122}
If $\mathcal F$ satisfies condition \ref{CondB}, then by theorem \ref{lb120}, any unitary $V$-module $W_i$ in $\mathcal F^\boxtimes$ is strongly integrable. Now we suppose that $\mathcal F$ satisfies condition \ref{CondA}. Then, using the same argument as in the proof of theorem \ref{lb120}, one can show that any $W_i$ in $\mathcal F^\boxtimes$ is \textbf{almost strongly integrable}, which means the following:  Define a real vector subspace $\ER=\{v+\theta v, i(v-\theta v):v\in \E \}$ of $\E$. Then there exists a representation $\pi_i$ of the conformal net $\mathcal M_V$ on the $\mathcal H_i$, such that for any $I\in\mathcal J,v\in\ER$, and $f\in C^\infty_c(I)$ satisfying that
\begin{align}
e^{i\pi\Delta_v/2}e_{1-\Delta_v}f=\overline {e^{i\pi\Delta_v/2}e_{1-\Delta_v}f},\label{eq359}
\end{align}
we have
\begin{align}
\pi_{i,I}\big(\overline{Y(v,f)}\big)=\overline{Y_i(v,f)}.
\end{align}

Note that  by theorem \ref{lb61}, the von Neumann algebra $\mathcal M_V(I)$ is generated by these $\overline{Y(v,f)}$'s. Therefore, such representation $\pi_i$, if exists, must be unique. In this way, we have a  functor $\mathfrak F:\Rep^\uni_{\mathcal F^\boxtimes}(V)\rightarrow \Rep_{\mathcal F^\boxtimes}(\mathcal M_V)$ sending the object $(W_i,Y_i)$ to $(\mathcal H_i,\pi)$.  By proposition 3.6, the conformal vector $\nu$ is inside $\ER$. Therefore, from their proof we see that theorem \ref{lb63} and  corollary \ref{lb103} still hold, with $\mathcal S$ replaced by $\mathcal F^\boxtimes$.

We define $\mathcal M_V(I)_\infty$ to be the set of all $x\in\mathcal M_V(I)$ satisfying relation \eqref{eq301} for any $i\in\mathcal F^\boxtimes$. We can conclude that $\mathcal M_V(I)_\infty$ is a strongly dense self-adjoint subalgebra of $\mathcal M_V(I)$, either by using the same argument as in the proof of proposition \ref{lb104}, or by observing that every $e^{it\overline{Y(v,f)}}$ is inside $\mathcal M_V(I)_\infty$ (by Lemma B.8-(1)), where $t\in\mathbb R,v\in \ER$, and $f\in C^\infty_c(I)$ satisfies equation \eqref{eq359}.
\end{rem}

We now define generalized smeared intertwining operators. First, for any $I\in\mathcal J,n=1,2,\dots$, we choose an arbitrary continuous argument function $\arg_I$ on $I$, and define $\mathfrak O_n(I)$ to be the set of all $(I_1,\dots,I_n)$, where $I_1,\dots,I_n\in\mathcal J(I)$ are mutually disjoint, and for any $2\leq l<m\leq n$,  either $\arg_I(z_lz_1^{-1})\arg_I(z_mz_1^{-1})<0$ for all $z_m\in I_m,z_l\in I_l$, or $|\arg_I(z_lz_1^{-1})|<|\arg_I(z_mz_1^{-1})|$ for all $z_m\in I_m,z_l\in I_l$.

Let $\mathcal Y_{\sigma_n\cdots\sigma_2,\alpha}$ be a  generalized intertwining operator in $\mathcal V{k\choose i_n~\dots~i_1~j}$. We say that $\mathcal Y_{\sigma_n\cdots\sigma_2,\alpha}$ is \textbf{controlled by $\mathcal F$} if $i_1,\dots,i_n\in\mathcal F\cup\overline{\mathcal F}$, and $j,k\in\mathcal F^\boxtimes$. Choose $I\in\mathcal J(S^1\setminus\{-1\})$, $(I_1,\dots,I_n)\in\mathfrak O_n(I)$ and $f_1\in C^\infty_c(I_1),\dots,f_n\in C^\infty_c(I_n)$. For any  $w^{(i_1)}\in W_{i_1},\dots,w^{(i_n)}\in W_{i_n}$, we define a sesquilinear form
\begin{gather*}
\mathcal Y_{\sigma_n\cdots\sigma_2,\alpha}(w^{(i_n)},f_n;\dots;w^{(i_1)},f_1): W_j\times W_k\rightarrow\mathbb C,\\
(w^{(j)}, w^{(k)})\mapsto\langle \mathcal Y_{\sigma_n\cdots\sigma_2,\alpha}(w^{(i_n)},f_n;\dots;w^{(i_1)},f_1)w^{(j)}|w^{(k)} \rangle
\end{gather*}
using the equation
\begin{align}
&\langle \mathcal Y_{\sigma_n\cdots\sigma_2,\alpha}(w^{(i_n)},f_n;\dots;w^{(i_1)},f_1)w^{(j)}|w^{(k)} \rangle\nonumber\\
=&\int_{-\pi}^{\pi}\cdots \int_{-\pi}^{\pi}\langle \mathcal Y_{\sigma_n\cdots\sigma_2,\alpha}(w^{(i_n)},e^{i\theta_n};\dots;w^{(i_1)},e^{i\theta_1})w^{(j)}|w^{(k)} \rangle\cdot f_1(e^{i\theta_1})\cdots f_n(e^{i\theta_n})\di\theta_1\cdots\di\theta_n,
\end{align}
where, for each $l=2,3,\dots,n$, $\arg(e^{i\theta_l}-e^{i\theta_1})$ is close to $\theta_l=\arg e^{i\theta_l}$ as $e^{i\theta_1}\rightarrow 0$.
\begin{pp}\label{lbb2}
Assume that $\mathcal Y_{\sigma_n\cdots\sigma_2,\alpha}$ is controlled by $\mathcal F$. Then the linear operator $ \mathcal Y_{\sigma_n\cdots\sigma_2,\alpha}(w^{(i_n)},f_n;\dots;w^{(i_1)},f_1):W_j\rightarrow \widehat W_k$ maps $W_j$ into $\mathcal H^\infty_k$. If we regard it as an unbounded operator  $\mathcal H_j\rightarrow\mathcal H_k$ with domain $W_j$, then it is preclosed. The closure $\overline{ \mathcal Y_{\sigma_n\cdots\sigma_2,\alpha}(w^{(i_n)},f_n;\dots;w^{(i_1)},f_1)}$ maps $\mathcal H^\infty_j$ into $\mathcal H^\infty_k$, and its adjoint maps $\mathcal H^\infty_k$ into $\mathcal H^\infty_j$. Moreover, there exists  $p\in\mathbb Z_{\geq0}$, such that for any $l\in\mathbb Z_{\geq0}$, we can find  $C_{l+p}>0$, such that the inequality 
\begin{align}
\big\lVert\overline{ \mathcal Y_{\sigma_n\cdots\sigma_2,\alpha}(w^{(i_n)},f_n;\dots;w^{(i_1)},f_1)}\xi^{(j)}\big\lVert_l\leq C_{l+p}\lVert\xi^{(j)}\lVert_{l+p}\label{eq306}
\end{align}
holds for any $\xi^{(j)}\in\mathcal H^\infty_j$.
\end{pp}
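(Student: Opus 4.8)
The plan is to reduce everything to the corresponding—already available—properties of \emph{ordinary} smeared energy-bounded intertwining operators, using that a generalized intertwining operator is, near $S^1$, an honest product of ordinary ones. By the fusion relation \eqref{eq293}, on $\mathscr O_n(I)$ the unsmeared operator factors as
\[
\mathcal Y_{\sigma_n\cdots\sigma_2,\alpha}(w^{(i_n)},z_n;\dots;w^{(i_1)},z_1)=\mathcal Y_{\alpha_n}(w^{(i_n)},z_n)\cdots\mathcal Y_{\alpha_1}(w^{(i_1)},z_1),
\]
for a uniquely determined chain $\mathcal Y_{\alpha_m}\in\mathcal V{j_m\choose i_m~j_{m-1}}$ with $W_{j_0}=W_j$ and $W_{j_n}=W_k$. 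Since $j_0=j\in\mathcal F^\boxtimes$, $i_1,\dots,i_n\in\mathcal F\cup\overline{\mathcal F}$, and $\mathcal F^\boxtimes$ is closed under $\boxtimes$ with modules of $\mathcal F\cup\overline{\mathcal F}$ and under taking submodules, associativity of the fusion product shows that each intermediate $W_{j_m}$ is again in $\mathcal F^\boxtimes$ (when the generalized operator is nonzero; otherwise there is nothing to prove, since the zero operator satisfies all the assertions). Hence each $\mathcal Y_{\alpha_m}$ has charge space in $\mathcal F\cup\overline{\mathcal F}$ and source and target in $\mathcal F^\boxtimes$, so by condition \ref{CondA}-(c) (if $i_m\in\mathcal F$), or by applying condition \ref{CondA}-(c) to its adjoint together with proposition 3.4 (if $i_m\in\overline{\mathcal F}$)—and likewise under condition \ref{CondB}—each $\mathcal Y_{\alpha_m}$ is energy-bounded.

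Next I would identify, for $(I_1,\dots,I_n)\in\mathfrak O_n(I)$ and $f_m\in C^\infty_c(I_m)$, the generalized smeared operator with the composition of ordinary smeared intertwining operators,
\[
\mathcal Y_{\sigma_n\cdots\sigma_2,\alpha}(w^{(i_n)},f_n;\dots;w^{(i_1)},f_1)=\mathcal Y_{\alpha_n}(w^{(i_n)},f_n)\cdots\mathcal Y_{\alpha_1}(w^{(i_1)},f_1)
\]
as operators $W_j\to\mathcal H^\infty_k$. Because $(I_1,\dots,I_n)\in\mathfrak O_n(I)$ forces $(z_1,\dots,z_n)\in\mathscr O_n(I)$ whenever $z_m\in\operatorname{supp}f_m$, the integrand defining the left side is precisely the real-analytic operator-valued function displayed above. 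The identity is then proved by induction on $n$: writing $\eta=\mathcal Y_{\alpha_{n-1}}(w^{(i_{n-1})},f_{n-1})\cdots\mathcal Y_{\alpha_1}(w^{(i_1)},f_1)w^{(j)}\in\mathcal H^\infty_{j_{n-1}}$, one applies $\overline{\mathcal Y_{\alpha_n}(w^{(i_n)},f_n)}$ to $\eta$ and interchanges the remaining $\theta$-integrations with the intermediate-energy summations that define the unsmeared product in remark \ref{lbb1}; this interchange is legitimate because the energy bounds of the $\mathcal Y_{\alpha_m}$ dominate the series \eqref{eq371} by a single summable series uniformly in the $\theta$'s, so Fubini and dominated convergence apply. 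I expect this identification to be the main obstacle of the proof—everything else is bookkeeping.

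Finally, all the assertions follow by composing the known properties (from part I) of the ordinary smeared energy-bounded intertwining operators $\mathcal Y_{\alpha_m}(w^{(i_m)},f_m)$: each maps $W_{j_{m-1}}$ into $\mathcal H^\infty_{j_m}$, is preclosed with closure mapping $\mathcal H^\infty_{j_{m-1}}$ into $\mathcal H^\infty_{j_m}$ and adjoint mapping $\mathcal H^\infty_{j_m}$ into $\mathcal H^\infty_{j_{m-1}}$, and satisfies $\lVert\overline{\mathcal Y_{\alpha_m}(w^{(i_m)},f_m)}\xi\rVert_l\le C^{(m)}_{l+p_m}\lVert\xi\rVert_{l+p_m}$ for some $p_m\ge0$. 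Composing the $n$ of them shows the generalized smeared operator maps $W_j$ into $\mathcal H^\infty_k$; iterating the bounds gives \eqref{eq306} with $p=p_1+\cdots+p_n$, first for $\xi^{(j)}\in W_j$ and then, by truncating an arbitrary $\xi^{(j)}\in\mathcal H^\infty_j$ to finite energy and applying \eqref{eq306} to the differences, for all $\xi^{(j)}\in\mathcal H^\infty_j$; this simultaneously shows the operator is preclosed with closure defined on $\mathcal H^\infty_j$ and equal there to $\overline{\mathcal Y_{\alpha_n}(w^{(i_n)},f_n)}\cdots\overline{\mathcal Y_{\alpha_1}(w^{(i_1)},f_1)}$, hence mapping $\mathcal H^\infty_j$ into $\mathcal H^\infty_k$. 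For the adjoint, the formal adjoint of the composition is $\mathcal Y_{\alpha_1}(w^{(i_1)},f_1)^\dagger\cdots\mathcal Y_{\alpha_n}(w^{(i_n)},f_n)^\dagger$, again a composition of smeared intertwining operators of the adjoint types (energy-bounded by proposition 3.4), so by the same argument its closure is defined on $\mathcal H^\infty_k$ and maps into $\mathcal H^\infty_j$; since this closure is contained in the Hilbert-space adjoint and $W_j$ is a core for $\overline{\mathcal Y_{\sigma_n\cdots\sigma_2,\alpha}(w^{(i_n)},f_n;\dots;w^{(i_1)},f_1)}$, the Hilbert-space adjoint also maps $\mathcal H^\infty_k$ into $\mathcal H^\infty_j$, completing the proof.
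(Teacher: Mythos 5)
Your proof is correct and follows essentially the same route as the paper's: factor the unsmeared generalized operator into a chain of ordinary intertwining operators as in \eqref{eqb3}, deduce the energy boundedness of each factor from condition \ref{CondA}-(c) or \ref{CondB}, pass to the smeared factorization \eqref{eqb4}, and then compose the known smoothness, closability, and energy-norm-bound properties of ordinary smeared intertwining operators to obtain all the assertions. The one step where your account supplies extra work is the justification of the smeared factorization by induction and a Fubini/dominated-convergence interchange: the paper simply cites proposition 3.12 of part I for exactly that identity, whereas you re-derive it (and you are also more explicit than the paper about why the intermediate modules lie in $\mathcal F^\boxtimes$ and about the case of a charge space in $\overline{\mathcal F}$); the outer structure of the argument is otherwise identical.
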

\begin{proof}
Choose any $z_1\in I_1,\dots,z_n\in I_n$. Choose arguments $\arg z_1,\dots,\arg z_n\in(-\pi,\pi)$. For each $l=2,3,\dots,n$, we let $\arg(z_l-z_1)$ be close to $\arg z_l$ as $z_1\rightarrow 0$. Suppose that for any $w^{(i_1)}\in W_{i_1},\dots,w^{(i_n)}\in W_{i_n}$ we have the fusion relation
\begin{align}
\mathcal Y_{\sigma_n\cdots\sigma_2,\alpha}(w^{(i_n)},z_n;\dots;w^{(i_1)},z_1)=\mathcal Y_{\alpha_n}(w^{(i_n)},z_n)\cdots\mathcal Y_{\alpha_1}(w^{(i_1)},z_1)\label{eqb3}
\end{align}
for a chain of intertwining operators $\mathcal Y_{\alpha_1},\dots,\mathcal Y_{\alpha_n}$.  Then the  source spaces and the charge spaces of these intertwining operators are unitary $V$-modules in $\mathcal F^\boxtimes$. By condition \ref{CondA}-(c) and proposition 3.3, these intertwining operators are energy-bounded. It follows from proposition 3.12 that
\begin{align}
\mathcal Y_{\sigma_n\cdots\sigma_2,\alpha}(w^{(i_n)},f_n;\dots;w^{(i_1)},f_1)=\mathcal Y_{\alpha_n}(w^{(i_n)},f_n)\cdots\mathcal Y_{\alpha_1}(w^{(i_1)},f_1)\label{eqb4}
\end{align}
when both sides act on $W_j$. Therefore, by equation (3.25), the adjoint of $\mathcal Y_{\sigma_n\cdots\sigma_2,\alpha}(w^{(i_n)},f_n;\dots;w^{(i_1)},f_1)$ has a dense domain containing $\mathcal H^\infty_k$, which proves that $\mathcal Y_{\sigma_n\cdots\sigma_2,\alpha}(w^{(i_n)},f_n;\dots;w^{(i_1)},f_1)$ is preclosed. By proposition 3.9, there exists $p\in\mathbb Z_{\geq0}$, such that for any $l\in\mathbb Z_{\geq0}$, there exists $C_{l+p}>0$,  such that inequality \eqref{eq306} holds for any $\xi^{(j)}\in W_j$. From this we know that $\mathcal H^\infty_j$ is inside the domain of $\overline{\mathcal Y_{\sigma_n\cdots\sigma_2,\alpha}(w^{(i_n)},f_n;\dots;w^{(i_1)},f_1)}$, that this closed operator maps $\mathcal H^\infty_j$ into $\mathcal H^\infty_k$, and that inequality \eqref{eq306} holds for any $\xi^{(j)}\in\mathcal H^\infty_j$. Clearly we have
\begin{align*}
\mathcal Y_{\sigma_n\cdots\sigma_2,\alpha}(w^{(i_n)},f_n;\dots;w^{(i_1)},f_1)^*\supset\mathcal Y_{\alpha_1}(w^{(i_1)},f_1)^\dagger\cdots\mathcal Y_{\alpha_n}(w^{(i_n)},f_n)^\dagger.
\end{align*}
So $\overline{\mathcal Y_{\sigma_n\cdots\sigma_2,\alpha}(w^{(i_n)},f_n;\dots;w^{(i_1)},f_1)}^*$ maps $\mathcal H^\infty_k$ into $\mathcal H^\infty_j$.
\end{proof}

We  regard the linear operator $ \mathcal Y_{\sigma_n\cdots\sigma_2,\alpha}(w^{(i_n)},f_n;\dots;w^{(i_1)},f_1):\mathcal H^\infty_j\rightarrow\mathcal H^\infty_k$ as the restriction of $\overline{ \mathcal Y_{\sigma_n\cdots\sigma_2,\alpha}(w^{(i_n)},f_n;\dots;w^{(i_1)},f_1)}$ to $\mathcal H^\infty_j$, and call it a \textbf{generalized smeared intertwining operator}. Then, if the fusion relation \eqref{eqb3} holds,  relation \eqref{eqb4} holds when both sides act on $\mathcal H^\infty_j$. The \textbf{formal adjoint} $\mathcal Y_{\sigma_n\cdots\sigma_2,\alpha}(w^{(i_n)},f_n;\dots;w^{(i_1)},f_1)^\dagger:\mathcal H^\infty_k\rightarrow\mathcal H^\infty_j$ of $\mathcal Y_{\sigma_n\cdots\sigma_2,\alpha}(w^{(i_n)},f_n;\dots;w^{(i_1)},f_1)$ is defined to be the restriction of the closed operator $\mathcal Y_{\sigma_n\cdots\sigma_2,\alpha}(w^{(i_n)},f_n;\dots;w^{(i_1)},f_1)^*$ to $\mathcal H^\infty_k$.

\begin{pp}[Strong intertwining property]\label{lb106}
Let $\mathcal Y_{\sigma_n\cdots\sigma_2,\alpha}\in \mathcal V{k\choose i_n~\dots~i_1~j}$ be controlled by $\mathcal F$, $w^{(i_1)}\in W_{i_1},\dots,w^{(i_n)}\in W_{i_n}$, $I\in\mathcal J,J\in\mathcal J(S^1\setminus\{-1\})$ be disjoint, and $(J_1,\dots,J_n)\in\mathfrak O_n(J)$. If $\mathcal F$ satisfies condition \ref{CondA},  then for any $x\in\mathcal M_V(I),w^{(i_1)}\in W_{i_1},\dots,w^{(i_n)}\in W_{i_n},g_1\in C^\infty_c(J_1),\dots,g_n\in C^\infty_c(J_n)$, we have
\begin{align}
\pi_k(x)\cdot\overline{\mathcal Y_{\sigma_n\cdots\sigma_2,\alpha}(w^{(i_n)},g_n;\dots;w^{(i_1)},g_1)} \subset\overline{\mathcal Y_{\sigma_n\cdots\sigma_2,\alpha}(w^{(i_n)},g_n;\dots;w^{(i_1)},g_1)}\cdot \pi_j(x).\label{eq317}
\end{align}
Relation \eqref{eq317} still holds if we assume that $\mathcal F$ satisfies condition \ref{CondB}, and that $x\in\mathcal M_V(I),w^{(i_1)}\in E^1(W_{i_1}),\dots,w^{(i_n)}\in E^1(W_{i_n}),g_1\in C^\infty_c(J_1),\dots,g_n\in C^\infty_c(J_n)$.
\end{pp}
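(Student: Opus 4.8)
The point is that \eqref{eq317} contains no new analytic input beyond Proposition~\ref{lbb2} and the strong intertwining property for \emph{ordinary} smeared intertwining operators established in part~I (recalled as property~(c) in the introduction): one rewrites the generalized smeared intertwining operator as a composition of ordinary ones, pushes $\pi(x)$ through each factor, and then reassembles, the only genuine work being the bookkeeping of domains when passing to closures.

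\emph{Step 1 (fusion factorization).} Pick $(z_1,\dots,z_n)\in\mathscr O_n(J)$ with $z_l\in J_l$ and arguments $\arg z_1,\dots,\arg z_n\in(-\pi,\pi)$, with $\arg(z_l-z_1)$ close to $\arg z_l$ as $z_1\to0$. By the definition of generalized intertwining operators there is a chain $\mathcal Y_{\alpha_1},\dots,\mathcal Y_{\alpha_n}$, with $\mathcal Y_{\alpha_m}$ of type ${r_m\choose i_m~r_{m-1}}$ where $r_0=j$, $r_n=k$ and $W_{r_1},\dots,W_{r_{n-1}}\in\mathcal F^\boxtimes$, realizing the fusion relation \eqref{eqb3}. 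By Proposition~\ref{lbb2} (equation \eqref{eqb4}), the generalized smeared intertwining operator factors, on $\mathcal H_j^\infty$, as
\[
\mathcal Y_{\sigma_n\cdots\sigma_2,\alpha}(w^{(i_n)},g_n;\dots;w^{(i_1)},g_1)=\mathcal Y_{\alpha_n}(w^{(i_n)},g_n)\cdots\mathcal Y_{\alpha_1}(w^{(i_1)},g_1),
\]
each factor being an ordinary smeared intertwining operator with charge space $W_{i_m}$ ($i_m\in\mathcal F\cup\overline{\mathcal F}$), smearing function supported in $J_m\subset J\subset S^1\setminus\{-1\}$ (disjoint from $I$), and source/target spaces in $\mathcal F^\boxtimes$; such an operator is energy-bounded (under Condition~\ref{CondA} by \ref{CondA}-(c), proposition~3.3, and the fact that the adjoint of an energy-bounded intertwining operator is energy-bounded, which covers the charge spaces lying in $\overline{\mathcal F}$; under Condition~\ref{CondB} because $w^{(i_m)}\in E^1(W_{i_m})$).

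\emph{Step 2 (push $\pi(x)$ through each factor and compose).} For each $m$ the strong intertwining property for ordinary smeared intertwining operators applies to $\mathcal Y_{\alpha_m}(w^{(i_m)},g_m)$ and $x\in\mathcal M_V(I)$: under Condition~\ref{CondA} because the $1$-st order energy bounds on a generating set of quasi-primary vertex operators force strong commutativity of $\mathcal Y_{\alpha_m}(w^{(i_m)},g_m)$ with the smeared vertex operators generating $\mathcal M_V(I)$, and under Condition~\ref{CondB} because $w^{(i_m)}\in E^1(W_{i_m})$; in the Condition~\ref{CondA} case the $W_{r_m}$ are only almost strongly integrable, but $\pi_{r_m}$ still exists by Remark~\ref{lb122}. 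This gives
\[
\pi_{r_m}(x)\,\overline{\mathcal Y_{\alpha_m}(w^{(i_m)},g_m)}\subset\overline{\mathcal Y_{\alpha_m}(w^{(i_m)},g_m)}\,\pi_{r_{m-1}}(x)\qquad(m=1,\dots,n).
\]
Writing $T:=\overline{\mathcal Y_{\alpha_n}(w^{(i_n)},g_n)}\cdots\overline{\mathcal Y_{\alpha_1}(w^{(i_1)},g_1)}$ (a composition of closed operators defined on $\mathcal H_j^\infty$), an induction on the number of factors shows that if $\eta\in\mathscr D(T)$ then $\pi_j(x)\eta\in\mathscr D(T)$ and $T\pi_j(x)\eta=\pi_k(x)T\eta$; that is, $\pi_k(x)\,T\subset T\,\pi_j(x)$.

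\emph{Step 3 (passage to the closure — the delicate point).} It remains to upgrade this to \eqref{eq317} for $\overline{\mathcal Y_{\sigma_n\cdots\sigma_2,\alpha}(\dots)}$, which is the closure of $T\restriction W_j$. First suppose $x\in\mathcal M_V(I)_\infty$; then $\pi_j(x)W_j\subset\pi_j(x)\mathcal H_j^\infty\subset\mathcal H_j^\infty$. Given $\xi\in\mathscr D(\overline{\mathcal Y_{\sigma_n\cdots\sigma_2,\alpha}(\dots)})$, choose $\xi_\nu\in W_j$ with $\xi_\nu\to\xi$ and $\mathcal Y_{\sigma_n\cdots\sigma_2,\alpha}(\dots)\xi_\nu\to\overline{\mathcal Y_{\sigma_n\cdots\sigma_2,\alpha}(\dots)}\xi$; then $\pi_j(x)\xi_\nu\in\mathcal H_j^\infty$, $\pi_j(x)\xi_\nu\to\pi_j(x)\xi$, and by Step~2 together with the factorization of Step~1 on $\mathcal H_j^\infty$,
\[
\mathcal Y_{\sigma_n\cdots\sigma_2,\alpha}(\dots)\,\pi_j(x)\xi_\nu=\pi_k(x)\,\mathcal Y_{\sigma_n\cdots\sigma_2,\alpha}(\dots)\xi_\nu\longrightarrow\pi_k(x)\,\overline{\mathcal Y_{\sigma_n\cdots\sigma_2,\alpha}(\dots)}\xi,
\]
so, the closure being a closed operator, $\pi_j(x)\xi$ lies in its domain with the right value: \eqref{eq317} holds on $\mathcal M_V(I)_\infty$. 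Finally, since $\mathcal M_V(I)_\infty$ is strongly dense in $\mathcal M_V(I)$ (Proposition~\ref{lb104}, resp.\ Remark~\ref{lb122}), Kaplansky's density theorem provides a net $x_\mu\in\mathcal M_V(I)_\infty$ with $\lVert x_\mu\rVert\le\lVert x\rVert$ and $x_\mu\to x$ $\sigma$-strongly; by local normality of $\pi_j$ and $\pi_k$, $\pi_j(x_\mu)\to\pi_j(x)$ and $\pi_k(x_\mu)\to\pi_k(x)$ strongly, and feeding \eqref{eq317} for $x_\mu$ into the closedness of $\overline{\mathcal Y_{\sigma_n\cdots\sigma_2,\alpha}(\dots)}$ gives \eqref{eq317} for $x$. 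The expected main obstacle is precisely Step~3: the algebraic content is entirely packaged in Proposition~\ref{lbb2} and the ordinary strong intertwining property, so what is left is a careful — but essentially routine — handling of domains and closures, the one indispensable ingredient being the strong density of $\mathcal M_V(I)_\infty$ needed to treat $x\notin\mathcal M_V(I)_\infty$.
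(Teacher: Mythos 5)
Your proof follows the same path as the paper's: factor the generalized smeared operator into a chain of ordinary smeared intertwining operators via Proposition~\ref{lbb2}, commute $\pi(x)$ past each factor using the part-I strong intertwining property (proposition~3.16 together with proposition~B.1 under Condition~\ref{CondA}; the symmetric-operator/polar-decomposition argument from step~2 of the proof of Theorem~\ref{lb120} under Condition~\ref{CondB}), compose on $\mathcal H_j^\infty$ for $x\in\mathcal M_V(I)_\infty$, and extend to all of $\mathcal M_V(I)$ by strong density and closedness. Your Step~3 spells out in full the domain-and-closure bookkeeping that the paper compresses into the clause ``and hence for any $x\in\mathcal M_V(I)$,'' while your Step~2 is a touch vaguer than the paper about which distinct part-I lemma underlies the single-factor commutation in each of the two cases; apart from that difference in level of detail, the argument is the same.
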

\begin{proof}
We assume that the fusion relation \eqref{eqb3} holds when $z_1\in J_1,\dots,z_n\in J_n$ and the arguments are chosen as in the proof of proposition \ref{lbb2}.

First, suppose that $\mathcal F$ satisfies condition \ref{CondA}.  By theorem \ref{lb61}, the von Neumann algebra $\mathcal M_V(I)$ is generated by the bounded operators $e^{it\overline{Y(v,f)}}$, where $t\in\mathbb R,v\in \ER, f\in C^\infty_c(I)$, and $e^{i\pi\Delta_v/2}e_{1-\Delta_v}f=\overline {e^{i\pi\Delta_v/2}e_{1-\Delta_v}f}$. Now for $m=1,2,\dots,n$ we let $W_{j_{m-1}}$ and $W_{j_m}$ be the source space and the target space of $\mathcal Y_{\alpha_m}$ respectively. Then by proposition 3.16 (and proposition B.1), for any $x\in\mathcal M_V(I),w^{(i_m)}\in W_{i_m},g_m\in C^\infty_c(J_m)$, we have
\begin{align}
\pi_{j_m}(x)\overline{\mathcal Y_{\alpha_m}(w^{(i_m)},g_m)}\subset \overline{\mathcal Y_{\alpha_m}(w^{(i_m)},g_m)}\pi_{j_{m-1}}(x). \label{eq364}
\end{align}
Therefore, if $x\in\mathcal M_V(I)_\infty$, then equation
\begin{align}
\pi_{j_m}(x)\mathcal Y_{\alpha_m}(w^{(i_m)},g_m)=\mathcal Y_{\alpha_m}(w^{(i_m)},g_m)\pi_{j_{m-1}}(x).
\end{align}
holds when both sides act on $\mathcal H^\infty_{j_{m-1}}$. Thus, by \eqref{eqb4}, for any $x\in\mathcal M_V(I)_\infty$, equation
\begin{align}
\pi_k(x)\cdot\mathcal Y_{\sigma_n\cdots\sigma_2,\alpha}(w^{(i_n)},g_n;\dots;w^{(i_1)},g_1)=\mathcal Y_{\sigma_n\cdots\sigma_2,\alpha}(w^{(i_n)},g_n;\dots;w^{(i_1)},g_1)\cdot \pi_j(x)
\end{align}
also holds when both sides act on $\mathcal H^\infty_j$. This proves relation \eqref{eq317} for any $x\in\mathcal M_V(I)_\infty$, and hence for any $x\in\mathcal M_V(I)$.

Now we assume that $\mathcal F$ satisfies condition \ref{CondB}. Then from step 2 of the proof of theorem \ref{lb120},  relation \eqref{eq364} holds for any $x\in\mathcal M_V(I)$. This again implies relation \eqref{eq317}. Thus we are done with the proofs for both cases.
\end{proof}

\begin{pp}[Rotation covariance]\label{lb101}
Let $\mathcal Y_{\sigma_n\cdots\sigma_2,\alpha}\in \mathcal V{k\choose i_n~\dots~i_1~j}$ be controlled by $\mathcal F$, $w^{(i_1)}\in W_{i_1},\dots,w^{(i_n)}\in W_{i_n}$ be homogeneous, $J\in S^1\setminus\{-1 \}$, and $(J_1,\dots,J_n)\in\mathfrak O_n(J)$. Choose $\varepsilon>0$ such that $\mathfrak r(t)J\subset S^1\setminus\{-1\}$. Then for any $g_1\in C^\infty_c(J_1),\dots,g_n\in C^\infty_c(J_n)$, and $t\in(-\varepsilon,\varepsilon)$, we have
\begin{align}
&e^{it{\overline {L_0}}}\cdot\overline{\mathcal Y_{\sigma_n\cdots\sigma_2,\alpha}(w^{(i_n)},g_n;\dots;w^{(i_1)},g_1)}\cdot e^{-it\overline {L_0}}\nonumber\\
=&\overline{\mathcal Y_{\sigma_n\cdots\sigma_2,\alpha}\big(w^{(i_n)},e^{i(\Delta_{w^{(i_n)}}-1)t}\mathfrak r(t)g_n;\dots;w^{(i_1)},e^{i(\Delta_{w^{(i_1)}}-1)t}\mathfrak r(t)g_1\big)}.
\end{align}
\end{pp}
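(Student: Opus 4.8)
The plan is to reduce the assertion to the rotation covariance of \emph{ordinary} smeared intertwining operators (property (d) of the introduction, i.e.\ proposition 3.15), using the fusion relation established in the proof of proposition \ref{lbb2}.

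First I would fix $z_1\in J_1,\dots,z_n\in J_n$ together with the arguments chosen as in the proof of proposition \ref{lbb2}, and pick a chain of intertwining operators $\mathcal Y_{\alpha_1},\dots,\mathcal Y_{\alpha_n}$ realizing the fusion relation \eqref{eqb3}; write $W_j=W_{j_0},W_{j_1},\dots,W_{j_n}=W_k$ for the successive source/target spaces. Since $\mathcal Y_{\sigma_n\cdots\sigma_2,\alpha}$ is controlled by $\mathcal F$, each $\mathcal Y_{\alpha_m}$ is energy bounded, so by proposition \ref{lbb2} (equation \eqref{eqb4}), on $\mathcal H^\infty_j$,
\begin{align*}
\mathcal Y_{\sigma_n\cdots\sigma_2,\alpha}(w^{(i_n)},g_n;\dots;w^{(i_1)},g_1)=\mathcal Y_{\alpha_n}(w^{(i_n)},g_n)\cdots\mathcal Y_{\alpha_1}(w^{(i_1)},g_1).
\end{align*}
Conjugating by $e^{it\overline{L_0}}$ and inserting $e^{-it\overline{L_0}}e^{it\overline{L_0}}$ between consecutive factors is legitimate because each $\mathcal Y_{\alpha_m}(w^{(i_m)},g_m)$ maps $\mathcal H^\infty_{j_{m-1}}$ into $\mathcal H^\infty_{j_m}$ (proposition 3.9) and $e^{\pm it\overline{L_0}}$ are unitaries preserving every $\mathcal H^\infty$. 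Applying proposition 3.15 to each factor and setting $g_m^t=e^{i(\Delta_{w^{(i_m)}}-1)t}\mathfrak r(t)g_m$, which is supported in $\mathfrak r(t)J_m$, I would get, on $\mathcal H^\infty_j$ and for $t\in(-\varepsilon,\varepsilon)$,
\begin{align*}
e^{it\overline{L_0}}\cdot\mathcal Y_{\sigma_n\cdots\sigma_2,\alpha}(w^{(i_n)},g_n;\dots;w^{(i_1)},g_1)\cdot e^{-it\overline{L_0}}=\mathcal Y_{\alpha_n}(w^{(i_n)},g_n^t)\cdots\mathcal Y_{\alpha_1}(w^{(i_1)},g_1^t).
\end{align*}

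Next I would identify the right-hand side with the generalized smeared intertwining operator for the rotated data. The family $(\mathfrak r(t)J_1,\dots,\mathfrak r(t)J_n)$ lies in $\mathfrak O_n(\mathfrak r(t)J)$ and $\mathfrak r(t)J\subset S^1\setminus\{-1\}$, because the quantities $z_lz_1^{-1}$ which enter the definitions of $\mathscr O_n$ and $\mathfrak O_n$ are unchanged under a common rotation. Transporting the unsmeared fusion relation \eqref{eq293} along the rotation path $s\mapsto(\mathfrak r(s)z_1,\dots,\mathfrak r(s)z_n)$, $s\in[0,t]$ — which stays inside the region where $\mathcal Y_{\sigma_n\cdots\sigma_2,\alpha}$ is defined — shows that the \emph{same} chain $\mathcal Y_{\alpha_1},\dots,\mathcal Y_{\alpha_n}$ realizes the fusion relation for $\mathcal Y_{\sigma_n\cdots\sigma_2,\alpha}$ near $\mathfrak r(t)J$, and with exactly the argument conventions used in the definition of generalized smeared intertwining operators. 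Hence proposition \ref{lbb2}, applied to the rotated data, gives on $\mathcal H^\infty_j$
\begin{align*}
\mathcal Y_{\alpha_n}(w^{(i_n)},g_n^t)\cdots\mathcal Y_{\alpha_1}(w^{(i_1)},g_1^t)=\mathcal Y_{\sigma_n\cdots\sigma_2,\alpha}(w^{(i_n)},g_n^t;\dots;w^{(i_1)},g_1^t).
\end{align*}
Combining the two displays, the closed operators $e^{it\overline{L_0}}\,\overline{\mathcal Y_{\sigma_n\cdots\sigma_2,\alpha}(w^{(i_n)},g_n;\dots;w^{(i_1)},g_1)}\,e^{-it\overline{L_0}}$ and $\overline{\mathcal Y_{\sigma_n\cdots\sigma_2,\alpha}(w^{(i_n)},g_n^t;\dots;w^{(i_1)},g_1^t)}$ agree on $\mathcal H^\infty_j$. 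Since $\mathcal H^\infty_j$ is a core for the latter by proposition \ref{lbb2}, and a core for the former because $e^{-it\overline{L_0}}$ restricts to a bijection of $\mathcal H^\infty_j$ while $\mathcal H^\infty_j$ is a core for $\overline{\mathcal Y_{\sigma_n\cdots\sigma_2,\alpha}(w^{(i_n)},g_n;\dots;w^{(i_1)},g_1)}$, the two operators coincide, which is exactly the assertion of the proposition.

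The step I expect to be the main obstacle is the bookkeeping in the preceding paragraph: checking that the chain of ordinary intertwining operators fusing to $\mathcal Y_{\sigma_n\cdots\sigma_2,\alpha}$ near $J$ is literally the one fusing to it near $\mathfrak r(t)J$, together with the consistency of the argument conventions under the rotation path. Everything else is a routine combination of proposition \ref{lbb2}, the energy-bounds estimates of proposition 3.9, and the rotation covariance of ordinary smeared intertwining operators.
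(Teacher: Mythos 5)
Your argument is correct and is exactly the route the paper takes: the paper's proof is the one-liner ``this follows from \eqref{eqb4} and (3.39)'', which means precisely to factor the generalized smeared intertwining operator via \eqref{eqb4}, conjugate each ordinary smeared factor by $e^{it\overline{L_0}}$ using the rotation covariance of ordinary smeared intertwining operators, and reassemble. Your write-up merely makes explicit the core argument and the bookkeeping of arguments/charts under rotation that the paper leaves implicit.
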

\begin{proof}
This follows from relations \eqref{eqb4} and (3.39).
\end{proof}

\begin{thm}[Braiding]\label{lb109}
Let $I,J\in\mathcal J(S^1\setminus\{-1\})$ be disjoint. Choose  $(I_1,\dots,I_m)\in\mathfrak O_m(I),(J_1,\dots,J_n)\in\mathfrak O_n(J)$. Choose $z\in I,\zeta\in J$, and let $-\pi<\arg z,\arg\zeta<\pi$. Let $\mathcal Y_{\sigma_m\cdots\sigma_2,\alpha}\in\mathcal V{k'\choose i_m~\dots~i_1~k_1},\mathcal Y_{\rho_n\cdots\rho_2,\beta}\in\mathcal V{k_1\choose j_n~\dots~j_1~k},\mathcal Y_{\sigma_m\cdots\sigma_2,\alpha'}\in\mathcal V{k_2\choose i_m~\dots~i_1~k},\mathcal Y_{\rho_n\cdots\rho_2,\beta'}\in\mathcal V{k'\choose j_n~\dots~j_1~k_2}$ be generalized intertwining operators of $V$ controlled by $\mathcal F$. Suppose that $W_i$ is the charge spaces of $\mathcal Y_\alpha$ and $\mathcal Y_{\alpha'}$, $W_j$ is the charge space of $\mathcal Y_\beta$ and $\mathcal Y_{\beta'}$, and for  any $w^{(i)}\in W_i,w^{(j)}\in W_j$, we have the braid relation
\begin{align}
	\mathcal Y_\alpha(w^{(i)},z)\mathcal Y_\beta(w^{(j)},\zeta)=\mathcal Y_{\beta'}(w^{(j)},\zeta)\mathcal Y_{\alpha'}(w^{(i)},z).
	\end{align}
Then for any $w^{(i_1)}\in W_{i_1},\dots,w^{(i_m)}\in W_{i_m},w^{(j_1)}\in W_{j_1},\dots,w^{(j_n)}\in W_{j_n},f_1\in C^\infty_c(I_1),\dots f_m\in C^\infty_c(I_m),g_1\in C^\infty_c(J_1),\dots, g_n\in C^\infty_c(J_n)$, we have the braid relation
\begin{align}
&\mathcal Y_{\sigma_m\cdots\sigma_2,\alpha}(w^{(i_m)},f_m;\dots;w^{(i_1)},f_1)	\mathcal Y_{\rho_n\cdots\rho_2,\beta}(w^{(j_n)},g_n;\dots;w^{(j_1)},g_1)\nonumber\\
=&\mathcal Y_{\rho_n\cdots\rho_2,\beta'}(w^{(j_n)},g_n;\dots;w^{(j_1)},g_1)\mathcal Y_{\sigma_m\cdots\sigma_2,\alpha'}(w^{(i_m)},f_m;\dots;w^{(i_1)},f_1).\label{eq311}
\end{align}	
\end{thm}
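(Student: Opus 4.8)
The plan is to reduce the smeared braid relation \eqref{eq311} to the unsmeared generalized braid relation of Theorem \ref{lb16}. I would do this by first expressing each of the four generalized smeared intertwining operators appearing in \eqref{eq311} as a composition of ordinary smeared intertwining operators (Proposition \ref{lbb2}), then using the smearing property of products of energy-bounded intertwining operators (proposition 3.12) to exchange smearing with multiplication, and finally integrating the unsmeared braid relation of Theorem \ref{lb16} against the test functions. No genuinely new analytic ingredient is needed; the work is in keeping the various chains of intertwining operators and the argument conventions consistent.

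First I would fix points $z_1\in I_1,\dots,z_m\in I_m$ and $\zeta_1\in J_1,\dots,\zeta_n\in J_n$, with $\arg z_l=\arg_I(z_l)\in(-\pi,\pi)$ and $\arg\zeta_l=\arg_J(\zeta_l)\in(-\pi,\pi)$ (possible since $-1\notin I\cup J$), and with the arguments of $z_l-z_1$ and $\zeta_l-\zeta_1$ chosen by the convention used in the definition of generalized smeared intertwining operators. Since $(I_1,\dots,I_m)\in\mathfrak O_m(I)$, the set $I_1\times\dots\times I_m$ is a connected subset of $\mathscr O_m(I)$, so there is a single chain $\mathcal Y_{\alpha_1},\dots,\mathcal Y_{\alpha_m}$ of intertwining operators, valid for all these $z_l$, with $\mathcal Y_{\sigma_m\cdots\sigma_2,\alpha}(w^{(i_m)},z_m;\dots;w^{(i_1)},z_1)=\mathcal Y_{\alpha_m}(w^{(i_m)},z_m)\cdots\mathcal Y_{\alpha_1}(w^{(i_1)},z_1)$; likewise one obtains chains $\{\mathcal Y_{\beta_l}\}$, $\{\mathcal Y_{\alpha'_l}\}$, $\{\mathcal Y_{\beta'_l}\}$ attached to $\mathcal Y_{\rho_n\cdots\rho_2,\beta}$, $\mathcal Y_{\sigma_m\cdots\sigma_2,\alpha'}$, $\mathcal Y_{\rho_n\cdots\rho_2,\beta'}$. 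Because all four generalized operators are controlled by $\mathcal F$, every operator in these chains is energy-bounded (by condition \ref{CondA}-(c) and proposition 3.3, exactly as in the proof of Proposition \ref{lbb2}; recall that condition \ref{CondB} implies condition \ref{CondA}-(c)). By Proposition \ref{lbb2} (relation \eqref{eqb4}), on $\mathcal H^\infty_k$ the left-hand side of \eqref{eq311} becomes the iterated smeared product $\mathcal Y_{\alpha_m}(w^{(i_m)},f_m)\cdots\mathcal Y_{\alpha_1}(w^{(i_1)},f_1)\mathcal Y_{\beta_n}(w^{(j_n)},g_n)\cdots\mathcal Y_{\beta_1}(w^{(j_1)},g_1)$, and the right-hand side becomes $\mathcal Y_{\beta'_n}(w^{(j_n)},g_n)\cdots\mathcal Y_{\beta'_1}(w^{(j_1)},g_1)\mathcal Y_{\alpha'_m}(w^{(i_m)},f_m)\cdots\mathcal Y_{\alpha'_1}(w^{(i_1)},f_1)$.

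Next, since the intervals $I_1,\dots,I_m\subset I$ and $J_1,\dots,J_n\subset J$ are mutually disjoint and $I\cap J=\emptyset$, the ordered product of the corresponding unsmeared intertwining operators converges (this is exactly what the hypotheses $(I_1,\dots,I_m)\in\mathfrak O_m(I)$, $(J_1,\dots,J_n)\in\mathfrak O_n(J)$ and the disjointness of $I,J$ are for), and proposition 3.12 identifies each of the two iterated smeared products above with the integral, against $f_1(z_1)\cdots f_m(z_m)\,g_1(\zeta_1)\cdots g_n(\zeta_n)$ in the appropriate angular variables, of that unsmeared product. By the fusion relations \eqref{eq293}–\eqref{eq294} and the definition \eqref{eq292} of the product of two generalized intertwining operators, the first of these unsmeared integrands is $\mathcal Y_{\sigma_m\cdots\sigma_2,\alpha}(w^{(i_m)},z_m;\dots;w^{(i_1)},z_1)\,\mathcal Y_{\rho_n\cdots\rho_2,\beta}(w^{(j_n)},\zeta_n;\dots;w^{(j_1)},\zeta_1)$, and the second is $\mathcal Y_{\rho_n\cdots\rho_2,\beta'}(w^{(j_n)},\zeta_n;\dots;w^{(j_1)},\zeta_1)\,\mathcal Y_{\sigma_m\cdots\sigma_2,\alpha'}(w^{(i_m)},z_m;\dots;w^{(i_1)},z_1)$. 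The hypothesis $\mathcal Y_\alpha(w^{(i)},z)\mathcal Y_\beta(w^{(j)},\zeta)=\mathcal Y_{\beta'}(w^{(j)},\zeta)\mathcal Y_{\alpha'}(w^{(i)},z)$ for $z\in I,\zeta\in J$ propagates by analytic continuation over the connected domain $\{(z,\zeta):z\in I,\zeta\in J\}$ (on which $z\neq\zeta$) to the same identity at $(z_1,\zeta_1)$; hence Theorem \ref{lb16}, applied with the configurations $(I_1,\dots,I_m)\in\mathfrak O_m(I)$ and $(J_1,\dots,J_n)\in\mathfrak O_n(J)$, equates these two unsmeared integrands pointwise on $I_1\times\dots\times I_m\times J_1\times\dots\times J_n$. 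Integrating this pointwise identity against the test functions then yields \eqref{eq311} on $\mathcal H^\infty_k$.

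The step that needs care is the bookkeeping of chains. One must confirm that the chains $\{\mathcal Y_{\alpha'_l}\}$ and $\{\mathcal Y_{\beta'_l}\}$ obtained by decomposing $\mathcal Y_{\sigma_m\cdots\sigma_2,\alpha'}$ and $\mathcal Y_{\rho_n\cdots\rho_2,\beta'}$ via their own fusion relations coincide with the chains appearing on the right-hand side of the braid relation \eqref{eq41} of Theorem \ref{lb16} — this follows from the uniqueness of fusion chains (corollary 2.7) together with the fact that, by construction, $\mathcal Y_{\sigma_m\cdots\sigma_2,\alpha'}$ and $\mathcal Y_{\rho_n\cdots\rho_2,\beta'}$ are built from the same $\sigma_l$'s and $\rho_l$'s and from the braided operators $\mathcal Y_{\alpha'}$, $\mathcal Y_{\beta'}$ — and that all the argument choices ($\arg z_l$, $\arg\zeta_l$, $\arg(z_l-z_1)$, $\arg(\zeta_l-\zeta_1)$) used when invoking Proposition \ref{lbb2}, proposition 3.12 and Theorem \ref{lb16} are mutually consistent. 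Once this is granted, the result follows by assembling these three statements; I do not expect any hidden analytic difficulty beyond what is already contained in Theorem \ref{lb16} and Proposition \ref{lbb2}.
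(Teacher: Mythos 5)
Your proposal is correct and follows essentially the same route as the paper's proof: decompose each generalized smeared intertwining operator into a product of ordinary smeared intertwining operators via Proposition \ref{lbb2} (relation \eqref{eqb4}), pass from the product of smeared operators to an integral of the unsmeared product via proposition 3.12, identify the two unsmeared integrands as products of generalized intertwining operators, equate them pointwise by Theorem \ref{lb16}, and integrate against the test functions. The one extra worry you raise—matching the chains $\{\mathcal Y_{\alpha'_l}\},\{\mathcal Y_{\beta'_l}\}$ with those produced by Theorem \ref{lb16}—is already built into the statement of Theorem \ref{lb16}, which asserts the braid relation for the generalized operators formed from the \emph{same} $\sigma_l,\rho_l$ and the braided $\mathcal Y_{\alpha'},\mathcal Y_{\beta'}$, so no additional argument is needed there.
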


\begin{proof}
Choose $z_1\in I_1,\dots,z_m\in I_m,\zeta_1\in J_1,\dots,\zeta_n\in J_n$. Let $-\pi<\arg z_1,\dots,\arg z_m,\arg\zeta_1,\dots,\arg \zeta_n<\pi$, and let $\arg(z_2-z_1),\dots,\arg(z_m-z_1),\arg(\zeta_2-\zeta_1),\dots,\arg(\zeta_n-\zeta_1)$ be close to $\arg z_2,\dots,\arg z_m,\arg\zeta_2,\dots,\arg\zeta_n$ as $z_1,\dots,z_1,\zeta_1,\dots,\zeta_1\rightarrow 0$ respectively.
Suppose that for any $w^{(i_1)}\in W_{i_1},\dots,w^{(i_m)}\in W_{i_m},w^{(j_1)}\in W_{j_1},\dots,w^{(j_n)}\in W_{j_n}$, we have the fusion relations
\begin{gather}
\mathcal Y_{\sigma_m\cdots\sigma_2,\alpha}(w^{(i_m)},z_m;\dots;w^{(i_1)},z_1)=\mathcal Y_{\alpha_m}(w^{(i_m)},z_m)\cdots\mathcal Y_{\alpha_1}(w^{(i_1)},z_1),\\
\mathcal Y_{\rho_n\cdots\rho_2,\beta}(w^{(j_n)},\zeta_n;\dots;w^{(j_1)},\zeta_1)=\mathcal Y_{\beta_n}(w^{(j_n)},\zeta_n)\cdots\mathcal Y_{\beta_1}(w^{(j_1)},\zeta_1).
\end{gather}
Then the source spaces and the target spaces of $\mathcal Y_{\alpha_1},\dots\mathcal Y_{\alpha_m},\mathcal Y_{\beta_1},\dots,\mathcal Y_{\beta_n}$ are unitary $V$-modules inside $\mathcal F^\boxtimes$. So these intertwining operators are energy-bounded. By relation \eqref{eqb4}, we have
\begin{gather}
\mathcal Y_{\sigma_m\cdots\sigma_2,\alpha}(w^{(i_m)},f_m;\dots;w^{(i_1)},f_1)=\mathcal Y_{\alpha_m}(w^{(i_m)},f_m)\cdots\mathcal Y_{\alpha_1}(w^{(i_1)},f_1),\\
\mathcal Y_{\rho_n\cdots\rho_2,\beta}(w^{(j_n)},g_n;\dots;w^{(j_1)},g_1)=\mathcal Y_{\beta_n}(w^{(j_n)},g_n)\cdots\mathcal Y_{\beta_1}(w^{(j_1)},g_1).
\end{gather}
Therefore, by proposition 3.12,
\begin{align}
&\mathcal Y_{\sigma_m\cdots\sigma_2,\alpha}(w^{(i_m)},f_m;\dots;w^{(i_1)},f_1)	\mathcal Y_{\rho_n\cdots\rho_2,\beta}(w^{(j_n)},g_n;\dots;w^{(j_1)},g_1)\nonumber\\
=&\mathcal Y_{\alpha_m}(w^{(i_m)},f_m)\cdots\mathcal Y_{\alpha_1}(w^{(i_1)},f_1)\mathcal Y_{\beta_n}(w^{(j_n)},g_n)\cdots\mathcal Y_{\beta_1}(w^{(j_1)},g_1)\nonumber\\
=&\int_{-\pi}^{\pi}\cdots\int_{-\pi}^{\pi}\cdot\int_{-\pi}^{\pi}\cdots\int_{-\pi}^{\pi}\mathcal Y_{\alpha_m}(w^{(i_m)},e^{i\theta_m})\cdots\mathcal Y_{\alpha_1}(w^{(i_1)},e^{i\theta_1})\nonumber\\
&\cdot\mathcal Y_{\beta_n}(w^{(j_n)},e^{i\vartheta_n})\cdots\mathcal Y_{\beta_1}(w^{(j_1)},e^{i\vartheta_1})f_1(e^{i\theta_1})\cdots f_m(e^{i\theta_m})\nonumber\\
&\cdot g_1(e^{i\vartheta_1})\cdots g_n(e^{i\vartheta_n})\di\theta_1\cdots\di\theta_m\di\vartheta_1\cdots\di\vartheta_n\nonumber\\
=&\int_{-\pi}^{\pi}\cdots\int_{-\pi}^{\pi}\cdot\int_{-\pi}^{\pi}\cdots\int_{-\pi}^{\pi}\mathcal Y_{\sigma_m\cdots\sigma_2,\alpha}(w^{(i_m)},e^{i\theta_m};\cdots;w^{(i_1)},e^{i\theta_1})\nonumber\\
&\cdot\mathcal Y_{\rho_n\cdots\rho_2,\beta}(w^{(j_n)},e^{i\vartheta_n};\cdots;w^{(j_1)},e^{i\vartheta_1})f_1(e^{i\theta_1})\cdots f_m(e^{i\theta_m})\nonumber\\
&\cdot g_1(e^{i\vartheta_1})\cdots g_n(e^{i\vartheta_n})\di\theta_1\cdots\di\theta_m\di\vartheta_1\cdots\di\vartheta_n.\label{eq309}
\end{align}
The same argument shows that
\begin{align}
&\mathcal Y_{\rho_n\cdots\rho_2,\beta'}(w^{(j_n)},g_n;\dots;w^{(j_1)},g_1)\mathcal Y_{\sigma_m\cdots\sigma_2,\alpha'}(w^{(i_m)},f_m;\dots;w^{(i_1)},f_1)\nonumber\\
=&\int_{-\pi}^{\pi}\cdots\int_{-\pi}^{\pi}\cdot\int_{-\pi}^{\pi}\cdots\int_{-\pi}^{\pi}\mathcal Y_{\rho_n\cdots\rho_2,\beta'}(w^{(j_n)},e^{i\vartheta_n};\cdots;w^{(j_1)},e^{i\vartheta_1})\nonumber\\
&\cdot\mathcal Y_{\sigma_m\cdots\sigma_2,\alpha'}(w^{(i_m)},e^{i\theta_m};\cdots;w^{(i_1)},e^{i\theta_1})f_1(e^{i\theta_1})\cdots f_m(e^{i\theta_m})\nonumber\\
&\cdot g_1(e^{i\vartheta_1})\cdots g_n(e^{i\vartheta_n})\di\theta_1\cdots\di\theta_m\di\vartheta_1\cdots\di\vartheta_n.\label{eq310}
\end{align}
By theorem \ref{lb16}, the right hand sides of equations \eqref{eq309} and \eqref{eq310} are equal, which proves equation \eqref{eq311}.
\end{proof}

\begin{thm}[Adjoint relation]\label{lb108}
Choose $I\in\mathcal J(S^1\setminus\{-1\})$ and $(I_1,\dots,I_n)\in\mathfrak O_n(I)$. Let $W_{i_1},W_{i_2},\dots,W_{i_n}$ be unitary $V$-modules in $\mathcal F\cup\overline{\mathcal F}$, and let $\mathcal Y_{\sigma_2},\dots,\mathcal Y_{\sigma_n}$ be a chain of unitary intertwining operators of $V$ with charge spaces $W_{i_2},\dots,W_{i_n}$ respectively, such that  the source space of $\mathcal Y_{\sigma_2}$ is $W_{i_1}$. We let $W_i\in\mathcal F^\boxtimes$ be  the target space of $\mathcal Y_{\sigma_n}$. Then for each $2\leq m\leq n$, there exists a unitary intertwining operator $\mathcal Y_{\widetilde\sigma_m}$ whose type is the same as that of $\mathcal Y_{\overline{\sigma_m}}$, such that for   any unitary $V$-modules $W_j,W_k$ in $\mathcal F^\boxtimes,\mathcal Y_\alpha\in\mathcal V{k\choose i~j},w^{(i_1)}\in W_{i_1},\dots,w^{(i_n)}\in W_{i_n}$ being quasi-primary, and  $f_1\in C^\infty_c(I_1),\dots,f_n\in C^\infty_c(I_n)$, we have
	\begin{align}
	&\mathcal Y_{\sigma_n\cdots\sigma_2,\alpha}\big(w^{(i_n)},f_n;\dots;w^{(i_1)},f_1\big)^\dagger\nonumber\\
	=&e^{-i\pi\big(\Delta_{w^{(i_1)}}+\cdots+\Delta_{w^{(i_n)}}\big)}\cdot\mathcal Y_{\widetilde\sigma_n\cdots\widetilde \sigma_2,\alpha^*}\big(\overline{w^{(i_n)}},\overline{e_{(2-2\Delta_{w^{(i_n)}})}f_n};\dots;\overline{w^{(i_1)}},\overline{e_{(2-2\Delta_{w^{(i_1)}})}f_1}\big).\label{eq328}
	\end{align}
\end{thm}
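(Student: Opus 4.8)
The plan is to obtain the smeared adjoint relation \eqref{eq328} by integrating its unsmeared counterpart \eqref{eq314} against the test functions $f_1,\dots,f_n$, in the same spirit in which Propositions \ref{lb106} and \ref{lb101} and Theorem \ref{lb109} were reduced to the corresponding statements for ordinary smeared operators. Fix $(I_1,\dots,I_n)\in\mathfrak O_n(I)$, a continuous argument function $\arg_I$ on $I$, and for $z_m=e^{i\theta_m}\in I_m$ put $\arg z_m=\arg_I(z_m)$ and let $\arg(z_m-z_1)$ be close to $\arg z_m$ as $z_1\to0$ — the very conventions hard-wired into the definition of generalized smeared operators. For these choices, \eqref{eq314} supplies a chain of unitary intertwining operators $\mathcal Y_{\widetilde\sigma_2},\dots,\mathcal Y_{\widetilde\sigma_n}$ with $\mathcal Y_{\widetilde\sigma_m}$ of the same type as $\mathcal Y_{\overline{\sigma_m}}$; since $I_1\times\cdots\times I_n$ lies in a single simply connected piece of $\mathscr O_n(I)$, this chain does not change as the $z_m$ vary over the $I_m$. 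Because $W_{i_1},\dots,W_{i_n}\in\mathcal F\cup\overline{\mathcal F}$ and $W_j,W_k\in\mathcal F^\boxtimes$, the generalized intertwining operator $\mathcal Y_{\widetilde\sigma_n\cdots\widetilde\sigma_2,\alpha^*}$ (with charge spaces $W_{\overline{i_1}},\dots,W_{\overline{i_n}}$, source $W_k$ and target $W_j$) is controlled by $\mathcal F$, so Proposition \ref{lbb2} applies to it as well and its generalized smeared version is defined.

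Next I would take matrix elements with $w^{(j)}\in W_j$, $w^{(k)}\in W_k$. By the definition of the formal adjoint (Proposition \ref{lbb2}) and of the generalized smeared operator,
\begin{align*}
&\langle \mathcal Y_{\sigma_n\cdots\sigma_2,\alpha}(w^{(i_n)},f_n;\dots;w^{(i_1)},f_1)^\dagger w^{(k)}\,|\,w^{(j)}\rangle\\
=&\int_{(-\pi,\pi)^n}\overline{\langle \mathcal Y_{\sigma_n\cdots\sigma_2,\alpha}(w^{(i_n)},e^{i\theta_n};\dots;w^{(i_1)},e^{i\theta_1})w^{(j)}\,|\,w^{(k)}\rangle}\;\prod_m\overline{f_m(e^{i\theta_m})}\,\overline{\di\theta_m},
\end{align*}
the interchange of complex conjugation and integration being harmless since the integrand is smooth on the compact support of $f_1\otimes\cdots\otimes f_n$. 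Inserting \eqref{eq314} pointwise replaces $\overline{\langle\mathcal Y_{\sigma_n\cdots\sigma_2,\alpha}(\dots)w^{(j)}\,|\,w^{(k)}\rangle}$ by $e^{-i\pi(\Delta_{w^{(i_1)}}+\cdots+\Delta_{w^{(i_n)}})}\prod_m(e^{i\theta_m})^{2\Delta_{w^{(i_m)}}}\langle\mathcal Y_{\widetilde\sigma_n\cdots\widetilde\sigma_2,\alpha^*}(\overline{w^{(i_n)}},e^{i\theta_n};\dots;\overline{w^{(i_1)}},e^{i\theta_1})w^{(k)}\,|\,w^{(j)}\rangle$, and the elementary identity $(e^{i\theta})^{2\Delta}\overline{f(e^{i\theta})}\,\overline{\di\theta}=(\overline{e_{(2-2\Delta)}f})(e^{i\theta})\,\di\theta$ absorbs each prefactor $(e^{i\theta_m})^{2\Delta_{w^{(i_m)}}}$ into the measure. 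The integral that remains is exactly the defining integral of $\mathcal Y_{\widetilde\sigma_n\cdots\widetilde\sigma_2,\alpha^*}\big(\overline{w^{(i_n)}},\overline{e_{(2-2\Delta_{w^{(i_n)}})}f_n};\dots;\overline{w^{(i_1)}},\overline{e_{(2-2\Delta_{w^{(i_1)}})}f_1}\big)$ applied to $w^{(k)}$ and paired with $w^{(j)}$, so \eqref{eq328} holds after pairing with arbitrary $w^{(j)}\in W_j$, $w^{(k)}\in W_k$. Density of $W_j$ in $\mathcal H_j$ upgrades this to equality of the two operators on $W_k$; since both are restrictions of closed operators, $W_k$ is dense in $\mathcal H_k^\infty$, and the right-hand side obeys the energy bounds \eqref{eq306} of Proposition \ref{lbb2}, the equality extends to all of $\mathcal H_k^\infty$, which is \eqref{eq328}.

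An alternative route, closer to the pattern of the earlier results of this section, is to use Proposition \ref{lbb2} to write the left-hand side as a product $\mathcal Y_{\alpha_1}(w^{(i_1)},f_1)^\dagger\cdots\mathcal Y_{\alpha_n}(w^{(i_n)},f_n)^\dagger$ of formal adjoints of ordinary smeared intertwining operators (for the chain $\mathcal Y_{\alpha_m}$ realising the fusion over $(I_1,\dots,I_n)$), apply the adjoint relation for ordinary smeared intertwining operators to each factor — this is the only place where quasi-primarity of the $w^{(i_m)}$ is used — and then reassemble the result into $\mathcal Y_{\widetilde\sigma_n\cdots\widetilde\sigma_2,\alpha^*}(\overline{w^{(i_n)}},\dots;\overline{w^{(i_1)}},\dots)$ by appealing once more to \eqref{eq314} and Proposition \ref{lbb2}. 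With the first (direct) route I do not expect any serious obstacle: the substance is bookkeeping — reconciling the argument conventions of \eqref{eq314} with those built into the generalized smeared operators, tracking the conjugations entering through $\alpha^*=\overline{C\alpha}$ and through the functions $e_r$, and checking that the formal adjoint may be commuted past the integral and that one may pass from $W_k$ to $\mathcal H_k^\infty$, all of which is covered by Proposition \ref{lbb2}. The delicate point, which arises only in the second route, is getting the order of the factors right when reassembling on the unsmeared side; this is exactly what \eqref{eq314} (hence Theorem \ref{lb3} and Remark \ref{lb87}) takes care of, so the direct route is the one I would actually carry out.
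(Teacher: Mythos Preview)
Your direct route is exactly the paper's argument: the paper says to multiply both sides of \eqref{eq314} by $\overline{f_1(e^{i\theta_1})}\cdots\overline{f_n(e^{i\theta_n})}e^{-2i(\theta_1+\cdots+\theta_n)}\di\theta_1\cdots\di\theta_n$ and integrate, leaving the bookkeeping (precisely the argument-convention checks and the $e_r$ identity you spell out) to the reader. Your write-up is thus a correct and more detailed version of the paper's own proof.
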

\begin{proof}
This is obtained by multiplying both sides of equation \eqref{eq314} by the expression\begin{align*}
\overline{f_1(e^{i\theta_1})}\cdots\overline{f_n(e^{i\theta_n})}e^{-2i(\theta_1+\cdots+\theta_n)}\di\theta_1\cdots \di\theta_n,
\end{align*}
and then taking the integral. We leave the details to the reader.
\end{proof}

\section{Defining an inner product $\Lambda$ on $W_i\boxtimes W_j$}

In this chapter, we define (in section 2) a sesquilinear form $\Lambda$ on $W_{ij}=W_i\boxtimes W_j$ using transport matrices, and prove (in section 3) that these forms are inner products. As discussed in the  introduction of part I, our strategy for proving the positivity of $\Lambda$ is to identify the form $\Lambda$ on a dense subspace of $\mathcal H_{ij}$ with the inner product on a subspace of the  Connes fusion product $\mathcal H_i\boxtimes\mathcal H_j$ of the conformal net modules $\mathcal H_i$ and $\mathcal H_j$. In section 1, we prove a density property for constructing such a dense subspace. 

Note that the  Connes fusion product (Connes relative tensor product) is a motivation rather than a logistic background of our theory. So we don't assume the reader has any previous knowledge on this topic, nor shall we give a formal definition on Connes fusion in this paper. Those who are interested in this topic can read \cite{Wassermann} section 30 for a brief introduction, or read \cite{Con80} or \cite{Takesaki II} section IX.3 for more details.

\subsection{Density of the range of fusion product}\label{lb19}

Recall from section 3.2 that $W_{ij}=W_i\boxtimes W_j=\bigoplus_{k\in\mathcal E}\mathcal V{k \choose i~j}^*\otimes W_k$ is the tensor product module of $W_i,W_j$. We now define a type $ij\choose{i~j}$ intertwining operator $\mathcal Y_{i\boxtimes j}:W_i\otimes W_j\rightarrow W_{ij}\{x\}$ in the following way: If $\mathcal Y_\alpha\in\mathcal V {k\choose i~j}, w^{(i)}\in W_i,w^{(j)}\in W_j$ and $w^{(\overline k)}\in W_{\overline k}$, then
\begin{align}
\langle\mathcal Y_\alpha\otimes w^{(\overline k)}, \mathcal Y_{i\boxtimes j}(w^{(i)},x)w^{(j)}\rangle=\langle w^{(\overline k)},\mathcal Y_\alpha(w^{(i)},x)w^{(j)}\rangle.
\end{align}
For any $k\in\mathcal E$, we choose a basis $\{\mathcal Y_\alpha:\alpha\in\Theta^k_{ij} \}$ of $\mathcal V{k\choose i~j}$, and let $\{\widecheck{ \mathcal Y}^\alpha:\alpha\in\Theta^k_{ij} \}\subset\mathcal V{k\choose i~j}^*$ be the  dual basis of $\Theta^k_{ij}$. (i.e., if $\alpha,\beta\in\Theta^k_{ij}$, then $\langle \mathcal Y_\alpha,\widecheck{\mathcal Y}^\beta \rangle=\delta_{\alpha,\beta}$.) Then for any $w^{(i)}\in W_i$ and $w^{(j)}\in W_j$ we have
\begin{align}
\mathcal Y_{i\boxtimes j}(w^{(i)},x)w^{(j)}=\sum_{k\in\mathcal E}\sum_{\alpha\in\Theta^k_{ij}}\widecheck{\mathcal Y}^{\alpha}\otimes\mathcal Y_{\alpha}(w^{(i)},x)w^{(j)}=\sum_{\alpha\in\Theta^*_{ij}}\widecheck{\mathcal Y}^{\alpha}\otimes\mathcal Y_{\alpha}(w^{(i)},x)w^{(j)}.
\end{align}
(See the beginning of section 2 for notations.)

The following density property generalizes proposition A.3.
\begin{pp}\label{lb100}
Let $\mathcal Y_{\sigma_2},\dots,\mathcal Y_{\sigma_n}$ be a chain of non-zero irreducible intertwining operators of $V$ with charge spaces $W_{i_2},\dots,W_{i_n}$ respectively. Let $W_{i_1}$ be the source space of $\mathcal Y_{\sigma_2}$, and  let $W_i$ be the target space of $\mathcal Y_{\sigma_n}$. Choose a $V$-module $W_j$, non-zero vectors $w_0^{(i_1)}\in W_{i_1},\dots,w_0^{(i_n)}\in W_{i_n}$, $I\in\mathcal J,(z_1,\dots,z_n)\in\mathscr O_n(I)$, and choose arguments $\arg z_1,\arg (z_2-z_1),\dots,\arg(z_n-z_1)$. Fix $w^{(\overline {ij})}\in W_{\overline {ij}}$. Suppose that for any $w^{(j)}\in W_j$, 
\begin{align}
\langle w^{(\overline{ij})},\mathcal Y_{\sigma_n\cdots\sigma_2,i\boxtimes j}(w^{(i_n)}_0,z_n;\dots;w^{(i_1)}_0,z_1)w^{(j)} \rangle=0,\label{eq315}
\end{align}
then $w^{(\overline{ij})}=0$.
\end{pp}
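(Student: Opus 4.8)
\noindent The plan is to reduce \eqref{eq315}, by analytic continuation, to the nested configuration in which $\mathcal Y_{\sigma_n\cdots\sigma_2,i\boxtimes j}$ is defined directly, and there to recognize it as an instance of the density statement of proposition A.3 of part I, now applied to a vector of the algebraic completion $\widehat W_i$ rather than to a genuine charge vector. Concretely: as a function of $(z_1,\dots,z_n)$ with the chosen arguments, the left-hand side of \eqref{eq315} is a branch of a holomorphic function, so if it vanishes for every $w^{(j)}\in W_j$ at one point of its connected region of definition it vanishes on that whole region; and by the construction of $\mathcal Y_{\sigma_n\cdots\sigma_2,i\boxtimes j}$ near $\mathscr O_n(I)$ we may move $(z_1,\dots,z_n)$, within that region, to a point with $0<|z_2-z_1|<\cdots<|z_n-z_1|<|z_1|$. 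At such a point, put
\[
u:=\mathcal Y_{\sigma_n}(w^{(i_n)}_0,z_n-z_1)\cdots\mathcal Y_{\sigma_2}(w^{(i_2)}_0,z_2-z_1)w^{(i_1)}_0\in\widehat W_i,
\]
the composition being absolutely convergent by theorems 2.6 and 2.7; then \eqref{eq370} (with $\alpha=i\boxtimes j$) rewrites the hypothesis as $\langle w^{(\overline{ij})},\mathcal Y_{i\boxtimes j}(u,z_1)w^{(j)}\rangle=0$ for every $w^{(j)}\in W_j$.

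Next I would verify that $u\neq0$. Pairing $u$ with an arbitrary $w^{(\overline i)}\in W_{\overline i}$ and moving the factors $\mathcal Y_{\sigma_n},\dots,\mathcal Y_{\sigma_2}$ one at a time onto $w^{(\overline i)}$ by means of the transpose relation and the contragredient operators $\mathcal Y_{C\sigma_m}$, one rewrites $\langle w^{(\overline i)},u\rangle$, up to explicit invertible transforms, as the pairing of $w^{(i_1)}_0$ with a vector $\mathcal Y_{C\sigma_2}(\widetilde w^{(i_2)}_0,\,\cdot\,)\cdots\mathcal Y_{C\sigma_n}(\widetilde w^{(i_n)}_0,\,\cdot\,)w^{(\overline i)}\in\widehat W_{\overline{i_1}}$, where each $\widetilde w^{(i_m)}_0\neq0$. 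Since every module occurring in the chain is irreducible and the $\mathcal Y_{C\sigma_m}$ are nonzero, iterated application of corollary 2.15 (and the proof of corollary A.4) shows that these vectors span a dense subspace of $\widehat W_{\overline{i_1}}$; hence some $w^{(\overline i)}$ pairs non-trivially with $w^{(i_1)}_0\neq0$, so $u\neq0$.

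Finally I would conclude by the density property. With $u\neq0$, the proof of proposition A.3 applies with only routine modifications: writing $w^{(\overline{ij})}=\sum_{k}\sum_{\alpha\in\Theta^k_{ij}}\mathcal Y_\alpha\otimes w^{(\overline k)}_\alpha$ and $u=\sum_s u_s$, the hypothesis becomes $\sum_{k,\alpha,s}\langle w^{(\overline k)}_\alpha,\mathcal Y_\alpha(u_s,z_1)w^{(j)}\rangle=0$ for all $w^{(j)}$, and the Reeh--Schlieder continuation used in the proof of corollary A.4 (insert $\zeta^{\overline{L_0}}$ to gain holomorphy, rotate, and apply Schwarz reflection) isolates, weight by weight, the matching of the lowest nonzero component $u_{s_0}$ against the highest homogeneous components of the $w^{(\overline k)}_\alpha$; corollary 2.15, now with $u_{s_0}\neq0$, forces those components to vanish, and a descending induction on weight gives $w^{(\overline{ij})}=0$ (if $W_j$ is reducible one first splits off its irreducible summands). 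The main obstacle I anticipate is precisely this last step carried out uniformly: running the weight-by-weight argument of proposition A.3 with a completion vector $u$ in place of a charge vector, that is, controlling the absolutely convergent but infinite sum over the homogeneous components of $u$ while keeping separate the finitely many $k$ and the finitely many relevant weights of $w^{(\overline{ij})}$; checking $u\neq0$ for arbitrary nonzero $w^{(i_1)}_0,\dots,w^{(i_n)}_0$ is the other point requiring care, though it reduces cleanly, as above, to the density corollaries already available for single intertwining operators.
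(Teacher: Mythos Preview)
Your approach differs from the paper's, and the obstacle you flag in the last paragraph is real---extending proposition~A.3 to a charge vector $u\in\widehat W_i$ (rather than $W_i$) is not a routine modification, since the infinite sum over homogeneous components of $u$ does not separate cleanly from the finitely many weights of $w^{(\overline{ij})}$ under the kind of Reeh--Schlieder continuation you describe.

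The paper sidesteps this entirely by working with \emph{genuine} vectors throughout. The first move is to upgrade from the fixed $w^{(i_m)}_0$ to arbitrary $w^{(i_m)}\in W_{i_m}$: by the argument of corollary~2.15 (irreducibility of the $W_{i_m}$), the vanishing \eqref{eq315} for the single tuple $(w^{(i_1)}_0,\dots,w^{(i_n)}_0)$ forces it for all tuples $(w^{(i_1)},\dots,w^{(i_n)})$. Only then does the paper analytically continue to the nested region. At a nested point, instead of freezing everything into a completion vector $u$, the paper lets $z_2\to z_1$ and uses proposition~A.1 to extract the modes $\mathcal Y_{\sigma_2}(w^{(i_2)},s_2)w^{(i_1)}$; by corollary~A.4 these span $W_{j_2}$ (the target of $\sigma_2$), so the innermost $\mathcal Y_{\sigma_2}$ is replaced by an arbitrary $w^{(j_2)}\in W_{j_2}$. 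Iterating strips off all the $\sigma_m$ and yields $\langle w^{(\overline{ij})},\mathcal Y_{i\boxtimes j}(w^{(i)},z_1)w^{(j)}\rangle=0$ for every $w^{(i)}\in W_i$ and $w^{(j)}\in W_j$---now proposition~A.3 applies verbatim.

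So the paper's key idea is to peel off the $\sigma_m$ one at a time via mode expansion, never forming the completion vector $u$; this makes your non-vanishing argument for $u$ and the extension of A.3 both unnecessary. Your route might be completable, but the layer-stripping argument is shorter and uses only the ingredients already at hand.
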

\begin{proof}
Suppose that equation  \eqref{eq315} holds. From the proof of corollary 2.15, we see that
\begin{align}
\langle w^{(\overline{ij})},\mathcal Y_{\sigma_n\cdots\sigma_2,i\boxtimes j}(w^{(i_n)},z_n;\dots;w^{(i_1)},z_1)w^{(j)} \rangle=0\label{eq316}
\end{align}
for all $w^{(i_1)}\in W_{i_1},\dots,w^{(i_n)}\in W_{i_n},w^{(i)}\in W_i$. By theorem 2.4 and the discussion below, equation \eqref{eq316} holds for all $(z_1,\dots,z_n)\in\mathscr O_n(I)$ (the arguments $\arg z_1,\arg (z_2-z_1),\dots,\arg(z_n-z_1)$ are changed continuously). In particular, for any $(z_1,\dots,z_n)\in\mathscr O_n(I)$ satisfying $0<|z_2-z_1|<|z_3-z_1|<\cdots<|z_n-z_1|<|z_1|$, equation \eqref{eq316} reads
\begin{align}
\big\langle w^{(\overline{ij})},\mathcal Y_{i\boxtimes j}\big(\mathcal Y_{\sigma_n}(w^{(i_n)},z_n-z_1)\cdots\mathcal Y_{\sigma_2}(w^{(i_2)},z_2-z_1)w^{(i_1)},z_1 \big) w^{(j)} \big\rangle=0.
\end{align}
If we let $z_2$ be close to $z_1$, then by proposition A.1, for any $s_2\in\mathbb R$, we have
\begin{align}
\big\langle w^{(\overline{ij})},\mathcal Y_{i\boxtimes j}\big(\mathcal Y_{\sigma_n}(w^{(i_n)},z_n-z_1)\cdots\mathcal Y_{\sigma_3}(w^{(i_3)},z_3-z_1)\mathcal Y_{\sigma_2}(w^{(i_2)},s_2)w^{(i_1)},z_1 \big) w^{(j)} \big\rangle=0,
\end{align}
where $\mathcal Y_{\sigma_2}(w^{(i_2)},s_2)$ is a mode of the intertwining operator $\mathcal Y_{\sigma_2}(w^{(i_2)},x)$. Let $W_{j_2}$ be the target space of $\mathcal Y_{\sigma_2}$ (which is also the source space of $\mathcal Y_{\sigma_3}$). Then by corollary A.4, vectors of the form $\mathcal Y_{\sigma_2}(w^{(i_2)},s_2)w^{(i_1)}$ span the vector space $W_{j_2}$. Therefore, for any $w^{(j_2)}\in W_{j_2}$, we have
\begin{align}
\big\langle w^{(\overline{ij})},\mathcal Y_{i\boxtimes j}\big(\mathcal Y_{\sigma_n}(w^{(i_n)},z_n-z_1)\cdots\mathcal Y_{\sigma_3}(w^{(i_3)},z_3-z_1)w^{(j_2)},z_1 \big) w^{(j)} \big\rangle=0.
\end{align}
If we apply the same argument several times, then for any $w^{(i)}\in W_i,w^{(j)}\in W_j$,
\begin{align}
\langle w^{(\overline{ij})},\mathcal Y_{i\boxtimes j}(w^{(i)},z_1 ) w^{(j)} \rangle=0.
\end{align}
So by proposition A.3, $w^{(\overline{ij})}$ must be zero.
\end{proof}

A smeared version of the above proposition can be stated as follows.

\begin{pp}\label{lb105}
Let $V$ be unitary, energy-bounded, and strongly local. Let $\mathcal F$ be a non-empty set of non-zero irreducible unitary $V$-modules satisfying condition \ref{CondA} or \ref{CondB} in section \ref{Condition ABC}.  Let $W_i,W_j$ be unitary $V$-modules in $\mathcal F^\boxtimes$, and assume that $W_i$ is irreducible. Fix an arbitrary unitary structure on $W_{ij}$.

Let $W_{i_1},\dots,W_{i_n}$ be irreducible unitary $V$-modules in $\mathcal F\cup\overline{\mathcal F}$. Let $\mathcal Y_{\sigma_2},\dots,\mathcal Y_{\sigma_n}$ be a chain of non-zero irreducible unitary intertwining operators of $V$ with charge spaces $W_{i_2},\dots,W_{i_n}$ respectively, such that $W_{i_1}$ is the source space of $\mathcal Y_{\sigma_2}$, and   $W_i$ is the target space of $\mathcal Y_{\sigma_n}$. Choose $I\in\mathcal J(S^1\setminus\{-1\}),(I_1,\dots,I_n)\in\mathfrak O_n(I)$. Fix non-zero homogeneous vectors $w^{(i_1)}_0\in W_{i_1},\dots,w^{(i_n)}_0\in W_{i_n}$. Then for any $l\in\mathbb Z_{\geq0}$, vectors of the form
\begin{align}
\pi_{ij}(x)\mathcal Y_{\sigma_n\cdots\sigma_2,i\boxtimes j}(w^{(i_n)}_0,f_n;\dots;w^{(i_1)}_0,f_1)w^{(j)},\label{eq318}
\end{align}
span a core for $\overline{L_0}^l$, where $x\in\mathcal M_V(I)_\infty,f_1\in C^\infty_c(I_1),\dots,f_n\in C^\infty_c(I_n),w^{(j)}\in W_j$.
\end{pp}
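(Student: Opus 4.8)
\emph{Plan.} The idea is to reduce the statement---for all $l$ simultaneously---to the unsmeared density result Proposition~\ref{lb100}, via a Reeh--Schlieder type argument carried out with energy norms. Since $\mathrm{id}\in\mathcal M_V(I)_\infty$, the span of the vectors \eqref{eq318} contains the subspace
\begin{align*}
\mathcal D_0:=\Span\big\{\mathcal Y_{\sigma_n\cdots\sigma_2,i\boxtimes j}(w^{(i_n)}_0,f_n;\dots;w^{(i_1)}_0,f_1)w^{(j)}:f_m\in C^\infty_c(I_m),\ w^{(j)}\in W_j\big\}.
\end{align*}
Because $W_i,W_j\in\mathcal F^\boxtimes$ we have $W_{ij}\in\mathcal F^\boxtimes$, so $\mathcal Y_{\sigma_n\cdots\sigma_2,i\boxtimes j}$ is controlled by $\mathcal F$ and Proposition~\ref{lbb2} gives $\mathcal D_0\subset\mathcal H^\infty_{ij}\subset\mathscr D(\overline{L_0}^l)$; hence the span of \eqref{eq318} lies in $\mathscr D(\overline{L_0}^l)$ as well. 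Since the graph norm of $\overline{L_0}^l$ is equivalent to $\lVert\,\cdot\,\rVert_l:=\lVert(1+\overline{L_0})^l\,\cdot\,\rVert$, it suffices to prove that $\mathcal D_0$ is $\lVert\,\cdot\,\rVert_l$-dense in $\mathcal H^l_{ij}$, i.e.\ that if $\eta\in\mathcal H^l_{ij}$ satisfies $\langle(1+\overline{L_0})^l\zeta|(1+\overline{L_0})^l\eta\rangle=0$ for every $\zeta\in\mathcal D_0$, then $\eta=0$. By linearity one may take each $w^{(j)}$ homogeneous throughout.

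First I would record the rotation invariance of $\mathcal D_0$: given $\zeta=\mathcal Y_{\sigma_n\cdots\sigma_2,i\boxtimes j}(w^{(i_n)}_0,f_n;\dots;w^{(i_1)}_0,f_1)w^{(j)}\in\mathcal D_0$, the supports of the $f_m$ are compact in the open arcs $I_m$, so there is $\varepsilon>0$ with $e^{i(\Delta_{w^{(i_m)}_0}-1)t}\mathfrak r(t)f_m\in C^\infty_c(I_m)$ for all $m$ and all $\lvert t\rvert<\varepsilon$; rotation covariance (Proposition~\ref{lb101}) together with $e^{it\overline{L_0}}w^{(j)}\in W_j$ then gives $e^{it\overline{L_0}}\zeta\in\mathcal D_0$ for $\lvert t\rvert<\varepsilon$. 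Now let $\eta$ be as above, fix $\zeta\in\mathcal D_0$, and consider
\begin{align*}
G(z):=\big\langle z^{\overline{L_0}}(1+\overline{L_0})^l\zeta\,\big|\,(1+\overline{L_0})^l\eta\big\rangle,
\end{align*}
which by positivity of energy is holomorphic on $\{0<\lvert z\rvert<1\}$ and continuous on $\{0<\lvert z\rvert\leq1\}$. Since $e^{it\overline{L_0}}$ commutes with $(1+\overline{L_0})^l$ and $e^{it\overline{L_0}}\zeta\in\mathcal D_0$ for small $t$, the hypothesis gives $G(e^{it})=\langle(1+\overline{L_0})^le^{it\overline{L_0}}\zeta|(1+\overline{L_0})^l\eta\rangle=0$ on a neighbourhood of $1$ in $S^1$; hence $G\equiv0$ by the Schwarz reflection principle. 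Expanding $z^{\overline{L_0}}(1+\overline{L_0})^l\zeta=\sum_s(1+s)^lz^sP_s\zeta$ and comparing Taylor coefficients yields $\langle\zeta|P_s\eta\rangle=0$ for every $s$ and every $\zeta\in\mathcal D_0$.

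It remains to conclude $P_s\eta=0$ for each $s$, which, as $\eta=\sum_sP_s\eta$, finishes the proof. Here $P_s\eta\in W_{ij}(s)$ is a finite-dimensional algebraic vector. Because the $w^{(i_m)}_0$ and $w^{(j)}$ are homogeneous, $(z_1,\dots,z_n)\mapsto P_s\big(\mathcal Y_{\sigma_n\cdots\sigma_2,i\boxtimes j}(w^{(i_n)}_0,z_n;\dots;w^{(i_1)}_0,z_1)w^{(j)}\big)$ is a continuous $W_{ij}(s)$-valued function on $\prod_mI_m$, and $\langle\mathcal Y_{\sigma_n\cdots\sigma_2,i\boxtimes j}(w^{(i_n)}_0,f_n;\dots;w^{(i_1)}_0,f_1)w^{(j)}|P_s\eta\rangle$ is the integral of its inner product against $P_s\eta$ over the $I_m$'s; the vanishing just obtained therefore forces $\langle\mathcal Y_{\sigma_n\cdots\sigma_2,i\boxtimes j}(w^{(i_n)}_0,z_n;\dots;w^{(i_1)}_0,z_1)w^{(j)}|P_s\eta\rangle=0$ for all $(z_1,\dots,z_n)\in\prod_mI_m\subset\mathscr O_n(I)$ (with the arguments chosen as in the definition of the generalized smeared operator) and all $w^{(j)}\in W_j$. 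The fixed unitary structure on $W_{ij}$ identifies $P_s\eta$ with an element $w^{(\overline{ij})}\in W_{\overline{ij}}$ whose pairing $\langle w^{(\overline{ij})},\,\cdot\,\rangle$ coincides, up to a scalar conjugation, with $\langle\,\cdot\,|P_s\eta\rangle$; applying Proposition~\ref{lb100} at such a point $(z_1,\dots,z_n)$ then gives $w^{(\overline{ij})}=0$, hence $P_s\eta=0$. Thus $\eta=0$, and $\mathcal D_0$---a fortiori the span of \eqref{eq318}---is $\lVert\,\cdot\,\rVert_l$-dense in $\mathcal H^l_{ij}$, i.e.\ a core for $\overline{L_0}^l$.

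I expect the main obstacle to be the Reeh--Schlieder step: one must check carefully that small rotations keep all the smearing functions inside the prescribed arcs $I_m$ so that $e^{it\overline{L_0}}\mathcal D_0\subset\mathcal D_0$ genuinely holds, and that the boundary-value and Schwarz-reflection manipulations go through together with the energy-norm bookkeeping that lets one descend from the statement in $\mathcal H^l_{ij}$ to the $\mathcal H_{ij}$-level formulation of Proposition~\ref{lb100}. A routine but necessary side point is that the continuity of the finite-dimensional projections $P_s(\cdots)$ and the applicability of rotation covariance on $\mathcal D_0$ should be traced back to the quantitative energy bounds \eqref{eq306} of Proposition~\ref{lbb2} and the convergence properties of the correlation functions used in defining the generalized intertwining operators.
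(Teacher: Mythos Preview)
Your argument is correct and takes a genuinely different route from the paper's. The paper proceeds in two separate stages: first it shows the full span $\mathcal W_1$ (with the $\pi_{ij}(x)$ factors) is dense in $\mathcal H_{ij}$ by an iterated Reeh--Schlieder argument that propagates the $x_m$'s around the circle, concludes that $\mathcal W_1^\perp$ is $\mathcal M_V$-invariant, invokes Corollary~\ref{lb103} to find an algebraic vector there, and then passes to pointwise vanishing and Proposition~\ref{lb100}; second, it upgrades density to the core property by shrinking to subintervals $K_m\subset\joinrel\subset I_m$, forming an auxiliary dense space $\mathcal W_2$, and applying Lemma~\ref{lb15}. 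Your proof collapses both stages into one: by running the Schwarz reflection directly on $G(z)=\langle z^{\overline{L_0}}(1+\overline{L_0})^l\zeta\,|\,(1+\overline{L_0})^l\eta\rangle$ you obtain $\langle\zeta\,|\,P_s\eta\rangle=0$ for every weight $s$ in a single stroke, which simultaneously yields the $\lVert\cdot\rVert_l$-density (hence the core property) and produces the algebraic vector $P_s\eta$ without the detour through $\mathcal M_V$-invariance, Corollary~\ref{lb103}, or Lemma~\ref{lb15}. In particular you never use the $\pi_{ij}(x)$ factor, so you actually prove the stronger statement that $\mathcal D_0$ itself is a core. What the paper's approach buys is a clean separation of the conformal-net-theoretic input (invariance of $\mathcal W_1^\perp$) from the VOA density result, and the intermediate $\mathcal M_V$-invariance may be of independent interest; what yours buys is brevity and a sharper conclusion. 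The minor technical points you flag---that the rotation $\varepsilon$ depends on $\zeta$ but this suffices for Schwarz reflection at fixed $\zeta$, and that the pointwise vanishing follows from continuity of the finite-dimensional projection $P_s(\cdots)$---are handled correctly.
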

\begin{proof}
Let $\mathcal W_1$ be the subspace of $\mathcal H^\infty_{ij}$ spanned by vectors of the form \eqref{eq318}. We first show that $\mathcal W_1$ is a dense subspace of $\mathcal H_{ij}$.

The first step is to show that $\mathcal W_1^\perp$ is invariant under the action of the conformal net $\mathcal M_V$.  Choose an open interval $J\subset\joinrel\subset I$, and choose $\delta>0$ such that $\mathfrak r(t)J\subset I$ for any $t\in(-\delta,\delta)$. Fix $\xi^{(ij)}\in\mathcal W_1^\perp$.  Then for any $w^{(j)}\in W_j,m\in\mathbb Z_{>0},x_1,\dots,x_m\in\mathcal M_V(J)_\infty,f_1\in C^\infty_c(I_1),\dots,f_n\in C^\infty_c(I_n)$, we have
\begin{align}
\langle x_m\cdots x_2x_1\mathcal Y_{\sigma_n\cdots\sigma_2,i\boxtimes j}(w^{(i_n)}_0,f_n;\dots;w^{(i_1)}_0,f_1)w^{(j)}|\xi^{(ij)} \rangle=0.\label{eq321}
\end{align}
Choose $\varepsilon>0$ such that   the support of
$$f_a^t=\exp(it(\Delta_{w^{(i_a)}_0}-1))\mathfrak r(t)f_a$$ 
is inside $I_a$ for any $t\in(-\varepsilon,\varepsilon)$ and any  $a=1,2,\dots,n$. Then, by proposition \ref{lb101}, for any $t\in\mathbb R$ we have
\begin{align}
&\langle x_m\cdots x_1\cdot e^{it\overline{L_0}}\mathcal Y_{\sigma_n\cdots\sigma_2,i\boxtimes j}(w^{(i_n)}_0,f_n;\dots;w^{(i_1)}_0,f_1)w^{(j)}|\xi^{(ij)}\rangle\nonumber\\
=&\langle x_m\cdots x_1\mathcal Y_{\sigma_n\cdots\sigma_2,i\boxtimes j}(w^{(i_n)}_0,f^t_n;\dots;w^{(i_1)}_0,f^t_1)e^{it\overline{L_0}}w^{(j)}|\xi^{(ij)}\rangle,\label{eq352}
\end{align}
which must be zero when $t\in(-\varepsilon,\varepsilon)$. Therefore, as in step 1 of the proof of theorem \ref{lb120},  the Schwarz reflection principle implies that \eqref{eq352} equals zero for any $t\in\mathbb R$. (Note that when we define generalized smeared intertwining operators, the arguments are restricted to $(-\pi,\pi)$. Here we allow the arguments  to exceed $(-\pi,\pi)$ and change continuously according to the action of $\mathfrak r(t)$.) Hence we conclude that equation \eqref{eq321} holds for any $t\in\mathbb R,w^{(j)}\in W_j,x_1,\dots,x_m\in\mathcal M_V(J)_\infty,f_1\in C^\infty_c(\mathfrak r(t)I_1),\dots,f_n\in C^\infty_c(\mathfrak r(t)I_n)$.

We use similar argument once more.  Choose any  $w^{(j)}\in W_j,t_0,t\in\mathbb R,x_1,\dots,x_m\in\mathcal M_V(J)_\infty,f_1\in C^\infty_c(\mathfrak r(t_0)I_1),\dots,f_n\in C^\infty_c(\mathfrak r(t_0)I_n)$. Then by proposition \ref{lb101} and equation \eqref{eq319}, we have
\begin{align}
&\langle x_m\cdots x_2\cdot e^{it\overline{L_0}}\pi_{ij}(x_1)\mathcal Y_{\sigma_n\cdots\sigma_2,i\boxtimes j}(w^{(i_n)}_0,f_n;\dots;w^{(i_1)}_0,f_1)w^{(j)}|\xi^{(ij)}\rangle\nonumber\\
=&\langle x_m\cdots x_2\cdot e^{it\overline{L_0}}\pi_{ij}(x_1)e^{-it\overline{L_0}}\mathcal Y_{\sigma_n\cdots\sigma_2,i\boxtimes j}(w^{(i_n)}_0,f^t_n;\dots;w^{(i_1)}_0,f^t_1)e^{it\overline{L_0}}w^{(j)}|\xi^{(ij)}\rangle\nonumber\\
=&\langle x_m\cdots x_2\cdot \pi_{ij}(e^{it\overline{L_0}}x_1e^{-it\overline{L_0}})\mathcal Y_{\sigma_n\cdots\sigma_2,i\boxtimes j}(w^{(i_n)}_0,f^t_n;\dots;w^{(i_1)}_0,f^t_1)e^{it\overline{L_0}}w^{(j)}|\xi^{(ij)}\rangle.\label{eq322}
\end{align}
If $t\in(-\delta,\delta)$, then $e^{it\overline{L_0}}x_1e^{-it\overline{L_0}}\in\mathcal M_V(\mathfrak r(t)J)_\infty\subset\mathcal M_V(I)_\infty$, and hence \eqref{eq322} must be zero. So the value of \eqref{eq322} equals zero when $t\in(-\delta,\delta)$. By Schwarz reflection principle, \eqref{eq322} equals zero for any $t\in\mathbb R$. Since the choice of $t_0$ is arbitrary, we  conclude that equation \eqref{eq321} holds for any $t_0,t_1\in\mathbb R,x_1\in\mathcal M_V(\mathfrak r(t_1)J)_\infty,x_2\in\mathcal M_V(J)_\infty,\dots,x_m\in\mathcal M_V(J)_\infty,w^{(j)}\in W_j,f_1\in C^\infty_c(\mathfrak r(t_0)I_1),\dots,f_n\in C^\infty_c(\mathfrak r(t_0)I_n)$. The same argument shows that  equation \ref{eq321} holds for any $t_0,t_1,t_2,\dots,t_m\in\mathbb R,w^{(j)}\in W_j,x_1\in\mathcal M_V(\mathfrak r(t_1)J)_\infty,x_2\in\mathcal M_V(\mathfrak r(t_2)J)_\infty,\dots,x_m\in\mathcal M_V(\mathfrak r(t_m)J)_\infty,f_1\in C^\infty_c(\mathfrak r(t_0)I_1),\dots,f_n\in C^\infty_c(\mathfrak r(t_0)I_n)$. Hence, by proposition \ref{lb104} and the additivity of $\mathcal M_V$,  the equation
\begin{align}
\langle x_m\cdots x_1\mathcal Y_{\sigma_n\cdots\sigma_2,i\boxtimes j}(w^{(i_n)}_0,f_n;\dots;w^{(i_1)}_0,f_1)w^{(j)}|\xi^{(ij)}\rangle=0
\end{align}
holds for any $m\in\mathbb Z_{\geq0},J_1,\dots,J_m\in\mathcal J,x_1\in\mathcal M_V(J_1),\dots,x_m\in\mathcal M_V(J_m),f_1\in C^\infty_c(I_1),\dots,f_n\in C^\infty_c(I_n),w^{(j)}\in W_j,\xi^{(ij)}\in\mathcal W_1^\perp$. This proves that $\mathcal W_1^\perp$ is $\mathcal M_V$-invariant. 

Now suppose that $\mathcal W_1^\perp$ is non-trivial. By corollary \ref{lb103} and remark \ref{lb122}, $\mathcal W_1^\perp$ is the closure of a non-trivial $V$-submodule of $W_{ij}$. Thus there exists a non-zero vector $w^{(ij)}\in W_{ij}\cap\mathcal W_1^\perp$. For any $f_1\in C^\infty_c(I_1),\dots,f_n\in C^\infty_c(I_n),w^{(j)}\in W_j$, we have
\begin{align}
\langle \mathcal Y_{\sigma_n\cdots\sigma_2,i\boxtimes j}(w^{(i_n)}_0,f_n;\dots;w^{(i_1)}_0,f_1)w^{(j)}|w^{(ij)} \rangle=0.
\end{align}
Fix $z_1\in I_1,\dots,z_n\in I_n$. For each $1\leq m\leq n$ we let $f_m$ converge to the $\delta$-function at $z_m$. Then we have
\begin{align}
\langle \mathcal Y_{\sigma_n\cdots\sigma_2,i\boxtimes j}(w^{(i_n)}_0,z_n;\dots;w^{(i_1)}_0,z_1)w^{(j)}|w^{(ij)} \rangle=0
\end{align}
for any $w^{(j)}\in W_j$. By proposition \ref{lb100}, $w^{(ij)}$ equals zero, which is impossible. So $\mathcal W_1$ must be dense.

Now we show that $\mathcal W_1$ is a core for $\overline{L_0}^l$.  Choose an open interval $K\subset\joinrel\subset I$, and  $(K_1,\dots,K_n)\in\mathfrak O_n(K)$, such that  $K_1\subset\joinrel\subset I_1,\dots,K_n\subset\joinrel\subset I_n$. Let $\mathcal W_2$ be the subspace of $\mathcal H_{ij}^\infty$ spanned by vectors of the form
\begin{align*}
\pi_{ij}(x)\mathcal Y_{\sigma_n\cdots\sigma_2,i\boxtimes j}(w^{(i_n)}_0,f_n;\dots;w^{(i_1)}_0,f_1)w^{(j)},
\end{align*}
where $x\in\mathcal M_V(K)_\infty,f_1\in C^\infty_c(K_1),\dots,f_n\in C^\infty_c(K_n),w^{(j)}\in W_j$. Then clearly $\mathcal W_2$ is also dense in $\mathcal H_{ij}$. Choose $\epsilon>0$ such that for any $t\in(-\epsilon,\epsilon)$, $\mathfrak r(t)K\subset I,\mathfrak r(t)K_1\subset I_1,\dots,\mathfrak r(t)K_n\subset I_n$. Then by proposition \ref{lb101}, $e^{it\overline{L_0}}\mathcal W_2\subset\mathcal W_1$. Hence, by the next lemma, $\mathcal W_1$ is a core for $\overline{L_0}^l$.
\end{proof}

\begin{lm}[cf. \cite{CKLW} lemma 7.2.]\label{lb15}
	Let $A$ be a self-adjoint operator on a Hilbert space $\mathcal H$, and let $U(t)=e^{itA}$, $t\in\mathbb R$ be the corresponding strongly-continuous one-parameter group of unitary operators on $\mathcal H$. For any $k\in\mathbb Z_{\geq0}$, let $\mathcal H^k$ denote the domain of $A^k$, and let $\mathcal H^\infty=\bigcap_{k\in\mathbb Z_{\geq0}}\mathcal H^k$. Assume that there exists a real number $\epsilon>0$ and two dense linear subspaces $\mathscr D_\epsilon$ and $\mathscr D$ of $\mathcal H^\infty$ such that $U(t)\mathscr D_\epsilon\subset\mathscr D$ for any $t\in(-\epsilon,\epsilon)$. Then, for every positive integer $k$, $\mathscr D$ is a core for $A^k$.
\end{lm}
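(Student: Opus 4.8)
The plan is to prove the lemma by mollifying vectors along the unitary flow. For $\zeta\in\mathcal H$ and $h\in C^\infty_c(-\epsilon,\epsilon)$ write $\zeta_h=\int_{-\epsilon}^{\epsilon}h(t)U(t)\zeta\,dt$, a norm-convergent Hilbert-space integral. I would first record the two standard properties of this operation. (a) \emph{Smoothing}: for \emph{every} $\zeta\in\mathcal H$, differentiating $s\mapsto U(s)\zeta_h=\int h(r-s)U(r)\zeta\,dr$ under the integral sign and iterating shows $\zeta_h\in\mathcal H^\infty$ with $A^m\zeta_h=i^m\zeta_{h^{(m)}}$ for all $m\geq 0$; in particular $\lVert A^m\zeta_h\rVert\leq\lVert h^{(m)}\rVert_{L^1}\lVert\zeta\rVert$. (b) \emph{Approximate identity}: if $h_n\in C^\infty_c(-\epsilon,\epsilon)$ satisfy $h_n\geq 0$, $\int h_n=1$, and $\mathrm{supp}\,h_n\to\{0\}$, then for $\zeta\in\mathcal H^k$ one has $A^k\zeta_{h_n}=(A^k\zeta)_{h_n}$ (since $A^k$ commutes with each $U(t)$), and strong continuity of $U$ gives $\zeta_{h_n}\to\zeta$ and $A^k\zeta_{h_n}\to A^k\zeta$ in norm, i.e. $\zeta_{h_n}\to\zeta$ in the graph norm of $A^k$.

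Second, I would show that for $\xi\in\mathscr D_\epsilon$ and any $h\in C^\infty_c(-\epsilon,\epsilon)$ the vector $\xi_h$ lies in the graph-norm closure of $\mathscr D$. Indeed, every Riemann sum $\sum_j h(t_j)U(t_j)\xi\,\Delta_j$ lies in $\mathscr D$, because $U(t)\xi\in\mathscr D$ whenever $|t|<\epsilon$ and $\mathscr D$ is a linear subspace; since $t\mapsto U(t)\xi$ and $t\mapsto U(t)A^k\xi$ are norm-continuous on the compact support of $h$, and $A^k$ commutes with $U(t)$, these sums converge in norm to $\xi_h$ and their images under $A^k$ converge to $(A^k\xi)_h=A^k\xi_h$. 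Hence the sums converge to $\xi_h$ in the graph norm of $A^k$, as claimed.

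Finally I would combine (a), (b), and the second step by a three-term estimate. Fix $k$, $\eta\in\mathcal H^k$, and $\delta>0$. Using (b), choose $n$ with $\lVert\eta_{h_n}-\eta\rVert+\lVert A^k(\eta_{h_n}-\eta)\rVert<\delta/3$. With $n$ now fixed, write $\eta_{h_n}-\xi_{h_n}=(\eta-\xi)_{h_n}$; applying (a) to $\eta-\xi\in\mathcal H^k$ (so that $A^k(\eta-\xi)_{h_n}=i^k(\eta-\xi)_{h_n^{(k)}}$) yields $\lVert(\eta-\xi)_{h_n}\rVert+\lVert A^k(\eta-\xi)_{h_n}\rVert\leq(\lVert h_n\rVert_{L^1}+\lVert h_n^{(k)}\rVert_{L^1})\lVert\eta-\xi\rVert$. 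Since $\mathscr D_\epsilon$ is dense in $\mathcal H$, pick $\xi\in\mathscr D_\epsilon$ making the right-hand side $<\delta/3$, and then by the second step pick $\zeta\in\mathscr D$ with $\lVert\xi_{h_n}-\zeta\rVert+\lVert A^k(\xi_{h_n}-\zeta)\rVert<\delta/3$. The triangle inequality gives $\zeta\in\mathscr D$ within $\delta$ of $\eta$ in the graph norm of $A^k$, proving $\mathscr D$ is a core for $A^k$.

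The crux, and the step to state carefully, is the passage from $\xi$ close to $\eta$ in the ambient norm to $\xi_{h_n}$ close to $\eta_{h_n}$ in the \emph{graph} norm of $A^k$: approximating $\eta$ directly by $\mathscr D_\epsilon$ in graph norm is not available, since $\mathscr D_\epsilon$ is only assumed dense in $\mathcal H$; it is the fixed mollifier $h_n$ that converts ambient-norm control of $\eta-\xi$ into graph-norm control of $\eta_{h_n}-\xi_{h_n}$, at the price of the finite constant $\lVert h_n^{(k)}\rVert_{L^1}$, which is harmless precisely because $n$ is chosen before $\xi$. The only other point requiring a word of justification is that $A^k$ commutes with every $U(t)$ (functional calculus for the self-adjoint $A$), used both to move $A^k$ onto $\xi$ inside the Riemann sums and to identify $A^k\zeta_{h_n}$ with $(A^k\zeta)_{h_n}$; this is routine and the rest of the argument is bookkeeping.
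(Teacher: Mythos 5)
The paper itself gives no proof of this lemma, only the citation to \cite{CKLW} lemma 7.2, whose argument is exactly the mollification-along-the-flow strategy you carry out. Your proof is correct and essentially the same as the cited reference's: the crux you identify — that since $\mathscr D_\epsilon$ is only dense in $\mathcal H$, one must mollify first with a fixed $h_n$ so that the bound $\lVert A^k\zeta_{h_n}\rVert\leq\lVert h_n^{(k)}\rVert_{L^1}\lVert\zeta\rVert$ converts ambient-norm control of $\eta-\xi$ into graph-norm control of $\eta_{h_n}-\xi_{h_n}$ — is indeed the heart of the argument.
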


\subsection{The sesquilinear form $\Lambda$ on $W_i\boxtimes W_j$}

Beginning with this section, we assume that $V$ is unitary, energy bounded, and strongly local, and that there exists a non-empty set $\mathcal F$ of non-zero irreducible unitary $V$-modules  satisfying condition \ref{CondA} or \ref{CondB} in section \ref{Condition ABC}. 

Choose unitary $V$-modules $W_i,W_j$ in $\mathcal F^\boxtimes$.  We now define, for any $k\in\mathcal E$, a sesquilinear form $\Lambda=\Lambda(\cdot|\cdot)$ on $\mathcal V{k\choose i~j}^*$ (antilinear on the second variable).  Choose a basis $\{\mathcal Y_{\alpha}:\alpha\in\Theta^k_{ij} \}$ of $\mathcal V{k\choose i~j}$. Choose  $z_1,z_2\in\mathbb C^\times$ satisfying $0<|z_2-z_1|<|z_1|<|z_2|$. Choose $\arg z_2$, let $\arg z_1$ be close to $\arg z_2$ as $z_2-z_1\rightarrow 0$, and let $\arg (z_2-z_1)$ be close to $\arg z_2$ as $z_1\rightarrow 0$.
By fusion of intertwining operators, there exists a complex $N^k_{ij}\times N^k_{ij}$ matrix $\Lambda=\{\Lambda^{\alpha\beta}\}_{\alpha,\beta\in\Theta^k_{ij}}$, such that  for any $w^{(i)}_1,w^{(i)}_2\in W_i$ we have the following \textbf{transport formula} (version 1):
\begin{align}
&Y_j\big(\mathcal Y^0_{\overline ii}(\overline {w^{(i)}_2},z_2-z_1)w^{(i)}_1,z_1\big)\nonumber\\
=&\sum_{k\in\mathcal E}\sum_{\alpha,\beta\in\Theta^k_{ij}}\Lambda^{\alpha\beta}\mathcal Y_{\beta^*}(\overline{w^{(i)}_2},z_2)\mathcal Y_{\alpha}(w^{(i)}_1,z_1)\nonumber\\
=&\sum_{\alpha,\beta\in\Theta^*_{ij}}\Lambda^{\alpha\beta}\mathcal Y_{\beta^*}(\overline{w^{(i)}_2},z_2)\mathcal Y_{\alpha}(w^{(i)}_1,z_1).\label{eq62}
\end{align}
The  matrix $\Lambda$ is called a \textbf{transport matrix} of $V$.  Let $\{\widecheck{\mathcal Y}^{\alpha}:\alpha\in\Theta^k_{ij} \}$ be the dual basis of $\Theta^k_{ij}$. We then define a sesquilinear form $\Lambda(\cdot|\cdot )$ on $\mathcal V{k\choose i~j}^*$ by setting
\begin{align}
\Lambda( \widecheck{\mathcal Y}^{\alpha}|\widecheck{\mathcal Y}^{\beta}  )=\Lambda^{\alpha\beta}.
\end{align}
It is easy to see that this definition does not depend on the basis chosen. These sesquilinear forms induce one on the vector space $W_i\boxtimes W_j=\bigoplus_{k\in\mathcal E}\mathcal V{k\choose i~j}^*\otimes W_k$: if $k_1,k_2\in\mathcal E\cap\mathcal F^\boxtimes,\widecheck{\mathcal Y}_1\in\mathcal V{k_1\choose i~j}^*,\widecheck{\mathcal Y}_2\in\mathcal V{k_2\choose i~j}^*,w^{(k_1)}\in W_{k_1},w^{(k_2)}\in W_{k_2}$, then
\begin{align}
\Lambda\big(\widecheck{\mathcal Y}_1\otimes w^{(k_1)}\big|\widecheck{\mathcal Y}_2\otimes w^{(k_2)}\big)= \left\{ \begin{array}{cl}
\Lambda(\widecheck{\mathcal Y}_1|\widecheck{\mathcal Y}_2)\langle w^{(k_1)}|w^{(k_2)} \rangle & \textrm{if $k_1=k_2$,}\\
0 & \textrm{if $k_1\neq k_2$.}
\end{array} \right.\label{eqb5}
\end{align}
In the next section, we will prove that $\Lambda$ is an inner product.

\begin{rem}\label{lb30}
Our definition of transport formulas is motivated by the landmark paper  \cite{Wassermann} of A.Wassermann.  In that paper, Wassermann used smeared intertwining operators to define transport formulas  when the fusion rules are at most $1$ (see section 31). In those cases  transport matrices become transport coefficients. Proving the strict positivity of these coefficients  is one of the key steps to compute the \emph{Connes} fusion rules for type $A_n$ unitary WZW models in \cite{Wassermann}. However, to prove similar results for other examples, one has to combine Wassermann's methods with Huang-Lepowsky's tensor product theory on VOA modules, as we now briefly explain.

The non-zeroness of the transport coefficients in \cite{Wassermann} was proved  by computing explicitly the monodromy coefficients of the solutions of differential equations (2.5) (in the case of WZW models, the Knizhnik-Zamolodchikov (KZ) equations). In the case of \cite{Wassermann}, these equations reduce to certain generalized hypergeometric equations, the manipulation of which is still possible. But for other examples, say type $G_2$ WZW models, these differential equations are more complicated and  the precise values of transport coefficients are therefore much harder to calculate. 

Fortunately, Y.Huang's remarkable works on the rigidity and the modularity of VOA tensor categories (cf. \cite{H MI,H Verlinde,H Rigidity}) provide a solution to this issue. Indeed, the non-zeroness of transport coefficients (or the non-degeneracy of transport formulas) is closely related to the (weak) \emph{rigidity} of the braided tensor category $\Rep(V)$ (see step 3 of the proof of theorem \ref{lb110}). To prove this rigidity, Huang made the following contributions:\\
(a) In \cite{H Verlinde} and \cite{H Rigidity}, he showed that the rigidity   of $\Rep(V)$, which is a genus-$0$ property of CFT, follows from the genus-$1$ property of modular invariance. This important observation complements the Verlinde formula, which says that the fusion rules (which are also genus-$0$ data) can be computed using $S$-matrices (genus-$1$ data).\\
(b) The result of Y.Zhu \cite{Zhu96} on modular invariance of  VOA characters (as a special case of the modular invariance of genus-$1$ conformal blocks) is insufficient to prove the rigidity property. In \cite{H MI}, Huang proved the most general form of modular invariance by deriving certain differential equations on the moduli spaces of punctured complex tori, hence proving the rigidity of $\Rep(V)$. We refer the reader to \cite{HL13} for a detailed discussion of this issue.

Generalizing Wassermann's positivity result is also not easy. Indeed, Wassermann's key idea for proving the positivity of $\Lambda$ is to construct enough bounded intertwiners of conformal nets. Smeared intertwining operators (or rather their phases) provide such examples of bounded intertwiners, but they are not enough even in the case of type $A$ WZW models. Therefore, in \cite{Wassermann} Wassermann also considered products of smeared intertwining operators, which are essentially the same as the generalized smeared intertwining operators considered in the section \ref{Condition ABC}. Moreover, to prove the positivity of all $\Lambda$, one  needs to calculate certain braid relations between generalized smeared intertwining operators and their adjoints. However, Wassermann's calculation in \cite{Wassermann} (especially section 32) is model dependent and hence not easy to generalize. This is perhaps due to the fact that our understanding of the corresponding VOA structures was not mature enough by the time \cite{Wassermann} was written. But now, with the help of Huang-Lepowsky's important works on vertex tensor categories \cite{H 1,H 2,H 3,H 4,H ODE}, we are able to calculate the braid and adjoint relations for generalized (smeared) intertwining operators in a general setting, as we have already seen in the previous chapter.

The sesquilinear form $\Lambda$ is also closely related to the non-degenerate bilinear form constructed in \cite{HK07}. This will be explained in section \ref{lbb5}.
\end{rem}

For any $k\in\mathcal E\cap\mathcal F^\boxtimes$, since $W_k$ is irreducible, we have $N^k_{0k}=N^k_{k0}=1$. That the sesquilinear forms $\Lambda$ on $\mathcal V{k\choose 0~k}^*$ and on $\mathcal V{k\choose k~0}^*$ are  positive definite can be seen from the following two fusion relations:
\begin{gather}
Y_k\big(Y(u,z_2-z_1)v,z_1\big)=Y_k(u,z_2)Y_k(v,z_1),\label{eq323}\\
Y\big(\mathcal Y^0_{\overline kk}(\overline {w^{(k)}_2},z_2-z_1)w^{(k)}_1,z_1\big)=\mathcal Y^0_{\overline kk}(\overline{w^{(k)}_2},z_2)\mathcal Y^k_{k0}(w^{(k)}_1,z_1),\label{eq324}
\end{gather}
where $u,v\in V$, and $w^{(k)}_1,w^{(k)}_2\in W_k$. The first equation follows from proposition 2.13, and the second one follows from proposition 2.17.  (Note that these two fusion relations hold for any $V$-module $W_k$.) Moreover, the dual element of $Y_k$ is an orthonormal basis of $\mathcal V{k\choose 0~k}^*$, and the dual element of $\mathcal Y^k_{k0}$ is an orthonormal basis of $\mathcal V{k\choose k~0}^*$.

We  derive now some variants of transport formulas.

\begin{pp}\label{lb111}
	Let $I\in\mathcal J$. Choose distinct complex numbers $z_1,z_2\in I$. Choose $z_0\in I^c$ with argument $\arg z_0$. Define a continuous argument function $\arg_I$ on $I$, and let $\arg z_1=\arg_I(z_1),\arg z_2=\arg_I(z_2)$. Let $W_i,W_j$ be  unitary $V$-modules in $\mathcal F^\boxtimes$. \\
	(1) Let $W_s,W_r$ be unitary $V$-modules in $\mathcal F^\boxtimes$, and choose  $\mathcal Y_\gamma\in\mathcal V{r\choose j~s}$. Then for any $w^{(i)}_1,w^{(i)}_2\in W_i, w^{(j)}\in W_j$, we have the braid relation
	\begin{align}
	&\mathcal Y_\gamma(w^{(j)},z_0)\bigg(\sum_{\alpha,\beta\in\Theta^*_{is}}\Lambda^{\alpha\beta}\mathcal Y_{\beta^*}(\overline{w^{(i)}_2},z_2)\mathcal Y_{\alpha}(w^{(i)}_1,z_1)\bigg)\nonumber\\
	=&\bigg(\sum_{\alpha,\beta\in\Theta^*_{ir}}\Lambda^{\alpha\beta}\mathcal Y_{\beta^*}(\overline{w^{(i)}_2},z_2)\mathcal Y_{\alpha}(w^{(i)}_1,z_1)\bigg)\mathcal Y_\gamma(w^{(j)},z_0).\label{eq63}
	\end{align}
	(2) For any $w^{(i)}_1,w^{(i)}_2\in W_i$ and $ w^{(j)}\in W_j$, we have the transport formula (version 2)
	\begin{align}
	&\mathcal Y^j_{j0}(w^{(j)},z_0)\mathcal Y^0_{\overline ii}(\overline{w^{(i)}_2},z_2)\mathcal Y^i_{i0}(w^{(i)}_1,z_1)\nonumber\\
	=&\bigg(\sum_{\alpha,\beta\in\Theta^*_{ij}}\Lambda^{\alpha\beta}\mathcal Y_{\beta^*}(\overline{w^{(i)}_2},z_2)\mathcal Y_{\alpha}(w^{(i)}_1,z_1)\bigg)\mathcal Y^j_{j0}(w^{(j)},z_0).\label{eq64}
	\end{align}
(3) If $\arg z_0<\arg z_2<\arg z_0+2\pi$, then for any $w^{(i)}_2\in W_i,w^{(j)}\in W_j$, we have the transport formula (version 3)
\begin{align}
\mathcal Y^j_{j0}(w^{(j)},z_0)\mathcal Y^0_{\overline ii}(\overline{w^{(i)}_2},z_2)=\sum_{\alpha,\beta\in\Theta^*_{ij}}\Lambda^{\alpha\beta}\mathcal Y_{\beta^*}(\overline{w^{(i)}_2},z_2)\mathcal Y_{B_+\alpha}(w^{(j)},z_0).\label{eq330}
\end{align}
If $\arg z_2<\arg z_0<\arg z_2+2\pi$,  then equation \eqref{eq330} still holds, with $B_+\alpha$ replaced by $B_-\alpha$.
\end{pp}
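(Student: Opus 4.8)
The plan is to derive all three statements from the transport formula (version 1), equation \eqref{eq62}, using only the basic algebraic properties of intertwining operators (fusion, braiding, and the definitions of $\mathcal Y^i_{i0},\mathcal Y^0_{\overline ii},\mathcal Y^j_{j0}$), together with analytic continuation. By analytic continuation it suffices to work in a configuration where $0<|z_2-z_1|<|z_1|<|z_2|$ and the relevant regions of absolute convergence overlap; once the identities are established for one choice of $z_0,z_1,z_2$ with compatible arguments, they extend to the full domain claimed in the statement by moving the points along $I$ and $I^c$ and tracking arguments continuously.

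For part (1), the point is that the right-hand side of \eqref{eq62} — viewed as a function of $w^{(i)}_1,w^{(i)}_2$ — agrees with $Y_s\big(\mathcal Y^0_{\overline ii}(\overline{w^{(i)}_2},z_2-z_1)w^{(i)}_1,z_1\big)$ when the target is $W_s$, i.e. it is (for fixed $\mathcal Y^0_{\overline ii}$-output $v\in W_s$) just the action $Y_s(v,z_1)$ of the vertex operator of $W_s$ up to fusion, composed with the annihilation operator. So the composite $\mathcal Y_\gamma(w^{(j)},z_0)\big(\sum\Lambda^{\alpha\beta}\mathcal Y_{\beta^*}\mathcal Y_\alpha\big)$ is, after fusing the two factors with charge space $W_i$, nothing but $\mathcal Y_\gamma\big(w^{(j)},z_0\big)Y_s\big(u,z_1\big)$ for $u\in W_s$ ranging over the image of the annihilation-and-fusion map, which by the commutation/braiding relation between a vertex operator and an intertwining operator (proposition 2.13 together with the braiding of intertwining operators, as used in Theorem \ref{lb16}) equals $\mathcal Y_{\gamma'}\big(\text{(same }u),z_1;\dots\big)$-type expression landing in $W_r$; re-expanding via \eqref{eq62} with target $W_r$ gives the right-hand side of \eqref{eq63}. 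Concretely I would first fuse $\mathcal Y_{\beta^*}(\overline{w^{(i)}_2},z_2)\mathcal Y_\alpha(w^{(i)}_1,z_1)$ back into $Y_s\big(\mathcal Y^0_{\overline ii}(\overline{w^{(i)}_2},z_2-z_1)w^{(i)}_1,z_1\big)$ using \eqref{eq62} read from right to left, then move $\mathcal Y_\gamma(w^{(j)},z_0)$ through $Y_s(\cdot,z_1)$ using the standard fact that a smeared/unsmeared vertex operator commutes with an intertwining operator at separated points (the module-map property of $Y$), obtaining $Y_r\big(\mathcal Y^0_{\overline ii}(\overline{w^{(i)}_2},z_2-z_1)\cdot,z_1\big)\mathcal Y_\gamma(w^{(j)},z_0)$, and finally re-apply \eqref{eq62} with target $W_r$.

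For part (2), I would specialize part (1) by taking $W_s=W_0=V$, $W_r=W_j$, and $\mathcal Y_\gamma=\mathcal Y^j_{j0}\in\mathcal V{j\choose j~0}$, which is legitimate since $\mathcal Y^j_{j0}$ has charge space $W_j$ and source space $W_0$; the inner sum over $\Theta^*_{i0}$ collapses using $N^k_{i0}\neq0$ only for the appropriate $k$, and one checks that $\sum_{\alpha,\beta\in\Theta^*_{i0}}\Lambda^{\alpha\beta}\mathcal Y_{\beta^*}(\overline{w^{(i)}_2},z_2)\mathcal Y_\alpha(w^{(i)}_1,z_1)=Y_0\big(\mathcal Y^0_{\overline ii}(\overline{w^{(i)}_2},z_2-z_1)w^{(i)}_1,z_1\big)$, and then that $Y_0(\cdot,z_1)$ applied through $\mathcal Y^j_{j0}$ reproduces $\mathcal Y^j_{j0}(w^{(j)},z_0)\mathcal Y^0_{\overline ii}(\overline{w^{(i)}_2},z_2)\mathcal Y^i_{i0}(w^{(i)}_1,z_1)$ after unwinding the definitions $\mathcal Y^i_{i0}=B_\pm Y_i$ and $\mathcal Y^0_{\overline ii}=(\mathcal Y^i_{i0})^\dagger$; this uses fusion relations \eqref{eq323}, \eqref{eq324} of the preceding discussion. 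For part (3), I would start from \eqref{eq64}, feed in $w^{(i)}_1=\Omega$ (or rather use that $\mathcal Y^i_{i0}(\cdot,x)\Omega$ recovers $W_i$ by the creation property), so that $\mathcal Y^i_{i0}(w^{(i)}_1,z_1)$ disappears on a cyclic vector and the left side becomes $\mathcal Y^j_{j0}(w^{(j)},z_0)\mathcal Y^0_{\overline ii}(\overline{w^{(i)}_2},z_2)$; on the right side the sum over $\alpha$ with $\mathcal Y_\alpha(w^{(i)}_1,z_1)\Omega$ collapses and converts $\mathcal Y_\alpha$ into $\mathcal Y_{B_\pm\alpha}$ applied to $w^{(j)}$ at $z_0$, by the relation between the creation operator and the braiding operator $B_\pm$ (using $\mathcal Y^i_{i0}=B_\pm Y_i$ and proposition 2.9-type identities). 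The choice of $B_+$ versus $B_-$ is dictated by whether $\arg z_0<\arg z_2<\arg z_0+2\pi$ or the reverse, which records the direction in which $z_0$ is braided past $z_2$.

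The main obstacle I expect is not any single identity but the careful bookkeeping of arguments: each of \eqref{eq62}, \eqref{eq63}, \eqref{eq64}, \eqref{eq330} is an identity of multivalued holomorphic functions, and the braidings that move $z_0$ past the pair $(z_1,z_2)$ introduce monodromy that must be matched precisely to the argument conventions fixed in the statement (the close-to relations, and the conditions $\arg z_0<\arg z_2<\arg z_0+2\pi$ etc.). In particular, verifying that analytic continuation from the convergence domain $0<|z_2-z_1|<|z_1|<|z_2|$ to the configuration $z_0,z_1,z_2\in I$, $z_0\in I^c$ is possible along a path keeping all points distinct, and that this path produces exactly the braided operators $\mathcal Y_{B_+\alpha}$ rather than $\mathcal Y_{B_-\alpha}$, is where the real care is needed; the algebra of fusing and commuting vertex and intertwining operators is, by contrast, routine given the results of chapter 2 of part I.
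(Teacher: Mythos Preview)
Your approach to parts (1) and (2) is exactly the paper's: fuse the $\Lambda$-sum back into $Y_s\big(\mathcal Y^0_{\overline ii}(\overline{w^{(i)}_2},z_2-z_1)w^{(i)}_1,z_1\big)$ via \eqref{eq62}, commute $\mathcal Y_\gamma$ past this vertex operator (proposition 2.13 and theorem \ref{lb16}, since this is a product of two generalized intertwining operators localized in disjoint intervals), and re-expand via \eqref{eq62} with target $W_r$; then (2) is the specialization $W_s=V$, $\mathcal Y_\gamma=\mathcal Y^j_{j0}$, using \eqref{eq324} to identify the left-hand side.

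For part (3) you have the right ingredients but the description is tangled. The sentence ``feed in $w^{(i)}_1=\Omega$'' is a type error ($w^{(i)}_1\in W_i$), and $\mathcal Y^i_{i0}(w^{(i)}_1,z_1)$ does not ``disappear'' on $\Omega$. The clean argument, which is what the paper does, is: start from \eqref{eq64} and on the \emph{right-hand} side braid $\mathcal Y_\alpha(w^{(i)}_1,z_1)$ past $\mathcal Y^j_{j0}(w^{(j)},z_0)$ using corollary 2.18, which under $\arg z_0<\arg z_1<\arg z_0+2\pi$ gives $\mathcal Y_{B_+\alpha}(w^{(j)},z_0)\mathcal Y^i_{i0}(w^{(i)}_1,z_1)$. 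Now both sides of \eqref{eq64} end in the factor $\mathcal Y^i_{i0}(w^{(i)}_1,z_1)$, and one cancels it via proposition 2.3 (the injectivity of right-composition with the creation operator). Your ``cyclicity'' remark is essentially this cancellation step, but note that the $B_\pm$ conversion on the right-hand side is not a consequence of evaluating at $\Omega$ alone---it is precisely corollary 2.18, which you do cite. Once you separate these two steps (braid, then cancel), the argument-tracking you worry about reduces to checking that $\arg z_0<\arg z_2<\arg z_0+2\pi$ forces the same inequality for $z_1$ close to $z_2$, which is immediate.
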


\begin{proof}
(1) By rotating $z_1,z_2$ along $I$ and changing their arguments continuously, we can assume that $0<|z_1-z_2|<1$. Then clearly $\arg z_1$ is close to $\arg z_2$ as $z_2-z_1\rightarrow 0$. We also let $\arg(z_2-z_1)$ be close to $\arg z_2$ as $z_1\rightarrow 0$. Then by	equation \eqref{eq62}, proposition 2.13, and theorem \ref{lb16}, we have
	\begin{align}
	&\mathcal Y_\gamma(w^{(j)},z_0)\bigg(\sum_{\alpha,\beta\in\Theta^*_{is}}\Lambda^{\alpha\beta}\mathcal Y_{\beta^*}(\overline{w^{(i)}_2},z_2)\mathcal Y_{\alpha}(w^{(i)}_1,z_1)\bigg)\nonumber\\
	=&\mathcal Y_\gamma(w^{(j)},z_0)Y_s\big(\mathcal Y^0_{\overline ii}(\overline {w^{(i)}_2},z_2-z_1)w^{(i)}_1,z_1\big)\label{eqb13}\\
	=&Y_r\big(\mathcal Y^0_{\overline ii}(\overline {w^{(i)}_2},z_2-z_1)w^{(i)}_1,z_1\big)\mathcal Y_\gamma(w^{(j)},z_0)\label{eqb14}\\
	=&\bigg(\sum_{\alpha,\beta\in\Theta^*_{ir}}\Lambda^{\alpha\beta}\mathcal Y_{\beta^*}(\overline{w^{(i)}_2},z_2)\mathcal Y_{\alpha}(w^{(i)}_1,z_1)\bigg)\mathcal Y_\gamma(w^{(j)},z_0),\nonumber
	\end{align}
where \eqref{eqb13} and \eqref{eqb14} are understood as products of two generalized intertwining operators (see the beginning of chapter \ref{lb112}). This proves equation \eqref{eq63}.

(2) Equation \eqref{eq64} is a special case of equation \eqref{eq63}. 

(3) If $\arg z_0<\arg z_2<\arg z_0+2\pi$, we choose $z_1\in S^1\setminus\{-1\}$ close to $z_2$ and let $\arg z_1$ be close to $\arg z_2$ as $z_1\rightarrow z_2$. Then by equation \eqref{eq64}, corollary 2.18, and proposition 2.11, we have
\begin{align}
&\mathcal Y^j_{j0}(w^{(j)},z_0)\mathcal Y^0_{\overline ii}(\overline{w^{(i)}_2},z_2)\mathcal Y^i_{i0}(w^{(i)}_1,z_1)\nonumber\\
=&\sum_{\alpha,\beta\in\Theta^*_{ij}}\Lambda^{\alpha\beta}\mathcal Y_{\beta^*}(\overline{w^{(i)}_2},z_2)\mathcal Y_{B_+\alpha}(w^{(j)},z_0)\mathcal Y^i_{i0}(w^{(i)}_1,z_1).\nonumber
\end{align}
By proposition 2.3, we obtain equation \eqref{eq330}. The other case is proved in a similar way.
\end{proof}

\subsection{Positive definiteness of $\Lambda$}

Let $W_i,W_j$ be unitary $V$-modules in $\mathcal F^\boxtimes$, and let $W_k$ be in $\mathcal E\cap\mathcal F^\boxtimes$ as before. We prove in this section that the sesquilinear form $\Lambda$ on $\mathcal V{k\choose i~j}^*$ is positive definite. One suffices to prove this when $W_i,W_j$ are irreducible. Indeed, if $W_i,W_j$ are not necessarily irreducible, and have orthogonal decompositions $W_i=W_{i_1}\oplus W_{i_2}\oplus\cdots\oplus W_{i_m},W_j=W_{j_1}\oplus W_{j_2}\oplus\cdots\oplus W_{j_n}$, then clearly the unitary $V$-modules $W_{i_1},\dots,W_{i_m},W_{j_1},\dots,W_{j_n}$ are in $\mathcal F^\boxtimes$. It is easy to see that the transport matrix for $\mathcal V{k\choose i~j}^*$ can be diagonalized into the  $mn$ blocks of the transport matrices for $\mathcal V{k\choose i_a~j_b}^*$ ($1\leq a\leq m,1\leq b\leq n$). Therefore, if we choose $W_{i_1},\dots,W_{i_m},W_{j_1},\dots,W_{j_n}$  to be irreducible, and if we can prove that the transport matrix for every $\mathcal V{k\choose i_a~j_b}^*$ is positive definite, then the one for $\mathcal V{k\choose i~j}^*$ is also positive definite.

So let us assume that $W_i,W_j$ are irreducible. We let $\mathcal Y_{\kappa(i)}=\mathcal Y^i_{i0}$ and $\mathcal Y_{\kappa(j)}=\mathcal Y^j_{j0}$. Then $\mathcal Y_{\kappa(i)^*}=\mathcal Y^0_{\overline ii},\mathcal Y_{\kappa(j)^*}=\mathcal Y^0_{\overline jj}$.
Since $W_i$ (resp. $W_j$) is in $\mathcal F^\boxtimes$, there exits unitary $V$-modules $W_{i_1},\dots,W_{i_m}$ (resp. $W_{j_1},\dots,W_{j_n}$) in $\mathcal F\cup\overline{\mathcal F}$, such that $W_i$ (resp. $W_j$) is equivalent to a submodule of $W_{i_m\cdots i_1}=W_{i_m}\boxtimes\cdots\boxtimes W_{i_1}$ (resp. $W_{j_n\cdots j_1}$). Therefore, we can  choose a chain of non-zero irreducible unitary intertwining operators $\mathcal Y_{\sigma_2},\dots,\mathcal Y_{\sigma_m}$ (resp. $\mathcal Y_{\rho_2},\dots,\mathcal Y_{\rho_n}$) with charge spaces $W_{i_2},\dots,W_{i_m}$ (resp. $W_{j_2},\dots,W_{j_n}$) respectively, such that $W_{i_1}$ (resp. $W_{j_1}$) is the source space of $\mathcal Y_{\sigma_2}$ (resp. $\mathcal Y_{\rho_2}$), and that $W_i$ (resp. $W_j$) is the target space of $\mathcal Y_{\sigma_m}$ (resp. $\mathcal Y_{\rho_n}$).

Fix non-zero quasi-primary vectors $w^{(i_1)}\in W_{i_1},\dots,w^{(i_m)}\in W_{i_m},w^{(j_1)}\in W_{j_1},\dots,w^{(j_n)}\in W_{j_n}$. If $\mathcal F$ satisfies condition \ref{CondB} in section \ref{Condition ABC}, we assume moreover that $w^{(i_1)}\in E^1(W_{i_1}),\dots,w^{(i_m)}\in E^1(W_{i_m}),w^{(j_1)}\in E^1(W_{j_1}),\dots,w^{(j_n)}\in E^1(W_{j_n})$. Choose disjoint open intervals $I,J\in\mathcal J(S^1\setminus\{-1\})$, and choose  $(I_1,\dots,I_m)\in \mathfrak O_m(I),(J_1,\dots,J_n)\in\mathfrak O_n(J)$.  We define two sets $\mathcal A=\mathcal M_V(I)_\infty\times  C^\infty_c(I_1)\times\cdots\times  C^\infty_c(I_m)$ and $\mathcal B=\mathcal M_V(J)_\infty\times C^\infty_c(J_1)\times\cdots\times C^\infty_c(J_n)$. For any $ a=(x,f_1,\dots,f_m)\in\mathcal A$ and $ b=(y,g_1,\dots,g_n)\in\mathcal B$, we define two linear operators $A( a):\mathcal H^\infty_0\rightarrow \mathcal H^\infty_i$ and $B( b):\mathcal H^\infty_0\rightarrow \mathcal H^\infty_j$ as follows: if $\xi^{(0)}\in\mathcal H^\infty_0$ then
\begin{gather}
A( a)\xi^{(0)}=\pi_i(x)\mathcal Y_{\sigma_m\cdots\sigma_2,\kappa(i)}(w^{(i_m)},f_m;\dots,w^{(i_1)},f_1)\xi^{(0)},\\
B( b)\xi^{(0)}=\pi_j(y)\mathcal Y_{\rho_n\cdots\rho_2,\kappa(j)}(w^{(j_n)},g_n;\dots,w^{(j_1)},g_1)\xi^{(0)}.
\end{gather}
By proposition \ref{lbb2}, the formal adjoints of these two linear operators exist.

\begin{lm}\label{lb20}
	For any $N\in\mathbb Z_{>0}$, $ a_1,\dots, a_N\in\mathcal A$, $ b_1,\dots, b_N\in\mathcal B$ and $\xi^{(0)}_1,\dots,\xi^{(0)}_N\in\mathcal H^\infty_0$, we have
	\begin{equation}
	\sum_{s,t=1,\dots,N}\langle B( b_s)A( a_t)^\dagger A( a_s)\xi^{(0)}_s|B( b_t)\xi^{(0)}_t\rangle\geq0.\label{eq73}
	\end{equation}
\end{lm}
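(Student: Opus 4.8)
## Proof proposal

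The plan is to reduce the inequality \eqref{eq73} to the positivity of an inner product on a Connes fusion product, exactly as sketched in the introduction, but now using the machinery of generalized smeared intertwining operators established in Chapter~5. The crucial point is that $A(a)$ and $B(b)$ should be rewritten so that all the ``$W_i$-side'' operators are localized in $I$ and all the ``$W_j$-side'' operators in $J$, with $I$ and $J$ disjoint; then the strong intertwining property (Proposition~\ref{lb106}) lets one commute the two families past one another, and the adjoint relation (Theorem~\ref{lb108}) lets one rewrite $A(a_t)^\dagger$ and eventually recognize the sum as a squared norm.

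First I would unpack the left-hand side of \eqref{eq73}. By definition of $A(a)$ and $B(b)$, and using that the formal adjoint of a generalized smeared intertwining operator is again given by Proposition~\ref{lbb2} (so all compositions make sense on $\mathcal H^\infty$), the summand is
\begin{align*}
\langle B(b_s)A(a_t)^\dagger A(a_s)\xi^{(0)}_s\,|\,B(b_t)\xi^{(0)}_t\rangle
=\langle A(a_t)^\dagger A(a_s)\xi^{(0)}_s\otimes(\text{$J$-side data})\,|\,\cdots\rangle,
\end{align*}
the idea being that $B(b_s)$ acts on $\mathcal H_0\to\mathcal H_j$ and $A(a_t)^\dagger A(a_s)$ maps $\mathcal H_0\to\mathcal H_0$, and these two operate in causally disjoint regions $J$ and $I$. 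The main technical step is then to invoke the transport formula — in the form of Proposition~\ref{lb111}(2), the ``version 2'' transport formula \eqref{eq64}, and its smeared generalization obtained by combining \eqref{eq64} with Theorem~\ref{lb109} (braiding of generalized smeared intertwining operators) and Theorem~\ref{lb108} (the adjoint relation) — to pass the composition $A(a_t)^\dagger A(a_s)$, which is built from creation and annihilation operators with charge space $W_i$ localized in $I$, to the right of $B(b_t)^\dagger B(b_s)$. This is the analogue of equation \eqref{eq366} in the introduction: after this rearrangement, $\Lambda(\xi|\xi)$-type sums become sums of the form $\sum_{s,t}\langle \mathfrak A_t^*\mathfrak A_s\mathfrak B_t^*\mathfrak B_s\Omega|\Omega\rangle$.

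Next I would use the strong intertwining property (Proposition~\ref{lb106}) together with Proposition~\ref{lb104}/Remark~\ref{lb122} (density of the smooth subalgebra $\mathcal M_V(I)_\infty$) to replace the generalized smeared intertwining operators localized in $I$ by genuine bounded operators $\mathfrak A_s\in\Hom_{\mathcal M_V(I^c)}(\mathcal H_0,\mathcal H_i)$, and similarly those in $J$ by bounded $\mathfrak B_t\in\Hom_{\mathcal M_V(J^c)}(\mathcal H_0,\mathcal H_j)$; the polar decomposition trick from Step~2 of the proof of Theorem~\ref{lb120} is the tool here, and Proposition~\ref{lbb2} guarantees the relevant densities hold on $\mathcal H^\infty$. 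Once both families are bounded intertwiners living in commuting relative commutants, the double sum $\sum_{s,t}\langle\mathfrak A_t^*\mathfrak A_s\mathfrak B_t^*\mathfrak B_s\Omega|\Omega\rangle$ is manifestly $\bigl\lVert\sum_s \mathfrak A_s\Omega\boxtimes\mathfrak B_s\Omega\bigr\rVert^2\geq 0$ in the Connes fusion product $\mathcal H_i\boxtimes\mathcal H_j$; concretely one checks $\langle\mathfrak A_t^*\mathfrak A_s\mathfrak B_t^*\mathfrak B_s\Omega|\Omega\rangle=\langle\mathfrak A_s\Omega\boxtimes\mathfrak B_s\Omega|\mathfrak A_t\Omega\boxtimes\mathfrak B_t\Omega\rangle$, which is exactly the positive-semidefiniteness of a Gram matrix.

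The main obstacle I expect is the bookkeeping in the transport/adjoint step: one must keep careful track of arguments $\arg z$ (following the conventions listed before Section~4), of which braided or adjoint intertwining operator $\mathcal Y_{\beta^*}$, $\mathcal Y_{B_\pm\alpha}$ appears after each commutation, and of the phase factors $e^{-i\pi\Delta}$ and $z^{2\Delta}$ produced by Theorem~\ref{lb108} — and verify that these phases cancel in the sandwiched expression $\langle B(b_s)A(a_t)^\dagger A(a_s)\xi_s^{(0)}|B(b_t)\xi_t^{(0)}\rangle$ so that what remains is literally a positive Gram form rather than something twisted by unimodular scalars. Since the creation/annihilation operators are unitary intertwining operators and $\mathcal Y_{\kappa(i)^*}=(\mathcal Y_{\kappa(i)})^\dagger$, these cancellations are forced, but the verification requires the smeared adjoint relation \eqref{eq328} applied with $w^{(i_a)}$ quasi-primary (which is why quasi-primarity, and under condition~\ref{CondB} membership in $E^1(W_{i_a})$, was assumed). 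Everything else is an assembly of results already proved: density (Proposition~\ref{lb105}), braiding (Theorem~\ref{lb109}), adjoint (Theorem~\ref{lb108}), strong intertwining (Proposition~\ref{lb106}), and the transport formulas (Proposition~\ref{lb111}).
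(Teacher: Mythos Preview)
You have conflated the proof of Lemma~\ref{lb20} with Step~2 of the proof of Theorem~\ref{lb110}. The transport formula (Proposition~\ref{lb111}), braiding (Theorem~\ref{lb109}), the adjoint relation (Theorem~\ref{lb108}), and the density result (Proposition~\ref{lb105}) play \emph{no role} in the lemma itself; they are used in Theorem~\ref{lb110} to show that $\Lambda(\xi^{(ij)}|\xi^{(ij)})$ \emph{equals} the sum in \eqref{eq73}, after which the lemma is simply invoked. The expression $\sum_{s,t}\langle B(b_s)A(a_t)^\dagger A(a_s)\xi^{(0)}_s|B(b_t)\xi^{(0)}_t\rangle$ is already in the form that needs no VOA-theoretic rewriting: $A(a)$ is by construction localized in $I$ and $B(b)$ in $J$, there is no $B(b_t)^\dagger$ present to commute past, and no phase factors $e^{-i\pi\Delta}$, $z^{2\Delta}$ arise. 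Your entire paragraph about ``the main technical step'' and the last paragraph about phase bookkeeping are solving a problem that does not exist here.

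The actual proof is purely operator-algebraic and uses only Proposition~\ref{lb106} and Haag duality. One argues by contradiction: assume the sum is at distance $\geq\varepsilon$ from $[0,\infty)$. Since $\overline{B(b_r)}$ commutes strongly with $\mathcal M_V(J^c)$ by Proposition~\ref{lb106}, its right polar decomposition $K_rV_r$ and spectral projections $Q_r(\lambda)$ of $K_r$ all commute with $\mathcal M_V(J^c)$, so the bounded truncations $\mathfrak B(b_r)=\overline{Q_r(\lambda_r)\overline{B(b_r)}}$ lie in $\Hom_{\mathcal M_V(J^c)}(\mathcal H_0,\mathcal H_j)$ and approximate the original sum to within $\varepsilon/2$. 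Now $\mathfrak B(b_s)^*\mathfrak B(b_t)\in\End_{\mathcal M_V(J^c)}(\mathcal H_0)=\mathcal M_V(J)$ by Haag duality, so by Proposition~\ref{lb106} again it commutes with each $\overline{A(a_t)}$; this lets one rewrite $\langle\mathfrak B(b_s)A(a_t)^\dagger A(a_s)\xi^{(0)}_s|\mathfrak B(b_t)\xi^{(0)}_t\rangle=\langle\overline{A(a_s)}\xi^{(0)}_s|\overline{A(a_t)}\mathfrak B(b_s)^*\mathfrak B(b_t)\xi^{(0)}_t\rangle$. A second spectral truncation on the $A$-side produces bounded $\mathfrak A(a_s)\in\Hom_{\mathcal M_V(I^c)}(\mathcal H_0,\mathcal H_i)$, and the resulting $\tau=\sum_{s,t}\langle\mathfrak B(b_s)\mathfrak A(a_t)^*\mathfrak A(a_s)\xi^{(0)}_s|\mathfrak B(b_t)\xi^{(0)}_t\rangle$ is nonnegative because $[\mathfrak A(a_t)^*\mathfrak A(a_s)]_{s,t}$ is a positive matrix in $\mathcal M_V(I)\otimes M(N,\mathbb C)$ and $\mathfrak B(b_s)$ intertwines the $\mathcal M_V(I)$-actions. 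This $\tau$ lies within $\varepsilon$ of the original sum, contradicting the assumption. Your steps (3) and (4) capture part of this correctly, but you have buried the simple argument under machinery belonging to the theorem that \emph{uses} the lemma.
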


\begin{proof}
	Suppose that 
	\begin{equation}
	\sum_{s,t=1,\dots,N}\langle B( b_s)A( a_t)^\dagger A( a_s)\xi^{(0)}_s|B( b_t)\xi^{(0)}_t\rangle\notin[0,+\infty).
	\end{equation}
Then we can find $\varepsilon>0$, such that  for any $\tau\in[0,+\infty)$,
	\begin{gather}
	\bigg|\sum_{s,t=1,\dots,N}\langle B( b_s)A( a_t)^\dagger A( a_s)\xi^{(0)}_s|B( b_t)\xi^{(0)}_t\rangle-\tau \bigg|\geq\varepsilon.\label{eq72}
	\end{gather}
By proposition \ref{lb106}, for any $x\in\mathcal M_V(J^c)$ and $r=1,\dots,N$, we have $\pi_j(x)\overline{B( b_r)}\subset\overline{B( b_r)}\pi_0(x)$. We also  regard $\overline{B( b_r)}$ as an unbounded operator on $\mathcal H_0\oplus\mathcal H_j$, being the original operator when restrict to $\mathcal H_0$, and  the zero map when restricted to $\mathcal H_j$. We let  $x$ act on  $\mathcal H_0\bigoplus\mathcal H_j$ diagonally (i.e., $x=\diag(\pi_0(x),\pi_I(x))$). Then $x\overline{B( b_r)}\subset\overline{B( b_r)}x$. Since $x^*$ also satisfies this relation,	elements in $\mathcal M_V(J^c)$ commute strongly with $\overline{B( b_r)}$. Therefore, if we take the right polar decomposition $\overline{B( b_r)}=K_rV_r$ (where $K_r$ is self-adjoint and $V_r$ is a partial isometry), then $\mathcal M_V(J^c)$ commutes strongly with $V_r$ and $K_r$.  We let $K_r=\int_{-\infty}^{+\infty} \lambda dQ_r(\lambda)$ be the spectral decomposition of $K_r$. Then for each $\lambda\geq0$, $Q_r(\lambda)=\int_{-\infty}^\lambda dQ_r(\mu)$ commutes with $\mathcal M_V(J^c)$. Therefore, the \emph{bounded} operator $\overline{Q_r(\lambda)\overline{B( b_r)}}$ commutes with $\mathcal M_V(J^c)$, i.e., $\overline{Q_r(\lambda)\overline{B( b_r)}}\in\mathrm{Hom}_{\mathcal M_V(J^c)}(\mathcal H_0,\mathcal H_j)$.
	
Now we  choose a real number $M>0$, such that for any $s,t=1,\dots,N$,
\begin{gather}
\lVert  B( b_s)A( a_t)^\dagger A( a_s)\xi_s^{(0)}\lVert\leq M,~~\lVert  B( b_t)\xi_t^{(0)}\lVert\leq M.
\end{gather}
For each $r=1,\dots,N$, since the projection $Q_r(\lambda)$ converges strongly to $1$ as $\lambda\rightarrow+\infty$, there exists $\lambda_r>0$, such that for any $t=1,\dots,N$,
	\begin{gather}
	\lVert  B( b_r)A( a_t)^\dagger A( a_r)\xi_r^{(0)}-Q_r(\lambda_r)B( b_r)A( a_t)^\dagger A( a_r)\xi_r^{(0)}\lVert<\frac{\varepsilon}{4MN^2},\\
	\lVert  B( b_r)\xi_r^{(0)}-Q_r(\lambda_r)B( b_r)\xi_r^{(0)}\lVert<\frac{\varepsilon}{4MN^2}.
	\end{gather}
	We let $\mathfrak B(b_r)=\overline{Q_r(\lambda_r)\overline{B( b_r)}}\in\mathrm{Hom}_{\mathcal M_V(J^c)}(\mathcal H_0,\mathcal H_j)$, then the above inequalities imply that
	\begin{align}
	\bigg|\sum_{s,t}\langle\mathfrak B( b_s)A( a_t)^\dagger A( a_s)\xi^{(0)}_s|\mathfrak B( b_t)\xi^{(0)}_t\rangle-\sum_{s,t}\langle B( b_s)A( a_t)^\dagger A( a_s)\xi^{(0)}_s|B( b_t)\xi^{(0)}_t\rangle\bigg|<\frac \varepsilon 2.\label{eq70}
	\end{align}
	
Now, for any $1\leq r\leq N$, since $\mathfrak B(b_r)\in\mathrm{Hom}_{\mathcal M_V(J^c)}(\mathcal H_0,\mathcal H_j)$, we also have	$\mathfrak B(b_r)^*\in\mathrm{Hom}_{\mathcal M_V(J^c)}(\mathcal H_j,\mathcal H_0)$. Thus, for any $1\leq s,t\leq N$, we have $\mathfrak B(b_s)^* \mathfrak B(b_t)\in\End_{\mathcal M_V(J^c)}(\mathcal H_0)=\mathcal M_V(J^c)'$. By Haag duality, $\mathfrak B(b_s)^* \mathfrak B(b_t)\in\mathcal M_V(J)$.
By proposition \ref{lb106}, $\pi_i\big(\mathfrak B(b_s)^* \mathfrak B(b_t)\big)\overline{A( a_t)}\subset\overline{A( a_t)}\mathfrak B(b_s)^* \mathfrak B(b_t)$. In particular,  $\mathfrak B(b_s)^* \mathfrak B(b_t) \mathscr D(\overline{A(a_t)})\subset  \mathscr D(\overline{A(a_t)})$. Since $\xi^{(0)}_t\in\mathcal H^\infty_0\subset\mathscr D(\overline{A(a_t)})$, 
	\begin{equation}
	\mathfrak B(b_s)^* \mathfrak B(b_t)\xi_t^{(0)}\in  \mathfrak B(b_s)^* \mathfrak B(b_t) \mathscr D(\overline{A(a_t)})\subset  \mathscr D(\overline{A(a_t)}).
	\end{equation}
Therefore,
	\begin{align}
	&\langle\mathfrak B( b_s)A( a_t)^\dagger A( a_s)\xi^{(0)}_s|\mathfrak B( b_t)\xi^{(0)}_t\rangle\nonumber\\
	=&\langle A( a_t)^\dagger A( a_s)\xi^{(0)}_s|\mathfrak B( b_s)^*\mathfrak B( b_t)\xi^{(0)}_t\rangle\nonumber\\
	=&\langle\overline{ A( a_t)}^* \cdot\overline{A( a_s)}\xi^{(0)}_s|\mathfrak B( b_s)^*\mathfrak B( b_t)\xi^{(0)}_t\rangle\nonumber\\
	=&\langle \overline{A( a_s)}\xi^{(0)}_s|\overline{ A( a_t)}\mathfrak B( b_s)^*\mathfrak B( b_t)\xi^{(0)}_t\rangle.\label{eq325}
	\end{align}
Let $\overline{A(a_s)}=H_sU_s$ be the right polar decomposition of $\overline{A(a_s)}$, and take the spectral decomposition $H_s=\int_{-\infty}^{+\infty} \kappa dP_s(\kappa)$. Then for each $s$, we can find  $\kappa_s>0$ such that 
	\begin{equation}\label{eq69}
	\bigg|\sum_{s,t}\langle \overline{A( a_s)}\xi^{(0)}_s|\overline{ A( a_t)}\mathfrak B( b_s)^*\mathfrak B( b_t)\xi^{(0)}_t\rangle-\sum_{s,t}\langle \mathfrak A( a_s)\xi^{(0)}_s|\mathfrak A( a_t)\mathfrak B( b_s)^*\mathfrak B( b_t)\xi^{(0)}_t\rangle\bigg|<\frac \varepsilon 2,
	\end{equation}
	where $\mathfrak A( a_s)=\overline{P_s(\kappa_s)\overline{A(a_s)}}\in\mathrm{Hom}_{\mathcal M(I^c)}(\mathcal H_0,\mathcal H_i)$. Note that  $\mathfrak A( a_s)$ and $\mathfrak B(b_t)$ are  bounded operators. Set 
\begin{align}
\tau=\sum_{s,t}\langle \mathfrak A( a_s)\xi^{(0)}_s|\mathfrak A( a_t)\mathfrak B( b_s)^*\mathfrak B( b_t)\xi^{(0)}_t\rangle=\sum_{s,t}\langle \mathfrak B( b_s)\mathfrak A( a_t)^*\mathfrak A( a_s)\xi^{(0)}_s|\mathfrak B( b_t)\xi^{(0)}_t\rangle.
\end{align}	
Then by inequalities \eqref{eq70}, \eqref{eq69}, and equation \eqref{eq325},
	\begin{equation}\label{eq71}
	\bigg|\sum_{s,t}\langle B( b_s)A( a_t)^\dagger A( a_s)\xi^{(0)}_s|B( b_t)\xi^{(0)}_t\rangle-\tau\bigg|<\varepsilon.
	\end{equation}
	
We now show that $\tau\geq0$, which will contradict condition \eqref{eq72} and thus prove inequality \eqref{eq73}. Let $M(N,\mathbb C)$ be the complex valued $N\times N$ matrix algebra. By evaluating between vectors in $\mathcal H_0^{\oplus N}$, we find that the $\mathcal M_V(I)$-valued matrix $[\mathfrak A(a_t)^*\mathfrak A(a_s)]_{N\times N}$ is a positive element in the von Neumann algebra $\mathcal M_V(I)\otimes M(N,\mathbb C)$.  So $[\pi_{j,I}(\mathfrak A(a_t)^*\mathfrak A(a_s))]_{N\times N}\in \pi_{j,I}(\mathcal M_V(I))\otimes M(N,\mathbb C)$ is also positive. Therefore, if for each $s$ we define a vector $\eta_s=\mathfrak B(b_s)\xi_s^{(0)}$, then 
	\begin{equation}
	\sum_{s,t}(\pi_{j,I}(\mathfrak A(a_t)^*\mathfrak A(a_s))\eta_s|\eta_t)\geq0.
	\end{equation}
Since $\mathfrak B(b_s)\in\mathrm{Hom}_{\mathcal M_V(J^c)}(\mathcal H_0,\mathcal H_j)\subset\mathrm{Hom}_{\mathcal M_V(I)}(\mathcal H_0,\mathcal H_j)$, we have
	\begin{align}
\mathfrak B( b_s)\mathfrak A( a_t)^*\mathfrak A( a_s)\xi^{(0)}_s=\pi_{j,I}\big(\mathfrak A(a_t)^*\mathfrak A(a_s)\big)\mathfrak B( b_s)\xi_s^{(0)}=\pi_{j,I}(\mathfrak A(a_t)^*\mathfrak A(a_s))\eta_s.
	\end{align}
	Hence
	\begin{align}
\tau=	\sum_{s,t}\langle \mathfrak B( b_s)\mathfrak A( a_t)^*\mathfrak A( a_s)\xi^{(0)}_s|\mathfrak B( b_t)\xi^{(0)}_t\rangle=\sum_{s,t}(\pi_{j,I}(\mathfrak A(a_t)^*\mathfrak A(a_s))\eta_s|\eta_t)\geq0.
	\end{align}
\end{proof}

\begin{thm}\label{lb110}
Suppose that $V$ is unitary, energy bounded, and strongly local, and $\mathcal F$ is a non-empty set of non-zero irreducible unitary $V$-modules satisfying condition \ref{CondA} or \ref{CondB} in section \ref{Condition ABC}. Let $W_i,W_j$  be unitary $V$-modules in $\mathcal F^\boxtimes$. Then the sesquilinear form $\Lambda$ on $W_i\boxtimes W_j$ is an inner product. Equivalently, for any irreducible unitary $V$-module $W_k$ in $\mathcal E\cap\mathcal F^\boxtimes$, 	the sesquilinear form $\Lambda$ on $\mathcal V{k\choose i~j}^*$ is positive definite.
\end{thm}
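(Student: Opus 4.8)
The plan is to use the reduction already carried out — it suffices to treat $W_i,W_j$ irreducible — and to prove that $\Lambda$ is positive \emph{semi}-definite on $W_i\boxtimes W_j$; non-degeneracy, hence strict positivity, then follows formally from the rigidity of $\Rep(V)$, exactly as in step 3 of the argument sketched in Remark \ref{lb30}. Observe first that $\Lambda$ is bounded: on the summand $\mathcal V{k\choose i~j}^*\otimes W_k$ it is $\Lambda_k\otimes\langle\,\cdot\,|\,\cdot\,\rangle_{W_k}$ for the fixed finite matrix $\Lambda_k=(\Lambda^{\alpha\beta})_{\alpha,\beta\in\Theta^k_{ij}}$, and $\mathcal E$ is finite, so $\Lambda$ extends to a bounded Hermitian form on $\mathcal H_{ij}$. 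Thus the whole content is the inequality $\Lambda(\xi|\xi)\geq0$ for $\xi$ ranging over a dense subset of $\mathcal H_{ij}$.

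I would keep fixed all the data preceding Lemma \ref{lb20}: the chains $\mathcal Y_{\sigma_2},\dots,\mathcal Y_{\sigma_m}$, $\mathcal Y_{\rho_2},\dots,\mathcal Y_{\rho_n}$, the quasi-primary vectors $w^{(i_1)},\dots,w^{(i_m)},w^{(j_1)},\dots,w^{(j_n)}$ (chosen in the $E^1$-spaces under Condition \ref{CondB}), disjoint $I,J\in\mathcal J(S^1\setminus\{-1\})$ with $(I_1,\dots,I_m)\in\mathfrak O_m(I)$, $(J_1,\dots,J_n)\in\mathfrak O_n(J)$, and the sets $\mathcal A,\mathcal B$ with the operators $A(a)\colon\mathcal H_0^\infty\to\mathcal H_i^\infty$, $B(b)\colon\mathcal H_0^\infty\to\mathcal H_j^\infty$; recall $\mathcal Y_{\kappa(i)}=\mathcal Y^i_{i0}$, $\mathcal Y_{\kappa(j)}=\mathcal Y^j_{j0}$, so $\mathcal Y_{\kappa(i)^*}=\mathcal Y^0_{\overline ii}$, $\mathcal Y_{\kappa(j)^*}=\mathcal Y^0_{\overline jj}$. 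Since $W_i,W_j\in\mathcal F^\boxtimes$ forces $W_{ij}\in\mathcal F^\boxtimes$, the generalized intertwining operator $\mathcal Y_{\sigma_m\cdots\sigma_2,i\boxtimes j}$ is controlled by $\mathcal F$; by Proposition \ref{lb105} (and its analog for the $\rho$-chain, using Proposition \ref{lbb2} to let the generalized smeared operators act on all of $\mathcal H^\infty_j$, so that the vectors $B(b)\xi^{(0)}$ are total in $\mathcal H_j$), the vectors
\begin{align*}
\xi=\sum_{s=1}^N\pi_{ij}(x_s)\,\overline{\mathcal Y_{\sigma_m\cdots\sigma_2,i\boxtimes j}\big(w^{(i_m)},f_{m,s};\dots;w^{(i_1)},f_{1,s}\big)}\,B(b_s)\xi^{(0)}_s
\end{align*}
with $a_s=(x_s,f_{1,s},\dots,f_{m,s})\in\mathcal A$, $b_s\in\mathcal B$, $\xi^{(0)}_s\in\mathcal H^\infty_0$, are dense in $\mathcal H_{ij}$.

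The heart of the proof is then a computation identifying $\Lambda(\xi|\xi)$ with the left-hand side of \eqref{eq73}. Expanding $\mathcal Y_{i\boxtimes j}$ into its components $\mathcal Y_\alpha$ ($\alpha\in\Theta^*_{ij}$) and using \eqref{eqb5} together with $\Lambda(\widecheck{\mathcal Y}^\alpha|\widecheck{\mathcal Y}^\beta)=\Lambda^{\alpha\beta}$, the quantity $\Lambda(\xi|\xi)$ becomes a sum over $s,t$ of $\sum_{\alpha,\beta\in\Theta^*_{ij}}\Lambda^{\alpha\beta}$ times an inner product of products of generalized smeared operators built from $\mathcal Y_\alpha,\mathcal Y_\beta$. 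The adjoint relation for generalized smeared intertwining operators (Theorem \ref{lb108}) rewrites the $\mathcal Y_\beta$-factor as a $\mathcal Y_{\beta^*}$-factor, so the $\Lambda^{\alpha\beta}$-weighted combination takes the shape $\sum_{\alpha,\beta}\Lambda^{\alpha\beta}\mathcal Y_{\beta^*}(\cdots)\mathcal Y_\alpha(\cdots)$ of the transport formula; applying the transport formula in generalized smeared form (Proposition \ref{lb111}, which rests on the braiding Theorem \ref{lb109}) replaces this combination by a product $\mathcal Y^j_{j0}(\cdots)\mathcal Y^0_{\overline ii}(\cdots)\mathcal Y^i_{i0}(\cdots)$ of generalized smeared creation/annihilation operators. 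A second use of the adjoint relation turns the $\mathcal Y^0_{\overline ii}$-factor into the formal adjoint of a $\mathcal Y^i_{i0}$-factor, and the strong intertwining property (Proposition \ref{lb106}) lets the $\mathcal M_V(I)$-operators pass through the $J$-supported smeared operators; regrouping the $I$-data into $A(a_t)^\dagger A(a_s)$ and the $J$-data into $B(b_s),B(b_t)$ yields
\begin{align*}
\Lambda(\xi|\xi)=\sum_{s,t=1}^N\big\langle\, B(b_s)\,A(a_t)^\dagger A(a_s)\,\xi^{(0)}_s\ \big|\ B(b_t)\,\xi^{(0)}_t\,\big\rangle\ \geq\ 0
\end{align*}
by Lemma \ref{lb20}. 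Density together with boundedness of $\Lambda$ then gives $\Lambda\geq0$ on $\mathcal H_{ij}$, hence on $W_{ij}$, and the rigidity argument upgrades semi-definiteness to positive definiteness.

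I expect the main obstacle to be the bookkeeping in the third paragraph: threading the combinatorics of the transport matrix, the adjoint relation \eqref{eq328}, the braiding, and the strong intertwining property so that the resulting operator product assembles into \emph{exactly} the $A(a_t)^\dagger A(a_s)$ and $B(b_s),B(b_t)$ of Lemma \ref{lb20}, while keeping every branch and argument convention (the choices of $\arg$ of the point-differences, which braiding $B_\pm$ occurs, the placement of intervals within $\mathfrak O_m(I),\mathfrak O_n(J)$) consistent through all of these relations, and checking that the reassembled tuples genuinely lie in $\mathcal A$ and $\mathcal B$. The analytic ingredients (preclosedness, continuous extension to $\mathcal H^\infty$, the density/core statements) are by comparison routine, given Propositions \ref{lbb2}, \ref{lb104}, and \ref{lb105}.
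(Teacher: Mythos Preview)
Your proposal is correct and follows essentially the same approach as the paper's own proof: reduce to irreducible $W_i,W_j$, show density of the vectors $\sum_s\widetilde A(a_s)B(b_s)\xi^{(0)}_s$ via Proposition \ref{lb105}, compute $\Lambda(\xi|\xi)$ by combining the adjoint relation (Theorem \ref{lb108}), braiding (Theorem \ref{lb109}), the transport formula (Proposition \ref{lb111}), and the strong intertwining property (Proposition \ref{lb106}) to reduce to the expression in Lemma \ref{lb20}, then invoke rigidity for non-degeneracy. The paper's precise operator manipulations in Step 2 differ slightly in order from your sketch (it first braids $\mathcal Y_\alpha$ past $\mathcal Y_{\kappa(j)}$ to produce a $B_+\alpha$, then applies the adjoint relation and the version-3 transport formula \eqref{eq330} rather than version 2), but this is exactly the bookkeeping you already flag as the main obstacle, and your outline contains all the necessary ingredients.
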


\begin{proof}
As argued at the beginning of this section, we can assume, without loss of generality, that $W_i,W_j$ are irreducible.\\

Step 1. We first show that $\Lambda$ is positive. For each $k\in\mathcal E\cap\mathcal F^\boxtimes$, we   choose a basis $\{\mathcal Y_\alpha:\alpha\in \Theta^k_{ij}\}$ of $\mathcal V{k\choose i~j}$,  let $\{\widecheck{\mathcal Y}^\alpha:\alpha\in \Theta^k_{ij}\}$ be its dual basis in $\mathcal V{k\choose i~j}^*$, and define an inner product on $\mathcal V{k\choose i~j}^*$ under which $\{\widecheck{\mathcal Y}^\alpha:\alpha\in \Theta^k_{ij}\}$ becomes orthonormal. We extend these inner products to a unitary structure on $W_{ij}=\bigoplus_k \mathcal V{k\choose i~j}^*\otimes W_k$, just as we extend $\Lambda$ using \eqref{eqb5}. As usual, we let $\mathcal H_{ij}$ be the corresponding $\mathcal M_V$-module. The sesquilinear form $\Lambda$ on $W_{ij}$ defined by \eqref{eqb5} can be  extended uniquely to a \emph{continuous} sesquilinear form $\Lambda$ on the Hilbert space $\mathcal H_{ij}$.

Choose intertwining operators $\mathcal Y_{\sigma_2},\dots,\mathcal Y_{\sigma_m},\mathcal Y_{\rho_2},\dots,\mathcal Y_{\rho_n}$, disjoint open intervals $I,J,(I_1,\dots,I_m)\in\mathfrak O_m(I),(J_1,\dots,J_n)\in\mathfrak O_n(J)$, and non-zero quasi-primary vectors $w^{(i_1)},\dots,w^{(i_m)},w^{(j_1)},\dots,w^{(j_n)}$ as 
at the beginning of this section. By proposition \ref{lb105}, for each $l\in\mathbb Z_{\geq0}$, vectors of the form
\begin{align}
B(b)\xi^{(0)}=\pi_j(y)\mathcal Y_{\rho_n\cdots\rho_2,\kappa(j)}(w^{(j_n)},g_n;\dots;w^{(j_1)},g_1)\xi^{(0)}\label{eq326}
\end{align}
span a core for $\overline {L_0}^l$ in $\mathcal H^\infty_j$, where $b=(y,g_1,\dots,g_n)\in\mathcal B$, and $\xi^{(0)}\in\mathcal H^\infty_0$. For any $a=(x,f_1,\dots,f_m)\in\mathcal A$, we define an unbounded operator $\widetilde A(a):\mathcal H_j\rightarrow \mathcal H_{ij}$ with domain $\mathcal H^\infty_{j}$ to satisfy
\begin{align}
\widetilde A(a)=\pi_{ij}(x)\mathcal Y_{\sigma_m\cdots\sigma_2,i\boxtimes j}(w^{(i_m)},f_m;\dots;w^{(i_1)},f_1).
\end{align}
Then, by inequality \eqref{eq306}, vectors of the form \eqref{eq326} span a core for $\widetilde A(a)$.
Therefore, by proposition \ref{lb105}, vectors of the form
\begin{align}
\xi^{(ij)}=\sum_{s=1,\dots,N}\widetilde A(a_s)B(b_s)\xi^{(0)}_s\label{eq327}
\end{align}
form a dense subspace of $\mathcal H_{ij}$, where $N=1,2,\dots$, and for each $s$, $a_s=(x_s,f_{s,1},\dots,f_{s,m})\in\mathcal A,b_s=(y_s,g_{s,1},\dots,g_{s,n})\in\mathcal B$, and $\xi^{(0)}_s\in\mathcal H_0$. If we can prove, for any $\xi^{(ij)}\in \mathcal H_{ij}$ of the form \eqref{eq327}, that $\Lambda(\xi^{(ij)}|\xi^{(ij)})\geq0$, then $\Lambda$ is positive on $W_i\boxtimes W_j$.\\

Step 2. We show that $\Lambda(\xi^{(ij)}|\xi^{(ij)})\geq0$. Let us simplify the notations a little bit. Let $\vec w^{(\vec i)}=(w^{(i_1)},\dots,w^{(i_m)}),\vec \sigma=(\sigma_2,\dots,\sigma_m),\vec f_s=(f_{s,1},\dots,f_{s,m})$. If $\mathcal Y_\alpha$ is an intertwining operator whose charge space, source space, and target space are inside $\mathcal F^\boxtimes$, then we set
\begin{align}
\mathcal Y_{\vec \sigma,\alpha }(\vec w^{(\vec i)},\vec f_s)=\mathcal Y_{\sigma_m\cdots\sigma_2,\alpha}(w^{(i_m)},f_{s,m};\dots;w^{(i_1)},f_{s,1}).
\end{align}
Similarly, we let  $\vec w^{(\vec j)}=(w^{(j_1)},\dots,w^{(j_n)}),\vec \rho=(\rho_2,\dots,\rho_n),\vec g_s=(g_{s,1},\dots,g_{s,n})$. $\mathcal Y_{\vec \rho,\kappa(j)}(w^{(\vec j)},\vec g_s)$ is defined in a similar way.

 Assume, without loss of generality, that $I$ is anti-clockwise to $J$, i.e., for any $z\in I,\zeta\in J$, we have $-\pi<\arg\zeta<\arg z<\pi$. By  proposition \ref{lb106}, for any $s=1,\dots,N$,
\begin{align}
&\widetilde A(a_s)B(b_s)=x_s\mathcal Y_{\vec \sigma,i\boxtimes j }(\vec w^{(\vec i)},\vec f_s)y_s\mathcal Y_{\vec \rho,\kappa(j)}(\vec w^{(\vec j)},\vec g_s)\nonumber\\
=&\sum_{\alpha\in\Theta^*_{ij}}\widecheck{\mathcal Y}^\alpha\otimes x_sy_s\mathcal Y_{\vec \sigma,\alpha }(\vec w^{(\vec i)},\vec f_s)\mathcal Y_{\vec \rho,\kappa(j)}(\vec w^{(\vec j)},\vec g_s).
\end{align}
So for any $s,t=1,\dots,N$,
\begin{align}
&\Lambda\big(\widetilde A(a_s)B(b_s)\xi^{(0)}_s\big|\widetilde A(a_t)B(b_t)\xi^{(0)}_t\big)\nonumber\\
=&\sum_{\alpha,\beta\in\Theta^*_{ij}}\Lambda^{\alpha\beta}\big\langle x_sy_s\mathcal Y_{\vec \sigma,\alpha }(\vec w^{(\vec i)},\vec f_s)\mathcal Y_{\vec \rho,\kappa(j)}(\vec w^{(\vec j)},\vec g_s)\xi^{(0)}_s\big| x_ty_t\mathcal Y_{\vec \sigma,\beta }(\vec w^{(\vec i)},\vec f_t)\mathcal Y_{\vec \rho,\kappa(j)}(\vec w^{(\vec j)},\vec g_t)\xi^{(0)}_t \big\rangle\nonumber\\
=&\sum_{\alpha,\beta\in\Theta^*_{ij}}\Lambda^{\alpha\beta}\big\langle \mathcal Y_{\vec \sigma,\beta }(\vec w^{(\vec i)},\vec f_t)^\dagger x_t^*x_sy_s\mathcal Y_{\vec \sigma,\alpha }(\vec w^{(\vec i)},\vec f_s)\mathcal Y_{\vec \rho,\kappa(j)}(\vec w^{(\vec j)},\vec g_s)\xi^{(0)}_s\big|y_t\mathcal Y_{\vec \rho,\kappa(j)}(\vec w^{(\vec j)},\vec g_t)\xi^{(0)}_t \big\rangle\nonumber\\
=&\sum_{\alpha,\beta\in\Theta^*_{ij}}\Lambda^{\alpha\beta}\big\langle \mathcal Y_{\vec \sigma,\beta }(\vec w^{(\vec i)},\vec f_t)^\dagger x_t^*x_sy_s\mathcal Y_{\vec \sigma,\alpha }(\vec w^{(\vec i)},\vec f_s)\mathcal Y_{\vec \rho,\kappa(j)}(\vec w^{(\vec j)},\vec g_s)\xi^{(0)}_s\big|B(b_t)\xi^{(0)}_t \big\rangle.\label{eqb6}
\end{align}
By corollary 2.18 and theorem \ref{lb109},
\begin{align}
&\sum_{\alpha,\beta\in\Theta^*_{ij}} \Lambda^{\alpha\beta}\mathcal Y_{\vec \sigma,\beta }(\vec w^{(\vec i)},\vec f_t)^\dagger x_t^*x_sy_s\mathcal Y_{\vec \sigma,\alpha }(\vec w^{(\vec i)},\vec f_s)\mathcal Y_{\vec \rho,\kappa(j)}(\vec w^{(\vec j)},\vec g_s)\nonumber\\
=&\sum_{\alpha,\beta\in\Theta^*_{ij}} \Lambda^{\alpha\beta}\mathcal Y_{\vec \sigma,\beta }(\vec w^{(\vec i)},\vec f_t)^\dagger x_t^*x_sy_s\mathcal Y_{\vec \rho,B_+\alpha}(\vec w^{(\vec j)},\vec g_s)\mathcal Y_{\vec \sigma,\kappa(i) }(\vec w^{(\vec i)},\vec f_s)\nonumber\\
=&\sum_{\alpha,\beta\in\Theta^*_{ij}} \Lambda^{\alpha\beta}y_s\mathcal Y_{\vec \sigma,\beta }(\vec w^{(\vec i)},\vec f_t)^\dagger \mathcal Y_{\vec \rho,B_+\alpha}(\vec w^{(\vec j)},\vec g_s)x_t^*x_s\mathcal Y_{\vec \sigma,\kappa(i) }(\vec w^{(\vec i)},\vec f_s).\label{eq329}
\end{align}
By theorem \ref{lb108}, for each $l=2,\dots,m$, there exists an  intertwining operators $\widetilde{\sigma_l}$ having the same type as that of $\sigma_l$, such that \eqref{eq328} holds for all $\mathcal Y_\alpha$ whose charge space, source space, and target space are unitary $V$-modules in $\mathcal F^\boxtimes$. Let $h_{t,1}=e^{i\pi\Delta_{w^{(i_1)}}}(e_{2-2\Delta_{w^{(i_1)}}}f_{t,1}),\dots,h_{t,m}=e^{i\pi\Delta_{w^{(i_m)}}}(e_{2-2\Delta_{w^{(i_m)}}}f_{t,m})$. Set $\vec h_t=(h_{t,1},\dots,h_{t,m}),\overline{\vec h_t}=(\overline {h_{t,1}},\dots,\overline {h_{t,m}}),  \overline{\vec w^{(\vec i)}}=(\overline{w^{(i_1)}},\dots,\overline{w^{(i_m)}})$. Then \eqref{eq329} equals
\begin{align}
\sum_{\alpha,\beta\in\Theta^*_{ij}} \Lambda^{\alpha\beta}y_s\mathcal Y_{\vec {\widetilde\sigma},\beta^* }(\overline{\vec w^{(\vec i)}},\overline{\vec h_t})\mathcal Y_{\vec \rho,B_+\alpha}(\vec w^{(\vec j)},\vec g_s)x_t^*x_s\mathcal Y_{\vec \sigma,\kappa(i) }(\vec w^{(\vec i)},\vec f_s).\label{eq331}
\end{align}
By equation \eqref{eq330} and theorem \ref{lb109}, \eqref{eq331} equals
\begin{align}
y_s\mathcal Y_{\vec \rho,\kappa(j)}(\vec w^{(\vec j)},\vec g_s)\mathcal Y_{\vec {\widetilde\sigma},\kappa(i)^* }(\overline{\vec w^{(\vec i)}},\overline{\vec h_t})x_t^*x_s\mathcal Y_{\vec \sigma,\kappa(i) }(\vec w^{(\vec i)},\vec f_s),
\end{align}
which, due to equation \eqref{eq328}, also equals
\begin{align}
&y_s\mathcal Y_{\vec \rho,\kappa(j)}(\vec w^{(\vec j)},\vec g_s)\mathcal Y_{\vec \sigma,\kappa(i) }(\vec w^{(\vec i)},\vec f_t)^\dagger x_t^*x_s\mathcal Y_{\vec \sigma,\kappa(i) }(\vec w^{(\vec i)},\vec f_s)\nonumber\\
=&B(b_s)A(a_t)^\dagger A(a_s).
\end{align}
Substitute this expression into equation \eqref{eqb6}, we see that
\begin{align}
\Lambda\big(\widetilde A(a_s)B(b_s)\xi^{(0)}_s\big|\widetilde A(a_t)B(b_t)\xi^{(0)}_t\big)=\langle B(b_s)A(a_t)^\dagger A(a_s)\xi^{(0)}_s|B(b_t)\xi^{(0)}_t \rangle.
\end{align}
Therefore, by lemma \ref{lb20},
\begin{align}
\Lambda(\xi^{(ij)}|\xi^{(ij)})
=&\sum_{s,t=1,\dots,N}\Lambda\big(\widetilde A(a_s)B(b_s)\xi^{(0)}_s\big|\widetilde A(a_t)B(b_t)\xi^{(0)}_t\big)\nonumber\\
=&\sum_{s,t=1,\dots,N}\langle B(b_s)A(a_t)^\dagger A(a_s)\xi^{(0)}_s|B(b_t)\xi^{(0)}_t \rangle\geq0.
\end{align}

	Step 3 (See also \cite{HK07} theorem 3.4). We prove the non-degeneracy of $\Lambda$ using the rigidity of $\Rep(V)$. Since $\Lambda$ is positive, for each $k\in\mathcal E$, we can choose  a basis $\Theta^k_{ij}$, such that the transport matrix $\Lambda$ is a diagonal, and that the  entries are either $1$ or $0$. Thus, we have the transport formula 
	\begin{align}
	Y_j(\mathcal Y^0_{\overline ii}(\overline {w^{(i)}_2},z_2-z_1)w^{(i)}_1,z_1)=\sum_{\alpha\in\Theta^*_{ij}}\lambda_\alpha\mathcal Y_{\alpha^*}(\overline{w^{(i)}_2},z_2)\mathcal Y_{\alpha}(w^{(i)}_1,z_1),\label{eq224}
	\end{align}
where each $\lambda_\alpha$ is either $1$ or $0$. For each $k\in\mathcal E$, we let $n^k_{ij}$ be the number of $\alpha\in\Theta^k_{ij}$ satisfying $\lambda_\alpha=1$. Then clearly $n^k_{ij}\leq N^k_{ij}$. If we can show that $n^k_{ij}= N^k_{ij}$, then the non-degeneracy of $\Lambda$ follows.
	
	Since $W_i$ is irreducible, we have $N^0_{\overline i i}=N^i_{0i}=1$. So there exists a complex number $\mu_i\neq0$ such that $\mathcal Y^0_{\overline ii}$  represents  the morphism $\mu_i \ev_i:W_{\overline i}\boxtimes W_i\rightarrow V$. We also regard $\mathcal Y_\alpha$ as a morphism $W_i\boxtimes W_j\rightarrow W_k$, and $\mathcal Y_{\alpha^*}$ a morphism $W_{\overline i}\boxtimes W_k\rightarrow W_j$ (see section 2.4). Then equation \eqref{eq224} is equivalent to the following relation for morphisms:
	\begin{align}
	\mu_i (\ev_i\otimes \id_j)=\sum_{k\in\mathcal E}\sum_{\alpha\in\Theta^k_{ij}}\lambda_{\alpha}\mathcal Y_{\alpha^*}\circ(\id_{\overline i}\otimes\mathcal Y_{\alpha}).
	\end{align}
	By equation (2.64),
	\begin{align*}
\mu_i(\id_i\otimes\id_j)
	=&\mu_i[(\id_i\otimes\ev_i)\circ(\coev_i\otimes\id_i)]\otimes\id_j\\
	=&\mu_i(\id_i\otimes\ev_i\otimes\id_j)\circ(\coev_i\otimes\id_i\otimes\id_j)\\
	=&\sum_{k\in\mathcal E}\sum_{\alpha\in\Theta^k_{ij}}\lambda_\alpha\big(\id_i\otimes(\mathcal Y_{\alpha^*}\circ(\id_{\overline i}\otimes\mathcal Y_{\alpha}))\big)\circ(\coev_i\otimes\id_i\otimes\id_j)\\
	=&\sum_{k\in\mathcal E}\sum_{\alpha\in\Theta^k_{ij}}\lambda_\alpha(\id_i\otimes\mathcal Y_{\alpha^*})\circ(\id_i\otimes\id_{\overline i}\otimes\mathcal Y_\alpha)\circ(\coev_i\otimes\id_i\otimes\id_j)\\
	=&\sum_{k\in\mathcal E}\sum_{\alpha\in\Theta^k_{ij}}\lambda_\alpha(\id_i\otimes\mathcal Y_{\alpha^*})\circ(\coev_i\otimes\mathcal Y_\alpha)\\
		=&\sum_{k\in\mathcal E}\sum_{\alpha\in\Theta^k_{ij}}\lambda_\alpha(\id_i\otimes\mathcal Y_{\alpha^*})\circ(\coev_i\otimes\id_k)\circ(\id_0\otimes\mathcal Y_\alpha).
	\end{align*}
	This equation implies that the isomorphism $\mu_i(\id_i\otimes \id_j):W_i\boxtimes W_j\rightarrow W_i\boxtimes W_j$ factors through the homomorphism $$\Phi:\sum^\oplus_{k\in\mathcal E}\sum^\oplus_{\alpha\in\Theta^k_{ij},\lambda_{\alpha}\neq0}\id_0\otimes\mathcal Y_\alpha:W_i\boxtimes W_j\rightarrow W=\bigoplus_{k\in\mathcal E}\bigoplus_{\alpha\in\Theta^k_{ij},\lambda_{\alpha}\neq0} W_k.$$ So $\Phi$ must be injective, which implies that $W_i\boxtimes W_j$ can be embedded as a submodule of $W$. Note that $W_i\boxtimes W_j\simeq\bigoplus_{k\in\mathcal E}W_k^{\oplus N^k_{ij}}$ and $W\simeq\bigoplus_{k\in\mathcal E}W_k^{\oplus n^k_{ij}}$. So we must have $n^k_{ij}\geq N^k_{ij}$. 
\end{proof}

\section{Unitarity of the ribbon fusion categories}

In this chapter, we still assume  that $V$ is unitary, energy bounded, and strongly local, and that $\mathcal F$ is a non-empty set of non-zero irreducible unitary $V$-modules satisfying condition \ref{CondA} or \ref{CondB} in section \ref{Condition ABC}. If $W_i,W_j$ are unitary $V$-modules in $\mathcal F^\boxtimes$, then by theorem \ref{lb110}, for each $k\in\mathcal E$, the sesquilinear form $\Lambda$ on $\mathcal V{k\choose i~j}^*$ defined by the transport matrix is  an inner product.  Therefore, we have a unitary structure on $\mathcal F^\boxtimes$ defined by $\Lambda$ (see section 2.4). We fix this unitary structure, and show that the ribbon fusion category $\Rep^\uni_{\mathcal F^\boxtimes}(V)$ is unitary.

We first note that the inner product $\Lambda$ on $\mathcal V{k\choose i~j}^*$ induces naturally an antilinear isomorphism map $\mathcal V{k\choose i~j}\rightarrow \mathcal V{k\choose i~j}^*$. We then define the inner product $\Lambda$ on $\mathcal V{k\choose i~j}$ so that this map becomes anti-unitary. Then a basis $\Theta^k_{ij}\subset\mathcal V{k\choose i~j}$ is orthonormal if and only if its dual basis is an orthonormal basis of  $\mathcal V{k\choose i~j}^*$. Therefore, if for each $k\in\mathcal E\cap\mathcal F^\boxtimes$, $\Theta^k_{ij}$ is an orthonormal basis of $\mathcal V{k\choose i~j}$, then  the transport formulas \eqref{eq62}, \eqref{eq63} and \eqref{eq330} become
\begin{gather}
Y_j\big(\mathcal Y^0_{\overline ii}(\overline {w^{(i)}_2},z_2-z_1)w^{(i)}_1,z_1\big)=\sum_{\alpha\in\Theta^*_{ij}}\mathcal Y_{\alpha^*}(\overline{w^{(i)}_2},z_2)\mathcal Y_{\alpha}(w^{(i)}_1,z_1),\\
\mathcal Y_\gamma(w^{(j)},z_0)\bigg(\sum_{\alpha\in\Theta^*_{is}}\mathcal Y_{\alpha^*}(\overline{w^{(i)}_2},z_2)\mathcal Y_{\alpha}(w^{(i)}_1,z_1)\bigg)=\bigg(\sum_{\alpha\in\Theta^*_{ir}}\mathcal Y_{\alpha^*}(\overline{w^{(i)}_2},z_2)\mathcal Y_{\alpha}(w^{(i)}_1,z_1)\bigg)\mathcal Y_\gamma(w^{(j)},z_0),\label{eqb11}\\
\mathcal Y^j_{j0}(w^{(j)},z_0)\mathcal Y^0_{\overline ii}(\overline{w^{(i)}_2},z_2)=\sum_{\alpha\in\Theta^*_{ij}}\mathcal Y_{\alpha^*}(\overline{w^{(i)}_2},z_2)\mathcal Y_{B_+\alpha}(w^{(j)},z_0).\label{eqb12}
\end{gather}

\subsection{Unitarity of braid matrices}

For any   unitary $V$-modules $W_i,W_j$ in $\mathcal F^\boxtimes$, and any $s,t\in\mathcal E\cap\mathcal F^\boxtimes$, we choose bases $\Theta^t_{is},\Theta^t_{sj}$ of $\mathcal V{t\choose i~s},\mathcal V{t\choose s~j}$ respectively.  Now fix $i,j\in\mathcal F^\boxtimes$, we also define $$\Theta^*_{i*}=\coprod_{s,t\in\mathcal E\cap\mathcal F^\boxtimes}\Theta^t_{is},\Theta^*_{*j}=\coprod_{s,t\in\mathcal E\cap\mathcal F^\boxtimes}\Theta^t_{sj}.$$ 

Choose distinct $z_i,z_j\in S^1$, and let $\arg z_j<\arg z_i<\arg z_j+2\pi$. For any $\alpha,\alpha'\in\Theta^*_{i*},\beta,\beta'\in\Theta^*_{j*}$, if either the source space of $\mathcal Y_\alpha$ does not equal the target space of $\mathcal Y_\beta$, or the target space of $\mathcal Y_{\alpha'}$ does not equal the source space of $\mathcal Y_{\beta'}$, or the target space of $\mathcal Y_\alpha$ does not equal the target space of $\mathcal Y_{\beta'}$, or the source space of $\mathcal Y_\beta$ does not equal the source space of $\mathcal Y_{\alpha'}$, then we set $(B_+)^{\beta'\alpha'}_{\alpha\beta}=0$; otherwise the values $(B_+)^{\beta'\alpha'}_{\alpha\beta}$ are determined by the following braid relation: for any $w^{(i)}\in W_i,w^{(j)}\in W_j$, 
\begin{align}
\mathcal Y_{\alpha}(w^{(i)},z_i)\mathcal Y_{\beta}(w^{(j)},z_j)=\sum_{\alpha'\in\Theta^*_{i*},\beta'\in\Theta^*_{j*}}(B_+)^{\beta'\alpha'}_{\alpha\beta}\mathcal Y_{\beta'}(w^{(j)},z_j)\mathcal Y_{\alpha'}(w^{(i)},z_i).\label{eq83}
\end{align}
The matrix $(B_+)_{ij}=\{ (B_+)^{\beta'\alpha'}_{\alpha\beta}\}_{\alpha\in\Theta^*_{i*},\beta\in\Theta^*_{j*}}^{\alpha'\in\Theta^*_{i*},\beta'\in\Theta^*_{j*}}$ is called a \textbf{braid matrix}. The matrix $(B_-)_{ij}=\{ (B_-)^{\beta'\alpha'}_{\alpha\beta}\}_{\alpha\in\Theta^*_{i*},\beta\in\Theta^*_{j*}}^{\alpha'\in\Theta^*_{i*},\beta'\in\Theta^*_{j*}}$ is defined in a similar way by assuming $\arg z_i<\arg z_j<\arg z_i+2\pi$. Clearly $(B_\pm)_{ij}$ is the inverse matrix of $(B_\mp)_{ji}$.

\begin{pp}\label{lb25}
For any $\alpha,\alpha'\in\Theta^*_{i*},\beta,\beta'\in\Theta^*_{j*}$, we have 
\begin{align}
\overline{(B_\pm)^{\beta'\alpha'}_{\alpha\beta}}=(B_\mp)^{\alpha'^*\beta'^*}_{\beta^*\alpha^*}.
\end{align}
\end{pp}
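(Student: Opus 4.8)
The plan is to deduce Proposition \ref{lb25} from the adjoint relation for intertwining operators, equation \eqref{eq291}, combined with the braid relation \eqref{eq83} defining the braid matrices. First I would pick distinct $z_i, z_j \in S^1$ with $\arg z_j < \arg z_i < \arg z_j + 2\pi$, and choose quasi-primary vectors $w^{(i)} \in W_i$, $w^{(j)} \in W_j$, since it suffices to verify the scalar identity on such vectors (the intertwining operator spaces are spanned by evaluations on quasi-primary vectors, or one argues by the usual density/uniqueness). Applying the formal adjoint $\dagger$ to both sides of \eqref{eq83} and using that $(\mathcal A \mathcal B)^\dagger = \mathcal B^\dagger \mathcal A^\dagger$, the left side becomes $\mathcal Y_\beta(w^{(j)}, z_j)^\dagger \mathcal Y_\alpha(w^{(i)}, z_i)^\dagger$ and the right side becomes a sum $\sum \overline{(B_+)^{\beta'\alpha'}_{\alpha\beta}} \mathcal Y_{\alpha'}(w^{(i)}, z_i)^\dagger \mathcal Y_{\beta'}(w^{(j)}, z_j)^\dagger$. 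Then I substitute \eqref{eq291}: each $\mathcal Y_\gamma(w, z)^\dagger = e^{-i\pi\Delta_w} z^{2\Delta_w} \mathcal Y_{\gamma^*}(\overline w, z)$, with $z$ being $z_i$ or $z_j$ and $\Delta_w$ being $\Delta_{w^{(i)}}$ or $\Delta_{w^{(j)}}$. The scalar prefactors $e^{-i\pi(\Delta_{w^{(i)}}+\Delta_{w^{(j)}})} z_i^{2\Delta_{w^{(i)}}} z_j^{2\Delta_{w^{(j)}}}$ appear identically on both sides and cancel.

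After this substitution the identity reads, for the adjoint intertwining operators,
\begin{align*}
\mathcal Y_{\beta^*}(\overline{w^{(j)}}, z_j)\mathcal Y_{\alpha^*}(\overline{w^{(i)}}, z_i) = \sum_{\alpha',\beta'} \overline{(B_+)^{\beta'\alpha'}_{\alpha\beta}} \, \mathcal Y_{\alpha'^*}(\overline{w^{(i)}}, z_i)\mathcal Y_{\beta'^*}(\overline{w^{(j)}}, z_j).
\end{align*}
Now I compare this with the defining braid relation for $B_-$ applied to the chain of operators with charge space $W_{\overline j}$ first and $W_{\overline i}$ second: since in the adjoint picture $\mathcal Y_{\beta^*}$ (charge space $W_{\overline j}$) is evaluated at $z_j$ and $\mathcal Y_{\alpha^*}$ (charge space $W_{\overline i}$) at $z_i$, with $\arg z_j < \arg z_i < \arg z_j + 2\pi$, this is precisely the configuration governing $(B_-)_{\overline j \, \overline i}$ — the "$\overline j$ is anti-clockwise to $\overline i$" case with the roles of the two modules swapped, which is exactly the $B_-$ convention. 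Reading off the matrix coefficient of $\mathcal Y_{\alpha'^*}(\overline{w^{(i)}}, z_i)\mathcal Y_{\beta'^*}(\overline{w^{(j)}}, z_j)$ and matching indices — the "incoming" labels are $\beta^* \in \Theta^*_{\overline j \, *}$ and $\alpha^* \in \Theta^*_{\overline i \, *}$, the "outgoing" labels are $\alpha'^*$ and $\beta'^*$ — gives $\overline{(B_+)^{\beta'\alpha'}_{\alpha\beta}} = (B_-)^{\alpha'^* \beta'^*}_{\beta^* \alpha^*}$, which is the claimed formula. The mirror argument starting from $\arg z_i < \arg z_j < \arg z_i + 2\pi$ gives the $(B_-) \leftrightarrow (B_+)$ version.

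The main obstacle I anticipate is bookkeeping rather than conceptual: making sure the argument conventions on $z_i, z_j$ (and the induced arguments, e.g. on $\overline{z_i}$ if one works on $S^1$ versus keeping $z_i, z_j$ literally on the circle) line up so that the adjoined relation is genuinely the $B_-$ braid relation and not $B_+$ with some monodromy discrepancy, and that the index positions (superscript/subscript, which slot of $\Theta^*_{i*}$ versus $\Theta^*_{*j}$, and how $\alpha \mapsto \alpha^*$ moves an element of $\Theta^t_{is}$ into $\Theta^?_{\overline i \, ?}$) are tracked correctly. One subtlety worth spelling out: the formal adjoint is only literally given by \eqref{eq291} for quasi-primary $w$, so I would either restrict to quasi-primary vectors throughout (noting the braid matrices do not depend on the choice of vectors) or invoke the fact that both sides of \eqref{eq83} are determined by their action on quasi-primary vectors. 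I would also need to check that the sum over $\alpha', \beta'$ on the right remains locally finite after adjoining — this is automatic since it was finite before. Modulo these conventions, the proof is a direct computation of one line, so I would present it compactly.
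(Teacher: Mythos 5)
Your proposal matches the paper's proof essentially step for step: take the formal adjoint of the braid relation \eqref{eq83}, substitute the adjoint formula \eqref{eq291} (i.e., equation (1.34) from part I) for quasi-primary vectors, observe that the scalar prefactors cancel, and recognize the resulting identity as the defining $B_-$ braid relation for the adjoint intertwining operators with the roles of the two slots swapped. The bookkeeping concerns you flag are handled implicitly in the paper's one-line argument, and your index tracking is correct, so the proof is sound.
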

\begin{proof}
Choose distinct $z_i,z_j\in S^1$, and let $\arg z_j<\arg z_i<\arg z_j+2\pi$. Then for any $w^{(i)}\in W_i,w^{(j)}\in W_j$, the braid relation \eqref{eq83} holds. Taking the formal adjoint of \eqref{eq83}, we have
\begin{align}
\mathcal Y_{\beta}(w^{(j)},z_j)^\dagger\mathcal Y_{\alpha}(w^{(i)},z_i)^\dagger=\sum_{\alpha',\beta'}\overline{(B_+)^{\beta'\alpha'}_{\alpha\beta}}\mathcal Y_{\alpha'}(w^{(i)},z_i)^\dagger\mathcal Y_{\beta'}(w^{(j)},z_j)^\dagger.
\end{align}
By equation (1.34), for any $w^{(i)}\in W_i,w^{(j)}\in W_j$ we have
\begin{align}
\mathcal Y_{\beta^*}(\overline{w^{(j)}},z_j)\mathcal Y_{\alpha^*}(\overline {w^{(i)}},z_i)=\sum_{\alpha',\beta'}\overline{(B_+)^{\beta'\alpha'}_{\alpha\beta}}\mathcal Y_{\alpha'^*}(\overline {w^{(i)}},z_i)\mathcal Y_{\beta'^*}(\overline{w^{(j)}},z_j).\label{eq332}
\end{align}
But $\{(B_-)^{\alpha'^*\beta'^*}_{\beta^*\alpha^*} \}$ is also the braid matrix for the braid relation \eqref{eq332}. So we must have $\overline{(B_+)^{\beta'\alpha'}_{\alpha\beta}}=(B_-)^{\alpha'^*\beta'^*}_{\beta^*\alpha^*}$. If we let $\arg z_i<\arg z_j<\arg z_i+2\pi$, then we obtain $\overline{(B_-)^{\beta'\alpha'}_{\alpha\beta}}=(B_+)^{\alpha'^*\beta'^*}_{\beta^*\alpha^*}$.
\end{proof}

\begin{pp}\label{lb26}
If the bases $\Theta^*_{i*},\Theta^*_{j*}$ are
orthonormal under the inner product $\Lambda$, then for any $\alpha,\alpha'\in\Theta^*_{i*},\beta,\beta'\in\Theta^*_{j*}$, we have
\begin{align}
(B_\pm)^{\beta'\alpha'}_{\alpha\beta}=(B_\mp)^{\alpha^*\beta'}_{\beta\alpha'^*}=(B_\pm)^{\beta^*\alpha^*}_{\alpha'^*\beta'^*}.\label{eq26}
\end{align}
\end{pp}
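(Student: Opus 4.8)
The plan is to reduce everything to the single identity
\[
(B_\pm)^{\beta'\alpha'}_{\alpha\beta}=(B_\mp)^{\alpha^*\beta'}_{\beta\alpha'^*},
\]
which I will call $(\mathrm i)$. Granting $(\mathrm i)$, the second equality in \eqref{eq26} is automatic: applying $(\mathrm i)$ a second time to its own right-hand side $(B_\mp)^{\alpha^*\beta'}_{\beta\alpha'^*}$ — that is, matching it to the pattern $(B_\mp)^{B'A'}_{AB}\mapsto(B_\pm)^{A^*B'}_{BA'^*}$ with $B'=\alpha^*,\ A'=\beta',\ A=\beta,\ B=\alpha'^*$ — produces exactly $(B_\pm)^{\beta^*\alpha^*}_{\alpha'^*\beta'^*}$. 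Moreover, combining $(\mathrm i)$ (and the second equality it yields) with Proposition \ref{lb25} gives $\overline{(B_\pm)^{\beta'\alpha'}_{\alpha\beta}}=(B_\mp)^{\alpha\beta}_{\beta'\alpha'}$, i.e. the braid matrices $(B_\pm)_{ij}$ are unitary with respect to the orthonormal bases — which is the conclusion the title of this subsection is really after, obtained here as a corollary of \eqref{eq26}.

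To prove $(\mathrm i)$ I would ``bend the $W_i$-strand'' in the braid relation \eqref{eq83}, using the creation operator $\mathcal Y^i_{i0}$, the annihilation operator $\mathcal Y^0_{\overline ii}$, and the zig-zag fusion relation \eqref{eq324}. Concretely: choose $z_i,z_j\in S^1\setminus\{-1\}$ with $\arg z_j<\arg z_i$, together with an auxiliary point $z$ whose argument is slightly larger than $\arg z_i$, and start from the braid relation for the pair $(j,\overline i)$,
\[
\mathcal Y_\beta(w^{(j)},z_j)\,\mathcal Y_{\alpha'^*}(\overline{w^{(i)}},z_i)=\sum_{\alpha,\beta'}(B_-)^{\alpha^*\beta'}_{\beta\alpha'^*}\,\mathcal Y_{\alpha^*}(\overline{w^{(i)}},z_i)\,\mathcal Y_{\beta'}(w^{(j)},z_j).
\]
Composing on the right with $\mathcal Y^i_{i0}(w^{(i)},z)$ and applying \eqref{eq324} converts the adjacent $W_{\overline i}$- and $W_i$-legs into a single vacuum-module factor, turning $\mathcal Y_{\alpha'^*}(\overline{w^{(i)}},z_i)\,\mathcal Y^i_{i0}(w^{(i)},z)$ into a vertex operator; feeding in a vector of $W_i$ through $\mathcal Y^0_{\overline ii}$ on the other side then makes the left-hand side an honest instance of the braid relation for the pair $(i,j)$, and reading off the coefficient of a fixed monomial $\mathcal Y_{\beta'}(w^{(j)},\cdot)\,\mathcal Y_{\alpha'}(w^{(i)},\cdot)$ on the two sides yields $(B_+)^{\beta'\alpha'}_{\alpha\beta}$ on one side against $(B_-)^{\alpha^*\beta'}_{\beta\alpha'^*}$ on the other. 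This is most cleanly carried out inside a genus-$0$ correlation function $\langle\,\cdot\,,\,\cdot\,\rangle$ by analytically continuing the auxiliary points around $S^1$; the orthonormal transport formulas \eqref{eqb11} and \eqref{eqb12} are precisely the packaged form of the ``bending'' moves needed, and the orthonormality hypothesis is exactly what strips the $\Lambda^{\alpha\beta}$-weights off the completeness sums $\sum_\alpha\mathcal Y_{\alpha^*}(\cdot)\mathcal Y_\alpha(\cdot)$ so that these moves are weight-free.

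The main obstacle, and essentially the entire content of the write-up, is bookkeeping. One must track at each bending/braiding step whether $(B_+)$ or $(B_-)$ occurs — this is dictated solely by the cyclic order of the arguments on $S^1$, and one misplaced point silently interchanges $+$ and $-$ — and one must follow the several occurrences of the adjoint operation $\gamma\mapsto\gamma^*$ so that the starred labels $\alpha^*,\alpha'^*$ land in the correct slots while the unstarred $\beta'$ does not. A subsidiary point is to arrange the auxiliary arguments so that each intermediate braid relation and each application of \eqref{eq324} is literally valid rather than valid only after analytic continuation; once the configuration is pinned down, the computation is short. (Alternatively, $(\mathrm i)$ can be deduced from the unitarity of the braiding operator $\sigma_{i,j}$ on tensor products together with Proposition \ref{lb25}, but then that unitarity must be proved first, so this is not a genuine shortcut.)
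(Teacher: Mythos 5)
Your reduction of the second equality to the first by ``iterating $(\mathrm i)$'' has a genuine gap. Identity $(\mathrm i)$ is proved only under the hypothesis that the braid matrix is written in \emph{orthonormal} bases. To apply $(\mathrm i)$ a second time to $(B_\mp)^{\alpha^*\beta'}_{\beta\alpha'^*}$, you must treat it as an entry of the braid matrix $(B_\mp)_{j\overline i}$ with respect to the bases $\Theta^*_{j*}$ and $\{\mathcal Y_{\alpha^*}:\alpha\in\Theta^*_{i*}\}$, and the iteration is legitimate only if the latter is orthonormal under $\Lambda$. But the adjoint map $\alpha\mapsto\alpha^*$ does \emph{not} preserve $\Lambda$-norms: for instance $\|\mathcal Y^i_{i0}\|^2=1$ while $\|(\mathcal Y^i_{i0})^\dagger\|^2=\|\mathcal Y^0_{\overline ii}\|^2=d_{\overline i}=d_i$, which is generically $\neq 1$. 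So $\{\alpha^*\}$ is not an orthonormal basis and the substitution rule cannot be applied as stated. (Rescaling to restore orthonormality reintroduces quantum-dimension factors that do not obviously cancel, since the factors attached to $\alpha^*$ and $\alpha'^*$ come from different $\mathcal V$-spaces.) The paper avoids this entirely by deriving \emph{both} equalities of \eqref{eq26} independently from two distinct rearrangements of a single $5$-point correlation function $\langle\mathcal Y_{\beta^*}\mathcal Y_\beta\mathcal Y_{\alpha'^*}\mathcal Y_{\alpha'}w_0|w_5\rangle$ via the weight-free transport formula \eqref{eqb11}, together with the braid relation and analytic continuation around the circle; in that argument the starred labels only appear as names for coefficients, and no orthonormality of $\{\alpha^*\}$ is ever invoked.

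A secondary point: your sketch for proving $(\mathrm i)$ by ``bending the $W_i$-strand'' is in the right spirit, but the specific step invoking \eqref{eq324} to convert $\mathcal Y_{\alpha'^*}(\overline{w^{(i)}},z_i)\,\mathcal Y^i_{i0}(w^{(i)},z)$ into a vertex operator does not apply: \eqref{eq324} concerns the particular pair $\mathcal Y^0_{\overline ii}\cdot\mathcal Y^i_{i0}$, not a general $\mathcal Y_{\alpha'^*}\cdot\mathcal Y^i_{i0}$. The correct packaged form of the bending move is the transport formula \eqref{eqb11} (equivalently \eqref{eq64}), which is what the paper uses. If you run the $5$-point-function computation twice — once braiding $\mathcal Y_\alpha$ past $\mathcal Y_\beta$ after the transport step, once braiding $\mathcal Y_{\alpha'^*}$ past $\mathcal Y_{\beta'^*}$ — you obtain both equalities of \eqref{eq26} directly, with no need for the problematic iteration and no need for Proposition \ref{lb25} at this stage.
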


\begin{proof}
Choose distinct $z_1,z_2,z_3,z_4\in S^1$ with arguments $\arg z_1<\arg z_2<\arg z_3<\arg z_4<\arg z_1+2\pi$. By relation \eqref{eqb11}, for any $k\in\mathcal E\cap\mathcal F^\boxtimes,w_0,w_5\in W_k, w_1,w_2\in W_i,w_3,w_4\in W_j$, we have, following convention 2.19,
\begin{align}
&\sum_{\substack{\alpha'\in\Theta^*_{i*} \\ \beta\in\Theta^*_{j*}}}\Big\langle \mathcal Y_{\beta^*}(\overline{w_4},z_4)\mathcal Y_{\beta}(w_3,z_3)\mathcal Y_{\alpha'^*}(\overline{w_2},z_2)\mathcal Y_{\alpha'}(w_1,z_1)w_0\Big|w_5 \Big\rangle\nonumber\\
=&\sum_{\substack{\alpha\in\Theta^*_{i*} \\ \beta\in\Theta^*_{j*}}}\Big\langle \mathcal Y_{\beta^*}(\overline{w_4},z_4)\mathcal Y_{\alpha^*}(\overline{w_2},z_2)\mathcal Y_{\alpha}(w_1,z_1)\mathcal Y_{\beta}(w_3,z_3)w_0\Big|w_5 \Big\rangle.\label{eq333}
\end{align}
By exchanging $\mathcal Y_\alpha$ and $\mathcal Y_\beta$,  \eqref{eq333} equals

\begin{align}
\sum_{\substack{\alpha,\alpha'\in\Theta^*_{i*} \\ \beta,\beta'\in\Theta^*_{j*} }}(B_-)^{\beta'\alpha'}_{\alpha\beta}\Big\langle \mathcal Y_{\beta^*}(\overline{w_4},z_4)\mathcal Y_{\alpha^*}(\overline{w_2},z_2)\mathcal Y_{\beta'}(w_3,z_3)\mathcal Y_{\alpha'}(w_1,z_1)w_0\Big|w_5  \Big\rangle.
\end{align}
By proposition 2.3,  we have
\begin{align}
\mathcal Y_{\beta}(w_3,z_3)\mathcal Y_{\alpha'^*}(\overline{w_2},z_2)=\sum_{\substack{\alpha,\alpha'\in\Theta^*_{i*} \\ \beta,\beta'\in\Theta^*_{j*} }}(B_-)^{\beta'\alpha'}_{\alpha\beta}\mathcal Y_{\alpha^*}(\overline{w_2},z_2)\mathcal Y_{\beta'}(w_3,z_3).
\end{align}
This proves that $(B_+)^{\alpha^*\beta'}_{\beta\alpha'^*}=(B_-)^{\beta'\alpha'}_{\alpha\beta}$.

Similarly, we also have
\begin{align}
&\sum_{\substack{\alpha'\in\Theta^*_{i*} \\ \beta\in\Theta^*_{j*}}}\Big\langle \mathcal Y_{\beta^*}(\overline{w_4},z_4)\mathcal Y_{\beta}(w_3,z_3)\mathcal Y_{\alpha'^*}(\overline{w_2},z_2)\mathcal Y_{\alpha'}(w_1,z_1)w_0\Big|w_5 \Big\rangle\nonumber\\
=&\sum_{\substack{\alpha'\in\Theta^*_{i*} \\ \beta'\in\Theta^*_{j*}}}\Big\langle\mathcal Y_{\alpha'^*}(\overline{w_2},z_2) \mathcal Y_{\beta'^*}(\overline{w_4},z_4)\mathcal Y_{\beta'}(w_3,z_3)\mathcal Y_{\alpha'}(w_1,z_1)w_0\Big|w_5 \Big\rangle\\
=&\sum_{\substack{\alpha,\alpha' \in\Theta^*_{i*} \\ \beta,\beta'\in\Theta^*_{j*}}}(B_-)^{\beta^*\alpha^*}_{\alpha'^*\beta'^*}\Big\langle \mathcal Y_{\beta^*}(\overline{w_4},z_4)\mathcal Y_{\alpha^*}(\overline{w_2},z_2)\mathcal Y_{\beta'}(w_3,z_3)\mathcal Y_{\alpha'}(w_1,z_1)w_0\Big|w_5  \Big\rangle,
\end{align}
which implies that $(B_+)^{\alpha^*\beta'}_{\beta\alpha'^*}=(B_-)^{\beta^*\alpha^*}_{\alpha'^*\beta'^*}$.

If $z_1,z_2,z_3,z_4\in S^1$ and their arguments are chosen such that $\arg z_4<\arg z_3<\arg z_2<\arg z_1<\arg z_4+2\pi$, then the same argument implies that $(B_+)^{\beta'\alpha'}_{\alpha\beta}=(B_-)^{\alpha^*\beta'}_{\beta\alpha'^*}=(B_+)^{\beta^*\alpha^*}_{\alpha'^*\beta'^*}$. 
\end{proof}

\begin{co}\label{lbb3}
If the bases $\Theta^*_{i*},\Theta^*_{j*}$ are
orthonormal under the inner product $\Lambda$, then the braid matrix $(B_\pm)_{ij}$ is unitary.
\end{co}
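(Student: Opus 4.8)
The plan is to show that $(B_+)_{ij}$ is an isometry, i.e.\ $(B_+)_{ij}^*(B_+)_{ij}=1$; since $(B_+)_{ij}$ is a finite square matrix this already forces $(B_+)_{ij}$ to be unitary, and $(B_-)_{ij}$ is then handled by the identical argument with the two signs interchanged. In coordinates, recalling that the $(\alpha\beta)$-$(\alpha'\beta')$ entry of $(B_+)_{ij}$ is $(B_+)^{\beta'\alpha'}_{\alpha\beta}$, the statement to be verified is
\begin{align}
\sum_{\alpha\in\Theta^*_{i*},\ \beta\in\Theta^*_{j*}}\overline{(B_+)^{\beta'\alpha'}_{\alpha\beta}}\cdot (B_+)^{\beta''\alpha''}_{\alpha\beta}=\delta_{\alpha',\alpha''}\,\delta_{\beta',\beta''}
\end{align}
for all $\alpha',\alpha''\in\Theta^*_{i*}$ and $\beta',\beta''\in\Theta^*_{j*}$.

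To treat the left-hand side I would first use Proposition \ref{lb25} to rewrite the conjugated factor as $\overline{(B_+)^{\beta'\alpha'}_{\alpha\beta}}=(B_-)^{\alpha'^*\beta'^*}_{\beta^*\alpha^*}$, which is an entry of the braid matrix $(B_-)_{\overline j\,\overline i}$ (the subscript $(\beta^*,\alpha^*)$ carries charge spaces $W_{\overline j},W_{\overline i}$, and the superscript is the same pair in reversed order, in accordance with the convention of \eqref{eq83}). Next, invoking the orthonormality hypothesis, I would apply the second equality of Proposition \ref{lb26} (with the upper sign) to the remaining factor, obtaining $(B_+)^{\beta''\alpha''}_{\alpha\beta}=(B_+)^{\beta^*\alpha^*}_{\alpha''^*\beta''^*}$, an entry of $(B_+)_{\overline i\,\overline j}$. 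Thus both factors have been re-expressed in terms of the ``barred'' braid matrices, with the summation index $(\alpha,\beta)$ corresponding, through the bijection $(\alpha,\beta)\mapsto(\beta^*,\alpha^*)$ of $\Theta^*_{i*}\times\Theta^*_{j*}$ onto $\Theta^*_{\overline j*}\times\Theta^*_{\overline i*}$, to a single index shared between $(B_-)_{\overline j\,\overline i}$ and $(B_+)_{\overline i\,\overline j}$.

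Consequently the left-hand side is a contraction that computes a matrix entry of the composite of $(B_-)_{\overline j\,\overline i}$ and $(B_+)_{\overline i\,\overline j}$. But $(B_+)_{\overline i\,\overline j}$ is the inverse matrix of $(B_-)_{\overline j\,\overline i}$ (apply the inversion property recorded just before Proposition \ref{lb25} with $i\mapsto\overline i$, $j\mapsto\overline j$), so this composite is the identity; since the adjoint operation is a bijection on the relevant bases, the resulting entry equals $\delta_{\alpha',\alpha''}\delta_{\beta',\beta''}$, which is exactly what was to be shown. Running the same computation with all signs reversed yields $(B_-)_{ij}^*(B_-)_{ij}=1$, completing the proof.

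The only real work is packaged into Propositions \ref{lb25} and \ref{lb26}, both already available; here the sole point demanding care is the bookkeeping of which pair of modules (and which of the original versus adjoint intertwining operators) labels each braid matrix, and the verification that the reindexing $(\alpha,\beta)\mapsto(\beta^*,\alpha^*)$ converts the summation into the correct matrix multiplication. This is precisely the bookkeeping that Proposition \ref{lb26} was set up to absorb, so I anticipate no obstacle beyond presenting the relabelings cleanly.
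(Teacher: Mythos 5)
Your proof is correct and relies on exactly the same ingredients as the paper's: Propositions \ref{lb25} and \ref{lb26} together with the relation $(B_\pm)_{ij}(B_\mp)_{ji}=1$. The paper packages them into the clean identity $(B_\pm)_{ij}=\big((B_\mp)_{ji}\big)^*$ and then invokes the inverse relation, whereas you verify the (co-)isometry condition entrywise after routing the sum through the barred matrices $(B_-)_{\overline j\,\overline i}$ and $(B_+)_{\overline i\,\overline j}$ via the bijection $(\alpha,\beta)\mapsto(\beta^*,\alpha^*)$ — a presentational rather than structural difference.
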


\begin{proof}
If we apply propositions \ref{lb25} and \ref{lb26}, then for any $\alpha,\alpha'\in\Theta^*_{i*},\beta,\beta'\in\Theta^*_{j*}$, we have

\begin{align}
(B_\pm)^{\beta'\alpha'}_{\alpha\beta}=(B_\pm)^{\beta^*\alpha^*}_{\alpha'^*\beta'^*}=\overline{(B_\mp)^{\alpha\beta}_{\beta'\alpha'}},
\end{align}
which shows that $(B_\pm)_{ij}$ is the adjoint  of $(B_\mp)_{ji}$. But we  know that $(B_\pm)_{ij}$ is also the inverse matrix of $(B_\mp)_{ji}$. So $(B_\pm)_{ij}$ is unitary.
\end{proof}

\subsection{Unitarity of fusion matrices}

Recall from section 2.4 that for any $W_i,W_j,W_k,W_t$ in $\mathcal F^\boxtimes$, we have a fusion matrix $\{F^{\beta'\alpha'}_{\alpha\beta} \}_{\alpha\in\Theta^t_{i*},\beta\in\Theta^*_{jk}}^{\alpha'\in\Theta^*_{ij},\beta'\in\Theta^t_{*k}}$ defined by the fusion relation
\begin{align}
\mathcal Y_\alpha(w^{(i)},z_i)\mathcal Y_\beta(w^{(j)},z_j)=\sum_{\alpha'\in\Theta^*_{ij},\beta'\in\Theta^t_{*k}}F^{\beta'\alpha'}_{\alpha\beta}\mathcal Y_{\beta'}(\mathcal Y_{\alpha'}(w^{(i)},z_i-z_j)w^{(j)},z_j),
\end{align}
where $z_i,z_j\in\mathbb C^\times,0<|z_i-z_j|<|z_j|<|z_i|$,  $\arg z_j$ is close to $\arg z_i$ as $z_j\rightarrow z_i$, and $\arg(z_i-z_j)$ is close to $\arg z_i$ as $z_j\rightarrow 0$. We let $F^{\beta'\alpha'}_{\alpha\beta}=0$ if the source space of $\mathcal Y_\alpha$ does not equal the target space of $\mathcal Y_\beta$, or if the target space of $\mathcal Y_{\alpha'}$ does not equal the charge space of $\mathcal Y_{\beta'}$. In this section, we show that fusion matrices are unitary.

\begin{pp}\label{lb113}
Choose unitary $V$-modules $W_i,W_k$ in $\mathcal F^\boxtimes$, $W_j,W_t$ in $\mathcal E\cap\mathcal F^\boxtimes$. Then for any for any $\alpha\in\Theta^t_{i*},\beta\in\Theta^*_{jk},\alpha'\in\Theta^*_{ij},\beta'\in\Theta^t_{*k}$, we have
\begin{align}
F^{\beta'\alpha'}_{\alpha\beta}=(B_+)^{B_+\beta',\alpha'}_{\alpha,B_+\beta}=(B_-)^{B_-\beta',\alpha'}_{\alpha,B_-\beta}.
\end{align}
\end{pp}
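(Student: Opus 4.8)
The plan is to deduce the identity from the fusion relation together with the skew-symmetry relation of part~I that relates $\mathcal Y_\beta$ to its braided operator $\mathcal Y_{B_\pm\beta}$. The key point is that $\mathcal Y_\beta$ and $\mathcal Y_{\beta'}$ each carry $W_k$ as their \emph{source} space, and skew-symmetry converts $W_k$ into a \emph{charge} space at the cost of rotating the position variable of these operators by $e^{\pm i\pi}$ and inserting a translation factor $e^{xL_{-1}}$. After this substitution, the two sides of the fusion relation become the two sides of a braid relation between an operator of charge space $W_i$ and one of charge space $W_k$, and reading off the coefficients yields the claim.

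Concretely, I would fix $z_i,z_j\in\mathbb C^\times$ with $0<|z_i-z_j|<|z_j|<|z_i|$ and arguments as in the definition of the fusion matrix, apply the fusion relation to $w^{(k)}\in W_k$, and proceed as follows. On the left, write $\mathcal Y_\beta(w^{(j)},z_j)w^{(k)}=e^{-\widetilde z_j L_{-1}}\mathcal Y_{B_+\beta}(w^{(k)},\widetilde z_j)w^{(j)}$ with $\widetilde z_j=e^{\mp i\pi}z_j$, and commute $\mathcal Y_\alpha(w^{(i)},z_i)$ past $e^{-\widetilde z_j L_{-1}}$ using the $L_{-1}$-conjugation formula for intertwining operators; since $z_i+\widetilde z_j=z_i-z_j$, the left side becomes $e^{-\widetilde z_j L_{-1}}\mathcal Y_\alpha(w^{(i)},z_i-z_j)\mathcal Y_{B_+\beta}(w^{(k)},\widetilde z_j)w^{(j)}$. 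On the right, apply the same skew-symmetry to $\mathcal Y_{\beta'}$, whose source space is also $W_k$, to get $e^{-\widetilde z_j L_{-1}}\mathcal Y_{B_+\beta'}(w^{(k)},\widetilde z_j)\mathcal Y_{\alpha'}(w^{(i)},z_i-z_j)w^{(j)}$. Cancelling the common factor $e^{-\widetilde z_j L_{-1}}$ and the arbitrary $w^{(j)}$ gives
\[
\mathcal Y_\alpha(w^{(i)},z_i-z_j)\,\mathcal Y_{B_+\beta}(w^{(k)},\widetilde z_j)=\sum_{\alpha',\beta'}F^{\beta'\alpha'}_{\alpha\beta}\,\mathcal Y_{B_+\beta'}(w^{(k)},\widetilde z_j)\,\mathcal Y_{\alpha'}(w^{(i)},z_i-z_j),
\]
which is a braid relation of the form \eqref{eq83} for the pair of charge spaces $W_i$ and $W_k$. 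Since $\arg(z_i-z_j)\approx\arg z_i\approx\arg z_j$ while $\arg\widetilde z_j=\arg z_j\pm\pi$, this configuration can be deformed, without collisions and with continuously varying arguments, to one on $S^1$ with the circular order required by the definition of $(B_+)_{ik}$; comparing coefficients (using the linear independence of the products $\mathcal Y_{B_+\beta'}(w^{(k)},\cdot)\mathcal Y_{\alpha'}(w^{(i)},\cdot)$) gives $F^{\beta'\alpha'}_{\alpha\beta}=(B_+)^{B_+\beta',\alpha'}_{\alpha,B_+\beta}$. The identity with $B_-$ follows by running the same argument with the opposite branch $\widetilde z_j=e^{\pm i\pi}z_j$, which exchanges the circular order of the two positions and replaces $(B_+)$ by $(B_-)$.

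The main obstacle is the bookkeeping of arguments and branches: one must check that the translation factors $e^{-\widetilde z_j L_{-1}}$ produced on the two sides are literally identical (so that they cancel), that $z_i+\widetilde z_j$ and $z_i-z_j$ agree with compatible chosen arguments, and, most delicately, that the rotation introduced by skew-symmetry lands on $B_+$ rather than $B_-$ and with the correct assignment of charge/source labels to the indices of the braid matrix. The individual steps are routine manipulations of intertwining operators from part~I; it is the sign and branch tracking, and the verification that the relevant point configurations lie in a common connected region, where care is needed.
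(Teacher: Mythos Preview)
Your approach is correct in spirit and genuinely different from the paper's, but there is one convergence point you should flag more explicitly than ``bookkeeping of arguments and branches.'' After skew-symmetry and $L_{-1}$-conjugation on the left side, you write the product $\mathcal Y_\alpha(w^{(i)},z_i-z_j)\,\mathcal Y_{B_+\beta}(w^{(k)},\widetilde z_j)w^{(j)}$; but in the fusion domain you start from, one has $|z_i-z_j|<|z_j|=|\widetilde z_j|$, so this product does \emph{not} converge there as an honest product of intertwining operators. (The $L_{-1}$-conjugation identity, as stated in part~I, applies to $\mathcal Y_\alpha(\cdot,z)e^{yL_{-1}}$ acting on a finite-energy vector, not on $\mathcal Y_{B_+\beta}(w^{(k)},\widetilde z_j)w^{(j)}\in\widehat W$.) The fix is straightforward: the right side $\sum F\,\mathcal Y_{B_+\beta'}\mathcal Y_{\alpha'}$ \emph{does} converge in that domain, so you have an identity between the original left-hand product $\mathcal Y_\alpha(w^{(i)},z_i)\mathcal Y_\beta(w^{(j)},z_j)w^{(k)}$ and $e^{-\widetilde z_j L_{-1}}\sum F\,\mathcal Y_{B_+\beta'}\mathcal Y_{\alpha'}$; then analytically continue in $z_i$ (keeping $|z_i|>|z_j|$) until $|z_i-z_j|>|z_j|$, where the left-hand side can legitimately be rewritten as $e^{-\widetilde z_j L_{-1}}\mathcal Y_\alpha\mathcal Y_{B_+\beta}$, and only then cancel and read off the braid matrix. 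Your deformation-to-$S^1$ remark covers this, but the specific obstruction is that the two ``braid'' products never converge simultaneously, not merely a branch issue.

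For comparison, the paper sidesteps this by inserting an auxiliary creation operator $\mathcal Y^k_{k0}(w^{(k)},z_k)$ at a third point $z_k\in S^1$ and working entirely with products on the circle: Corollary~2.18 (braiding with creation operators) replaces your skew-symmetry step, Theorem~\ref{lb16} (braiding of generalized intertwining operators) handles the fused side, and Proposition~2.17 (fusion with creation operators) unfuses at the end. The advantage is that every step invokes a result whose convergence has already been established in part~I or in chapter~\ref{lb112}, so no new analytic-continuation argument is needed. Your route is more elementary and avoids the three-point machinery, at the cost of having to justify one analytic continuation by hand.
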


\begin{proof}
Choose distinct $z_i,z_j,z_k\in S^1$ with arguments $\arg z_k<\arg z_j<\arg z_i<\arg z_k+2\pi$, and assume that $0<|z_i-z_j|<1$. Choose $w^{(i)}\in W_i,w^{(j)}\in W_j,w^{(k)}\in W_k$. By corollary 2.18,  we have
\begin{align}
&\mathcal Y_\alpha(w^{(i)},z_i)\mathcal Y_\beta(w^{(j)},z_j)\mathcal Y^k_{k0}(w^{(k)},z_k)\nonumber\\
=&\mathcal Y_\alpha(w^{(i)},z_i)\mathcal Y_{B_+\beta}(w^{(k)},z_k)\mathcal Y^j_{j0}(w^{(j)},z_j)\nonumber\\
=&\sum_{\substack{\alpha'\in\Theta^*_{i*}\\\beta'\in\Theta^*_{*k}}}(B_+)^{B_+\beta',\alpha'}_{\alpha,B_+\beta}\mathcal Y_{B_+\beta'}(w^{(k)},z_k)\mathcal Y_{\alpha'}(w^{(i)},z_i)\mathcal Y^j_{j0}(w^{(j)},z_j).\label{eq335}
\end{align}
On the other hand, by corollary 2.18 and theorem \ref{lb16},
\begin{align}
&\mathcal Y_\alpha(w^{(i)},z_i)\mathcal Y_\beta(w^{(j)},z_j)\mathcal Y^k_{k0}(w^{(k)},z_k)\nonumber\\
=&\sum_{s\in\mathcal E}\sum_{\substack{\alpha'\in\Theta^s_{ij}\\\beta'\in\Theta^t_{sk}}}F_{\alpha\beta}^{\beta'\alpha'}\mathcal Y_{\beta'}\big(\mathcal Y_{\alpha'}(w^{(i)},z_i-z_j)w^{(j)},z_j \big)\mathcal Y^k_{k0}(w^{(k)},z_k)\label{eqb7}\\
=&\sum_{s\in\mathcal E}\sum_{\substack{\alpha'\in\Theta^s_{ij}\\\beta'\in\Theta^t_{sk}}}F_{\alpha\beta}^{\beta'\alpha'}\mathcal Y_{B_+\beta'}(w^{(k)},z_k)\mathcal Y^s_{s0}\big(\mathcal Y_{\alpha'}(w^{(i)},z_i-z_j)w^{(j)},z_j \big),\label{eq334}
\end{align}
where \eqref{eqb7} and \eqref{eq334} are understood as products of two generalized intertwining operators (see the beginning of chapter \ref{lb112}).  By proposition 2.17,  \eqref{eq334} equals
\begin{align}
\sum_{s\in\mathcal E}\sum_{\substack{\alpha'\in\Theta^s_{ij}\\\beta'\in\Theta^t_{sk}}}F_{\alpha\beta}^{\beta'\alpha'}\mathcal Y_{B_+\beta'}(w^{(k)},z_k)\mathcal Y_{\alpha'}(w^{(i)},z_i)\mathcal Y^j_{j0}(w^{(j)},z_j).
\end{align}
Comparing this result with \eqref{eq335}, we see immediately that $F_{\alpha\beta}^{\beta'\alpha'}=(B_+)^{B_+\beta',\alpha'}_{\alpha,B_+\beta}$. If we assume at the beginning that $\arg z_i<\arg z_j<\arg z_k<\arg z_i+2\pi$, then we obtain $F_{\alpha\beta}^{\beta'\alpha'}=(B_-)^{B_-\beta',\alpha'}_{\alpha,B_-\beta}$.
\end{proof}

\begin{pp}\label{lb34}
Let $W_i,W_j$ be unitary $V$-modules in $\mathcal F^\boxtimes$. For each $k\in\mathcal E\cap\mathcal F^\boxtimes$, we let $\{\mathcal Y_{\alpha}:\alpha\in\Theta^k_{ij}\}$ be a set of orthonormal basis of $\mathcal V{k\choose i~j}$ under the inner product $\Lambda$. Then $B_+\Theta^k_{ij}=\{\mathcal Y_{B_+\alpha}:\alpha\in\Theta^k_{ij}\}$ and $B_-\Theta^k_{ij}=\{\mathcal Y_{B_-\alpha}:\alpha\in\Theta^k_{ij}\}$ are orthonormal bases of $\mathcal V{k\choose j~i}$.
\end{pp}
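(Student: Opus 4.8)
The plan is to show that the braiding operators $B_\pm$ act as antiunitary-then-transpose-type isometries on the intertwiner spaces, which reduces to comparing the Hilbert-space structures on $\mathcal V{k\choose i~j}$ and $\mathcal V{k\choose j~i}$ coming from the transport matrices. First I would recall that the inner product $\Lambda$ on $\mathcal V{k\choose i~j}^*$ (and hence, by the anti-unitary identification, on $\mathcal V{k\choose i~j}$) is characterized by the transport formula \eqref{eq62}, and likewise $\Lambda$ on $\mathcal V{k\choose j~i}$ is characterized by the analogous transport formula with the roles of $i,j$ exchanged. The key observation is that the braiding $B_\pm$ intertwines these two transport formulas: applying $B_\pm$ to the intertwining operators on the right-hand side of the $\mathcal V{k\choose i~j}$-transport formula, and using the braid relations together with the fact that $B_\pm$ commutes suitably with adjoints (as recorded in the notation list, $\mathcal Y_{B_\pm\alpha}^* $ relates to $\mathcal Y_{B_\mp\alpha^*}$ up to the standard phase), yields the $\mathcal V{k\choose j~i}$-transport formula with transport matrix transformed in a way that preserves positivity and, on orthonormal bases, is the identity.

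Concretely, the key steps in order would be: (1) fix an orthonormal basis $\Theta^k_{ij}$ of $\mathcal V{k\choose i~j}$ so that the transport matrix $\Lambda^{\alpha\beta}=\delta_{\alpha\beta}$, giving the normalized transport formula \eqref{eqb12}; (2) apply $B_\pm$ throughout and use corollary 2.18 (relating $\mathcal Y^j_{j0},\mathcal Y^0_{\overline ii}$ and their braided versions) plus proposition 2.3 to rewrite the resulting identity as the transport formula for $\mathcal V{k\choose j~i}$ with basis $B_\pm\Theta^k_{ij}$; (3) read off that the transport matrix of $V$ for $\mathcal V{k\choose j~i}^*$ in the basis dual to $B_\pm\Theta^k_{ij}$ is again the identity matrix, which by the defining property of $\Lambda$ on $\mathcal V{k\choose j~i}$ (theorem \ref{lb110} guarantees it is a genuine inner product) forces $B_\pm\Theta^k_{ij}$ to be orthonormal. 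Alternatively, and perhaps more cleanly, I would invoke proposition \ref{lb25}: the braid matrix relating the $(i,j)$-ordering to the $(j,i)$-ordering has entries $\overline{(B_\pm)^{\cdots}_{\cdots}}=(B_\mp)^{\cdots}_{\cdots}$, so that $B_\pm:\mathcal V{k\choose i~j}\to\mathcal V{k\choose j~i}$ is, up to the conjugate-linear identification, the adjoint of its inverse, hence an isometry; combined with the already-established positive-definiteness this gives the claim directly.

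The main obstacle I expect is bookkeeping of arguments and phases: the braiding operators $B_\pm$ depend on a choice of branch, and matching the argument conventions in \eqref{eq62}, \eqref{eqb12}, corollary 2.18, and proposition \ref{lb111}-(3) so that the transport matrix really comes out to be the \emph{same} identity matrix (rather than a diagonal matrix of phases) requires care. A secondary subtlety is that $i,j$ here are allowed to be reducible elements of $\mathcal F^\boxtimes$, so one should either note that the statement is about fixed $k\in\mathcal E\cap\mathcal F^\boxtimes$ and the argument is insensitive to reducibility of $W_i,W_j$, or reduce to the irreducible case as in the proof of theorem \ref{lb110}. Once the phase conventions are pinned down, the proof is short: it is essentially the observation that $\Lambda$ is defined symmetrically enough that $B_\pm$ transports orthonormal bases to orthonormal bases, which is exactly what corollary \ref{lbb3} already exploited at the level of braid matrices.
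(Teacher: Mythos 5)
Your proposal circles the right ideas—transport formulas, braid/adjoint interactions, the conventions of proposition \ref{lb111}-(3)—but both of your sketched routes have a gap at the crucial step, and neither quite matches the paper's argument.

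Approach A (``apply $B_\pm$ throughout'') does not by itself convert the $(i,j)$-transport formula into the $(j,i)$-transport formula. The $(i,j)$ version (proposition \ref{lb111}-(3)) has $\mathcal Y^j_{j0}(w^{(j)},z_0)\mathcal Y^0_{\overline ii}(\overline{w^{(i)}},z_2)$ on the left, whereas the $(j,i)$ version needs $\mathcal Y^i_{i0}(w^{(i)},z_0)\mathcal Y^0_{\overline jj}(\overline{w^{(j)}},z_2)$. Braiding only permutes the order of the two fixed annihilation/creation operators; it does not turn $\mathcal Y^j_{j0}$ into $\mathcal Y^0_{\overline jj}$ or $\mathcal Y^0_{\overline ii}$ into $\mathcal Y^i_{i0}$. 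The mechanism that actually performs that swap in the paper is taking the formal adjoint $\dagger$ of both sides of \eqref{eq330}: then $(\mathcal Y^j_{j0})^\dagger=\mathcal Y^0_{\overline jj}$, $(\mathcal Y^0_{\overline ii})^\dagger=\mathcal Y^i_{i0}$, the order of factors reverses, and on the right-hand side equation (1.34) converts $\mathcal Y_{B_\mp\alpha}^\dagger$ and $\mathcal Y_{\alpha^*}^\dagger$ into $\mathcal Y_{(B_\mp\alpha)^*}$ and $\mathcal Y_\alpha$, after which reindexing $\beta=B_\mp\alpha$ exhibits exactly the $(j,i)$-transport formula with basis $B_\mp\Theta^*_{ij}$. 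You gesture at this with ``$B_\pm$ commutes suitably with adjoints,'' but you never name the step of taking $\dagger$ of the whole transport identity, which is the entire content of the proof; without it the argument does not close.

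Approach B conflates two different objects. Proposition \ref{lb25} and corollary \ref{lbb3} concern the \emph{braid matrix} $(B_\pm)_{ij}$ expressing how a product $\mathcal Y_\alpha\mathcal Y_\beta$ (with $\alpha\in\Theta^*_{i*}$, $\beta\in\Theta^*_{j*}$) is rewritten after exchange; that matrix relates bases of composite intertwiner spaces, not the single operator $B_\pm:\mathcal V{k\choose i~j}\to\mathcal V{k\choose j~i}$. The sentence ``$B_\pm$ is the adjoint of its inverse, hence an isometry'' is precisely the claim to be proved, and it does not follow directly from \ref{lb25}. One can in principle extract the isometry of $B_\pm$ by restricting the braid matrix to the rows and columns where $\mathcal Y_\beta=\mathcal Y^j_{j0}$ and $\mathcal Y_{\alpha'}=\mathcal Y^i_{i0}$ and using corollary 2.18 to identify the resulting entries with those of $B_\pm$ on $\mathcal V{k\choose i~j}$, but this requires a separate argument that you have not supplied; it is also logically more roundabout than the dagger trick, since the unitarity of the braid matrix (corollary \ref{lbb3}) is itself proved by manipulations quite close in spirit to the one you need.

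In short: the missing idea is to take the formal adjoint of transport formula version 3. Once that is in place, the orthonormality of $B_\mp\Theta^k_{ij}$ falls out in two lines, with no need to track any residual diagonal of phases (the phase factors from (1.34) for $\mathcal Y^j_{j0}$, $\mathcal Y^0_{\overline ii}$ cancel against those on the right because the arguments were chosen consistently on $S^1$).
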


\begin{proof}
Choose distinct $z_i,z_j\in S^1$ with arguments satisfying $\arg z_i<\arg z_j<\arg z_i+2\pi$. By proposition \ref{lb111}-(3), for any $w^{(i)}\in W_i,w^{(j)}\in W_j$, we have
\begin{align}
\mathcal Y^j_{j0}(w^{(j)},z_j)\mathcal Y^0_{\overline ii}(\overline{w^{(i)}},z_i)=\sum_{\alpha\in\Theta^*_{ij}}\mathcal Y_{\alpha^*}(\overline{w^{(i)}},z_i)\mathcal Y_{B_-\alpha}(w^{(j)},z_j).
\end{align}
Take the formal adjoint of both sides, we obtain
\begin{align}
\mathcal Y^0_{\overline ii}(\overline{w^{(i)}},z_i)^\dagger\mathcal Y^j_{j0}(w^{(j)},z_j)^\dagger=\sum_{\alpha\in\Theta^*_{ij}}\mathcal Y_{B_-\alpha}(w^{(j)},z_j)^\dagger\mathcal Y_{\alpha^*}(\overline{w^{(i)}},z_i)^\dagger.\label{eq336}
\end{align}
Recall that $(\mathcal Y^j_{j0})^\dagger=\mathcal Y^0_{\overline jj}$ and $(\mathcal Y^0_{\overline ii})^\dagger=\mathcal Y^i_{i0}$. Thus, by equation (1.34),  equation \eqref{eq336} shows that
\begin{align}
&\mathcal Y^i_{i0}(w^{(i)},z_i)\mathcal Y^0_{\overline jj}(\overline{w^{(j)}},z_j)=\sum_{\alpha\in\Theta^*_{ij}}\mathcal Y_{(B_-\alpha)^*}(\overline{w^{(j)}},z_j)\mathcal Y_{\alpha}(w^{(i)},z_i)\nonumber\\
=&\sum_{\beta\in B_-\Theta^*_{ij}}\mathcal Y_{\beta^*}(\overline{w^{(j)}},z_j)\mathcal Y_{B_+\beta}(w^{(i)},z_i),
\end{align}
which, by proposition \ref{lb111}-(3), shows that $B_-\Theta^k_{ij}$ is an orthonormal basis of $\mathcal V{k\choose i~j}$ for any $k\in\mathcal E$. The other case is treated in a similar way.
\end{proof}

\begin{co}\label{lb37}
For any $W_i,W_j,W_k$ in $\mathcal F^\boxtimes$ and $W_t$ in $\mathcal E$, the fusion matrix $\{F^{\beta'\alpha'}_{\alpha\beta} \}_{\alpha\in\Theta^t_{i*},\beta\in\Theta^*_{jk}}^{\alpha'\in\Theta^*_{ij},\beta'\in\Theta^t_{*k}}$ is unitary.
\end{co}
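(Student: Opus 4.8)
The plan is to reduce the statement to the unitarity of braid matrices (Corollary \ref{lbb3}) by means of the identity expressing a fusion matrix through a braid matrix, Proposition \ref{lb113}.

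As usual one reduces to the essential case: if $W_t\notin\mathcal F^\boxtimes$ then the fusion matrix $F=\{F^{\beta'\alpha'}_{\alpha\beta}\}$ attached to $W_i,W_j,W_k,W_t$ is empty (hence trivially unitary), and if $W_j=\bigoplus_a W_{j_a}$ with the $W_{j_a}$ irreducible and in $\mathcal F^\boxtimes$, then $F$ is the direct sum of the fusion matrices for $W_i,W_{j_a},W_k,W_t$, exactly as the transport matrix decomposed at the beginning of the proof of Theorem \ref{lb110}; so we may assume $W_j,W_t\in\mathcal E\cap\mathcal F^\boxtimes$. As throughout this chapter, all bases of intertwining operator spaces are taken orthonormal for $\Lambda$.

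By Proposition \ref{lb113}, $F^{\beta'\alpha'}_{\alpha\beta}=(B_+)^{B_+\beta',\alpha'}_{\alpha,B_+\beta}$, the right-hand side being an entry of the braid matrix $(B_+)_{ik}$ for the charge spaces $W_i,W_k$, taken with respect to orthonormal bases $\Theta^*_{i*},\Theta^*_{k*}$ in which the relevant operators $\mathcal Y_{B_+\beta}$ and $\mathcal Y_{B_+\beta'}$ are basis elements (their orthonormality is guaranteed by Proposition \ref{lb34}). From the braid relation \eqref{eq83} one reads off that an entry of $(B_+)_{ik}$ vanishes unless its input and output operators share the same ``external'' source and target, so $(B_+)_{ik}$ is a direct sum of square blocks indexed by these external data --- squareness of the block attached to source $W_j$ and target $W_t$ amounts to the identity $\sum_s N^t_{is}N^s_{jk}=\sum_s N^s_{ij}N^t_{sk}=\dim\Hom_V(W_i\boxtimes W_j\boxtimes W_k,W_t)$. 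By Corollary \ref{lbb3} the matrix $(B_+)_{ik}$ is unitary, hence each such block is unitary. Finally, as $(\alpha,\beta)$ runs over the composable pairs in $\Theta^t_{i*}\times\Theta^*_{jk}$ and $(\alpha',\beta')$ over those in $\Theta^*_{ij}\times\Theta^t_{*k}$, the pairs $(\alpha,B_+\beta)$ and $(B_+\beta',\alpha')$ run bijectively over the column, resp.\ row, indices of the block of $(B_+)_{ik}$ attached to source $W_j$ and target $W_t$; thus $F$ equals that unitary block with its rows and columns relabeled by bijections of index sets, and is therefore unitary.

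The only real work is the combinatorial bookkeeping in the last step: verifying that the nonvanishing pattern of $(B_+)_{ik}$ carves out exactly the square block whose row and column labels correspond, via $B_+$, to those of $F$. There is no analytic or categorical obstruction --- the corollary is a direct assembly of Corollary \ref{lbb3}, Proposition \ref{lb34}, and Proposition \ref{lb113}.
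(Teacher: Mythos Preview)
Your proof is correct and follows essentially the same approach as the paper: reduce to irreducible $W_j$ (and $W_t\in\mathcal E\cap\mathcal F^\boxtimes$), then combine Proposition~\ref{lb113}, Proposition~\ref{lb34}, and Corollary~\ref{lbb3}. You supply more explicit detail on the block-structure bookkeeping than the paper does, but the strategy is identical.
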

\begin{proof}
If $W_j$ is irreducible, then  $W_j$ is unitarily equivalent to a unitary $V$-module in $\mathcal E\cap\mathcal F^\boxtimes$. The unitarity of the fusion matrix follows then from propositions \ref{lb113}, \ref{lb34}, and the unitarity of braid matrices proved in the last section. In general,  the fusion matrix is diagonalized according to the orthogonal decomposition of $W_j$ into irreducible submodules. Thus the unitarity can be proved easily.
\end{proof}

\subsection{Unitarity of the ribbon fusion categories}

In this section, we prove that $\Rep^\uni_{\mathcal F^\boxtimes}(V)$ is unitary when the unitary structure on $\mathcal F^\boxtimes$ is defined by $\Lambda$. By corollary \ref{lb37}, the associators are unitary. By proposition \ref{lb34}, the braid operators are unitary. That $\lambda_i:V\boxtimes W_i\rightarrow W_i$ and $\rho_i:W_i\boxtimes V\rightarrow W_i$ are unitary follows from equations \eqref{eq323} and \eqref{eq324}.

Choose $W_{i_1},W_{i_2},W_{j_1},W_{j_2}$ in $\mathcal F^\boxtimes$. We show, for any $F\in\Hom_V(W_{i_1},W_{i_2}),G\in\Hom_V(W_{j_1},W_{j_2})$, that
\begin{align}
(F\otimes G)^*=F^*\otimes G^*.\label{eq337}
\end{align}
Consider  direct sum modules $W_i=W_{i_1}\oplus^\perp W_{i_2},W_j=W_{j_1}\oplus^\perp W_{j_2}$. For each $k\in\mathcal E$, it is easy to see that $\mathcal V{k\choose i~j}$ has the natural orthogonal decomposition
\begin{align}
\mathcal V{k\choose i~j}=\bigoplus^\perp_{a,b=1,2}\mathcal V{k\choose i_a~j_b},
\end{align}
which induces the natural decomposition
\begin{align}
W_i\boxtimes W_j=\bigoplus^\perp_{a,b=1,2}W_{i_a}\boxtimes W_{j_b}.
\end{align}
Therefore, if we regard $F,G$  as endomorphisms of the modules $W_i,W_j$ respectively, then  $F\otimes G$ and $F^*\otimes G^*$ can be regarded as endomorphisms of $W_i\boxtimes W_j$.  Thus, it suffices to prove equation \eqref{eq337} for any $F\in\End_V(W_i),G\in\End_V(W_j)$.

Since $\End_V(W_i)$ and $\End_V(W_j)$ are $C^*$-algebras (see theorem 2.21), they are spanned by unitary elements inside them. Therefore, by linearity, it suffices to prove \eqref{eq337} when  $F\in\End_V(W_i),G\in\End_V(W_j)$ are unitary operators. By equation (2.56), it is easy to see that  $F\otimes G$ is unitary. Hence we have
\begin{align}
(F^*\otimes G^*)(F\otimes G)=F^*F\otimes G^*G=\id_i\otimes\id_j=\id_{ij},
\end{align}
which implies that $F^*\otimes G^*=(F\otimes G)^{-1}=(F\otimes G)^*$. This proves relation \eqref{eq337}.\\

For each $W_i$ in $\mathcal F^\boxtimes$, the twist  $\vartheta_i=e^{2i\pi L_0}$ is clearly unitary. Hence, in order to prove the unitarity of $\Rep^\uni_{\mathcal F^\boxtimes}(V)$, it remains to find $\ev_i,\coev_i$, such that equations (2.69) and (2.70) hold. 

To prove this, we let $\ev_{i,\overline i}\in\Hom_V(W_i\boxtimes W_{\overline i},V)$ be the homomorphism represented by the intertwining operator $\mathcal Y^0_{i\overline i}$, and let $\coev_{i,\overline i}=\ev_{i\overline i}^*$. Since $i$ and $\overline{\overline i}$ are identified, we can define $\ev_{\overline i,i}$ and $\coev_{\overline i,i}$  in a similar way. Set $\ev_i=\ev_{\overline i, i},\coev_i=\coev_{i,\overline i}$. If we can verify, for all $W_i$ in $\mathcal F^\boxtimes$, the following relations:
\begin{gather}
(\id_i\otimes\ev_{\overline i,i})\circ(\coev_{i,\overline i}\otimes \id_i)=\id_i,\label{eq147}\\
(\ev_{i,\overline i}\otimes \id_i)\circ(\id_i\otimes\coev_{\overline i, i})=\id_i,\label{eq148}\\
\ev_{i,\overline i}=\ev_{\overline i, i}\circ \sigma_{i,\overline i}\circ(\vartheta_i\otimes \id_{\overline i}),\label{eq149}\\
\coev_{i,\overline i}=(\id_i\otimes\vartheta_{\overline i}^{-1})\circ \sigma_{i,\overline i}^{-1}\circ\coev_{\overline i,i},\label{eq150}
\end{gather}
then equations (2.64), (2.65), (2.69), and (2.70) are true for all $W_i$, and our modular tensor category is unitary.

To begin with, we define the positive number $d_i$ to be the norm square of the vector $\mathcal Y^0_{i\overline i}$ inside $\mathcal V{0\choose i~\overline i}$, i.e.,
\begin{gather}
d_i=\|\mathcal Y^0_{i\overline i}\|^2.
\end{gather}
By propositions 1.14 and \ref{lb34},  $d_{\overline i}=d_i$. The following property will indicate that $d_i$ is the quantum dimension of $W_i$.
\begin{pp}\label{lb115}
\begin{equation}
\ev_{i,\overline i}\circ\coev_{i,\overline i}=d_i.
\end{equation}
\end{pp}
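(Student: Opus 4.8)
The plan is to unwind the definitions of $\ev_{i,\overline i}$ and $\coev_{i,\overline i}$ in terms of the intertwining operators $\mathcal Y^0_{i\overline i}$ and its adjoint, and then compute the scalar $\ev_{i,\overline i}\circ\coev_{i,\overline i}\in\End_V(V)=\mathbb C\cdot\id_V$ by pairing against the vacuum $\Omega$. Since $\coev_{i,\overline i}=\ev_{i,\overline i}^*$, this scalar is a nonnegative number, so it suffices to identify it. First I would recall that $\mathcal Y^0_{i\overline i}$ represents the morphism $\ev_{i,\overline i}$, and that $\mathcal Y^i_{i0}$ (the creation operator, equal to $B_\pm Y_i$) is essentially the ``opposite'' pairing, with $\mathcal Y^0_{\overline ii}=(\mathcal Y^i_{i0})^\dagger$. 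The key is that the norm-square $d_i=\|\mathcal Y^0_{i\overline i}\|^2$ is computed precisely through the adjoint relation (1.34)/equation \eqref{eq291}, which controls how the formal adjoint of an evaluation-type intertwining operator relates to a coevaluation-type one.

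The main computation: evaluate $\langle\ev_{i,\overline i}\circ\coev_{i,\overline i}\,\Omega\mid\Omega\rangle$. Since $\coev_{i,\overline i}\Omega\in W_i\boxtimes W_{\overline i}$, and $\ev_{i,\overline i}$ is represented by $\mathcal Y^0_{i\overline i}$, this pairing can be rewritten (using that $\coev_{i,\overline i}=\ev_{i,\overline i}^*$ and the description of the tensor product $W_i\boxtimes W_{\overline i}=\bigoplus_k\mathcal V{k\choose i\,\overline i}^*\otimes W_k$ together with the form $\Lambda$) as the squared norm of the image of $\ev_{i,\overline i}$ under the natural identification $\Hom_V(W_i\boxtimes W_{\overline i},V)\cong\mathcal V{0\choose i\,\overline i}^*$, measured in the inner product dual to $\Lambda$ — but by the way we have chosen the unitary structure on $\mathcal F^\boxtimes$ (namely so that $\Lambda$ defines it), this norm-square is exactly $\|\mathcal Y^0_{i\overline i}\|^2=d_i$. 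So I would set up the chain: $\ev_{i,\overline i}\circ\coev_{i,\overline i}=\ev_{i,\overline i}\circ\ev_{i,\overline i}^*$, recognize this as $\langle \ev_{i,\overline i},\ev_{i,\overline i}\rangle_\Lambda\cdot\id_V$ via the standard fact that for $R\in\Hom_V(X,V)$ with $V$ simple, $R\circ R^*=\|R\|^2\id_V$ where $\|R\|$ is the Hilbert–Schmidt-type norm induced by the unitary structure, and then observe $\|\ev_{i,\overline i}\|^2=\|\mathcal Y^0_{i\overline i}\|^2=d_i$ by the very definition of the unitary structure on intertwining-operator spaces.

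Concretely I would argue as follows. Because $W_0=V$ is irreducible, $\End_V(V)=\mathbb C\id_V$, so $\ev_{i,\overline i}\circ\coev_{i,\overline i}=c\,\id_V$ for a unique $c\geq 0$ (nonnegativity since it is $T T^*$ for $T=\ev_{i,\overline i}$ a morphism between Hilbert-space objects). Taking the trace of the inner product: $c=\langle \ev_{i,\overline i}\circ\coev_{i,\overline i}\,\Omega\mid\Omega\rangle=\langle\coev_{i,\overline i}\,\Omega\mid\coev_{i,\overline i}\,\Omega\rangle=\|\coev_{i,\overline i}\,\Omega\|^2$. Now $\coev_{i,\overline i}\,\Omega$ lies in $\mathcal V{0\choose i\,\overline i}^*\otimes V$, and under the identification $\Hom_V(V,W_i\boxtimes W_{\overline i})\cong\mathcal V{0\choose i\,\overline i}^*$ (sending $R\mapsto R\Omega$, which is isometric up to the fixed normalization $\|\Omega\|=1$) the element $\coev_{i,\overline i}=\ev_{i,\overline i}^*$ corresponds to the $\Lambda$-dual of the functional on $\mathcal V{0\choose i\,\overline i}$ dual to $\mathcal Y^0_{i\overline i}$; since $\ev_{i,\overline i}$ is the morphism represented by $\mathcal Y^0_{i\overline i}$ itself, its image in $\mathcal V{0\choose i\,\overline i}^*$ is $\widecheck{\mathcal Y}{}^{0}_{i\overline i}$ up to the trivialization, and the adjoint passes to $\mathcal V{0\choose i\,\overline i}$ via the antiunitary induced by $\Lambda$, giving $\|\coev_{i,\overline i}\,\Omega\|^2=\|\mathcal Y^0_{i\overline i}\|_\Lambda^2=d_i$. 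Hence $c=d_i$, which is the claim.

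\textbf{Main obstacle.} The delicate point is the bookkeeping of normalizations: one must be careful that the identification $\Hom_V(V, W_i\boxtimes W_{\overline i})\cong\mathcal V{0\choose i\,\overline i}^*$ is compatible with the unitary structures on both sides (i.e. that evaluating a morphism at $\Omega$ is isometric), and that under the formal-adjoint/contragredient operations the element $\mathcal Y^0_{i\overline i}\in\mathcal V{0\choose i\,\overline i}$ and the morphism $\ev_{i,\overline i}$ correspond correctly — in particular that no stray factor like $e^{-i\pi\Delta}$ or $z^{2\Delta}$ from \eqref{eq291} survives when everything is evaluated at $z\in S^1$ with the standard argument. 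I expect this to come down to invoking the adjoint relation for intertwining operators together with the definition of the unitary structure on $\mathcal V{k\choose i~j}$ given at the start of Chapter 7 (that a basis is orthonormal iff its $\Lambda$-dual basis is), so that $\|\ev_{i,\overline i}\|^2=\|\mathcal Y^0_{i\overline i}\|^2$ holds on the nose; the rest is the simple linear-algebra fact that $TT^*=\|T\|^2\id_V$ for $T$ a morphism onto a simple object.
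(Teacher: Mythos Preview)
Your proposal is correct and follows essentially the same approach as the paper: both compute $\ev_{i,\overline i}\circ\ev_{i,\overline i}^*$ by working in the summand $\mathcal V{0\choose i~\overline i}^*\otimes V\subset W_i\boxtimes W_{\overline i}$ and using the $\Lambda$-inner product. The paper does this concretely---it picks the unit vector $d_i^{1/2}\widecheck{\mathcal Y}^0_{i\overline i}$ and checks directly that $\ev$ and $\coev$ act by $d_i^{1/2}$ on it, then handles the non-irreducible case separately via orthogonal decomposition---whereas your abstract formulation (the linear-algebra identity $TT^*=\|\mathcal Y_T\|^2_\Lambda\cdot\id_V$ for $T\leftrightarrow\mathcal Y_T$ under $\Hom_V(W_i\boxtimes W_{\overline i},V)\cong\mathcal V{0\choose i~\overline i}$) handles all $W_i$ at once; the ``delicate normalization'' you flag as an obstacle in fact works out cleanly by taking an orthonormal basis of $\mathcal V{0\choose i~\overline i}$ and computing $T^*\Omega$ directly.
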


\begin{proof}
First we assume that $W_i$ is irreducible. Then  $\{\mathcal Y^0_{i\overline i}\}$ is a basis of $\mathcal V{0\choose i~\overline i}$. Let $\{\widecheck{\mathcal Y}^0_{i\overline i}\}$ be its dual basis. Then $\widecheck{\mathcal Y}^{\alpha}=d_i^{\frac 1 2}\widecheck{\mathcal Y}^0_{i\overline i}$ has unit length. Now, for any $v\in V$, $\ev_{i,\overline i}$ maps $\widecheck{\mathcal Y}^{\alpha}\otimes v\in W_i\boxtimes W_{\overline i}$ to $\langle\widecheck{\mathcal Y}^{\alpha},\mathcal Y^0_{i\overline i}\rangle v=d_i^{\frac 1 2}\langle\widecheck{\mathcal Y}^0_{i\overline i},\mathcal Y^0_{i\overline i}\rangle v=d_i^{\frac 1 2}v$. It follows that its adjoint $\coev_{i,\overline i}$ maps each $v\in V$ to $d_i^{\frac 1 2}\widecheck{\mathcal Y}^{\alpha}\otimes v$. Hence $\ev_{i,\overline i}\circ\coev_{i,\overline i}(v)=d_iv$.

In general, $W_i$ has decomposition $W_i=\bigoplus^\perp_{a}W_{i_a}$, where each $W_{i_a}$ is irreducible.  Let $p_a$ be the projection of $W_i$ on $W_{i_a}$. Then the projection $\overline {p_a} $ of $W_{\overline i}$ on $W_{\overline{i_a}}$ satisfies $\overline {p_a}\overline{w^{(i)}}=\overline {p_aw^{(i)}}$ ($w^{(i)}\in W_i$). It is easy to check that 
\begin{gather}
\ev_{i,\overline i}=\sum_a\ev_{i,\overline i}\circ(p_a\otimes \overline{p_a})=\sum_a\ev_{i_a,\overline{i_a}},\label{eq338}\\
\coev_{i,\overline i}=\sum_a (p_a\otimes \overline{p_a})\circ\coev_{i,\overline i}=\sum_a\coev_{i_a,\overline{i_a}},\label{eq339}
\end{gather}
and $d_i=\sum_a d_{i_a}$. The general case can be proved using these relations.
\end{proof}

Now we are ready to prove equations \eqref{eq147}-\eqref{eq150}.
\begin{proof}[Proof of equation \eqref{eq148}]
By equations \eqref{eq338} and \eqref{eq339}, it suffices to prove \eqref{eq148} when $W_i$ is irreducible. Choose $w^{(i)}_1,w^{(i)}_2\in W_i$. Choose $z_1,z_2\in\mathbb C^\times$ satisfying $0<|z_2-z_1|<|z_1|<|z_2|$. Choose $\arg z_2$, let $\arg z_1$ be close to $\arg z_2$ as $z_2-z_1\rightarrow 0$, and let $\arg (z_2-z_1)$ be close to $\arg z_2$ as $z_1\rightarrow 0$. Since  $\lVert \widecheck{\mathcal Y}^0_{\overline ii} \lVert^2=d_i^{-1}$, by transport formula we have
\begin{align}
&Y_i\big(\mathcal Y^0_{i\overline i}(w^{(i)}_2,z_2-z_1)\overline {w^{(i)}_1},z_1\big)\nonumber\\
=&d_i^{-1}(\mathcal Y^0_{\overline i i})^\dagger(w^{(i)}_2,z_2)\mathcal Y^0_{\overline i i}(\overline {w^{(i)}_1},z_1)+\mathcal Y_\gamma(w^{(i)}_2,z_2)\mathcal Y_\beta(\overline {w^{(i)}_1},z_1)\nonumber\\
=&d_i^{-1}\mathcal Y^i_{i0}(w^{(i)}_2,z_2)\mathcal Y^0_{\overline ii}(\overline {w^{(i)}_1},z_1)+\mathcal Y_\gamma(w^{(i)}_2,z_2)\mathcal Y_\beta(\overline {w^{(i)}_1},z_1)\label{eq340}
\end{align}
where $\mathcal Y_\beta,\mathcal Y_\gamma$ are a chain of intertwining operators, and the target space of $\mathcal Y_\beta$ does not contain any submodule equivalent to the vacuum module $V$. Equation \eqref{eq340} is equivalent to the relation
\begin{gather}
(\ev_{i,\overline i}\otimes \id_i)=d_i^{-1}(\id_i\otimes \ev_{\overline i,i})+\mathcal Y_\gamma\circ(\id_i\otimes\mathcal Y_\beta),
\end{gather}
where $\mathcal Y_{\gamma}$ and $\mathcal Y_{\beta}$ denote the corresponding morphisms. By proposition \ref{lb115},
\begin{align}
&(\ev_{i,\overline i}\otimes \id_i)\circ(\id_i\otimes\coev_{\overline i, i})\nonumber\\
=&d_i^{-1}(\id_i\otimes \ev_{\overline i,i})\circ(\id_i\otimes\coev_{\overline i,i})+\mathcal Y_\gamma\circ(\id_i\otimes\mathcal Y_\beta)\circ(\id_i\otimes\coev_{\overline i,i})\nonumber\\
=&\id_i+\mathcal Y_\gamma\circ\big(\id_i\otimes(\mathcal Y_\beta\circ\coev_{\overline i,i})\big).\label{eq154}
\end{align}
Since $\mathcal Y_\beta\circ\coev_{\overline i, i}$ is a morphism from the vacuum module $V$ to a $V$-module with no irreducible submodule equivalent to  $V$, $\mathcal Y_\beta\circ\coev_{\overline i, i}$ must be zero. So \eqref{eq154} equals $\id_i$, and equation \eqref{eq148} is proved.
\end{proof}

\begin{proof}[Proof of equations \eqref{eq147}, \eqref{eq149}, and \eqref{eq150}]
Take the adjoint of equation \eqref{eq148}, we immediately obtain equation \eqref{eq147}. Equation \eqref{eq149} follows from equation (1.41). Equation (1.42) indicates that
\begin{gather}
\ev_{i,\overline i}=\ev_{\overline i, i}\circ \sigma_{i,\overline i}\circ(\id_i\otimes \vartheta_{\overline i}),
\end{gather}
the adjoint of which is \eqref{eq150}. 
\end{proof}
Thus we've proved the unitarity of our ribbon fusion category. 

\begin{thm}\label{lb117}
Let $V$ be unitary, energy bounded, and strongly local, and let $\mathcal F$ be a non-empty set of non-zero irreducible unitary $V$-modules satisfying condition \ref{CondA} or \ref{CondB} in section \ref{Condition ABC}. If we define a unitary structure on $\mathcal F^\boxtimes$ using $\Lambda$, then the ribbon fusion category $\Rep^\uni_{\mathcal F^\boxtimes}(V)$ is unitary.
\end{thm}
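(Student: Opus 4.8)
The plan is to verify, one by one, the data and coherence axioms making a ribbon fusion category unitary, with all the analytic difficulty already packaged into Theorem~\ref{lb110}. First I would recall that by Theorem~\ref{lb110} the sesquilinear form $\Lambda$ on every $\mathcal V{k\choose i~j}^*$ is positive definite, hence an honest inner product; transporting it to $\mathcal V{k\choose i~j}$ so that the canonical antilinear map $\mathcal V{k\choose i~j}\to\mathcal V{k\choose i~j}^*$ is antiunitary equips $\mathcal F^\boxtimes$ with a unitary structure. Fixing orthonormal bases $\Theta^k_{ij}$ for all $k\in\mathcal E\cap\mathcal F^\boxtimes$, the transport formulas \eqref{eq62}, \eqref{eq63}, \eqref{eq330} lose their $\Lambda^{\alpha\beta}$ coefficients and become \eqref{eqb11}, \eqref{eqb12}; this is the identity that drives everything downstream.

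Next I would treat the structural isomorphisms. The braid matrices $(B_\pm)_{ij}$ are unitary by Corollary~\ref{lbb3}: Proposition~\ref{lb25} (formal adjoint of a braid relation, with relation~(1.34)) gives $\overline{(B_\pm)^{\beta'\alpha'}_{\alpha\beta}}=(B_\mp)^{\alpha'^*\beta'^*}_{\beta^*\alpha^*}$, Proposition~\ref{lb26} (a four-point computation using \eqref{eqb11} and Theorem~\ref{lb16}) gives $(B_\pm)^{\beta'\alpha'}_{\alpha\beta}=(B_\pm)^{\beta^*\alpha^*}_{\alpha'^*\beta'^*}$, and combining these shows $(B_\pm)_{ij}$ is the adjoint of $(B_\mp)_{ji}$; since it is also the inverse, it is unitary. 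The associators are unitary by Corollary~\ref{lb37}: Proposition~\ref{lb113} expresses each fusion matrix entry $F^{\beta'\alpha'}_{\alpha\beta}$ through a braid matrix entry (via Corollary~2.18 and Theorem~\ref{lb16}), Proposition~\ref{lb34} shows $B_\pm$ carries an orthonormal basis of $\mathcal V{k\choose i~j}$ to one of $\mathcal V{k\choose j~i}$, and the general case is reduced to $W_j$ irreducible by diagonalizing along an orthogonal decomposition. The unit isomorphisms $\lambda_i,\rho_i$ are unitary since the fusion relations \eqref{eq323}, \eqref{eq324} exhibit $Y_k$ and $\mathcal Y^k_{k0}$ as unit-length basis vectors; the twist $\vartheta_i=e^{2i\pi L_0}$ is manifestly unitary; and compatibility of $\otimes$ with $*$, namely $(F\otimes G)^*=F^*\otimes G^*$, reduces by the orthogonal decomposition of $\mathcal V{k\choose i~j}$ to $F\in\End_V(W_i),G\in\End_V(W_j)$, where $\End_V$ is a $C^*$-algebra by Theorem~2.21, hence spanned by unitaries, for which the claim is immediate because $F\otimes G$ is then unitary.

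The last, and I expect the hardest, ingredient is to produce compatible duality data. Set $\ev_{i,\overline i}$ to be the morphism represented by $\mathcal Y^0_{i\overline i}$, $\coev_{i,\overline i}=\ev_{i,\overline i}^*$, $\ev_i=\ev_{\overline i,i}$, $\coev_i=\coev_{i,\overline i}$, and $d_i=\|\mathcal Y^0_{i\overline i}\|^2$ (so $d_{\overline i}=d_i$ by Propositions~1.14 and \ref{lb34}). I would first prove $\ev_{i,\overline i}\circ\coev_{i,\overline i}=d_i$ (Proposition~\ref{lb115}): for $W_i$ irreducible, $\widecheck{\mathcal Y}^0_{i\overline i}$ has norm $d_i^{-1/2}$, so $\ev_{i,\overline i}$ sends $d_i^{1/2}\widecheck{\mathcal Y}^0_{i\overline i}\otimes v$ to $d_i^{1/2}v$ and its adjoint sends $v$ to $d_i^{1/2}\,\widecheck{\mathcal Y}^0_{i\overline i}\otimes v$, and the general case follows from \eqref{eq338}, \eqref{eq339}. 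Then I would prove the zigzag identity \eqref{eq148}: a transport formula for $Y_i\big(\mathcal Y^0_{i\overline i}(w^{(i)}_2,z_2-z_1)\overline{w^{(i)}_1},z_1\big)$ expands it as $d_i^{-1}\mathcal Y^i_{i0}(w^{(i)}_2,z_2)\mathcal Y^0_{\overline ii}(\overline{w^{(i)}_1},z_1)$ plus a term $\mathcal Y_\gamma\mathcal Y_\beta$ whose intermediate module has no summand isomorphic to $V$, i.e.\ $(\ev_{i,\overline i}\otimes\id_i)=d_i^{-1}(\id_i\otimes\ev_{\overline i,i})+\mathcal Y_\gamma\circ(\id_i\otimes\mathcal Y_\beta)$; composing with $\id_i\otimes\coev_{\overline i,i}$ and using Proposition~\ref{lb115} kills the correction term since $\mathcal Y_\beta\circ\coev_{\overline i,i}$ is a morphism from $V$ into a module with no vacuum summand, hence zero, leaving $\id_i$. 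Taking adjoints gives \eqref{eq147}; equation~(1.41) gives \eqref{eq149}, and the adjoint of the variant coming from (1.42) gives \eqref{eq150}. Together with the earlier items these verify (2.64), (2.65), (2.69), (2.70), so $\Rep^\uni_{\mathcal F^\boxtimes}(V)$ is unitary. Within this chapter the only delicate point is the vanishing of the off-diagonal term in \eqref{eq148}, resting on the module-theoretic fact that a morphism from $V$ into a module with no vacuum summand vanishes; the genuine analytic difficulty, positivity of $\Lambda$, is already absorbed into Theorem~\ref{lb110}.
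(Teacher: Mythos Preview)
Your proposal is correct and follows the paper's own proof essentially step for step: unitarity of braidings via Propositions~\ref{lb25}, \ref{lb26} and Corollary~\ref{lbb3}; unitarity of associators via Propositions~\ref{lb113}, \ref{lb34} and Corollary~\ref{lb37}; unitarity of $\lambda_i,\rho_i,\vartheta_i$ and $(F\otimes G)^*=F^*\otimes G^*$ exactly as in section~7.3; and the duality data via Proposition~\ref{lb115} together with the transport-formula argument for \eqref{eq148} and the deductions of \eqref{eq147}, \eqref{eq149}, \eqref{eq150}. The only (harmless) slip is in the computation for Proposition~\ref{lb115}: $\coev_{i,\overline i}$ sends $v$ to $d_i\,\widecheck{\mathcal Y}^0_{i\overline i}\otimes v$ (equivalently $d_i^{1/2}$ times the unit vector), not $d_i^{1/2}\,\widecheck{\mathcal Y}^0_{i\overline i}\otimes v$.
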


Note that our proof of the unitarity of the tensor categories  uses only the positive definiteness of  $\Lambda$. Thus our results in this chapter can also be summarized in the following way.

\begin{thm}
Let $V$ be unitary. If all irreducible $V$-modules are unitarizable, and the sesquilinear form $\Lambda$ defined on any tensor product of unitary $V$-modules is positive (definite), then $\Rep^\uni(V)$ is a unitary modular tensor category.
\end{thm}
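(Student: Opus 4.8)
The plan is to recognize that the final statement is not a genuinely new result but a distillation of the entire chapter: every argument in sections 7.1--7.3 that establishes unitarity of the ribbon/modular structure maps used only the positive-definiteness of $\Lambda$ together with purely algebraic facts about $\Rep(V)$ (the fusion and braiding relations of \cite{Gui19a} and Huang--Lepowsky's tensor category theory), and never the conformal net $\mathcal M_V$, the energy bounds, strong locality, or condition \ref{CondA}/\ref{CondB}. Those analytic inputs entered the paper for the sole purpose of proving Theorem \ref{lb110}, the positive-definiteness of $\Lambda$; once that conclusion is \emph{assumed} as a hypothesis, the remainder of the chapter applies verbatim, now with $\mathcal F^\boxtimes$ replaced by all of $\Rep^\uni(V)$.

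Concretely, I would proceed in three moves. First, the standing assumptions $(\alpha),(\beta),(\gamma)$ together with \cite{H Rigidity} (and \cite{H MI}) guarantee that $\Rep(V)$ is a modular tensor category; since all irreducible $V$-modules are assumed unitarizable, $\Rep^\uni(V)$ is the whole of $\Rep(V)$, and by theorem 2.21 each $\End_V(W_i)$ is a $C^*$-algebra. Second, the hypothesis that $\Lambda$ is positive definite on every $W_i\boxtimes W_j$ lets me equip $\Rep^\uni(V)$ with the unitary structure determined by $\Lambda$, exactly as in section 2.4: the transport matrices define inner products on the spaces $\mathcal V{k\choose i~j}^*\cong\Hom_V(W_i\boxtimes W_j,W_k)$, hence on all morphism spaces, and the CPT maps $C_i$ supply the involution. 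Third, I would rerun, word for word, the algebraic arguments of the chapter: the $*$-relation \eqref{eq337} for tensor products of morphisms; propositions \ref{lb25} and \ref{lb26} and corollary \ref{lbb3}, giving unitarity of the braid matrices; propositions \ref{lb113} and \ref{lb34} and corollary \ref{lb37}, giving unitarity of the associativity (fusion) matrices; unitarity of the unitors from the fusion relations \eqref{eq323}--\eqref{eq324}; unitarity of the twist $\vartheta_i=e^{2i\pi L_0}$; and finally proposition \ref{lb115} together with the verification of \eqref{eq147}--\eqref{eq150}, which produces compatible evaluation and coevaluation morphisms. Each of these invokes only the transport formulas of proposition \ref{lb111}, the braiding theorem \ref{lb16} for (unsmeared) generalized intertwining operators, and standard genus-$0$ identities, all valid in $\Rep(V)$ with no analytic hypotheses whatsoever.

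The only point requiring care is the audit itself: one must confirm that no proposition reused above secretly depends on the conformal-net machinery or on $\mathcal F$. I would trace the dependency graph and check that propositions \ref{lb25}, \ref{lb26}, \ref{lb111}, \ref{lb113}, \ref{lb34}, \ref{lb115}, corollaries \ref{lbb3}, \ref{lb37}, and the proofs of \eqref{eq337} and of \eqref{eq147}--\eqref{eq150} cite only results from \cite{Gui19a}, Huang--Lepowsky, theorem \ref{lb16} of section 5.1, and the definition of $\Lambda$ via transport matrices, so that none of them requires energy bounds, strong locality, strong integrability, or condition \ref{CondA}/\ref{CondB}. (If one wishes to weaken the hypothesis to mere positivity of $\Lambda$, non-degeneracy is then recovered from the rigidity of $\Rep(V)$ precisely as in step 3 of the proof of theorem \ref{lb110}.) Once this bookkeeping is done, the statement follows; indeed it is exactly this observation, that ``unitarity of the modular tensor category'' is a formal consequence of ``$\Lambda$ is an inner product,'' that the theorem is meant to record.
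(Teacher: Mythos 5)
Your proposal is correct and is essentially the same as the paper's: the paper gives no separate proof of this theorem, only the one-line remark preceding it that ``our proof of the unitarity of the tensor categories uses only the positive definiteness of $\Lambda$,'' and your proposal is precisely the careful dependency audit that remark implicitly relies on. You have correctly identified that the analytic inputs (strong locality, energy bounds, condition \ref{CondA}/\ref{CondB}, the conformal net $\mathcal M_V$) enter only to prove theorem \ref{lb110}, while the arguments of sections 7.1--7.3 use only the transport formulas, the braiding/fusion relations from Huang--Lepowsky theory, theorem \ref{lb16} for unsmeared generalized intertwining operators, and the Hermiticity and non-degeneracy of $\Lambda$, and you have also correctly flagged the one optional extra step (recovering non-degeneracy from mere positivity via rigidity, as in step 3 of theorem \ref{lb110}) and the passage from ribbon fusion category to full MTC when $\mathcal F^\boxtimes = \Rep^\uni(V)$.
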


\section{Epilogue}

\subsection{Application to unitary Virasoro VOAs $(c<1)$}
Let $\Vir=\Span_{\mathbb C}\{C,L_n:n\in\mathbb Z\}$ be the Virasoro Lie algebra satisfying the relation
\begin{gather*}
[L_m,L_n]=(m-n)L_{m+n}+\frac 1 {12}(m^3-m)\delta_{m,-n}C\qquad(m,n\in\mathbb Z),\\
[C,L_n]=0\qquad(n\in\mathbb Z).
\end{gather*}
If $W$ is a $\Vir$-module, and the vector space $W$ is equipped with an inner product $\langle\cdot|\cdot\rangle$, we say that $W$ is a unitary $\Vir$-module, if $L_n^\dagger=L_{-n}$ holds for any $n\in\mathbb Z$. More precisely, this means that for any $w_1,w_2\in W$, we have
\begin{align}
\langle L_nw_1|w_2 \rangle=\langle w_1|L_{-n}w_2\rangle.
\end{align}

Choose Lie subalgebras $\Vir_+=\Span_{\mathbb C}\{L_n:n\in\mathbb Z_{>0}\}$ and $\Vir_-=\Span_{\mathbb C}\{L_n:n\in\mathbb Z_{<0}\}$ of $\Vir$, and let  $U(\Vir)$ be the universal enveloping algebra of $\Vir$. For each $c,h\in\mathbb C$, the Verma module $M(c,h)$ for $\Vir$ is  the free $U(\Vir_-)$-module generated by a distinguished vector (the highest weight vector) $v_{c,h}$, subject to the relation
\begin{gather}
U(\Vir_+)v_{c,h}=0,\qquad
Cv_{c,h}=cv_{c,h},\qquad
L_0v_{c,h}=hv_{c,h}.
\end{gather}
Then there exists a unique maximal proper submodule $J(c,h)$ of $M(c,h)$. We let $L(c,h)=M(c,h)/J(c,h)$. It was proved in \cite{FQS} and \cite{GKO} that when $0\leq c<1$, the $\Vir$-module $L(c,h)$ is unitarizable if and only if there exist $m,r,s\in\mathbb Z$ satisfying $2\leq m,1\leq r\leq m-1,1\leq s\leq m$, such that
\begin{gather}
c=1-\frac 6{m(m+1)},\label{eq343}\\ h=h_{r,s}=\frac{\big((m+1)r-ms\big)^2-1}{4m(m+1)}.\label{eq344}
\end{gather}
For such a module $L(c,h)$, we fix a unitary structure such that $\langle v_{c,h}|v_{c,h}\rangle$=1.

Let $\Omega=v_{c,0},\nu=L_{-2}\Omega$. Then there exists a unique VOA structure on $L(c,0)$, such that $\Omega$ is the vacuum vector, and $Y(\nu,x)=\sum_{n\in\mathbb Z}L_nx^{-n-2}$ (cf.\cite{Frenkel Zhu}).   Let $E=\{\Omega, \nu\}$, then $E$ is a set of quasi-primary vectors generating $L(c,0)$. 

We now assume that $c$ satisfies relation \eqref{eq343}. Then by \cite{DL} theorem 4.2 or \cite{CKLW} proposition 5.17,  $L(c,0)$ is a unitary VOA. The PCT operator $\theta$ is determined by the fact that $\theta$ fixes vectors in $E$.  $L(c,0)$ satisfies conditions $(\alpha),(\beta)$, and $(\gamma)$ in the introduction. (See the introduction of \cite{H Rigidity}, and the reference therein.)

Since $Y(\nu,n)=L_{n-1}$, representations of $L(c,0)$ are determined by their restrictions to $\Vir$. By \cite{Virasoro} theorem 4.2,  irreducible representations of $L(c,0)$ are precisely those that can be restricted to  irreducible $\Vir$-modules of the form $L
(c,h_{r,s})$, where the highest weight $h_{r,s}$ satisfies relation \eqref{eq344}. By proposition 1.10, $L(c,h_{r,s})$ is a unitary $L(c,0)$-module. It follows that any $L(c,0)$-module is unitarizable. Clearly the conformal dimension of $L(c,h_{r,s})$ is $h_{r,s}$. 

Let $\mathcal F=\{L(c,h_{1,2}),L(c,h_{2,2}) \}$. The fusion rules of $L(c,0)$ (see \cite{Virasoro} theorem 4.3) indicate that $\mathcal F$ is \textbf{generating}, i.e., any unitary $L(c,0)$-module is in $\mathcal F^\boxtimes$. We check that  $\mathcal F$ satisfies condition \ref{CondA} in section \ref{Condition ABC}:

Condition \ref{CondA}-(a): Since we know that any $L(c,0)$-module is unitarizable, condition \ref{CondA}-(a) is obvious.

Condition \ref{CondA}-(b): Since $E\subset E^1(L(c,0))$, $E^1(L(c,0))$ is generating.

Condition \ref{CondA}-(c): If $\mathcal Y_\alpha\in\mathcal V{k\choose i~j}$ is unitary and irreducible (hence $W_i,W_j,W_k$ restrict to irreducible highest weight $\Vir$-modules), we choose a non-zero highest weight vector $v^{(i)}\in W_i$. We then define a linear map
\begin{gather*}
\phi_\alpha:W_j\rightarrow W_k\{x\},\\
w^{(j)}\mapsto \phi_\alpha(x)w^{(j)}=\mathcal Y_\alpha(v^{(i)},x)w^{(j)}.
\end{gather*}
Then $\phi_\alpha$ is a primary field in the sense of \cite{Loke} chapter II. By \cite{Loke} proposition IV.1.3, if $W_i\in\mathcal F$, then $\phi_\alpha$ satisfies $0$-th order energy bounds. This proves condition \ref{CondA}-(c). Theorem \ref{lb117} now implies the following:

\begin{thm}
Let $c=1-\frac 6{m(m+1)}$ where $m=2,3,4,\dots$, and let $L(c,0)$ be the unitary Virasoro VOA with central charge $c$. Then any $L(c,0)$-module is unitarizable,  $\Lambda$ is positive definite on the tensor product of any two $L(c,0)$-modules, and the modular tensor category $\Rep^\uni(L(c,0))$ of the unitary representations of $L(c,0)$ is unitary. 
\end{thm}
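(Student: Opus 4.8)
The plan is to obtain the theorem as a direct application of Theorem \ref{lb117} for the unitarity assertion and Theorem \ref{lb110} for the positivity assertion, once the hypotheses of those theorems have been checked for $V = L(c,0)$. Most of this checking has in fact been done in the discussion preceding the statement, so the argument amounts to assembling those pieces in the right order.

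First I would record that $L(c,0)$ is a unitary VOA of CFT type satisfying the standing assumptions $(\alpha)$, $(\beta)$, $(\gamma)$ (via \cite{DL}, \cite{CKLW}, and the references in \cite{H Rigidity}), so that $\Rep(L(c,0))$ is a modular tensor category. Next, unitarizability of every $L(c,0)$-module follows from the classification of irreducible modules — they restrict to the highest-weight $\Vir$-modules $L(c,h_{r,s})$ — together with the unitarity of each such module (proposition 1.10 applied to the minimal-model unitary structures fixed above) and complete reducibility $(\beta)$. This already disposes of the first claim.

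Then I would take $\mathcal F = \{L(c,h_{1,2}),\ L(c,h_{2,2})\}$ and verify, using the Virasoro fusion rules (\cite{Virasoro} theorem 4.3), that $\mathcal F$ is generating, i.e.\ $\mathcal F^\boxtimes = \Rep^\uni(L(c,0))$. The substantive step is checking condition \ref{CondA}: part (a) is automatic since all modules are unitarizable; part (b) holds because $E = \{\Omega,\nu\} \subset E^1(L(c,0))$ (the Virasoro field $Y(\nu,x)$ satisfies linear energy bounds) and $E$ generates $L(c,0)$; part (c) follows because for $W_i \in \mathcal F$ and a highest-weight vector $v^{(i)} \in W_i$, the primary field $\mathcal Y_\alpha(v^{(i)},x)$ satisfies $0$-th order energy bounds by \cite{Loke} proposition IV.1.3, so every irreducible unitary intertwining operator with charge space in $\mathcal F$ is energy-bounded. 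By corollary 3.7 and theorem \ref{lb61}, $L(c,0)$ is then energy-bounded and strongly local.

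With the hypotheses verified, Theorem \ref{lb110} gives that $\Lambda$ is positive definite on $W_i \boxtimes W_j$ for all unitary $L(c,0)$-modules $W_i, W_j$, which is the second claim; and Theorem \ref{lb117} gives that, with the unitary structure on $\mathcal F^\boxtimes$ induced by $\Lambda$, the ribbon fusion category $\Rep^\uni_{\mathcal F^\boxtimes}(L(c,0))$ is unitary. Since $\mathcal F^\boxtimes = \Rep^\uni(L(c,0))$, this is precisely the statement that $\Rep^\uni(L(c,0))$ is a unitary modular tensor category, finishing the proof. The only genuinely delicate ingredient is the energy-bounds estimate for the Virasoro primary fields attached to $h_{1,2}$ and $h_{2,2}$, which is imported from \cite{Loke}; everything else in this section is bookkeeping built on the general machinery of the earlier chapters.
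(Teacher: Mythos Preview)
Your proposal is correct and follows essentially the same route as the paper: the theorem is stated immediately after the paper has verified condition \ref{CondA} for $\mathcal F=\{L(c,h_{1,2}),L(c,h_{2,2})\}$ exactly as you describe, and the paper then simply invokes Theorem \ref{lb117}. Your only addition is to cite Theorem \ref{lb110} separately for the positivity claim, which is harmless clarification since Theorem \ref{lb117} already presupposes it.
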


\subsection{Application to unitary affine VOAs}

Let $\mathfrak g$ be a complex simple Lie algebra. Let $\mathfrak h$ be a Cartan subalgebra of $\mathfrak g$, $\lambda\in\mathfrak h^*$ be a highest weight, and let $L(\lambda)$ be the irreducible highest weight module of $\mathfrak g$ with highest weight $\lambda$ and a distinguished highest (non-zero) vector $v_\lambda\in L(\lambda)$. 

Choose the normalized invariant bilinear form $(\cdot,\cdot)$ satisfying $(\theta,\theta)=2$, where $\theta$ is the highest root of $\mathfrak g$. Let $\widehat{\mathfrak g}=\Span_{\mathbb C}\{K,X(n):X\in\mathfrak g,n\in\mathbb Z \}$ be the affine Lie algebra satisfying
\begin{gather*}
[X(m),Y(n)]=[X,Y](m+n)+m(X,Y)\delta_{m,-n}K\qquad (X,Y\in\mathfrak g,m,n\in\mathbb Z),\\
[K,X(n)]=0\qquad (X\in\mathfrak g,n\in\mathbb Z).
\end{gather*}
Let $\mathfrak g_{\mathbb R}$ be a compact real form of $\mathfrak g$. Then $\mathfrak g=\mathfrak g_{\mathbb R}\oplus_{\mathbb R}i\mathfrak g_{\mathbb R}$. If $W$ is a $\widehat{\mathfrak g}$-module, and the vector space $W$ is equipped with an inner product $\langle \cdot |\cdot\rangle$, we say that $W$ is a unitary $\widehat{\mathfrak g}$-module, if for any $X\in\mathfrak g_{\mathbb R}$ and $n\in\mathbb Z$, we have
\begin{align}
X(n)^\dagger=-X(-n),\qquad K^\dagger=K.
\end{align}

Let $U(\widehat {\mathfrak g})$ be the universal enveloping algebra of $\widehat{\mathfrak g}$. Choose Lie subalgebras $\widehat{\mathfrak g}_+=\Span_{\mathbb C}\{X(n):X\in\mathfrak g,n>0 \},\widehat{\mathfrak g}_-=\Span_{\mathbb C}\{X(n):X\in\mathfrak g,n<0 \}$ of $\widehat{\mathfrak g}$. We regard $\mathfrak g$ as a Lie subalgebra of $\widehat{\mathfrak g}$ by identifying $X\in\mathfrak g$ with $X(0)\in\widehat{\mathfrak g}$. For any $k\in\mathbb C$, highest weight $\lambda\in\mathfrak h^*$, the Verma module $M(k,\lambda)$ for $\widehat{\mathfrak g}$ is the free $U(\widehat{\mathfrak g}_-)$-module generated by $L(\lambda)$  and  subject to the conditions
\begin{gather}
U(\widehat{\mathfrak g}_+)L(\lambda)=0,\qquad K|_{L(\lambda)}=k\cdot\id|_{L(\lambda)}.
\end{gather}
We let $M(k,\lambda)$ be graded by $\mathbb Z_{\geq0}$: For any $X_1,\dots,X_m\in\mathfrak g,n_1,\dots,n_m>0,v\in L(\lambda)$, 
the weight of $X_1(-n_1)\cdots X_m(-n_m)v$ equals $n_1+\cdots+n_m$.
There exits a unique maximal proper graded submodule $J(k,\lambda)$ of $M(k,\lambda)$. We let $L(k,\lambda)=M(k,\lambda)/J(k,\lambda)$. Then by \cite{Kac infinite} theorem 11.7, the $\widehat{\mathfrak g}$-module $L(k,\lambda)$ is unitarizable if and only if 
\begin{gather}
k=0,1,2,\dots,\\
\lambda\text{ is a dominant integral weight of $\mathfrak g$, and }(\lambda,\theta)\leq k.\label{eq345}
\end{gather}
For such a $\widehat{\mathfrak g}$-module $L(k,\lambda)$, we fix a unitary structure.

Let $h^\vee$ be the dual Coxeter number of $\mathfrak g$. Let $\Omega$ be a highest weight vector of $L(k,0)$. It was proved in \cite{Frenkel Zhu} that when $k\neq-h^\vee$, there exists a unique VOA structure on $L(k,0)$, such that $\Omega$ is the vacuum vector, that for any $X\in\mathfrak g$ we have
\begin{align}
Y\big(X(-1)\Omega,x \big)=\sum_{n\in\mathbb Z}X(n)x^{-n-1},
\end{align}
and that the conformal vector $\nu$ is defined by
\begin{align}
\nu=\frac 1{2(k+h^\vee)}\sum_{i=1}^{\dim\mathfrak g}X_i(-1)^2\Omega,
\end{align}
where $\{X_i \}$ is an orthonormal basis of $i\mathfrak g_{\mathbb R}$ under the inner product $(\cdot,\cdot)$. The set $E=\{\Omega, X(-1)\Omega: X\in\mathfrak g_{\mathbb R}\}$ generates $L(k,0)$. Writing  the operator $L_1=Y(\nu,2)$ in terms of $X(n)$'s using Jacobi identity, one can show that the vectors in $E$ are quasi-primary. 

We now assume that $k\in\mathbb Z_{\geq0}$.  Then $L(k,0)$ satisfies conditions $(\alpha),(\beta)$, and $(\gamma)$ in the introduction. (See the introduction of \cite{H Rigidity}, and the reference therein.) By  \cite{DL} theorem 4.7 or \cite{CKLW} proposition 5.17, $L(k,0)$ is a unitary VOA, and the PCT operator $\theta$ is determined by the fact that it fixes the vectors in $E$.  

Representations of $L(k,0)$ are determined by their restrictions to $\widehat{\mathfrak g}$. By \cite{Frenkel Zhu} theorem 3.1.3, irreducible $L(k,0)$-modules are precisely those which can be restricted to the $\widehat{\mathfrak g}$-modules of the form $L(k,\lambda)$, where $\lambda\in\mathfrak h^*$ satisfies condition \eqref{eq345}. By proposition 1.10, these $L(k,0)$-modules are unitary. Hence all $L(k,0)$-modules are unitarizable, and any set $\mathcal F$ of irreducible unitary $L(k,0)$-module satisfies condition \ref{CondA}-(a).

By proposition 3.6, $E\subset E^1(L(k,0))$. Since $E$ generates $L(k,0)$,  any  $\mathcal F$ also satisfies condition \ref{CondA}-(b).  Checking condition \ref{CondA}-(c) is much harder, and requires  case by case studies. Note that given the set $\mathcal F$, finding out which irreducible modules are inside $\mathcal F^\boxtimes$ requires the knowledge of fusion rules. A very practical way of calculating fusion rules for a unitary affine VOA is to calculate the dimensions of the spaces of primary fields.

\subsubsection*{Primary fields}
Fix $k\in\mathbb Z_{>0}$. For each $\lambda\in\mathfrak h^*$ satisfying condition \eqref{eq345}, we write $U_\lambda=L(\lambda),W_{\lambda}=L(k,\lambda)$. Let $\Delta_\lambda$ be the conformal dimension of the $L(k,0)$-module $W_\lambda$. We define the normalized energy operator  on $W_\lambda$ to be $D=L_0-\Delta_\lambda$.

Assume that $\lambda,\mu,\nu\in\mathfrak h^*$ satisfy condition \eqref{eq345}. We let $\Delta^\nu_{\lambda\mu}=\Delta_\lambda+\Delta_\mu-\Delta_\nu$. A \textbf{type $\nu\choose\lambda~\mu$ primary field} $\phi_\alpha$ is a linear map
\begin{gather*}
\phi_\alpha:U_\lambda\otimes W_\mu\rightarrow W_\nu[[x^{\pm1}]]x^{-\Delta^\nu_{\lambda\mu}},\\
u^{(\lambda)}\otimes w^{(\mu)}\mapsto \phi_\alpha(u^{(\lambda)},x)w^{(\mu)}=\sum_{n\in\mathbb Z}\phi_\alpha(u^{(\lambda)},n)w^{(\mu)}x^{-\Delta^\nu_{\lambda\mu}-n}\\
(\text{where }\phi_\alpha(u^{\lambda},n)\in\Hom(W_\mu,W_\nu)),
\end{gather*}
such that for any $u^{(\lambda)}\in U_\lambda,X\in\mathfrak g,m\in\mathbb Z$, we have
\begin{gather}
[X(m),\phi_\alpha(u^{(\lambda)},x)]=\phi_\alpha(Xu^{(\lambda)},x)x^m,\label{eq346}\\
[L_0,\phi_\alpha(u^{(\lambda)},x)]=\Big(x\frac d{dx}+\Delta_\lambda\Big)\phi_\alpha(u^{(\lambda)},x).\label{eq347}
\end{gather}
We say that $U_\lambda$ is the \textbf{charge space} of $\phi_\alpha$.

Note that the above two conditions are equivalent to that for any $m,n\in\mathbb Z,u^{(\lambda)}\in U_\lambda,X\in\mathfrak g$,
\begin{gather}
[X(m),\phi_\alpha(u^{(\lambda)},n)]=\phi_\alpha(Xu^{(\lambda)},n+m),\\
[D,\phi_\alpha(u^{(\lambda)},n)]=-n\phi_\alpha(u^{(\lambda)},n).
\end{gather}

Primary fields and intertwining operators are related in the following way: Let $\mathcal V_\pri{\nu\choose\lambda~\mu}$ be the vector space of type $\nu\choose\lambda~\mu$ primary fields. If $\mathcal Y_\alpha\in\mathcal V{\nu\choose\lambda~\mu}$ is a type $\nu\choose\lambda~\mu$ intertwining operator of $L(k,0)$, then by relation (1.26), for any $w^{(\lambda)}\in W_\lambda$ we have,
\begin{align}
\mathcal Y_\alpha(w^{(\lambda)},x)=x^{L_0}\mathcal Y_\alpha(x^{-L_0}w^{(\lambda)},1)x^{-L_0}\in\End(W_\mu,W_\nu)[[x^{\pm1}]]x^{-\Delta^\nu_{\lambda\mu}}
\end{align}
where $\mathcal Y_\alpha(\cdot,1)=\mathcal Y_\alpha(\cdot,x)\big|_{x=1}$. We define a linear map $\phi_\alpha:U_\lambda\otimes W_\mu\rightarrow W_\nu[[x^{\pm1}]]x^{-\Delta^\nu_{\lambda\mu}}$ to be the restriction of $\mathcal Y_\alpha$ to $U_\lambda\otimes W_\mu$. Then the Jacobi identity and the translation property for $\mathcal Y_\alpha$ implies that $\mathcal Y_\alpha$ satisfies equations \eqref{eq346} and \eqref{eq347}. Therefore, we have a linear map
\begin{gather}
\Phi:\mathcal V{\nu\choose\lambda~\mu}\rightarrow \mathcal V_\pri{\nu\choose\lambda~\mu},\qquad\mathcal Y_\alpha\mapsto\phi_\alpha.\label{eq348}
\end{gather}

The injectivity of $\Phi$ follows immediately from relation (1.22) or from corollary 2.15.  $\Phi$ is also surjective. Indeed, if we fix any $z\in\mathbb C^\times$ and define another linear map
\begin{gather*}
\Psi_z:\mathcal V_\pri{\nu\choose\lambda~\mu}\rightarrow (W_\lambda\otimes W_\mu\otimes W_{\overline{\nu}})^*,\nonumber\\
\phi_\alpha\mapsto\phi_\alpha(\cdot,z)=\phi_\alpha(\cdot,x)\big|_{x=z},
\end{gather*}
then by equation \eqref{eq347}, $\Psi_z$ is injective. By equation \eqref{eq346} and \cite{Ueno} theorem 3.18, the dimension  of the image  of $\Psi_z$ is no greater than that of ``the space of vacua" $\mathcal V^\dagger_{\mu\lambda\overline{\nu}}(\mathbb P^1;0,z,\infty)$ defined in \cite{TUY} and \cite{Ueno}. The later can be calculated using the Verlinde formula proved in \cite{Beauville Verlinde}, \cite{Faltings Verlinde}, and \cite{Teleman Verlinde}. The same Verlinde formula for $N^\nu_{\lambda\mu}$ proved in \cite{H Verlinde} shows that the dimension  of the vector space $\mathcal V{\nu\choose\lambda~\mu}$ (which is the fusion rule $N^\nu_{\lambda\mu}$) equals that of $\mathcal V^\dagger_{\mu\lambda\overline{\nu}}(\mathbb P^1;0,z,\infty)$. So $\dim\mathcal V_{\pri}{\nu\choose\lambda~\mu}\leq N^\nu_{\lambda\mu}$, and hence $\Phi$ must be surjective. We conclude the following:
\begin{pp}
The  linear map $\Phi$ defined in \eqref{eq348} is an isomorphism. In particular, the fusion rule $N^\nu_{\lambda\mu}$ of $L(k,0)$ equals the dimension of the vector space of type $\nu\choose\lambda~\mu$ primary fields of $L(k,0)$.
\end{pp}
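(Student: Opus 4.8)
The plan is to prove injectivity and surjectivity of $\Phi$ separately. Since $\dim\mathcal V{\nu\choose\lambda~\mu}=N^\nu_{\lambda\mu}$ is finite by rationality, a bijection of vector spaces will immediately give the second assertion about fusion rules. For injectivity, I would argue that an intertwining operator $\mathcal Y_\alpha\in\mathcal V{\nu\choose\lambda~\mu}$ is completely determined by its restriction $\phi_\alpha$ to $U_\lambda\otimes W_\mu$. Indeed, $U_\lambda$ generates $W_\lambda$ under $\widehat{\mathfrak g}_-$, and the Jacobi identity for $\mathcal Y_\alpha$ yields the commutation relations $[X(m),\mathcal Y_\alpha(u^{(\lambda)},x)]=\mathcal Y_\alpha(Xu^{(\lambda)},x)x^m$; together with the $L_{-1}$-covariance property (relation (1.22)), these express every mode $\mathcal Y_\alpha(w^{(\lambda)},n)$ with arbitrary $w^{(\lambda)}\in W_\lambda$ in terms of the modes $\phi_\alpha(u^{(\lambda)},m)$ with $u^{(\lambda)}\in U_\lambda$ and the $\widehat{\mathfrak g}$-actions on $W_\mu$ and $W_\nu$. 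This is precisely the argument in the proof of corollary 2.15, so $\phi_\alpha=0$ forces $\mathcal Y_\alpha=0$.

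For surjectivity the strategy is a dimension count ending with the Verlinde formula. Fix $z\in\mathbb C^\times$ and consider the evaluation map $\Psi_z:\mathcal V_\pri{\nu\choose\lambda~\mu}\rightarrow(W_\lambda\otimes W_\mu\otimes W_{\overline{\nu}})^*$, $\phi_\alpha\mapsto\phi_\alpha(\cdot,z)$. First I would show $\Psi_z$ is injective: by \eqref{eq347}, the conformal-weight grading identity $[L_0,\phi_\alpha(u^{(\lambda)},x)]=(x\frac d{dx}+\Delta_\lambda)\phi_\alpha(u^{(\lambda)},x)$ recovers the full $x$-dependence of $\phi_\alpha(u^{(\lambda)},x)$ from its value at $x=z$. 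Next I would identify the image of $\Psi_z$: by \eqref{eq346} a vector in the image satisfies the $\widehat{\mathfrak g}$-Ward identities defining the space of vacua $\mathcal V^\dagger_{\mu\lambda\overline{\nu}}(\mathbb P^1;0,z,\infty)$ of \cite{TUY,Ueno}, so by \cite{Ueno} theorem 3.18 its dimension is at most $\dim\mathcal V^\dagger_{\mu\lambda\overline{\nu}}(\mathbb P^1;0,z,\infty)$. The geometric Verlinde formula of \cite{Beauville Verlinde,Faltings Verlinde,Teleman Verlinde} evaluates this number, while Huang's Verlinde formula \cite{H Verlinde} evaluates $N^\nu_{\lambda\mu}$ by the same expression; hence $\dim\mathcal V_\pri{\nu\choose\lambda~\mu}\leq\dim\mathcal V^\dagger_{\mu\lambda\overline{\nu}}(\mathbb P^1;0,z,\infty)=N^\nu_{\lambda\mu}=\dim\mathcal V{\nu\choose\lambda~\mu}$. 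Combined with the injectivity of $\Phi$, this forces $\Phi$ to be an isomorphism, and the dimension equality drops out.

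The main obstacle is the surjectivity half, and within it the precise comparison between primary fields evaluated at $z$ and the space of vacua of \cite{TUY}: one must verify that the relations \eqref{eq346} translate exactly into the gauge/Ward conditions cutting out $\mathcal V^\dagger_{\mu\lambda\overline{\nu}}(\mathbb P^1;0,z,\infty)$, so that Ueno's theorem can be applied, and then that the two a priori different Verlinde formulas — the geometric one for conformal blocks and Huang's categorical one for VOA fusion rules — genuinely yield the same number. This matching of formulas is the crucial external input and carries essentially all the depth of the argument; everything else, namely the injectivity of $\Phi$ and of $\Psi_z$, is formal manipulation with modes and the grading operators.
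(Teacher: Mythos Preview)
Your proposal is correct and follows essentially the same argument as the paper: injectivity of $\Phi$ via relation (1.22)/corollary 2.15, and surjectivity by the dimension count through the injective evaluation map $\Psi_z$, the bound by the space of vacua via \cite{Ueno} theorem 3.18, and the matching of the geometric and VOA Verlinde formulas. The paper's discussion is slightly terser but the logic is identical.
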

\begin{thm}\label{lbb4}
Let $k=0,1,2,\dots$, and let $L(k,0)$ be the level $k$ unitary affine VOA associated to $\mathfrak g$. Then any $L(k,0)$-module is unitarizable. Suppose that $\mathcal F$ is a \emph{generating} set of irreducible unitary $L(k,0)$-modules (i.e., $\mathcal F^\boxtimes$ contains any unitary $L(k,0)$-module), and that for any $\lambda\in\mathcal F$, all primary fields of $L(k,0)$ with charge spaces $U_\lambda$ are energy-bounded.  Then $\Lambda$ is positive definite on the tensor product of any two $L(k,0)$-modules, and the modular tensor category $\Rep^\uni(L(k,0))$  is unitary.
\end{thm}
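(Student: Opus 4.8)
The plan is to deduce the result from Theorem \ref{lb117} together with Theorem \ref{lb110}, by checking that $V = L(k,0)$ and the given set $\mathcal F$ fit into the framework of section \ref{Condition ABC} — concretely, that $\mathcal F$ satisfies condition \ref{CondA}. Since $\mathcal F$ is generating and, as recalled above, every $L(k,0)$-module is unitarizable, we have $\mathcal F^\boxtimes = \Rep^\uni(L(k,0))$; thus once Theorem \ref{lb117} applies it yields the unitarity of the full category, and Theorem \ref{lb110} yields the positive-definiteness of $\Lambda$ on every tensor product of unitary modules. Recall that we have already noted: $L(k,0)$ is a unitary VOA satisfying $(\alpha),(\beta),(\gamma)$; the set $E = \{\Omega,\, X(-1)\Omega : X \in \mathfrak g_{\mathbb R}\}$ consists of quasi-primary vectors and generates $L(k,0)$; and, by proposition 3.6, $E \subset E^1(L(k,0))$. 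Hence, by corollary 3.7 and theorem \ref{lb61}, $L(k,0)$ is energy bounded and strongly local and $\mathcal M_{L(k,0)}$ is a conformal net. So it only remains to verify the three parts of condition \ref{CondA}.

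Part (a) is immediate: any irreducible submodule of a tensor product of modules in $\mathcal F \cup \overline{\mathcal F}$ is an $L(k,0)$-module, and all such are unitarizable. Part (b) holds since $E$ generates $L(k,0)$ and $E \subset E^1(L(k,0))$. The heart of the matter is part (c). Let $\lambda \in \mathcal F$ and let $W_j = L(k,\mu)$, $W_k = L(k,\nu)$ be arbitrary irreducible unitary $L(k,0)$-modules; we must show every $\mathcal Y_\alpha \in \mathcal V{\nu\choose\lambda~\mu}$ is energy-bounded. Using the isomorphism $\Phi$ of \eqref{eq348}, the associated primary field $\phi_\alpha = \Phi(\mathcal Y_\alpha)$ has charge space $U_\lambda$, and by hypothesis $\phi_\alpha$ is energy-bounded, i.e.\ $\mathcal Y_\alpha(w^{(\lambda)},x)$ satisfies energy bounds for all $w^{(\lambda)} \in U_\lambda$. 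Since the top level $U_\lambda = L(\lambda)$ generates $L(k,\lambda) = W_i$ as a module over $L(k,0)$ — the modes $X(n)$ of the vertex operators $Y(X(-1)\Omega,x)$ of $L(k,0)$ produce all of $W_i$ from $U_\lambda$ — and since $L(k,0)$ is energy bounded, the energy-bounds propagation along the module action proved in part I (proposition 3.3) upgrades this to: $\mathcal Y_\alpha(w^{(i)},x)$ is energy-bounded for every $w^{(i)} \in W_i$. Thus $\mathcal Y_\alpha$ is energy-bounded. Reducible $W_j,W_k$ are handled by decomposition into irreducibles, and charge spaces $W_i$ with $i \in \overline{\mathcal F}$ by passing to adjoint intertwining operators (proposition 3.4). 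This gives condition \ref{CondA}-(c).

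With condition \ref{CondA} in hand, Theorem \ref{lb117} applies to $(L(k,0),\mathcal F)$ and shows that $\Rep^\uni_{\mathcal F^\boxtimes}(L(k,0)) = \Rep^\uni(L(k,0))$ is a unitary ribbon fusion category for the unitary structure defined by $\Lambda$, while Theorem \ref{lb110} shows that $\Lambda$ is positive definite on $W_i \boxtimes W_j$ for all unitary $W_i, W_j$; since $\Rep^\uni(L(k,0))$ is modular by \cite{H Rigidity} (using $(\alpha),(\beta),(\gamma)$), this is the desired conclusion. The one genuinely delicate point is part (c) of condition \ref{CondA}, and within it the passage from energy bounds on the top-level charge vectors $U_\lambda$ to energy bounds for the full intertwining operator; this is an instance of the general propagation mechanism already available from part I, but it is precisely where the hypothesis on primary fields enters, and in concrete cases (e.g.\ the $A_n$ and $D_n$ series) the substantive analytic estimates on primary fields — in the spirit of \cite{Wassermann} and \cite{Loke} — are exactly what must be supplied in order to meet that hypothesis.
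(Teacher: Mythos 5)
Your proof is correct and follows essentially the same route that the paper takes (the paper leaves the verification of condition \ref{CondA} implicit in the surrounding discussion of primary fields and the isomorphism $\Phi$ of \eqref{eq348}, and you have correctly made that verification explicit). In particular, you correctly identify that the only substantive step is condition \ref{CondA}-(c), and you correctly fill it in by combining the hypothesis on primary fields with the isomorphism $\Phi$ and the propagation of energy bounds from the top level $U_\lambda$ (a generating set for the charge module under the $L(k,0)$-action) to the full intertwining operator — this is exactly the mechanism the paper relies on when it asserts, in the parallel Virasoro case, that $0$-th order energy bounds for $\phi_\alpha$ ``prove condition \ref{CondA}-(c)''.
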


We now show that theorem \ref{lbb4} can be applied to the unitary affine VOAs of type $A_n$ and $D_n$.

\subsubsection*{The case $\mathfrak g=\mathfrak{sl}_n~(n\geq2)$}

Let $L(\Box)$ be the ($n$-dimensional) vector representation of $\mathfrak{sl}_n$, and let 
$$\mathcal F=\{L(k,\Box) \}.$$
In \cite{Wassermann}, especially in section 25, it was proved that if $\lambda=\Box$ and the weights $\mu,\nu$ of $\mathfrak {sl}_n$ satisfy condition \eqref{eq345}, then
\begin{align}
\dim\mathcal V_\pri{\nu\choose\lambda~\mu}=\dim\Big(\Hom_{\mathfrak g}\big(L(\lambda)\otimes L(\mu),L(\nu)\big)\Big).\label{eq349}
\end{align}
(Note that this relation is not true for general $L(\lambda)$.) Using this relation, one can show that $\mathcal F$ is generating. In the same section, it was  proved that any $\phi_\alpha\in\mathcal V_\pri{\nu\choose\Box~\mu}$ satisfies $0$-th order energy bounds.

\subsubsection*{The case $\mathfrak g=\mathfrak {so}_{2n}~(n\geq3)$}

Let $L(\Box)$ be the vector representation of $\mathfrak{so}_{2n}$, and let $L(s_+)$ and $L(s_-)$ be the two half-spin representations of $\mathfrak{so}_{2n}$. In \cite{Toledano} chapter IV, it was proved that if $\lambda$ equals $\Box$ or $s_{\pm}$, and the weights $\mu,\nu$ of $\mathfrak {so}_{2n}$ satisfy condition \eqref{eq345}, then relation \eqref{eq349} holds. This shows that the set 
$$\mathcal F=\{L(k,\Box),L(k,s_+),L(k,s_-) \}$$
is generating. By \cite{Toledano} theorem VI.3.1, any primary field whose charge space is $L(k,\Box),L(k,s_+)$, or $L(k,s_-)$ is energy-bounded.

We conclude the following.

\begin{thm}
Let $\mathfrak g$ be $\mathfrak {sl}_n~(n\geq2)$ or $\mathfrak{so}_{2n}~(n\geq3)$, let $k=0,1,2,\dots$, and let $L(k,0)$ be the unitary affine VOA associated to $\mathfrak g$. Then $\Lambda$ is positive definite on the tensor product of any two $L(k,0)$-modules, and the modular tensor category $\Rep^\uni(L(k,0))$ of the unitary representations of $L(k,0)$ is unitary. 
\end{thm}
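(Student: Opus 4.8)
The plan is to deduce the statement directly from Theorem \ref{lbb4}: since that theorem already records that every $L(k,0)$-module is unitarizable, all that remains is to produce, for each level $k\in\mathbb Z_{\geq0}$, a \emph{generating} set $\mathcal F$ of irreducible unitary $L(k,0)$-modules all of whose primary fields are energy-bounded. For $k=0$ the VOA is trivial and there is nothing to prove, so assume $k>0$. We take $\mathcal F=\{L(k,\Box)\}$ when $\mathfrak g=\mathfrak{sl}_n$, and $\mathcal F=\{L(k,\Box),L(k,s_+),L(k,s_-)\}$ when $\mathfrak g=\mathfrak{so}_{2n}$, as in the discussion preceding the theorem.

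First I would check that $\mathcal F$ is generating. By the proposition identifying $N^\nu_{\lambda\mu}$ with $\dim\mathcal V_\pri{\nu\choose\lambda~\mu}$, together with the identity \eqref{eq349} — which holds by \cite{Wassermann} section 25 when $\lambda=\Box$ (for $\mathfrak{sl}_n$) and by \cite{Toledano} chapter IV when $\lambda\in\{\Box,s_+,s_-\}$ (for $\mathfrak{so}_{2n}$), provided the remaining weights satisfy \eqref{eq345} — one obtains, for $\lambda$ in the chosen $\mathcal F$,
\begin{align*}
W_\lambda\boxtimes W_\mu\simeq\bigoplus_{\nu}W_\nu^{\oplus\, m^\nu_{\lambda\mu}},\qquad m^\nu_{\lambda\mu}=\dim\Hom_{\mathfrak g}\big(L(\lambda)\otimes L(\mu),L(\nu)\big),
\end{align*}
the sum ranging over dominant integral $\nu$ with $(\nu,\theta)\leq k$. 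Thus tensoring with a module in $\mathcal F$ reproduces, at the level of $L(k,0)$-modules, the classical decomposition of $L(\lambda)\otimes(\,\cdot\,)$ truncated to the integrable weights. Since classically the vector representation tensor-generates $\Rep(\mathfrak{sl}_n)$, and the vector together with the two half-spin representations tensor-generates $\Rep(\mathfrak{so}_{2n})$, and since for $k\geq1$ every fundamental weight encountered along the way is integrable at level $k$ (so that building up an arbitrary $\nu$ with $(\nu,\theta)\leq k$ one fundamental weight at a time keeps all partial sums integrable and preserves the Cartan component in the truncated fusion at each step), a straightforward induction on the number of tensor factors shows that every irreducible unitary $L(k,0)$-module lies in $\mathcal F^\boxtimes$.

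Next I would invoke the energy-bounds input: by \cite{Wassermann} section 25 every primary field in $\mathcal V_\pri{\nu\choose\Box~\mu}$ satisfies $0$-th order energy bounds in the $\mathfrak{sl}_n$ case, and by \cite{Toledano} theorem VI.3.1 every primary field of $L(k,0)$ with charge space $L(k,\Box)$, $L(k,s_+)$, or $L(k,s_-)$ is energy-bounded in the $\mathfrak{so}_{2n}$ case; via the isomorphism $\Phi$ between primary fields and intertwining operators this is exactly the hypothesis of Theorem \ref{lbb4} on the modules in $\mathcal F$. Applying Theorem \ref{lbb4} then yields both conclusions: $\Lambda$ is positive definite on $W_i\boxtimes W_j$ for all unitary $L(k,0)$-modules $W_i,W_j$, and $\Rep^\uni(L(k,0))$ is a unitary modular tensor category. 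The only step that is not purely formal is the verification that $\mathcal F$ is generating, and within it the model-dependent identity \eqref{eq349}; but for the vector and half-spin representations this identity is precisely what is established in \cite{Wassermann} and \cite{Toledano} (by comparing spaces of primary fields with spaces of conformal blocks and using the Verlinde formula), and granting it, the remaining passage to $\mathcal F^\boxtimes$ is routine classical highest-weight theory, so no further difficulty arises.
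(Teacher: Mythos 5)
Your proposal matches the paper's argument: the same choice of $\mathcal F$ (vector module for $\mathfrak{sl}_n$; vector plus two half-spin modules for $\mathfrak{so}_{2n}$), the same appeals to \cite{Wassermann} section 25 and \cite{Toledano} for the identity \eqref{eq349} that yields generation and for the energy bounds on the corresponding primary fields, and the same final invocation of Theorem \ref{lbb4}. The paper leaves the passage from \eqref{eq349} to ``$\mathcal F$ is generating'' as a brief remark just as you do, and your sketch of the induction through integrable intermediate weights is consistent with the intended (routine highest-weight-theory) verification.
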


\subsubsection*{Other examples}

As we see in theorem \ref{lbb4}, to finish proving the unitarity of the modular tensor categories associated to unitary affine VOAs, one has to show, for the remaining types, that a ``generating" set of primary fields are energy-bounded. The success in type $A_n$ and $D_n$ unitary WZW models, as well as in unitary minimal models, shows that achieving this goal is promising. Indeed, the main idea of proving the energy-boundedness of a primary field in \cite{Wassermann}, \cite{Loke}, and \cite{Toledano} is to embedd the original VOA $V$ in a larger (super) VOA $\widetilde V$, the energy-boundedness of the field operators of which is easy to show, and realize the primary field as the compression of the vertex operator or an energy-bounded intertwining operator of $\widetilde V$. This strategy can in fact be successfully carried out for the other classical Lie types ($B$ and $C$) and for type $G_2$ (cf. \cite{Gui}).  We hope that it will also work for the remaining exceptional types $E$ and $F_4$.

\subsection{Full conformal field theory with reflection positivity}\label{lbb5}

In this section, we give an interpretation of our  unitarity results from the perspective of full conformal field theory. In \cite{HK07}, Y.Z.Huang and L.Kong constructed a (genus 0)  full conformal field theory for $V$ called ``diagonal model". This construction relies on the non-degeneracy of a bilinear form  on each pair $\mathcal V{k\choose i~j}\otimes \mathcal V{\overline k\choose \overline i~\overline j}$, which follows from the rigidity of $\Rep(V)$. These bilinear forms $(\cdot,\cdot)$ are directly related to our sesquilinear forms $\Lambda(\cdot|\cdot)$ on each $\mathcal V{k\choose i~j}$:
\begin{align}
(\mathcal Y_\alpha,\mathcal Y_{\overline\beta})=\Lambda(\mathcal Y_\alpha|\mathcal Y_\beta)\qquad(\alpha,\beta\in\mathcal V{k\choose i~j}).
\end{align}
In light of this relation, we  sketch the construction of diagonal model in \cite{HK07} from a unitary point of view.

Let us assume that $V$ is unitary, all $V$-modules are unitarizable, and all transport matrices are positive definite. (This last condition holds for $V$ if there exists a generating set $\mathcal F$ of irreducible unitary $V$-modules satisfying condition \ref{CondA} of \ref{CondB} in section \ref{Condition ABC}.) We define a vector space 
\begin{align}
F=\bigoplus_{i\in\mathcal E}W_i\otimes W_{\overline i}.\label{eqb8}
\end{align}
Its algebraic completion is $\widehat F=\bigoplus_{i\in\mathcal E}\widehat W_i\otimes \widehat W_{\overline i}$.

For each $i,j,k\in\mathcal E$, we choose an orthonormal basis $\Theta^k_{ij}$ of $\mathcal V{k\choose i~j}$ under the inner product $\Lambda$. The full field operator $\mathbb Y$ is defined to be an $\End(F\otimes F,\widehat F)$-valued continuous function on $\mathbb C^\times$, such that for any $w^{(i)}_{L}\otimes \overline{w^{(i)}_R}\in W_i\otimes W_{\overline i}\subset F,w^{(j)}_{L}\otimes \overline{ w^{(j)}_R}\in W_j\otimes W_{\overline j}\subset F$,
\begin{align}
\mathbb Y(w^{(i)}_{L}\otimes \overline{w^{(i)}_R};z,\overline z)(w^{(j)}_{L}\otimes \overline{ w^{(j)}_R})=\sum_{k\in\mathcal E}\sum_{\alpha\in\Theta^k_{ij}}\mathcal Y_{\alpha}(w^{(i)}_L,z)w^{(j)}_L\otimes\mathcal Y_{\overline \alpha}(\overline {w^{(i)}_R},\overline z)\overline{w^{(j)}_R}.
\end{align}
Then $(F,\mathbb Y)$ is a full field algebra of $V$ satisfying certain important properties, including commutativity (\cite{HK07} proposition 1.5) and associativity (\cite{HK07} proposition 1.4). In fact, in our unitarity context, it is not hard for  the reader to check that these two properties are equivalent to the unitarity of braid matrices and fusion matrices respectively. $(F,\mathbb Y)$ also satisfies modular invariance (\cite{HK10} proposition 5.1), which is indeed equivalent (\cite{HK10} theorem 3.8) to the unitarity of the projective representation of $SL_2(\mathbb Z)$ on the vector space of the traces of the intertwining operators. This in turn is equivalent (\cite{HK10} theorem 4.11) to the unitarity of projective representation of $SL_2(\mathbb Z)$ in the unitary modular tensor category $\Rep^\uni(V)$ proved by \cite{Kir96} theorem 2.5.

Let us equip the vector space $F$ with an \emph{inner product} $\langle \cdot|\cdot\rangle$, such that the decomposition  \eqref{eqb8} is orthogonal, and for any $i\in\mathcal E,w^{(i)}_{L,1},w^{(i)}_{R,1},w^{(i)}_{L,2},w^{(i)}_{R,2}\in W_i$,
\begin{align}
\langle w^{(i)}_{L,1}\otimes \overline{w^{(i)}_{R,1}}|w^{(i)}_{L,2}\otimes \overline{w^{(i)}_{R,2}} \rangle=d_i^{-1}\langle w^{(i)}_{L,1}|w^{(i)}_{L,2} \rangle\langle w^{(i)}_{R,2}|w^{(i)}_{R,1} \rangle.\label{eqb9}
\end{align}
We also define an antilinear operator $\theta:F\rightarrow F$ sending each $w^{(i)}_L\otimes \overline {w^{(i)}_R}$ to $\overline {w^{(i)}_L}\otimes  {w^{(i)}_R}$, which is easily checked to be an anti-automorphism:
\begin{align}
\theta Y(w;z,\overline z)=Y(\theta w;\overline z, z)\theta\qquad(w\in F).
\end{align}
We call $\theta$ the \textbf{PCT} operator of $(F,\mathbb Y)$.

Note that when $V$ is non-unitary, we can only define a non-degenerate bilinear form on $F$, and show that under this bilinear form, the full field algebra $(F,\mathbb Y)$ satisfies the \emph{invariance property} (\cite{HK07} definition 3.9). But in our case, this invariance property should be  replaced by  the \textbf{reflection positivity}:
\begin{align}
\mathbb Y(w;z,\overline z)^\dagger=\mathbb Y\big(\theta \cdot e^{zL_1^L+\overline zL_1^R}(e^{-i\pi}z^{-2})^{L_0^L}(\overline {e^{-i\pi}z^{-2}})^{L_0^R} w;\overline {z^{-1}},z^{-1}\big)\qquad(w\in F),\label{eqb10}
\end{align}
where for each $n\in\mathbb Z,$, the linear operators $L_n^L=L_n\otimes 1,L_n^R=1\otimes L_n$ are defined on $F$. The factor $e^{-i\pi}$ in equation \eqref{eqb10} can be replaced by any $e^{i(2n+1)\pi}$, where $n\in\mathbb Z$. The reflection positivity is equivalent to the fact that for any $i,j,k\in\mathcal E,\mathcal Y_\alpha,\mathcal Y_\beta\in\mathcal V{k\choose i~j}$,
\begin{align}
\langle \mathcal Y_{C\alpha}|\mathcal Y_{C\beta} \rangle=\frac{d_k}{d_j}\langle \mathcal Y_\alpha|\mathcal Y_\beta \rangle.
\end{align}
This relation is essentially proved in \cite{HK07} using properties of the fusion matrices of intertwining operators. We remark that it can also be proved using graphical calculations for ribbon fusion categories.

A final remark: The positivity of $\Lambda$ is not used in full power to prove the reflection positivity of $F$. One only uses the positivity of quantum dimensions $d_i$'s and the fact that $\Lambda$ is Hermitian (i.e., $\Lambda(\mathcal Y_\alpha|\mathcal Y_\beta)=\overline{\Lambda(\mathcal Y_\beta|\mathcal Y_\alpha)}$), which can be checked more directly without doing long and tedious analysis as in our papers. So unlike the non-degeneracy of $\Lambda$, which is of significant importance in the  construction of the full field algebras of diagonal models, the positivity of $\Lambda$ only plays a marginal role. However, we expect that a systematic treatment of all full rational CFTs (but not just diagonal models) with reflection positivity will rely heavily on the unitarity of the MTC of $V$, and hence on the positivity of $\Lambda$.

\end{document}